\newcommand{\ol}{\overline}
\newcommand{\RN}[1]{%
  \textup{\uppercase\expandafter{\romannumeral#1}}%
}
\newtheorem{thm}{Theorem}[section]
\newtheorem{thmx}{Theorem}[section]
\newtheorem{defn}{Definition}[section]
\newtheorem{lem}{Lemma}[section]
\newtheorem{corx}{Corollary}[section]
\newtheorem{prop}{Proposition}[section]
\newtheorem{rem}{Remark}[section]
\numberwithin{equation}{section}
\def\C{\mathbb{C}}
\def\E{\mathbb{E}}
\def\N{\mathbb{N}}
\def\P{\mathbb{P}}
\def\R{\mathbb{R}}
\def\BB{\mathcal{B}}
\def\EE{\mathcal{E}}
\def\II{\mathcal{I}}
\def\LL{\mathcal{L}}
\def\PP{\mathcal{P}}
\def\KK{\mathcal{K}}
\def\andd{\,\,\text{and}\,\,}
\def\supp{\text{\rm supp}}
\def\lan{\langle}
\def\ran{\rangle}
\def\pa{\partial}
\def\stst{\subset\subset}
\def\al{\alpha}
\def\be{\beta}
\def\ep{\epsilon}
\def\ka{\kappa}
\def\la{\lambda}
\def\La{\Lambda}
\def\si{\sigma}
\def\de{\delta}
\def\ga{\gamma}
\begin{document}

\begin{frontmatter}
\title{Population dynamics under demographic and environmental stochasticity}
\runtitle{Population dynamics under demographic and environmental stochasticity}

\begin{aug}
\author[A]{\fnms{Alexandru}~\snm{Hening}\ead[label=e1]{ahening@tamu.edu}},
\author[B]{\fnms{Weiwei}~\snm{Qi}\ead[label=e2]{wqi2@ualberta.ca}},
\author[B]{\fnms{Zhongwei}~\snm{Shen}\ead[label=e3]{zhongwei@ualberta.ca}}
\and
\author[B,C]{\fnms{Yingfei}~\snm{Yi}\ead[label=e4]{yingfei@ualberta.ca}}
\address[A]{Department of Mathematics,  Texas A\&M University, College Station, TX 77843-3368, United States\printead[presep={,\ }]{e1}}

\address[B]{Department of Mathematical and Statistical Sciences, University of Alberta, Edmonton, AB T6G 2G1, Canada\printead[presep={,\ }]{e2,e3,e4}}

\address[C]{School of Mathematics, Jilin University, Changchun 130012, PRC\printead[presep={,\ }]{e4}}

\end{aug}

\begin{abstract}
The present paper is devoted to the study of the long term dynamics of diffusion processes modelling a single species that experiences both demographic and environmental stochasticity. In our setting, the long term dynamics of the diffusion process in the absence of demographic stochasticity is determined by the sign of $\Lambda_0$, the external Lyapunov exponent, as follows: $\Lambda_0<0$ implies (asymptotic) extinction and $\Lambda_0>0$ implies convergence to a unique positive stationary distribution $\mu_0$. If the system is of size $\frac{1}{\ep^{2}}$ for small $\ep>0$ (the intensity of demographic stochasticity), demographic effects will make the extinction time finite almost surely. This suggests that to understand the dynamics one should analyze the quasi-stationary distribution (QSD) $\mu_\ep$ of the system. The existence and uniqueness of the QSD is well-known under mild assumptions.

We look at what happens when the population size is sent to infinity, i.e., when $\ep\to 0$. We show that the external Lyapunov exponent still plays a key role: 1) If $\Lambda_0<0$, then $\mu_\ep\to \delta_0$, the mean extinction time is of order $|\ln \ep|$ and the extinction rate associated with the QSD $\mu_{\ep}$ has a lower bound of order $\frac{1}{|\ln\ep|}$; 2) If $\Lambda_0>0$, then $\mu_\ep\to \mu_0$, the mean extinction time is polynomial in $\frac{1}{\ep^{2}}$ and the extinction rate is polynomial in $\ep^{2}$. Furthermore, when $\Lambda_0>0$ we are able to show that the system exhibits multiscale dynamics: at first the process quickly approaches the QSD $\mu_\ep$ and then, after spending a polynomially long time there, it relaxes to the extinction state. We give sharp asymptotics in $\ep$ for the time spent close to $\mu_\ep$.

In contrast to models that only take into account demographic stochasticity, our results demonstrate the significant effect of environmental stochasticity -- it turns an exponentially long mean extinction time to a sub-exponential one. 
\end{abstract}

\begin{keyword}[class=MSC]
\kwd[Primary ]{35Q84}
\kwd{35J25}
\kwd[; secondary ]{37B25}
\kwd{60J60}
\end{keyword}

\begin{keyword}
\kwd{population dynamics}
\kwd{demographic stochasticity}
\kwd{environmental stochasticity}
\kwd{quasi-stationary distribution}
\kwd{extinction rate}
\kwd{extinction time}
\kwd{mean extinction time}
\kwd{multiscale dynamics}
\end{keyword}

\end{frontmatter}
\tableofcontents


\section{\bf Introduction}

One of the most important questions from population dynamics is figuring out when a species persists or goes extinct. For deterministic models, persistence is usually quantified via the existence of an attractor that is bounded away from zero (the extinction state). In this setting extinction can only happen asymptotically as time goes to infinity. However, any realistic ecological model has to take into account various intrinsic and extrinsic random environmental fluctuations. Usually there are either ecological models that take into account environmental stochasticity that arises due to fluctuations of the environment, or models from population genetics that focus on demographic stochasticity, which arises because of the randomness due to reproduction in a finite population. There are few analytic models which account for the effects of both types of stochasticity. 

If the system is of size $\frac{1}{\ep^{2}}$ for some small $\ep>0$ (intensity of the demographic noise), the presence of demographic effects will make the extinction time finite almost surely. As a result, in order to gain some information about the behavior of the process before extinction, it is natural and useful to look at quasi-stationary distributions (QSDs) \cite{CMSM13,MV12}, i.e., stationary distributions of the process conditioned on not going extinct. A key problem is to study scaling limits of systems that have QSDs and see what happens with the family of QSDs as the intensity of the demographic noise is sent to zero.

   The main goal of this paper is to analyze the dynamics of systems that have both types of stochasticity and can be modelled by stochastic differential equations (SDEs). We focus on the QSD and the extinction time as well as related quantities such as the extinction rate and the exponential convergence rate to the QSD, and investigate their asymptotic properties as the intensity of the demographic noise vanishes -- a particular emphasis is put on the connections to properties of the limit system.  Models with both types of stochasticity are more realistic as natural systems usually experience both types of randomness. The sharp criteria we find for the persistence and extinction of species are therefore more relevant to the modelling of natural ecosystems -- see \cite{HSL16,E20}.

Systems perturbed by either the environmental or demographic stochasticity have been attracting a lot of attention. If one looks at models that only have environmental stochasticity, there already exist many sharp results in the literature. In the one-dimensional setting, a full classification is possible by the well-known scale function and speed measure description of diffusions \cite{BS15}. In the multi-dimensional setting things are more complicated. Some general theory for the existence and uniqueness of stationary distributions can be found in \cite{K11}, while the most up to date results for Kolmogorov systems are in \cite{HN18, B18, HNS22}.  

For models with only demographic stochasticity, asymptotic properties of QSDs and related quantities as the intensity of the stochasticity vanishes are often the focus of studies. They have been investigated for randomly perturbed dynamical systems and rescaled Markov jump processes. The first work seemingly dates back to \cite{H97}, where the author studied the stochastic Ricker model. This work was generalized in \cite{KLZ98, RZ99} to randomly perturbed interval maps that apply to density-dependent branching processes. Further generalizations were considered in \cite{JS06,FS14}, where general randomly perturbed maps are studied and applied to many population models. These works illuminate two fundamental properties when the unperturbed deterministic system has a global attractor which is bounded away from extinction: 1) QSDs tend to concentrate on the deterministic attractor as the noise intensity vanishes. 2) The extinction rate associated with a QSD is exponentially small with respect to the system size (i.e., the reciprocal of the noise intensity squared), and therefore, the extinction time grows exponentially with the system size  if the initial distribution is given by the QSD.  Concentration properties of QSDs as in 1) are in line with that of stationary distributions for randomly perturbed dynamical systems (see e.g. \cite{Kifer88,Kifer89,K90,FW98,HK11,HJLY18}). For the latter in the case that the unperturbed system has simple dynamics, significantly more refined results are available in the literature (see e.g. \cite{Sheu86,Day87,Mikami88,BB09}).

Rescaled absorbed birth-and-death processes whose mean-field ODEs have a global asymptotically stable equilibrium have been investigated in \cite{Chan98,CCM16,CCM19}. In one dimension, the exponential asymptotic of QSDs and associated extinction rates are established in \cite{Chan98}. When the equilibrium is non-degenerate, these results are improved in \cite{CCM16} by determining the sub-exponential terms, implying in particular that QSDs converge to the Dirac measure at the equilibrium in a Gaussian manner. In higher dimensions, the aforementioned two fundamental properties are obtained in \cite{CCM19}. It is worthwhile to point out that the problem in higher dimensions is much more challenging due to the irreversibility and the lack of simple recursive formulas for QSDs. In \cite{CCM16,CCM19}, the authors also characterize the two-scale dynamics of the solution processes by deriving sophisticated estimates quantifying the distance between the distribution of the solution and the convex combination of the extinction state (more precisely, the Dirac measure at the extinction state) and the QSD. 

In \cite{SWY,JQSY21,QSY} the authors consider one-dimensional absorbed singular diffusion processes of generalized logistic type with small demographic noises -- these models can be derived as diffusion approximations of one-dimensional rescaled absorbed Markov jump processes arising from population dynamics and chemical reactions. When the unperturbed or mean-field ODE has a unique positive equilibrium (which must be globally asymptotically stable), results comparable to those contained in \cite{CCM16} are established. In particular, the noise-vanishing asymptotic of QSDs and associated extinction rates are determined up to the sub-exponential terms, and the two-scale dynamics of the solution process is characterized. The noise-vanishing asymptotic of QSDs and associated extinction rates extends to the case where the unperturbed ODE has multiple positive stable equilibria. We point out that while QSDs for many types of processes have been extensively studied (see \cite{Pokkett-bib,CCLMMS09,vDP13,CMSM13} and reference therein), the fundamental theory of QSDs (i.e., the existence, uniqueness and convergence) for absorbed singular diffusion processes was unavailable until the work \cite{CCLMMS09}. Since then, there have been significant new developments (see e.g. \cite{LC12,MV12,CMSM13,Miura14,CV18,HK19,HQSY}). 

There exist relevant works on overdamped Langevin equations restricted in a bounded domain and killed on its boundary \cite{Mathieu95,BGK05,DLLN19,DLLN20,DLLN21}. In \cite{Mathieu95}, the author derived the exponential asymptotic of the extinction rate (more appropriately, the exit rate for a diffusion process exiting from a bounded domain) and the asymptotic of the principal eigenfunction of the generator in the deepest well of the potential, leading to the sub-exponential asymptotic of the QSD in that well. These results are greatly improved in \cite{BGK05} under generic assumptions on the potential function. In a series of works \cite{DLLN19,DLLN20,DLLN21} examining exit events and the Eyring-Kramers formula, the sub-exponential asymptotic of the exit rate plays a significant role in computing the asymptotic of transition rates and determining the asymptotic exit distribution. 

This paper is a first step towards generalizing the theory of randomly perturbed dynamical systems without absorbing states and randomly perturbed dynamical systems with absorbing states and only demographic noises to a theory of randomly perturbed dynamical systems with absorbing states and multiple types of noise. Inspired by the aforementioned theories of noise-vanishing asymptotics of stationary distributions, QSDs, and related quantities, and motivated by the fact that real systems are subject to both intrinsic and extrinsic stochastic perturbations, we intend to establish an analogous theory for dynamical systems under both environmental and demographic noise perturbations, and study the effects of both types of noises.

In the present paper, we consider one-dimensional SDEs with both environmental and demographic stochasticity: 
\begin{equation*}
dX^{\ep}_t=b(X^{\ep}_t)dt+\si(X^{\ep}_t) dB_t+\ep\sqrt{a(X^{\ep}_t)}dW_t\quad\text{in}\quad[0,\infty),
\end{equation*}
where the coefficients $b$, $\si$ and $a$ satisfy natural assumptions. Let $T^{\ep}_{0}=\inf\left\{t\geq0:X^{\ep}_{t}=0\right\}$ be the extinction time of $X^{\ep}_{t}$. It is finite almost surely. Denote by $\LL_{\ep}$ the self-adjoint extension in $L^{2}(u_{\ep}^{G}):=L^{2}((0,\infty),u_{\ep}^{G}dx)$ of the generator of $X_{t}^{\ep}$, where $u_{\ep}^{G}$ is the non-integrable Gibbs density of $X_{t}^{\ep}$ as it grows like $\frac{1}{x}$ as $x\to0^{+}$. The spectrum of $\LL_{\ep}$ is purely discrete. Depending on the dynamics of the limiting SDE, which only has an environmental stochasticity term:
$$
dX^{0}_t=b(X^{0}_t)dt+\si(X^{0}_t) dB_t\quad\text{in}\quad[0,\infty),
$$
we are able to prove the following results (with rigorous statements given in Section \ref{s:setup}):

\smallskip

\noindent{\bf (I)} Suppose $\La_{0}:=b'(0)-\frac{|\sigma'(0)|^2}{2}>0$ so that $X_{t}^{0}$ has a unique stationary distribution $\mu_0$ that does not put mass on the extinction state $0$. 
\begin{itemize}
\item The unique QSD $\mu_\ep$ of $X_{t}^{\ep}$ converges to $\mu_0$ as the intensity of the demographic noise goes to zero, that is, $\ep\to0$. The associated extinction rate $\la_{\ep,1}$ is given by the principal eigenvalue of $-\LL_{\ep}$, and is polynomially small in $\ep$ with leading order $\la_{\ep,1}\sim\ep^{\frac{4b'(0)}{|\si'(0)|^2}-2}$.

\item The normalized extinction time $\frac{T^{\ep}_0}{\E_{\bullet}^{\ep}[T^{\ep}_0]}$ converges weakly to an exponential random variable of mean $1$ as $\ep\to0$. Moreover, the mean extinction time $\E^{\ep}_{\bullet}[T^{\ep}_0]$ depends polynomially on the system size $\frac{1}{\ep^{2}}$ with leading order 
$$
\E^{\ep}_{\bullet}[T^{\ep}_0]\sim\frac{1}{\la_{\ep,1}}\sim\ep^{2-\frac{4b'(0)}{|\si'(0)|^2}} = \left(\frac{1}{\ep^2}\right)^{\frac{2b'(0)}{|\si'(0)|^2}-1}.
$$

The polynomial asymptotics of the extinction rate $\la_{\ep,1}$ and the mean extinction time $\E^{\ep}_{\bullet}[T^{\ep}_0]$ are \emph{significant changes} from that of models having only demographic noise, see \cite{FS14,JQSY21,QSY}, where the dependence on the noise intensity is exponential. This shows that environmental stochasticity has a significant impact on the time-scales of the dynamics. The fact that the dependence changes from exponential to polynomial in the presence of environmental stochasticity has been recently showcased empirically and numerically in simple ecological models \cite{HSL16,E20}.

\item The eigenfunction $\phi_{\ep,1}$ of $-\LL_{\ep}$ associated with $\la_{\ep,1}$ converges, after appropriate normalization, to $1$ as $\ep\to0$. The second eigenvalue $\la_{\ep,2}$ of $-\LL_{\ep}$ satisfies 
$$
0<\liminf_{\ep\to 0}\la_{\ep,2}\leq\limsup_{\ep\to 0}\la_{\ep,2}<\infty,
$$
yielding in particular the uniform spectral gap $\inf_{\ep}(\la_{\ep,2}-\la_{\ep,1})>0$.

\item The distribution of $X_{t}^{\ep}$ satisfies the multiscale estimate:
\begin{equation}\label{intro-multiscale-estimate}
    \left\|\P^{\ep}_{\bullet}[X^{\ep}_t\in \bullet]-\left[\al_{\ep}e^{-\la_{\ep,1}t } \mu_{\ep}+\left(1-\al_{\ep}e^{-\la_{\ep,1}t }\right)\de_0\right]\right\|_{TV}\leq C e^{-\la_{\ep,2}t},
\end{equation}
where $\al_{\ep}$ is the integral of the appropriately normalized $\phi_{\ep,1}$ with respect to the initial distribution, and the constant $C$ depends on the initial distribution but is independent of $\ep$. This estimate together with information about $\la_{\ep,1}$, $\la_{\ep,2}$ and $\phi_{\ep,1}$ allows us to  quantify the multiscale dynamics of $X_{t}^{\ep}$ as follows.
If $t$ is such that $\frac{1}{\lambda_{\ep,2}}\ll t\ll \frac{1}{\lambda_{\ep,1}}$, then $\|\P^{\ep}_{\bullet}[X^{\ep}_t\in \bullet]-\mu_\ep]\|_{TV}\ll 1$, that is, the distribution of $X^{\ep}_t$ is close the QSD $\mu_\ep$. If $t$ is such that $t\gg \frac{1}{\lambda_{\ep,1}}$, then $\|\P^{\ep}_{\bullet}[X^{\ep}_t\in \bullet]-\de_0]\|_{TV}\ll 1$, that is, the distribution of $X^{\ep}_t$ gets close to $\delta_0$, the Dirac mass at the extinction state.

The estimate \eqref{intro-multiscale-estimate} is powerful -- it has the convergence result of the normalized extinction time $\frac{T^{\ep}_0}{\E_{\bullet}^{\ep}[T^{\ep}_0]}$ and the asymptotic reciprocal relationship $\E^{\ep}_{\bullet}[T^{\ep}_0]\sim\frac{1}{\la_{\ep,1}}$ as immediate consequences.
\end{itemize}

\smallskip

\noindent{\bf (II)} Suppose $\La_{0}<0$ so that $X_{t}^{0}$ goes extinct as $t\to\infty$.
    \begin{itemize}
        \item As $\ep\to 0$, we have $\mu_\ep\to \delta_0$. The extinction rate $\la_{\ep,1}$ vanishes as $\ep\to0$ and has a lower bound of order $\frac{1}{|\ln\ep|}$.
        
        \item The mean extinction time is of order $|\ln \ep |$, that is, $\E^{\ep}_{\bullet}[T^{\ep}_{0}]\sim |\ln \ep|$. 
    \end{itemize}

The quantity $\Lambda_0$ is often referred to as the \emph{stochastic growth rate} (it is also called the \emph{invasion rate} or the \emph{external Lyapunov exponent}) -- it determines the stability of the extinction state $0$ for $X_{t}^{0}$. As $\Lambda_0$ increases and crosses $0$, the stable extinction state loses its stability and bifurcates into an unstable extinction state and the globally asymptotically stable persistent state $\mu_{0}$. As it is seen from {\bf(I)} and {\bf(II)} such a bifurcation has a strong effect on the asymptotics of the extinction rate $\la_{\ep,1}$ and the mean extinction time $\E^{\ep}_{\bullet}[T^{\ep}_{0}]$.

To this end, we briefly comment on the ideas, methods and techniques used to establish the above results, as well as the difficulties overcome in the course of the proof. We pay particular attention to the comparison with the model that only has demographic stochasticity, that is,
\begin{equation*}
d\tilde{X}^{\ep}_t=b(\tilde{X}^{\ep}_t)dt+\ep\sqrt{a(\tilde{X}^{\ep}_t)}dW_t\quad\text{in}\quad[0,\infty).
\end{equation*}
For clarity, we assume $b$ is just the standard logistic growth rate function with $x_{*}$ being the only positive zero. Denote by $\tilde{\LL}_{\ep}$ the self-adjoint extension of the generator of $\tilde{X}_{t}^{\ep}$. Under natural assumptions on $a$, the spectrum of $\tilde{\LL}_{\ep}$ is purely discrete. Denote by $\tilde{\la}_{\ep,1}$ and $\tilde{\la}_{\ep,2}$ the first two eigenvalues of $-\tilde{\LL}_{\ep}$.
\begin{itemize}
    \item It is known (see e.g. \cite{Mathieu95,BGK05,JQSY21}) that the asymptotic of $\tilde{\la}_{\ep,1}$ and $\tilde{\la}_{\ep,2}$ are respectively determined by the potential function $\tilde{V}:=-\int_{0}^{\bullet}\frac{b}{a}ds$ and the vector field $b$ at $x_{*}$. More precisely, $\lim_{\ep\to0}\frac{\ep^{2}}{2}\ln\tilde{\la}_{\ep,1}=\tilde{V}(x_{*})$, and $\lim_{\ep\to0}\tilde{\la}_{\ep,2}=-b'(x_{*})$. The behavior of $\la_{\ep,1}$ and $\la_{\ep,2}$ is completely different: we can show in the case $\La_{0}>0$ that the leading asymptotic of $\la_{\ep,1}$ is determined by $b'$ and $\si'$ at the extinction state $0$. This shows that environmental stochasticity significantly alters the `hidden mechanisms' which affect the mean extinction time.  
    
    \item Denote by $\LL_{0}$ the self-adjoint extension of the generator of $X_{t}^{0}$. One expects that the asymptotics of $\la_{\ep,1}$ and $\la_{\ep,2}$ are governed by the spectral properties of $-\LL_{0}$. However, this is not clear at all because of the \textit{singular} limit ``$\lim_{\ep\to0}\LL_{\ep}=\LL_{0}$". The coefficient of the second-order term of $\LL_{\ep}$ has a first-order degeneracy at $0$, while that of $\LL_{0}$ has a second-order degeneracy at $0$. One of the unpleasant consequences of this singularity is that the structure of the spectrum of $\LL_{0}$ differs significantly from that of $\LL_{\ep}$. The reader is referred to Remark \ref{rem-troubles-from-spectrum} for details.

    \item Proving that $\inf_{\ep}\la_{\ep,2}>0$ is hard in part due to the singularity of the limit ``$\lim_{\ep\to0}\LL_{\ep}=\LL_{0}$". The way we prove this builds on the simple fact that the eigenfunctions associated with $\la_{\ep,1}$ and $\la_{\ep,2}$ are orthogonal. Assuming the failure of $\inf_{\ep}\la_{\ep,2}>0$, we manage to show the loss of the orthogonality of eigenfunctions. A crucial ingredient leading to this contradiction is to acquire certain compactness of appropriately normalized eigenfunctions associated with $\la_{\ep,1}$ and $\la_{\ep,2}$. 
    
    \item Given the fact that eigenfunctions of $-\LL_{\ep}$ span $L^{2}(u_{\ep}^{G})$, the multiscale estimate \eqref{intro-multiscale-estimate} follows essentially from the eigenfunction expansion of the Markov semigroup $P_{t}^{\ep}$ associated with $X_{t}^{\ep}$ before hitting $0$, saying particularly that all the terms in the estimate arise naturally except the property that the constant $C$ on the right hand side is independent of $\ep$. The key to obtaining this is to derive good pointwise estimates of $P_{t}^{\ep}Q_{2}^{\ep}f$ for $f\in C_{b}((0,\infty))$ by lifting the integrability, as we know $\|P^{\ep}_tQ^{\ep}_2\|_{L^2(u^{G}_{\ep})\to L^2(u^{G}_{\ep})}\leq e^{-\la_{\ep,2}t}$ from $P_{t}^{\ep}$ being generated by $\LL_{\ep}$, where $Q^{\ep}_2$ is the spectral projection of $\LL_{\ep}$ corresponding to eigenvalues $\si(\LL_{\ep})\setminus\{-\la_{\ep,1}\}$. This, however, is not an easy job due to the degeneracy of $\LL_{\ep}$ at $0$ and the singularity of $u_{\ep}^{G}$ at $0$. We overcome the difficulties by examining the Schr\"{o}dinger operator and the associated semigroup that are respectively unitarily equivalent to $\LL_{\ep}$ and $P^{\ep}_{t}$. It is the blowup feature of the potential of the Schr\"{o}dinger operator that helps to lift the integrability and reach the goal.
    
    \item The asymptotic of the extinction rate $\la_{\ep,1}$ in the case $\La_{0}>0$ is tackled from two perspectives. The first approach uses only the classical variational formula. A careful analysis of the eigen-equation (written in the quadratic form) near the extinction state $0$ allows us to derive the sharp lower bound. The analysis extends to the case $\La_{0}<0$. A non-sharp upper bound is obtained by constructing test functions. The other approach, which leads to the sharp asymptotic, builds on two independently established results: the asymptotic reciprocal relationship $\E^{\ep}_{\bullet}[T^{\ep}_0]\sim\frac{1}{\la_{\ep,1}}$ and the sharp asymptotic of $\E^{\ep}_{\bullet}[T^{\ep}_0]$. The former is an immediate consequence of the multiscale estimate \eqref{intro-multiscale-estimate} as mentioned in {\bf (I)}. The latter is achieved by a probabilistic approach that extends to the case $\La_{0}<0$.
\end{itemize}

The preprint \cite{PS21} has recently come to our attention. The authors have been able to prove results analogous to ours for a specific stochastic SIS epidemic model in randomly switched environments. The SIS model is described by a multitype birth-and-death process $X^K$ in a randomly switched environment -- the infection and recovery rates depend on the state of a finite Markov process, which model the environment, whose transition rates in turn depend on the number of infected individuals. The total population size $K$ is fixed and the authors show that as $K\to \infty$ the process converges to a piecewise deterministic Markov process that lives on a compact state space. The behavior of the limiting process is determined by the top Lyapunov exponent, $\Lambda$, of the linearized system. The authors are able to show that as $K\to\infty$ the time to extinction is, when $\Lambda<0$, at the most of order $\ln K$ and, when $\Lambda>0$, at least of polynomial order in $K$. We note that our results are for SDEs and are significantly sharper.

The paper is organized as follows. In Section \ref{s:setup} we offer the rigorous mathematical setup of the problem and exhibit our main results. Section \ref{s:prelim} offers some preliminary results that are needed for the main results. The analysis of the tightness and concentration of the measures $\mu_\ep$ as well as the proof of Theorem \ref{thm-concentration} are provided in Section \ref{s:tight}. Section \ref{s:asym} deals with the proof of Theorem \ref{thm-bounds-eigenvalues} which is about the asymptotic bounds on the first two eigenvalues $\la_{\ep,1}$ and $\la_{\ep,2}$ of the generator. The multiscale dynamics of $X_{t}^{\ep}$ and proof of Theorem \ref{thm-multiscale} appear in Section \ref{s:multi}. Section \ref{s:ext} is about the asymptotic of the mean extinction time $\E^{\ep}_{\bullet}[T^{\ep}_0]$ and the proof of Theorem \ref{thm-asymptotic-mean-extinction}. 

\section{\bf Mathematical setup and main results} \label{s:setup}

We consider the following family of SDEs:
\begin{equation}\label{main-diffusion-eqn}
dX^{\ep}_t=b(X^{\ep}_t)dt+\si(X^{\ep}_t) dB_t+\ep\sqrt{a(X^{\ep}_t)}dW_t\quad\text{in}\quad[0,\infty),
\end{equation}
where $0<\ep\ll 1$ is a small parameter, $b,\si:[0,\infty)\to \R$, $a:[0,\infty)\to [0,\infty)$ and $B_t, W_t$ are two independent standard one-dimensional Brownian motions on some probability space. Here, $\si dB_t$ models environmental stochasticity and $\ep\sqrt{a}dW_t$ represents demographic stochasticity. Hence, $\ep$ stands for the intensity of the demographic stochasticity. We point out that $\ep^{2}$ is inversely proportional to the population size, and hence, tends to $0$ as the population size goes to infinity. 

Throughout this paper, we make the following assumptions on the coefficients $b$, $\si$ and $a$.
\begin{itemize}
\item[\bf(H)]
The functions $b:[0,\infty)\to\R$, $\si:[0,\infty)\to\R$ and $a:[0,\infty)\to[0,\infty)$ are assumed to satisfy the following conditions:
\begin{itemize}
\item[\rm(1)] $b\in C^{1}([0,\infty))$,
$b(0)=0$, $b'(0)>0$, and $\limsup_{x\to\infty}b(x)<0$;

\item[\rm(2)] $\si\in C^{2}([0,\infty))$, $\si(0)=0$ and $\si'(0)\neq 0$;

\item[\rm(3)] $a\in C^{2}([0,\infty))$, $a(0)=0$, $a'(0)>0$, and $a>0$ on $(0,\infty)$;

\item[\rm(4)] there holds 
\begin{gather*}
     \limsup_{x\to \infty}\frac{a(x)}{\si^2(x)}<\infty,\quad\lim_{x\to\infty}\frac{b(x)}{|\si(x)|}=\lim_{x\to \infty}\frac{xb(x)}{\si^2(x)}=-\infty,\\
    \limsup_{x\to \infty}\frac{\si^2(x)}{|b(x)|} \max\left\{\frac{a'(x)}{a(x)}, \frac{|\si'(x)|}{|\si(x)|}\right\}=0,\quad \andd\quad \\
    \limsup_{x\to \infty}\frac{\si^2(x)}{b^2(x)} \max\left\{a''(x), (\si^2(x))'', |b'(x)|\right\}=0.
\end{gather*}

\end{itemize}
\end{itemize}

{\bf (H)}(1) says that $b$ is a logistic-type growth rate function -- these types of growth rates play an important roles in many biological and ecological applications. In particular, $b(x)$ looks like $b'(0)x$ around $0$ and the per-capita growth rate at zero is positive, $b'(0)>0$, something which ensures persistence if there is no demographic or environmental stochasticity. {\bf (H)}(2) is satisfied if $\si(x)=xf(x)$ for some $f\in C^2([0,\infty))$ and often appears in modeling environmental stochasticity. We note that in most applications one has $\sigma(x)=\sigma x$ for some $\sigma>0$. {\bf (H)}(3) assumes that $a$ is degenerate at $0$ and behaves like $a'(0)x$ near 0. It is worthwhile to point out that the singularity of $a$ at $0$ causes the non-integrability of the Gibbs density near $0$, and thus, leads to substantial difficulties in the analysis of \eqref{main-diffusion-eqn}. The condition $\limsup_{x\to \infty}b(x)<0$ in {\bf (H)}(1) and the growth conditions on $a$, $b$ and $\si$ in {\bf (H)}(4) guarantee \eqref{main-diffusion-eqn} forms a dissipative system. Other conditions in {\bf (H)}(4) restricting the derivatives of $a$, $b$ and $\si^2$ near $\infty$ are mild technical assumptions. The assumption {\bf (H)} applies in particular to the logistic diffusion:
$$
dX_t=X_t(\mu-\kappa X_t)dt+\sigma X_tdB_t+\ep\sqrt{\gamma X_t}dW_t\quad\text{in}\quad[0,\infty)
$$
and the stochastic theta logistic model (with $\theta>0$):
\begin{equation*}
    dX_t=X_t(\mu-\ka X_t^{\theta})dt+\si X_t dB_t+\ep\sqrt{\ga X_t}dW_t \quad\text{in}\quad[0,\infty).
\end{equation*}

Denote by $X_{t}^{\ep}$ the diffusion process on $[0,\infty)$ generated by  solutions of \eqref{main-diffusion-eqn}. For singular diffusion processes like \eqref{main-diffusion-eqn}, the strong uniqueness is ensured by the well-known Yamada-Watanabe theory \cite{YW71,WY71}. Moreover, $X^{\ep}_{t}$ gets absorbed by the absorbing state $0$ in finite time almost surely (see e.g. \cite{CCLMMS09,IW81}), leading eventually to extinction dynamics. However, $X^{\ep}_t$ can display long interesting dynamics before hitting $0$. To capture such dynamics, we use {\em quasi-stationary distributions} of $X^{\ep}_t$ -- these are initial distributions of $X^{\ep}_t$ on $(0,\infty)$ such that the distribution of $X_{t}^{\ep}$ conditioned on not reaching $0$ up to time $t$ is independent of $t\geq0$. 

Let $T^{\ep}_{0}$ be the first time that $X^{\ep}_{t}$ hits $0$ (often called the \emph{extinction time}), that is, 
$$
T^{\ep}_{0}=\inf\left\{t\geq0:X^{\ep}_{t}=0\right\}.
$$ 
Then, $\P^{\ep}_{\mu}[T^{\ep}_0<\infty]=1$ as mentioned above (see also \cite[Chapter VI, Section 3]{IW81}), where $\P^{\ep}_{\mu}$ the law of $X_{t}^{\ep}$ with initial distribution $\mu$. The associated expectation is denoted by $\E^{\ep}_{\mu}$. If $\mu=\de_{x}$, we simply write $\P^{\ep}_{x}=\P^{\ep}_{\de_x}$ and $\E^{\ep}_{x}=\E^{\ep}_{\de_x}$. 

\begin{defn}[Quasi-stationary distribution]
A Borel probability measure $\mu_{\ep}$ on $(0,\infty)$ is
called a \emph{quasi-stationary distribution (QSD)} of
$X^{\ep}_{t}$ if
$$
\P^{\ep}_{\mu_{\ep}}\left[X_{t}^{\ep}\in B|T_{0}^{\ep}>t\right]=\mu_{\ep}(B),\quad \forall t\geq0,\quad B\in\BB((0,\infty)),
$$
where $\BB((0,\infty))$ is the Borel $\si$-algebra of $(0,\infty)$.
\end{defn}

The general theory of QSDs (see e.g. \cite{MV12,CMSM13}) says that if $\mu_{\ep}$ is a QSD of $X_{t}^{\ep}$, then there is a unique number $\la_{\ep,1}>0$ such that $T_{0}^{\ep}$ is exponentially distributed with rate $\la_{\ep,1}$ provided $X_{0}^{\ep}\sim\mu_{\ep}$, that is,
\begin{equation}\label{met-qsd}
\P^{\ep}_{\mu_{\ep}}[T_0^{\ep}>t]=e^{-\lambda_{\ep,1}t},\quad \forall t\geq0.
\end{equation}
For this reason, $\la_{\ep,1}$ is often referred to as the \emph{extinction rate}.


Following \cite{CCLMMS09}, we check that under {\bf(H)}, $X_{t}^{\ep}$ admits a unique QSD $\mu_{\ep}$ with a positive $C^{2}$ density $u_{\ep}$ (see Lemma \ref{qsd-existence-uniqueness} for details). Moreover, the associated extinction rate $\la_{\ep,1}$ is given by the principal (or the first) eigenvalue of $-\LL_{\ep}$, where $\LL_{\ep}$ is an appropriate extension of the generator of $X_{t}^{\ep}$ and acts on functions in $C^{2}((0,\infty))$ as:
\begin{equation}\label{e:LE}
\LL_{\ep}\phi=\frac{1}{2}(\ep^2a+\si^2)\phi''+b\phi',\quad\forall\phi\in C^{2}((0,\infty)).
\end{equation}
The rigorous definition of $\LL_{\ep}$ is given in Subsection \ref{subsec-generator-spectrum-dynamics}. In addition, the spectral gap between the first and second eigenvalues, $\la_{\ep,1}$ and $\la_{\ep,2}$, of the operator $-\LL_{\ep}$ characterizes the exponential convergence rate of $\P^{\ep}_{\mu}\left[X_{t}^{\ep}\in \bullet|t<T_{0}^{\ep}\right]$ to the QSD $\mu_{\ep}$ as $t\to\infty$ whenever the initial distribution $\mu$ is compactly supported in $(0,\infty)$.

The main goal of this paper is to analyze the combined effects of environmental and demographic noises on population persistence and extinction. In order to achieve this, it is of paramount importance to investigate the asymptotic properties of $\mu_{\ep}$, $\la_{\ep,1}$ and $\la_{\ep,2}$ as $\ep\to0$. We are able to provide detailed information about the diffusion process $X_{t}^{\ep}$ governed by the QSD $\mu_{\ep}$, and to characterize the extinction time $T_{0}^{\ep}$ (especially, the mean extinction time $\E_{\bullet}[T_{0}^{\ep}]$) as well as the global multiscale dynamics of $X_{t}^{\ep}$.  

Investigating the asymptotic properties of $X_{t}^{\ep}$ and related quantities (i.e., $\mu_{\ep}$, $\la_{\ep,1}$, $\la_{\ep,2}$ and $T_{0}^{\ep}$) as $\ep\to0$ leads naturally to the limiting equation of \eqref{main-diffusion-eqn}, namely, 
\begin{equation}\label{eqn-diffusion-extrinsic}
dX^0_{t}=b(X^0_{t})dt+\si(X^0_{t}) dB_t\quad\text{in}\quad[0,\infty). 
\end{equation}
Intuitively, the first step towards a good understanding of these asymptotic properties is to acquire relevant information about the diffusion process $X_{t}^{0}$ on $[0,\infty)$ generated by solutions of \eqref{eqn-diffusion-extrinsic}. It is worthwhile to point out that $X_{t}^{0}$ behaves fundamentally different from $X_{t}^{\ep}$ over large time scales. For instance, if $X_{0}^{0}=x\in(0,\infty)$, then $X_{t}^{0}$ does not reach the absorbing state $0$ in finite time almost surely, that is, $X_{t}^{0}>0$ $\P_{x}$-a.e. for all $t>0$ (see Proposition \ref{prop-existence-uniqueness-sd}). Moreover, the spectral structure of the generator of \eqref{eqn-diffusion-extrinsic} differs very much from that of the generator of \eqref{main-diffusion-eqn}, i.e., $\LL_{\ep}$. More precisely, the latter is purely discrete (see Lemma \ref{prop-spectral-structure}), while the former is not (see Remark \ref{rem-troubles-from-spectrum}). 

The dynamics of $X_{t}^{0}$ is very well understood. Following for example \cite{Kallenberg02, EHS15, HN18}, we define the \textit{stochastic growth rate} (also called \textit{invasion rate} or \textit{external Lyapunov exponent})
\begin{equation}\label{e:lambda0}
\Lambda_0 := b'(0) - \frac{|\sigma'(0)|^2}{2}.    
\end{equation}
In population dynamics, the condition $\Lambda_0>0$ implies that a species tends to increase when it is at a low density, and therefore, persists in the long run (see \cite{Kallenberg02, EHS15, HN18}). The following sharp threshold result is part of Proposition \ref{prop-existence-uniqueness-sd}: 
\begin{itemize}
    \item if $\Lambda_0<0$, then $\de_{0}$ is the only stationary distribution of $X_{t}^{0}$; 
    
    \item if $\Lambda_0>0$, then $X_{t}^{0}$ admits a unique positive stationary distribution $\mu_0$ with a positive density $u_0\in C^2((0,\infty))$ given by the normalized Gibbs density, namely, $u_{0}=\frac{u_0^G}{\|u_0^G\|_{L^{1}((0,\infty))}}$, where
\begin{equation}\label{gibbs-environmental-only}
u_0^G:=\frac{1}{\si^2} e^{2\int_1^{\bullet}\frac{b}{\si^2}ds}\quad\text{in}\quad(0,\infty).
\end{equation}
\end{itemize}
More detailed information is given in Subsection \ref{subsec-sde-environmental}. Our main focus is on the case $\La_{0}>0$ -- in this setting the persistence of a species whose dynamics is modelled by \eqref{eqn-diffusion-extrinsic} becomes a transient property when the model \eqref{main-diffusion-eqn} is used. This is how things usually behave in nature where a population persists for a long time after which it eventually goes extinct. Our purpose is to give quantitative and qualitative characterizations of this phenomenon. We are also able to establish interesting results in the case $\La_{0}<0$, demonstrating significant changes as $\La_{0}$ crosses $0$, where a bifurcation occurs.

Our first result addresses the limiting behaviors of $\mu_{\ep}$ as $\ep\to0$. The space $C^2((0,\infty))$ is equipped with the topology of locally uniform convergence up to the second derivative.

\begin{thmx}\label{thm-concentration}
Assume {\bf (H)}.
\begin{itemize}
    \item[\rm(1)] If $\Lambda_0<0$, then $\lim_{\ep\to0}\int_{0}^{\infty}\phi d\mu_{\ep}=0$ for any $\phi\in C_{b}([0,\infty))$ with $\phi(0)=0$.
    
    \item[\rm(2)] If $\Lambda_0>0$, then $\lim_{\ep\to0}\mu_{\ep}=\mu_0$ weakly, and $\lim_{\ep\to0}u_{\ep}=u_0$ in $C^2((0,\infty))$. 
\end{itemize}
\end{thmx}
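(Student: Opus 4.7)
The QSD density $u_\ep$ solves the adjoint eigenvalue equation $\LL_\ep^* u_\ep + \la_{\ep,1} u_\ep = 0$ on $(0,\infty)$ with normalization $\int_0^\infty u_\ep\,dx = 1$. My plan has four steps: (a) prove tightness of $\{\mu_\ep\}$ on $[0,\infty)$ together with $\la_{\ep,1}\to 0$; (b) identify every weak subsequential limit as an invariant measure for $X_t^0$ on $(0,\infty)$; (c) in the case $\La_0>0$, rule out that the mass of $\mu_\ep$ leaks to the boundary $\{0\}$; and (d) upgrade weak convergence to $C^2_{\rm loc}((0,\infty))$ convergence of densities.

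For (a), I would test the QSD identity $\int \LL_\ep V\, d\mu_\ep = -\la_{\ep,1}\int V\, d\mu_\ep$ (justified by integration by parts against $u_\ep$, with boundary terms controlled via the asymptotic $u_\ep\sim 1/x$ near $0$) against a Lyapunov function $V$ with $V(x)\to\infty$ as $x\to\infty$. The dissipativity in {\bf (H)}(1) and {\bf (H)}(4), in particular $xb/\si^2\to -\infty$ and $\limsup b<0$, produces $\LL_\ep V\le -cV+C$ uniformly in $\ep$, hence a uniform bound $\int V\,d\mu_\ep\le C$ and tightness at $\infty$. A uniform upper bound $\la_{\ep,1}\le C$ follows by testing against smooth compactly supported functions, and $\la_{\ep,1}\to 0$ is obtained either from survival estimates of $X_t^\ep$ away from $0$ (when $\La_0>0$) or, when $\La_0<0$, from the logarithmic time-scale of extinction announced in the introduction. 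For (b), pick $\phi\in C_c^\infty((0,\infty))$ and pass to the limit in the QSD identity using $\ep^2 a\to 0$ locally uniformly and $\la_{\ep,1}\to 0$; this gives $\int \LL_0\phi\, d\mu_* = 0$, so $\mu_*|_{(0,\infty)}$ is $X_t^0$-invariant. When $\La_0<0$, Proposition~\ref{prop-existence-uniqueness-sd} forces $\mu_*|_{(0,\infty)}=0$, hence $\mu_*=\de_0$ and statement (1) follows.

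The main obstacle is step (c): preventing $\mu_*(\{0\})>0$ when $\La_0>0$. The strategy is to test the QSD identity against a function singular at $0$. Taking $V_\al(x)=\chi(x)x^{-\al}$ with $\chi$ a smooth cutoff near $0$ and $\al\in(0,2\La_0/|\si'(0)|^2)$, the expansions $b(x)=b'(0)x+O(x^2)$, $\si^2(x)=|\si'(0)|^2 x^2+O(x^3)$, $a(x)=a'(0)x+O(x^2)$ give, near the origin,
\[
\LL_\ep V_\al(x) = -\al\Bigl(\La_0 - \tfrac{\al}{2}|\si'(0)|^2\Bigr)x^{-\al}\chi(x) + \tfrac{\ep^2 \al(\al+1)a'(0)}{2}\,x^{-\al-1}\chi(x) + \text{l.o.t.}
\]
The leading $\ep$-independent coefficient is strictly negative by the choice of $\al$, so on $\{x\ge \ep^\beta\}$ with some $\beta\in(0,1)$ one has $\LL_\ep V_\al\le -cV_\al+C$; the boundary-layer contribution of the singular term $\ep^2 x^{-\al-1}$ on $\{x<\ep^\beta\}$ is handled either by truncating $V_\al$ at height $\ep^{-\al\beta}$ or by splitting the integral and exploiting that $\mu_\ep$ assigns a small (though not yet estimated) mass there. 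Inserting this into the QSD identity yields $\int V_\al\, d\mu_\ep\le C$ uniformly in small $\ep$, which forbids concentration at $0$. Combined with (a)-(b) and the uniqueness of the positive $X_t^0$-stationary distribution, $\mu_\ep\to\mu_0$ weakly.

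For (d), on any compact $K\subset\subset(0,\infty)$ the principal coefficient $\ep^2 a+\si^2$ is bounded below by $\inf_K\si^2>0$, so $\LL_\ep^* u_\ep+\la_{\ep,1}u_\ep=0$ is uniformly elliptic on $K$ with smooth coefficients converging to those of $\LL_0^*$. The $L^1$ normalization, the interior Harnack inequality, and standard Schauder estimates furnish uniform $C^{2,\ga}(K)$ bounds and strict positivity of $u_\ep$ on $K$, hence precompactness in $C^2(K)$. Any $C^2_{\rm loc}$ subsequential limit solves $\LL_0^* u=0$ on $(0,\infty)$, and the already-proved weak convergence $\mu_\ep\to\mu_0$ pins down the limit as $u_0$, yielding $u_\ep\to u_0$ in $C^2_{\rm loc}((0,\infty))$.
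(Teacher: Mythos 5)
The overall architecture you outline -- tightness at $\infty$, passage to the limit in the Fokker--Planck equation, exclusion of mass leakage to $\{0\}$ when $\La_0>0$, and elliptic bootstrap to $C^2_{\rm loc}$ -- matches the paper. Step (d) is essentially identical to the paper's argument. However, steps (a) and (c) contain genuine gaps, and step (b) as stated does not quite close part (1).

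In step (a), you assert $\la_{\ep,1}\to 0$ from ``survival estimates'' (when $\La_0>0$) or from the ``logarithmic time-scale announced in the introduction'' (when $\La_0<0$). Neither is a proof. The survival route is circular, since bounding $\P^\ep_{\mu_\ep}[T_0^\ep>t]$ from below requires knowing $\mu_\ep$ does not put vanishing mass near $0$, which is precisely what you are trying to establish; and citing a headline asymptotic of the paper as an input to the paper's first theorem is not available to a blind proof. The paper proves $\la_{\ep,1}\to0$ (Theorem \ref{thm-0-super-limit-principal-eigenvalue}) by a different mechanism: tightness at $\infty$ yields a weak subsequential limit $\mu_*$ on $[0,\infty)$, and a Freidlin--Wentzell-type convergence of the perturbed semigroup gives $\E_{\mu_*}[f(X^0_t)]=e^{-\la_* t}\int f\,d\mu_*$ for all $f\in C_b$; taking $f\equiv 1$ forces $\la_*=0$. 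This works uniformly in the sign of $\La_0$ and you would need something of this sort. Also note a factual slip: you write ``$u_\ep\sim 1/x$ near $0$,'' but the QSD density $u_\ep$ is in fact bounded near $0$ (Lemma \ref{bound-u_ep-near-0}); it is the Gibbs density $u^G_\ep$ that blows up like $1/x$.

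Step (c) is where the proposal most seriously diverges from what can be made to work. Testing the QSD identity $\int\LL_\ep V\,d\mu_\ep=-\la_{\ep,1}\int V\,d\mu_\ep$ against $V_\al=\chi\,x^{-\al}$ is not a priori legitimate: the identity is only guaranteed for test functions in a suitable domain, and extending it to a function singular at $0$ requires controlling boundary terms that do \emph{not} obviously vanish (the dominant contribution scales like $\ep^2 x^{-\al}$, and Lemma \ref{bound-u_ep-near-0} alone does not kill it). You acknowledge that the singular term $\ep^2 x^{-\al-1}$ near $x\lesssim\ep^\beta$ is problematic and propose to handle it ``by exploiting that $\mu_\ep$ assigns a small (though not yet estimated) mass there'' -- which is circular -- or by truncating, which changes $\LL_\ep V_\al$ in the transition layer and is left unexamined. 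This is precisely the difficulty the paper flags when it says that Lyapunov-type techniques fail near the extinction boundary. The paper's Proposition \ref{prop-concentration-0} avoids integration entirely: it sets $v_\ep:=x^k u_\ep$, uses Lemma \ref{lem-upper-soln-x^-k} (together with $\la_{\ep,1}\to0$) to get a negative zero-order coefficient in the elliptic equation solved by $v_\ep$, uses Lemma \ref{bound-u_ep-near-0} to get $v_\ep(0+)=0$, and concludes $\sup_\ep u_\ep(x)\le Cx^{-k}$ pointwise by the maximum principle together with an $\ep$-uniform Harnack bound at an interior point. Your approach would at best give $\sup_\ep\int x^{-\al}\,d\mu_\ep<\infty$ (enough for tightness) but the road to it has unfilled holes.

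Finally, in step (b) for part (1), concluding $\mu_*|_{(0,\infty)}=0$ from $\int\LL_0\phi\,d\mu_*=0$ plus Proposition \ref{prop-existence-uniqueness-sd} is too quick: solutions of the stationary Fokker--Planck equation form a two-dimensional family, and Proposition \ref{prop-existence-uniqueness-sd} only rules out invariant \emph{probability} measures; you would need to show that no finite nonzero measure on $(0,\infty)$ solves $\LL_0^*u=0$ when $\La_0<0$, which is not immediate. The paper sidesteps this entirely by deriving, along the same lines as Theorem \ref{thm-0-super-limit-principal-eigenvalue}, the identity $\E_{\mu_*}[\phi(X_t^0)]=\int\phi\,d\mu_*$ for all $\phi\in C_b([0,\infty))$ and $t\ge 0$, and then sending $t\to\infty$ with $X_t^0\to 0$ a.s.\ to conclude $\mu_*=\de_0$ directly.
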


Given the aforementioned sharp threshold result of $X_{t}^{0}$ and the fact that $X_{t}^{\ep}$ is a small random perturbation of $X_{t}^{0}$ (or, \eqref{main-diffusion-eqn} is a small random perturbation of \eqref{eqn-diffusion-extrinsic}), the conclusions from Theorem \ref{thm-concentration} are expected and look pretty straightforward. This, however, is completely deceptive from a technical perspective, especially in the case $\Lambda_{0}>0$. Indeed, when $\Lambda_{0}>0$, it is not hard to show that any limiting measure of $\{\mu_{\ep}\}_{\ep}$ must be $\mu_{0}$ (up to multiplication by a constant), and hence, the weak convergence $\lim_{\ep\to0}\mu_{\ep}=\mu_0$ follows if $\{\mu_{\ep}\}_{\ep}$ is tight. The tightness of $\{\mu_{\ep}\}_{\ep}$ comes from studying their concentration near $0$ and $\infty$. The concentration near $\infty$ follows mainly from the dissipativity and is obtained by means of the usual technique on the basis of Lyapunov-type functions (see Proposition \ref{prop-concentration-infty}). Establishing the concentration near $0$ is however troublesome due to the following technical problems: (i) both the vector field $b$ and the noise terms $\si$ and $\ep\sqrt{a}$ vanish at $0$; such degeneracies are known to cause difficulties in the analysis and are often avoided in the literature when treating noise-vanishing problems; (ii) techniques based on Lyapunov-type functions do not apply because of the demographic noise term which causes the finite time extinction of $X_{t}^{\ep}$; otherwise, a unique non-trivial stationary distribution would exist, instead of the QSD. These issues are circumvented by a two-step approach: an $\ep$-dependent upper bound of $u_{\ep}$ is first established (see Lemma \ref{bound-u_ep-near-0}); it is followed by an argument of maximum principle type (see the proof of Proposition \ref{prop-concentration-0}). As a result, we establish in Proposition \ref{prop-concentration-0} the following concentration estimate of the densities $\{u_{\ep}\}_{\ep}$:
$$
\sup_{\ep}u_{\ep}(x)\leq \frac{C}{x^{k}},\quad \forall x\in (0,x_*)
$$
for some $k\in (0,1)$, $x_*>0$ and $C>0$. Such an upper bound is more or less inspired by the expectation $\lim_{\ep\to0}u_{\ep}=u_0$ and the behavior of $u_{0}$ near $0$, i.e., $u_{0}(x)\sim C_{0}x^{\frac{2b'(0)}{|\si'(0)|^{2}}-2}$ as $x\to0$ for some $C_{0}>0$. Note that under the assumption $\La_{0}>0$ one has $2-\frac{2b'(0)}{|\si'(0)|^{2}}<1$.


Our second result establishes asymptotic bounds for $\la_{\ep,1}$ and $\la_{\ep,2}$, the first two eigenvalues of $-\LL_{\ep}$. Throughout this paper, for positive numbers $A_{\ep}$ and $B_{\ep}$ indexed by $\ep$, we write 
$$
A_{\ep}\approx_{\ep}B_{\ep},\quad A_{\ep}\lesssim_{\ep} B_{\ep}\quad\text{and}\quad A_{\ep}\gtrsim_{\ep} B_{\ep}
$$ 
if $\lim_{\ep\to0}\frac{A_{\ep}}{B_{\ep}}=1$, $\limsup_{\ep\to0}\frac{A_{\ep}}{B_{\ep}}\leq1$ and  $\liminf_{\ep\to0}\frac{A_{\ep}}{B_{\ep}}\geq1$, respectively.

\begin{thmx}\label{thm-bounds-eigenvalues}
Assume {\bf (H)}. Then, $\lim_{\ep\to 0}\la_{\ep,1}=0$. Moreover, the following hold.
\begin{enumerate}
    \item[\rm(1)] If $\Lambda_0<0$, then there is $C>0$ such that $\la_{\ep,1}\gtrsim_{\ep} \frac{C}{|\ln \ep|}$.
    
    \item[\rm(2)] If $\Lambda_0>0$, then 
\begin{itemize}
\item for each $0<\ga\ll 1$, there holds 
$$
\ep^{(1+\ga)\frac{4b'(0)}{|\si'(0)|^2}-2}\lesssim_{\ep} \la_{\ep,1}\lesssim_{\ep} \ep^{(1-\ga)\frac{2b'(0)}{|\si'(0)|^2}-1};
$$
\item $0<\liminf_{\ep\to 0}\la_{\ep,2}\leq\limsup_{\ep\to 0}\la_{\ep,2}<\infty$.
\end{itemize}
\end{enumerate}
\end{thmx}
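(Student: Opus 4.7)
The key tool is the Rayleigh--Ritz variational characterization of the eigenvalues of the self-adjoint operator $-\LL_\ep$ on $L^{2}(u_\ep^G)$:
\begin{equation*}
\la_{\ep,1}=\inf_{\phi\neq 0}\frac{\EE_\ep(\phi)}{\|\phi\|_{L^2(u_\ep^G)}^2},\qquad \la_{\ep,2}=\inf_{\phi\neq 0,\ \phi\perp\phi_{\ep,1}}\frac{\EE_\ep(\phi)}{\|\phi\|_{L^2(u_\ep^G)}^2},
\end{equation*}
with $\EE_\ep(\phi):=\tfrac{1}{2}\int_0^\infty(\ep^2 a+\si^2)(\phi')^2 u_\ep^G\,dx$ and orthogonality taken in $L^2(u_\ep^G)$. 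The form domain forces decay at $0$ since $u_\ep^G$ fails to be integrable there. The entire proof reduces to combining well-chosen test functions with careful analysis of $\EE_\ep$ in the critical regime $x\to 0$, where all coefficients of $\LL_\ep$ degenerate while $u_\ep^G$ blows up with an $\ep$-dependent profile (roughly $1/(\ep^2 x)$ for $x\ll\ep^2$ and $x^{2b'(0)/|\si'(0)|^2-2}$ for $x\gg\ep^2$).

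For the upper bounds (the statement $\la_{\ep,1}\to 0$ in both regimes and the polynomial upper bound when $\La_0>0$), I would use a family of cut-off test functions $\phi_\ep$ that vanish on $(0,\eta_\ep)$, interpolate smoothly to $1$ across $(\eta_\ep,2\eta_\ep)$, and equal $1$ on a fixed bulk interval (with a mild tail cutoff at infinity). A direct computation shows that $\EE_\ep(\phi_\ep)$ is controlled, modulo negligible far-field contributions, by $\eta_\ep^{-2}\int_{\eta_\ep}^{2\eta_\ep}(\ep^2 a+\si^2)u_\ep^G\,dx$, which via the explicit formula for $u_\ep^G$ becomes a monomial in $\eta_\ep$ with an $\ep$-dependent prefactor. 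The denominator $\|\phi_\ep\|_{L^2(u_\ep^G)}^2$ is bounded below by the integral of $u_\ep^G$ over the bulk, which is of order one in $\ep$ when $\La_0>0$ (by locally uniform convergence $u_\ep^G\to u_0^G$ from Theorem \ref{thm-concentration}). Optimizing $\eta_\ep=\eta_\ep(\ga)$ yields the polynomial upper bound when $\La_0>0$, while a logarithmic choice of $\eta_\ep$ combined with the concentration of $u_\ep^G$ near $0$ yields $\la_{\ep,1}\to 0$ when $\La_0<0$.

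For the lower bound I would work directly with the quadratic form. Given an admissible $\phi$ with $\|\phi\|_{L^2(u_\ep^G)}=1$, split $(0,\infty)=(0,\de_\ep)\cup[\de_\ep,\infty)$ and treat the two pieces separately. On the inner piece, a Hardy-type inequality tailored to the explicit power-type singularity of $u_\ep^G$ near $0$ gives $\int_0^{\de_\ep}\phi^2 u_\ep^G\,dx\leq K_\ep(\de_\ep)\,\EE_\ep(\phi)$ plus a boundary term at $\de_\ep$, with $K_\ep(\de_\ep)$ computed directly from the weight. On the outer piece, $\phi$ lives on an $\ep$-independent half-line and a standard weighted Poincar\'e inequality yields an analogous bound $\int_{\de_\ep}^\infty\phi^2 u_\ep^G\,dx\leq K'_\ep(\de_\ep)\,\EE_\ep(\phi)$ with boundary term absorbed into $K_\ep$. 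Summing and using $\|\phi\|_{L^2(u_\ep^G)}=1$ yields $1\leq (K_\ep+K'_\ep)\,\EE_\ep(\phi)$, and optimizing $\de_\ep$ delivers the stated exponent $\ep^{(1+\ga)\cdot 4b'(0)/|\si'(0)|^2-2}$ when $\La_0>0$ and the rate $C/|\ln\ep|$ when $\La_0<0$ (the logarithm reflecting the borderline integrability of $u_\ep^G$ near $0$ in this regime).

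The main obstacle is the uniform spectral gap $\liminf_{\ep\to 0}\la_{\ep,2}>0$; the matching upper bound $\limsup\la_{\ep,2}<\infty$ follows from the min--max principle with two disjointly supported smooth trial functions on a fixed compact subset of $(0,\infty)$. For the lower bound, as suggested in the introduction, I would argue by contradiction: suppose $\la_{\ep_n,2}\to 0$ along a subsequence and let $\phi_{\ep,1},\phi_{\ep,2}$ be the normalized eigenfunctions, orthogonal in $L^2(u_\ep^G)$. Interior elliptic regularity for $\LL_\ep$ on compact subsets of $(0,\infty)$ yields locally uniform $C^{2,\al}$-bounds, so along a further subsequence $\phi_{\ep_n,i}\to\phi_i$ in $C^2_{\mathrm{loc}}((0,\infty))$. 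The vanishing Dirichlet energies $\EE_{\ep_n}(\phi_{\ep_n,i})=\la_{\ep_n,i}\to 0$ force both limits to be constant on any interval where $u_0^G$ is nondegenerate, leaving only a one-dimensional space of possible limits. The contradiction with $L^2(u_\ep^G)$-orthogonality then reduces to an $\ep$-uniform tightness statement for $\phi_{\ep,2}^2 u_\ep^G$ on compact subsets of $(0,\infty)$, which is the technical heart of the argument and would be extracted from an $H^1$-type a priori estimate on $\phi_{\ep,2}$ combined with the singularity analysis of $u_\ep^G$ near $0$ already used in the lower bound on $\la_{\ep,1}$.
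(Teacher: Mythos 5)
Your high-level plan matches the paper's for the variational lower bound on $\la_{\ep,1}$ (the Hardy-type chain $\|\phi_{\ep,1}\|^2_{L^2(u^G_\ep)}\leq\EE_\ep(\phi_{\ep,1})\cdot\II_\ep$ after truncating the tail, then estimating $\II_\ep$) and for the test-function upper bound when $\La_0>0$. Your observation that the vanishing Dirichlet energies $\EE_{\ep}(\phi_{\ep,i})=\la_{\ep,i}\to0$ immediately force the local limits to be constant is a nice shortcut: the paper instead passes through the Fokker--Planck equation $\LL_0^*u_i=0$ and a uniqueness-in-$L^1$ lemma. However, two pieces of your argument have genuine gaps.

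First, you propose to prove $\lim_{\ep\to0}\la_{\ep,1}=0$ in both regimes by test functions. When $\La_0>0$ this works. When $\La_0<0$ it does not work as sketched. With either a linear cutoff at scale $\eta_\ep$ or a logarithmic interpolation from $\eta_\ep$ up to a fixed $\de$, a scaling computation (using $e^{-2V_\ep(x)}\asymp\max(x,\ep^2)^{\ka}$ with $\ka=\tfrac{2b'(0)}{|\si'(0)|^2}<1$) shows the Rayleigh quotient is $O(1)$, not $o(1)$: the numerator and the denominator both scale like $\eta_\ep^{\ka-1}$ (or like $\ep^{2(\ka-1)}/|\ln\ep|^2$ for the log cutoff), so they cancel. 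The paper instead proves $\la_{\ep,1}\to 0$ probabilistically: using $\P^\ep_{\mu_\ep}[T_0^\ep>t]=e^{-\la_{\ep,1}t}$, tightness of $\{\mu_\ep\}$, and the Freidlin--Wentzell small-perturbation estimate for $X^\ep$ against $X^0$ to pass to a limiting identity $\E_{\mu_*}[f(X^0_t)]=e^{-\la_*t}\int f\,d\mu_*$, which forces $\la_*=0$ on taking $f\equiv1$. This is not a cosmetic difference: the probabilistic route sidesteps exactly the degeneracy that stalls the variational one.

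Second, the spectral-gap contradiction argument is outlined with the standard normalization $\|\phi_{\ep,i}\|_{L^2(u^G_\ep)}=1$, and you then try to extract $C^2_{\mathrm{loc}}$ limits on compact subsets of $(0,\infty)$. But with that normalization, the $L^2(u^G_\ep)$ mass of $\phi_{\ep,i}$ could escape to $x=0$ as $\ep\to0$ (the singularity of $u^G_\ep$ at $0$ worsens with $\ep$), making both local limits $0$ and the orthogonality contradiction vacuous. The paper avoids this by renormalizing so that $\|\tilde\phi_{\ep,i}\|_{L^1((0,2);u^G_\ep)}+\|\tilde\phi_{\ep,i}\|^2_{L^2((1,\infty);u^G_\ep)}=1$, which is precisely what keeps mass on a fixed compact set and lets Steps 2--3 of the contradiction close. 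In addition, the required tightness near $0$ (Lemma \ref{lem-compactness-2-0} in the paper) is established by a pointwise maximum-principle comparison of the Fokker--Planck density $u_{\ep,2}$ against $x^{-k}$, not by an $H^1$-type estimate as you suggest; only the tail control near $\infty$ (Lemma \ref{lem-compactness-2-infty}) goes through the Schr\"odinger quadratic form in the way you describe.
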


\begin{rem}\label{rem-eigenvalues-asymptotic}
We offer some comments regarding Theorem \ref{thm-bounds-eigenvalues}.
\begin{itemize}
    \item[\rm(1)] We first exhibit the significant effects  the environmental noise $\si(X^{\ep}_t) d{B}_t$ has by comparing \eqref{main-diffusion-eqn} with
\begin{equation}\label{main-diffusion-eqn-only-demographic}
d\tilde{X}^{\ep}_t=b(\tilde{X}^{\ep}_t)dt+\ep\sqrt{a(\tilde{X}^{\ep}_t)}dW_t\quad\text{in}\quad[0,\infty).
\end{equation}
Just like $X_{t}^{\ep}$, the diffusion process $\tilde{X}^{\ep}_t$ reaches the extinction state $0$ in finite time almost surely and admits a unique QSD with extinction rate $\tilde{\la}_{\ep,1}$ with $-\tilde{\la}_{\ep,1}$ being the first eigenvalue of the (appropriately extended) generator of \eqref{main-diffusion-eqn-only-demographic}. It is shown in \cite[Theorem A]{JQSY21} that 
$\lim_{\ep\to0}\frac{\ep^{2}}{2}\ln\tilde{\la}_{\ep,1}=-d$ for some $d>0$ that can be computed in terms of $a$ and $b$. In particular, $\tilde{\la}_{\ep,1}$ is exponentially small in $\ep$. Hence, the asymptotic of $\la_{\ep,1}$ is fundamentally different from that of $\tilde{\la}_{\ep,1}$, manifesting the significance of the environmental noise. More importantly, turning an exponentially small extinction rate into a sub-exponentially small one, greatly improves the observability of the extinction of a species, making \eqref{main-diffusion-eqn} a much better model than \eqref{main-diffusion-eqn-only-demographic}. 

The effects of the environmental noise extend to the extinction time, especially, the mean extinction time, thanks to the relationship between the extinction time $\E_{\bullet}^{\ep}[T_{0}^{\ep}]$ and the extinction rate $\la_{\ep,1}$. See \eqref{met-qsd} and Corollary \ref{c:ext}.

    \item[\rm(2)]  When $\La_{0}<0$, we believe that the lower bound of order $\frac{1}{|\ln\ep|}$ for $\la_{\ep,1}$ is sharp in the sense that there is $\tilde{C}>0$ such that $\la_{\ep,1}\lesssim_{\ep}\frac{\tilde{C}}{|\ln\ep|}$. We offer some further explanation in Remark \ref{rem-conjecture} after Corollary \ref{corx-first-eigenvalue-sharp-asymptotic}.

    \item[\rm(3)] When $\La_{0}>0$, the upper bound of $\la_{\ep,1}$ is improved to $\la_{\ep,1}\lesssim_{\ep} \ep^{(1-\ga)\frac{4b'(0)}{|\si'(0)|^2}-2}$ for each $0<\ga\ll1$ in Corollary \ref{corx-first-eigenvalue-sharp-asymptotic}. This says that the leading order of $\la_{\ep,1}$ is $\ep^{\frac{4b'(0)}{|\si'(0)|^2}-2}$. The reason why we still include this as a main result is that its proof, relying only on the classical variational formula, is elementary, while the proof of Corollary \ref{corx-first-eigenvalue-sharp-asymptotic} uses heavy machinery (see comments after Corollary \ref{corx-first-eigenvalue-sharp-asymptotic} for details). 
    
    \item[\rm(4)] When $\La_{0}>0$, the asymptotic bounds of $\la_{\ep,1}$ and $\la_{\ep,2}$ imply that $\inf_{\ep}(\la_{\ep,2}-\la_{\ep,1})>0$. As mentioned earlier (or see Lemma \ref{qsd-existence-uniqueness}), $\la_{\ep,2}-\la_{\ep,1}$ is the exponential convergence rate of $\P^{\ep}_{\mu}\left[X_{t}^{\ep}\in \bullet|t<T_{0}^{\ep}\right]$ to $\mu_{\ep}$ as $t\to\infty$ whenever the initial distribution $\mu$ is compactly supported in $(0,\infty)$. These facts tell us that the distribution of $X_{t}^{\ep}$ quickly approaches $\mu_{\ep}$, then stays close to $\mu_{\ep}$ until the time scale $\frac{1}{\la_{\ep,1}}$, after which it finally relaxes to the extinction state. The multiscale dynamics is precisely characterized in Theorem \ref{thm-multiscale}.
\end{itemize}

\end{rem}



In our third result, we characterize the multiscale dynamics of the distribution of $X_{t}^{\ep}$ in the case $\La_{0}>0$ as described in Remark \ref{rem-eigenvalues-asymptotic} (4). Denote by $\phi_{\ep,1}$ the positive eigenfunction of $-\LL_{\ep}$ associated with $\la_{\ep,1}$ subject to the normalization $\|\phi_{\ep,1}\|_{L^2(u^G_{\ep})}=1$ (see Lemma \ref{prop-spectral-structure}). Let $\PP((0,\infty))$ be the set of Borel probability measures on $(0,\infty)$. For $\mu\in\PP((0,\infty))$ and for any measurable function $f:(0,\infty)\to\R$ we write $\langle\mu,f\rangle:=\int_{0}^{\infty}f d\mu$.

\begin{thmx}\label{thm-multiscale}
Assume {\bf (H)} and $\Lambda_0>0$. For any $\KK\stst(0,\infty)$, there is $C=C(\KK)>0$ such that
\begin{equation*}
    \sup_{\substack{\mu\in \PP((0,\infty))\\\supp(\mu)\subset \KK}}\left\|\P^{\ep}_{\mu}[X^{\ep}_t\in \bullet]-\left[e^{-\la_{\ep,1}t }\langle \mu,\al_{\ep,1}\rangle \mu_{\ep}+\left(1-e^{-\la_{\ep,1}t }\langle \mu,\al_{\ep,1}\rangle\right)\de_0\right]\right\|_{TV}\leq Ce^{-\la_{\ep,2}t}
\end{equation*}
holds for all $t\geq0$ and $0<\ep\ll 1$, where $\al_{\ep,1}:=\|\phi_{\ep,1}\|_{L^1(u^G_{\ep})}\phi_{\ep,1}$ satisfies
$$
\lim_{\ep\to0}\al_{\ep,1}=1\quad\text{locally uniformly in}\,\,(0,\infty).
$$
\end{thmx}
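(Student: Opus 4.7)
The plan is to split the killed semigroup via the spectral decomposition of $\LL_\ep$, extract the principal component as the $\mu_\ep$ contribution, and control the remainder $P_t^\ep Q_2^\ep$ pointwise on compacts by conjugating to a Schrödinger operator whose potential blows up at the extinction state.

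\textbf{Step 1: Spectral reduction.} Since $-\LL_\ep$ has purely discrete spectrum in $L^2(u_\ep^G)$ with orthonormal eigenbasis $\{\phi_{\ep,k}\}$, I would write
$$
P_t^\ep \;=\; e^{-\la_{\ep,1}t}\langle\,\cdot\,,\phi_{\ep,1}\rangle_{u_\ep^G}\phi_{\ep,1} \;+\; P_t^\ep Q_2^\ep,
$$
where $Q_2^\ep$ is the orthogonal projection onto $\{\phi_{\ep,1}\}^\perp$ and $\|P_t^\ep Q_2^\ep\|_{L^2(u_\ep^G)}\leq e^{-\la_{\ep,2}t}$ by functional calculus. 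For $\mu$ with $\supp\mu\subset\KK\stst(0,\infty)$ and Borel $B\subset(0,\infty)$, $\mu_t(B)=\int P_t^\ep\mathbbm{1}_B\,d\mu$, while $\mu_t(\{0\})=1-\int P_t^\ep\mathbf{1}\,d\mu$ (with $\mathbf{1}$ handled by monotone truncation $\mathbf{1}_N\to\mathbf{1}$ to stay in $L^2(u_\ep^G)$). Using the QSD identity $u_\ep=\phi_{\ep,1}u_\ep^G/\|\phi_{\ep,1}\|_{L^1(u_\ep^G)}$, the principal terms reassemble to give exactly $e^{-\la_{\ep,1}t}\langle\mu,\al_{\ep,1}\rangle\mu_\ep+(1-e^{-\la_{\ep,1}t}\langle\mu,\al_{\ep,1}\rangle)\de_0$. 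After cancellations the TV distance in the theorem collapses to
$$
\sup_{\|f\|_\infty\leq 1}\Big|\!\int\! P_t^\ep Q_2^\ep\bigl(f-f(0)\bigr)\,d\mu\Big|,
$$
so the theorem reduces to the $\ep$-uniform pointwise estimate $|P_t^\ep Q_2^\ep g(x)|\leq C(\KK)\,e^{-\la_{\ep,2}t}\|g\|_\infty$ for $x\in\KK$ and $g\in L^\infty((0,\infty))$.

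\textbf{Step 2 (the main obstacle): Schrödinger conjugation and smoothing.} The $L^2(u_\ep^G)$ bound alone is not enough because $u_\ep^G$ is singular and non-integrable near $0$, so no $\ep$-uniform Sobolev embedding is available in the original coordinates. My plan is to perform the Liouville change of variable $y=\int_1^x ds/\sqrt{(\ep^2 a+\si^2)(s)}$ together with the unitary $U_\ep\phi(y)=[(\ep^2 a+\si^2)u_\ep^G]^{1/2}(x(y))\,\phi(x(y))$, which realizes a unitary equivalence $U_\ep\LL_\ep U_\ep^{-1}=\tfrac12\partial_y^2-V_\ep(y)$ on $L^2(dy)$. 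A direct computation shows $V_\ep$ blows up at the left endpoint (the image of $x=0^+$), owing to the $1/x$-singularity of $u_\ep^G$ and the first-order vanishing of $\ep^2 a+\si^2$ there, and the blow-up is $\ep$-uniform on the relevant scale. This confining feature yields an $\ep$-uniform Hardy / form-domain inequality, which in turn delivers a smoothing estimate $\|e^{-sH_\ep}\|_{L^2(dy)\to L^\infty(I)}\leq C(s,I)$ on compact $y$-intervals $I$. Splitting $P_t^\ep Q_2^\ep=(P_{t/2}^\ep Q_2^\ep)(P_{t/2}^\ep Q_2^\ep)$, applying $L^2$-decay to the inner factor and smoothing to the outer one, and transferring back through $U_\ep$ (whose weight is $\ep$-uniformly controlled on $\KK$ by the locally uniform convergence $u_\ep^G\to u_0^G$), yields the pointwise estimate with the sharp rate $e^{-\la_{\ep,2}t}$. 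I expect this step to absorb the bulk of the technical effort, since the $\ep$-dependence of both the diffusion coefficient and the potential near the singular endpoint forces careful scale-aware estimates.

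\textbf{Step 3: Locally uniform convergence $\al_{\ep,1}\to 1$.} Combining $\|\phi_{\ep,1}\|_{L^2(u_\ep^G)}=1$ with $u_\ep=\phi_{\ep,1}u_\ep^G/\|\phi_{\ep,1}\|_{L^1(u_\ep^G)}$ gives the identity
$$
\al_{\ep,1}(x)\;=\;\frac{u_\ep(x)/u_\ep^G(x)}{\int_0^\infty u_\ep^2/u_\ep^G\,dy}.
$$
Theorem~\ref{thm-concentration} provides $u_\ep\to u_0$ in $C^2_{\mathrm{loc}}((0,\infty))$, the explicit formula for the Gibbs density provides $u_\ep^G\to u_0^G$ in $C^2_{\mathrm{loc}}((0,\infty))$, and the concentration bound $u_\ep(x)\leq C/x^k$ with some $k<1$ near $0$ (from Section~\ref{s:tight}) together with the dissipative tail bound near $\infty$ justify dominated convergence in the integral, giving $\int u_\ep^2/u_\ep^G\to \int u_0^2/u_0^G=1/\|u_0^G\|_{L^1}$. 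Since $u_0/u_0^G=1/\|u_0^G\|_{L^1}$, the ratio above tends to $1$ locally uniformly on $(0,\infty)$, completing the proof.
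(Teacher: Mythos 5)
Steps 1 and 3 of your plan match the paper's approach: spectral decomposition of the killed semigroup with the principal part reassembling into $e^{-\la_{\ep,1}t}\langle\mu,\al_{\ep,1}\rangle\mu_\ep+(1-e^{-\la_{\ep,1}t}\langle\mu,\al_{\ep,1}\rangle)\de_0$, and the algebraic identity $\al_{\ep,1}=\frac{u_\ep/u_\ep^G}{\int u_\ep^2/u_\ep^G\,dx}$ together with $u_\ep\to u_0$ in $C^2_{\mathrm{loc}}$ and uniform tail control are indeed what drive $\al_{\ep,1}\to 1$ (the paper organizes this through the eigenfunction $\tilde\phi_{\ep,1}$ and its separately-proved tail lemmas, but the substance is the same). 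The Schr\"odinger conjugation is also exactly the paper's device.

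There is, however, a genuine gap in your Step 2. You propose to split $P_t^\ep Q_2^\ep=(P_{t/2}^\ep Q_2^\ep)(P_{t/2}^\ep Q_2^\ep)$ and apply ``$L^2$-decay to the inner factor and smoothing to the outer one.'' But the input is $g=f-f(0)$ with $f\in L^\infty$ ranging over indicator functions of Borel sets, and such $g$ is \emph{not} in $L^2(u_\ep^G)$: $u_\ep^G(x)\sim x^{-1}$ near $0$ is non-integrable there, and for a generic bounded $g$ there is no decay at $0$ to compensate. Equivalently, in the Schr\"odinger coordinates $\tilde g:=\tilde U_\ep U_\ep g\notin L^2(dy)$. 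Your chain therefore cannot even start; no Hardy-type or form-domain inequality helps, because those govern the $L^2\to L^\infty$ direction (or its dual $L^1\to L^2$), whereas what is missing is a way to move from $L^\infty$ into some $L^p(dy)$ uniformly in $\ep$. The monotone truncation you mention applies only to the fixed function $\mathbf 1$ and does not yield a bound uniform over the family of indicator functions entering the total-variation supremum, nor is it uniform in $\ep$.

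The paper's Lemma \ref{lem-semigroup-property} supplies precisely the missing ingredient. Its Step 3 shows there exists $p_*\in(1,2)$ with
$$
\sup_\ep\|\tilde U_\ep U_\ep f\|_{L^{p_*}(dy)}\;\leq\;D_2\|f\|_\infty,\qquad \|\tilde U_\ep U_\ep f\|_{L^{p_*}}^{p_*}\leq\|f\|_\infty^{p_*}\int_0^\infty\frac{e^{-p_*V_\ep}}{\al_\ep^{p_*/4+1/2}}\,dx,
$$
and the finiteness of that integral, uniformly in $\ep$, is established by splitting at a fixed small $x_1$ and exploiting the asymptotics near $0$; the condition $\La_0>0$ enters decisively here to make $\ka:=\frac{2(1-\de)b'(0)}{(1+\de)|\si'(0)|^2}>1$ and then to choose $p_*\in\bigl(\max\{1,(\ka-\tfrac12)^{-1}\},2\bigr)$. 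The full chain is then $L^\infty\xrightarrow{\text{weight}}L^{p_*}\xrightarrow{\tilde P_1^\ep}L^2\xrightarrow{\text{spectral decay}}L^2\xrightarrow{\tilde P_1^\ep}L^\infty$, with the two endpoint factors $\tilde P_1^\ep$ providing the smoothing via an analytic-semigroup resolvent bound (independent of $\ep$ since $W_\ep+M\geq1$ uniformly) plus the one-dimensional Sobolev embedding, and duality. So your use of the weight solely to ``transfer back through $U_\ep$ on $\KK$'' is the wrong place to spend the weight's good properties; the weight must be used on the input side to land in $L^{p_*}$, and that is where the standing hypothesis $\La_0>0$ is consumed. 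Without that step the plan cannot produce the claimed $\ep$-uniform pointwise bound.
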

Built on the eigenfunction expansion of the Markov semigroup associated with $X_{t}^{\ep}$ before hitting $0$, Theorem \ref{thm-multiscale} establishes a sharp estimate quantifying the total variation distance between the distribution of $X_{t}^{\ep}$ and the convex combination of the QSD $\mu_{\ep}$ and the extinction state $\de_{0}$. The locally uniform limit $\lim_{\ep\to0}\al_{\ep,1}=1$ and the fact that the constant $C$ is independent of $\ep$ are what make this estimate powerful. Together with the asymptotic bounds of $\la_{\ep,1}$ and $\la_{\ep,2}$ in Theorem \ref{thm-bounds-eigenvalues} (2), Theorem \ref{thm-multiscale} has the following important dynamical implications: 
\begin{itemize}
    \item if $t_{\ep}^{(1)}<t_{\ep}^{(2)}$ are such that $\lim_{\ep\to0}t_{\ep}^{(1)}=\infty$ and $\lim_{\ep\to0}\la_{\ep,1}t_{\ep}^{(2)}=0$, then
    $$
    \lim_{\ep\to0}\sup_{t\in\left[t_{\ep}^{(1)},t_{\ep}^{(2)}\right]}\sup_{\substack{\mu\in \PP((0,\infty))\\\supp(\mu)\subset \KK}}\left\|\P^{\ep}_{\mu}[X^{\ep}_t\in \bullet]-\mu_{\ep}\right\|_{TV}=0;
    $$
    
    \item if $t_{\ep}^{(3)}$ is such that $\lim_{\ep\to0}\la_{\ep,1}t_{\ep}^{(3)}=\infty$, then
        $$
    \lim_{\ep\to0}\sup_{t\in\left[t_{\ep}^{(3)},\infty\right)}\sup_{\substack{\mu\in \PP((0,\infty))\\\supp(\mu)\subset \KK}}\left\|\P^{\ep}_{\mu}[X^{\ep}_t\in \bullet]-\de_{0}\right\|_{TV}=0.
    $$
\end{itemize}


Theorems \ref{thm-bounds-eigenvalues} (2) and \ref{thm-multiscale} have as immediate consequences the expected but far-reaching asymptotic reciprocal relationship between the extinction time $T_{0}^{\ep}$ and the extinction rate $\la_{\ep,1}$, and the asymptotic distribution of the normalized extinction time $\frac{T^{\ep}_0}{\E_{\bullet}^{\ep}[T^{\ep}_0]}$. 

\begin{corx} \label{c:ext}
Assume {\bf (H)} and $\Lambda_0>0$. For each $\mu\in \PP((0,\infty))$ having compact support in $(0,\infty)$, there exists $C=C(\mu)>0$ such that
    $$
    \left|\P^{\ep}_{\mu}[T^{\ep}_0>t]-e^{-\la_{\ep,1}t}\langle\mu,\al_{\ep,1}\rangle\right|\leq Ce^{-\la_{\ep,2}t},\quad\forall t>0\andd0<\ep\ll1,
    $$
or equivalently,
    $$
    \left|\P^{\ep}_{\mu}[\la_{\ep,1}T^{\ep}_0>t]-e^{-t}\langle\mu,\al_{\ep,1}\rangle\right|\leq Ce^{-\frac{\la_{\ep,2}}{\la_{\ep,1}}t},\quad\forall t>0\andd0<\ep\ll1.
    $$
In particular, 
\begin{itemize}
    \item $\lim_{\ep\to0}\P^{\ep}_{\mu}[\la_{\ep,1}T^{\ep}_0>t]=e^{-t}$ locally uniformly in $t\in(0,\infty)$;
    
    \item $\E^{\ep}_{\mu}[T^{\ep}_0]\approx_{\ep}\frac{1}{\la_{\ep,1}}$;
    
    \item $\lim_{\ep\to0}\P^{\ep}_{\mu}\left[\frac{T^{\ep}_0}{\E_{\mu}^{\ep}[T^{\ep}_0]}>t\right]=e^{-t}$ locally uniformly in $t\in(0,\infty)$.
\end{itemize}
\end{corx}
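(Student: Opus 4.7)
The plan is to extract everything from the multiscale estimate in Theorem \ref{thm-multiscale} by testing it against the set $(0,\infty)$. First I would observe that $\mu_{\ep}((0,\infty))=1$ since $\mu_\ep$ is a probability measure on $(0,\infty)$, while $\de_0((0,\infty))=0$, so for the measure $e^{-\la_{\ep,1}t}\langle\mu,\al_{\ep,1}\rangle\mu_\ep+(1-e^{-\la_{\ep,1}t}\langle\mu,\al_{\ep,1}\rangle)\de_0$ evaluated on $(0,\infty)$ we get exactly $e^{-\la_{\ep,1}t}\langle\mu,\al_{\ep,1}\rangle$. Since $\{T_0^\ep>t\}=\{X_t^\ep\in(0,\infty)\}$ on the event that $0$ is absorbing (which holds under {\bf (H)}), the total variation bound in Theorem \ref{thm-multiscale} applied with any $\KK\stst(0,\infty)$ containing $\supp(\mu)$ gives exactly the first displayed inequality with $C=C(\mu):=C(\KK)$. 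The equivalent rescaled form follows immediately by substituting $t\mapsto t/\la_{\ep,1}$.

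Next I would derive the three bulleted consequences. For the limit $\lim_{\ep\to0}\P^{\ep}_{\mu}[\la_{\ep,1}T^{\ep}_0>t]=e^{-t}$ locally uniformly, I use the rescaled inequality together with two facts from Theorem \ref{thm-bounds-eigenvalues}(2) and Theorem \ref{thm-multiscale}: first, since $\la_{\ep,1}\to 0$ and $\liminf_{\ep\to0}\la_{\ep,2}>0$, we have $\la_{\ep,2}/\la_{\ep,1}\to\infty$, making $Ce^{-(\la_{\ep,2}/\la_{\ep,1})t}\to 0$ uniformly on compact subsets of $(0,\infty)$; second, since $\al_{\ep,1}\to 1$ locally uniformly and $\mu$ has compact support in $(0,\infty)$, we get $\langle\mu,\al_{\ep,1}\rangle\to\langle\mu,1\rangle=1$ by dominated convergence. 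Combining both yields the claim.

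For the asymptotic reciprocal $\E^{\ep}_{\mu}[T^{\ep}_0]\approx_{\ep}1/\la_{\ep,1}$, I would integrate the first inequality in $t$ over $(0,\infty)$, using the identity
\begin{equation*}
\E^{\ep}_{\mu}[T^{\ep}_0]=\int_{0}^{\infty}\P^{\ep}_{\mu}[T^{\ep}_0>t]\,dt,
\end{equation*}
which gives
\begin{equation*}
\left|\la_{\ep,1}\E^{\ep}_{\mu}[T^{\ep}_0]-\langle\mu,\al_{\ep,1}\rangle\right|\leq \frac{C\la_{\ep,1}}{\la_{\ep,2}}.
\end{equation*}
Letting $\ep\to 0$ and again using $\la_{\ep,1}/\la_{\ep,2}\to0$ and $\langle\mu,\al_{\ep,1}\rangle\to 1$ yields $\la_{\ep,1}\E^{\ep}_{\mu}[T^{\ep}_0]\to 1$. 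The third bullet is then obtained by writing $\P^{\ep}_{\mu}[T_0^\ep/\E^\ep_\mu[T_0^\ep]>t]=\P^{\ep}_{\mu}[T_0^\ep>t\E^\ep_\mu[T_0^\ep]]$, plugging the argument $t\E^\ep_\mu[T_0^\ep]$ into the first inequality, and noting $\la_{\ep,1}t\E^\ep_\mu[T_0^\ep]\to t$ and $\la_{\ep,2}t\E^\ep_\mu[T_0^\ep]\to\infty$ locally uniformly in $t>0$.

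There is no real obstacle here since the deep work sits in Theorem \ref{thm-multiscale} and in the spectral gap from Theorem \ref{thm-bounds-eigenvalues}(2); the only delicate points are the passage $\langle\mu,\al_{\ep,1}\rangle\to 1$ (handled via compact support of $\mu$ and locally uniform convergence $\al_{\ep,1}\to 1$) and the legitimacy of integrating the pointwise bound in $t$, which is immediate from the exponential decay of both sides.
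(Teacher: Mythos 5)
Your proposal is correct and follows essentially the same route as the paper's proof: apply the multiscale estimate of Theorem \ref{thm-multiscale} to the Borel set $(0,\infty)$ (using $\mu_{\ep}((0,\infty))=1$, $\de_{0}((0,\infty))=0$, and $\{T_{0}^{\ep}>t\}=\{X_{t}^{\ep}\in(0,\infty)\}$), rescale $t\mapsto t/\la_{\ep,1}$, integrate over $t\in(0,\infty)$, and finish using $\la_{\ep,1}/\la_{\ep,2}\to 0$ and $\langle\mu,\al_{\ep,1}\rangle\to 1$. You spell out a few details the paper leaves implicit, such as the dominated-convergence step for $\langle\mu,\al_{\ep,1}\rangle\to 1$ and the substitution $t\mapsto t\,\E_{\mu}^{\ep}[T_{0}^{\ep}]$ for the third bullet, but the argument is identical in substance.
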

\begin{proof}
Let $\mu$ be as in the statement. Since $\P^{\ep}_{\mu}[X^{\ep}_t\in (0,\infty)]=\P^{\ep}_{\mu}[T^{\ep}_0>t]$, $\mu_{\ep}((0,\infty))=1$ and $\de_{0}((0,\infty))=0$, we apply Theorem \ref{thm-multiscale} and the definition of the total variation distance to find some $C=C(\mu)>0$ such that $\left|\P^{\ep}_{\mu}[T^{\ep}_0>t]-e^{-\la_{\ep,1}t}\langle\mu,\al_{\ep,1}\rangle\right|\leq Ce^{-\la_{\ep,2}t}$ for all $t>0$. Replacing $t$ by $\frac{t}{\la_{\ep,1}}$ leads to
$$
\left|\P^{\ep}_{\mu}[\la_{\ep,1}T^{\ep}_0>t]-e^{-t}\langle\mu,\al_{\ep,1}\rangle\right|\leq Ce^{-\frac{\la_{\ep,2}}{\la_{\ep,1}}t},\quad\forall t>0.
$$
In particular, $\lim_{\ep\to0}\P^{\ep}_{\mu}[\la_{\ep,1}T^{\ep}_0>t]=e^{-t}$ locally uniformly in $t\in(0,\infty)$.


Integrating the above inequality with respect to $t$ over $(0,\infty)$ yields 
$$
\left|\la_{\ep,1}\E^{\ep}_{\mu}[T^{\ep}_0]-\langle \mu, \al_{\ep,1}\rangle\right|\leq C\frac{\la_{\ep,1}}{\la_{\ep,2}}. 
$$
This together with Theorem \ref{thm-bounds-eigenvalues} (2) and $\lim_{\ep\to 0}\langle \mu, \al_{\ep,1}\rangle=1$ (by Theorem \ref{thm-multiscale}) implies that $\lim_{\ep\to 0}\la_{\ep,1}\E^{\ep}_\mu [T^{\ep}_0]=1$. The remaining result follows immediately.
\end{proof}

Corollary \ref{c:ext} says in particular that the normalized extinction time $\frac{T^{\ep}_0}{\E_{\mu}^{\ep}[T^{\ep}_0]}$ weakly converges to an exponential random variable of mean $1$ as $\ep\to0$. The asymptotic reciprocal relationship $\E^{\ep}_{\mu}[T^{\ep}_0]\approx_{\ep}\frac{1}{\la_{\ep,1}}$ is a fundamental principle connecting the asymptotics of $\E_{\mu}^{\ep}[T_{0}^{\ep}]$ and $\la_{\ep,1}$ -- this allows using information about one of these quantities to analyze the other one. In particular, (non-sharp) asymptotic bounds of the mean extinction time $\E_{\bullet}^{\ep}[T_{0}^{\ep}]$ in the case $\La_{0}>0$ can be obtained from the asymptotic bounds of $\la_{\ep,1}$ in Theorem \ref{thm-bounds-eigenvalues} (2). However, we wanted to improve this and get sharp bounds.

Our last result is devoted to the investigation of the sharp asymptotic bounds of the mean extinction time $\E_{\bullet}^{\ep}[T_0^\ep]$.

\begin{thmx}\label{thm-asymptotic-mean-extinction}
Assume {\bf(H)}. The following hold for each $\mu\in \PP((0,\infty))$ having compact support in $(0,\infty)$.
\begin{enumerate}
   \item If $\La_0<0$, then there exist $C_1,C_2>0$ such that
   $$
  C_1|\ln \ep|\lesssim_{\ep}  \E^{\ep}_{\mu}[T^{\ep}_{0}]\lesssim_{\ep} C_2|\ln \ep|. 
   $$
   
   \item If $\La_0>0$, then for each $0<\ga\ll 1$, 
   $$
   \ep^{2-(1-\ga)\frac{4b'(0)}{|\si'(0)|^2}}\lesssim_{\ep}\E^{\ep}_{\mu}[T^{\ep}_0]\lesssim_{\ep} \ep^{2-(1+\ga)\frac{4b'(0)}{|\si'(0)|^2}}.
   $$
\end{enumerate}
\end{thmx}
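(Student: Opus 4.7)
The strategy is to identify the mean extinction time as the solution of a Poisson equation and then analyse the resulting explicit integral representation. Specifically, $v^{\ep}(x):=\E^{\ep}_{x}[T^{\ep}_{0}]$ is the unique bounded solution of
\[
\LL_{\ep}v=-1\quad\text{on}\,\,(0,\infty),\qquad v(0)=0,
\]
with uniqueness and boundedness provided by the dissipativity in {\bf(H)}(4) together with a Dynkin-formula verification argument applied to $v^{\ep}(X^{\ep}_{t\wedge T^{\ep}_{0}})$ as $t\to\infty$. Two successive integrations using the integrating factor $\Phi_{\ep}(y):=\exp\bigl(\int_{1}^{y}\tfrac{2b(s)}{\ep^{2}a(s)+\si^{2}(s)}ds\bigr)$ yield the closed form
\[
v^{\ep}(x)=\int_{0}^{x}\frac{1}{\Phi_{\ep}(y)}\int_{y}^{\infty}\frac{2\Phi_{\ep}(z)}{\ep^{2}a(z)+\si^{2}(z)}\,dz\,dy.
\]
Since $\mu$ has compact support in $(0,\infty)$ and $v^{\ep}$ is continuous there, bounds on $\E^{\ep}_{\mu}[T^{\ep}_{0}]=\int v^{\ep}d\mu$ will follow from bounds on $v^{\ep}(x)$ that are uniform for $x$ in a compact subset of $(0,\infty)$.

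The core of the argument is a careful asymptotic analysis of $\Phi_{\ep}$ near $s=0$. Writing $\ka:=\tfrac{2b'(0)}{|\si'(0)|^{2}}$ (so $\ka>1$ iff $\La_{0}>0$), the leading-order expansion
\[
\frac{2b(s)}{\ep^{2}a(s)+\si^{2}(s)}=\frac{2b'(0)}{\ep^{2}a'(0)+|\si'(0)|^{2}s}+O(1)\quad\text{as}\,\,s\to 0^{+}
\]
integrates to give, on some small interval $(0,s_{0})$, two-sided bounds of the form $C_{1}(\ep^{2}+s)^{\ka}\leq\Phi_{\ep}(s)\leq C_{2}(\ep^{2}+s)^{\ka}$ with constants independent of $\ep$. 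An analogous expansion produces $\Phi_{\ep}(z)/(\ep^{2}a(z)+\si^{2}(z))\asymp(\ep^{2}+z)^{\ka-1}/z$ near $0$, while for $z$ bounded away from $0$ the dissipativity hypotheses in {\bf(H)}(4) force $\Phi_{\ep}/\si^{2}$ to converge (locally uniformly and dominated at $\infty$) to the Gibbs density $u_{0}^{G}$, making the inner integral uniformly convergent in $\ep$.

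Feeding these estimates into the double integral reduces everything to two case-specific computations. For part (2), when $\La_{0}>0$ and hence $\ka>1$, split the outer integral at $y=\ep^{2}$: on $(\ep^{2},x)$ the inner integral tends to a positive $\ep$-independent constant (essentially $\|u_{0}^{G}\|_{L^{1}}$), and
\[
\int_{\ep^{2}}^{x}\frac{dy}{\Phi_{\ep}(y)}\asymp\int_{\ep^{2}}^{x}y^{-\ka}\,dy\asymp\ep^{2-2\ka}=\ep^{2-\tfrac{4b'(0)}{|\si'(0)|^{2}}},
\]
while the contribution from $(0,\ep^{2})$ is $O(1)$ and hence subdominant; the $\ga$-slack in the statement of Theorem \ref{thm-asymptotic-mean-extinction} comfortably absorbs the multiplicative constants in $\asymp$ and the subdominant pieces. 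For part (1), when $\La_{0}<0$ and hence $\ka<1$, the outer integral is $O(1)$ on its own and the growth must come from the inner piece: a direct computation shows that on the intermediate range $(\ep^{2},s_{0})$ the inner integral $I_{\ep}(y)$ satisfies $I_{\ep}(y)/\Phi_{\ep}(y)\asymp 1/y$, so $\int_{\ep^{2}}^{x}dy/y\asymp|\ln\ep|$ delivers the two-sided $|\ln\ep|$-bounds, with the end regions $y<\ep^{2}$ and $y$ close to $x$ contributing only $O(1)$ and thus influencing only the multiplicative constants $C_{1}$ and $C_{2}$.

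The main technical difficulty lies in the crossover region $y,z\sim\ep^{2}$, where the asymptotic replacements $b(s)\approx b'(0)s$, $\si^{2}(s)\approx|\si'(0)|^{2}s^{2}$, $a(s)\approx a'(0)s$ carry $O(s)$ errors which, after being exponentiated inside $\Phi_{\ep}$, must be kept under uniform control simultaneously in the sub-$\ep^{2}$ and super-$\ep^{2}$ regimes; tracking these errors is what forces the bounds on $\Phi_{\ep}$ to be inherently two-sided rather than a sharp asymptotic equivalence. A secondary delicate point is the Dynkin verification: showing $\lim_{t\to\infty}\E^{\ep}_{x}\bigl[v^{\ep}(X^{\ep}_{t\wedge T^{\ep}_{0}})\bigr]=0$ requires $\lim_{x\to\infty}xb(x)/\si^{2}(x)=-\infty$ from {\bf(H)}(4) in order to force $\Phi_{\ep}$ to decay rapidly enough at infinity for $v^{\ep}$ to be bounded there. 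Since nothing in this argument draws on the spectral structure of $\LL_{\ep}$, the same method covers both regimes, which is precisely why it extends uniformly from the supercritical case $\La_{0}>0$ to the subcritical case $\La_{0}<0$.
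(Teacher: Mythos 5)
Your proposal is correct in spirit and represents a genuinely different organization of the argument from the paper's, though both rest on the same scale-function/speed-density asymptotics near the origin. The paper never invokes the global closed-form formula you write down. Instead it fixes a small interval $(0,\beta)$ and an interior point $x_*$, derives explicit formulas for the mean exit time $\E^{\ep}_{x_*}[\tau^{\ep}]$ from $(0,\beta)$ and the exit probability $\P^{\ep}_{x_*}[X^{\ep}_{\tau^{\ep}}=\beta]$ in terms of the quantities $s_{\ep}$ and $r_{\ep}$, separately establishes uniform-in-$\ep$ bounds on the return time $\E^{\ep}_{x}[\tau^{\ep}_{x_*}]$ via a Lyapunov function at infinity, and then assembles $\E^{\ep}_{x_*}[T^{\ep}_0]$ as a geometric sum over excursions via the strong Markov property. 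Your route collapses all of this into a single double-integral representation $v^{\ep}(x)=\int_0^x\Phi_{\ep}(y)^{-1}\int_y^\infty 2\Phi_{\ep}(z)\al_{\ep}(z)^{-1}\,dz\,dy$ and analyzes it directly. What you gain is brevity and a unified treatment of the two regimes in a single integral; what you pay for is that the Dynkin/optional-stopping verification that $v^{\ep}=\E^{\ep}_{\bullet}[T^{\ep}_0]$ (including showing the local martingale is a true martingale and that the double integral converges at infinity) is now a necessary preliminary step, whereas the paper's excursion decomposition only ever invokes such identities on bounded intervals where they are classical, pushing the behavior at infinity into the cleanly isolated Lemma \ref{lem-mean-hitting-time-x*}.

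One factual slip to correct: in the case $\La_0>0$ (so $\ka:=\tfrac{2b'(0)}{|\si'(0)|^2}>1$), the contribution to the outer integral from $(0,\ep^2)$ is \emph{not} $O(1)$. On that range $\Phi_{\ep}(y)^{-1}\asymp\ep^{-2\ka}$ while the inner integral $\int_y^\infty 2\Phi_{\ep}/\al_{\ep}\,dz$ is bounded below by $\int_{\ep^2}^\infty 2\Phi_{\ep}/\al_{\ep}\,dz\asymp 1$; integrating over $y\in(0,\ep^2)$ then gives a contribution of order $\ep^{2}\cdot\ep^{-2\ka}=\ep^{2-2\ka}$, which is the \emph{same} order as the main term, not subdominant. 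This does not harm the final two-sided bound $v^{\ep}(x)\asymp\ep^{2-2\ka}$, since both pieces contribute the same power of $\ep$ and the $\ga$-slack absorbs the resulting multiplicative constants just as it would anyway, but the stated justification should read ``of the same order and therefore harmless'' rather than ``$O(1)$.'' In the complementary case $\La_0<0$ your claim that $(0,\ep^2)$ contributes only $O(1)$ is correct, since there the main term grows only like $|\ln\ep|$ and the boundary-layer contribution $\ep^{2}\cdot\ep^{-2\ka}=\ep^{2-2\ka}\to 0$ (as $\ka<1$) is indeed negligible.
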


Theorem \ref{thm-asymptotic-mean-extinction} is established by adopting a probabilistic approach focusing on analyzing the behaviours of $X_{t}^{\ep}$ near $0$. It is independent of Theorems \ref{thm-concentration}-\ref{thm-multiscale}. 

As an immediate consequence of Corollary \ref{c:ext} and Theorem \ref{thm-asymptotic-mean-extinction} (2), we get the following sharp asymptotic of $\la_{\ep,1}$ when $\La_{0}>0$, improving the one given in Theorem \ref{thm-bounds-eigenvalues} (2).

\begin{corx}\label{corx-first-eigenvalue-sharp-asymptotic}
Assume {\bf(H)} and $\La_{0}>0$. Then, for each $0<\ga\ll1$,
$$
\ep^{(1+\ga)\frac{4b'(0)}{|\si'(0)|^2}-2}\lesssim_{\ep}\la_{\ep,1}\lesssim_{\ep}\ep^{(1-\ga)\frac{4b'(0)}{|\si'(0)|^2}-2}.
$$
\end{corx}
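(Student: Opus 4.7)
The plan is to derive Corollary \ref{corx-first-eigenvalue-sharp-asymptotic} as a direct algebraic consequence of two already-established facts: the asymptotic reciprocal relationship $\E^{\ep}_{\mu}[T^{\ep}_0]\approx_{\ep}\frac{1}{\la_{\ep,1}}$ from Corollary \ref{c:ext}, and the sharp two-sided asymptotic bounds on the mean extinction time from Theorem \ref{thm-asymptotic-mean-extinction} (2). No new dynamical or spectral analysis is required at this stage; the work has already been done upstream.

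Concretely, I would fix any $\mu\in\PP((0,\infty))$ whose support is compact in $(0,\infty)$ (for example $\mu=\de_{x}$ for some $x>0$) and invoke Corollary \ref{c:ext} to get $\lim_{\ep\to0}\la_{\ep,1}\E_{\mu}^{\ep}[T_{0}^{\ep}]=1$, that is,
\[
\la_{\ep,1}\approx_{\ep}\frac{1}{\E_{\mu}^{\ep}[T_{0}^{\ep}]}.
\]
Then I would translate the bounds from Theorem \ref{thm-asymptotic-mean-extinction} (2) into reciprocal bounds using the elementary fact that for positive $A_{\ep}$ and $B_{\ep}$, $A_{\ep}\lesssim_{\ep}B_{\ep}$ is equivalent to $\frac{1}{A_{\ep}}\gtrsim_{\ep}\frac{1}{B_{\ep}}$, and similarly for $\gtrsim_{\ep}$. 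Applied to
\[
\ep^{2-(1-\ga)\frac{4b'(0)}{|\si'(0)|^2}}\lesssim_{\ep}\E^{\ep}_{\mu}[T^{\ep}_0]\lesssim_{\ep} \ep^{2-(1+\ga)\frac{4b'(0)}{|\si'(0)|^2}},
\]
this yields
\[
\ep^{(1+\ga)\frac{4b'(0)}{|\si'(0)|^2}-2}\lesssim_{\ep}\frac{1}{\E_{\mu}^{\ep}[T_{0}^{\ep}]}\lesssim_{\ep}\ep^{(1-\ga)\frac{4b'(0)}{|\si'(0)|^2}-2}.
\]
Finally, substituting the reciprocal relationship gives the advertised bound on $\la_{\ep,1}$.

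There is genuinely no obstacle at the level of this corollary: once Corollary \ref{c:ext} and Theorem \ref{thm-asymptotic-mean-extinction} (2) are in hand, the derivation is purely an exercise in manipulating the $\lesssim_{\ep}$/$\gtrsim_{\ep}$ notation. The only small subtlety worth recording is that the chosen $\mu$ must have compact support inside $(0,\infty)$ so that Corollary \ref{c:ext} applies; the resulting bounds on $\la_{\ep,1}$ of course do not depend on the choice of $\mu$. All of the real difficulty — the multiscale estimate, the uniform spectral gap, and the probabilistic analysis of the extinction time near the origin — is concentrated in the results being cited, which is why the authors explicitly frame this statement as a corollary rather than an independent theorem.
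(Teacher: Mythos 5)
Your proposal is correct and is exactly the route the paper takes: the authors present this corollary as an immediate consequence of the reciprocal relationship $\E^{\ep}_{\mu}[T^{\ep}_0]\approx_{\ep}\frac{1}{\la_{\ep,1}}$ from Corollary \ref{c:ext} combined with the two-sided bounds of Theorem \ref{thm-asymptotic-mean-extinction}(2), and the inversion of the $\lesssim_{\ep}$/$\gtrsim_{\ep}$ relations is exactly the intended elementary step. Your remark about choosing $\mu$ with compact support in $(0,\infty)$ so that Corollary \ref{c:ext} applies is the right (and only) point of care.
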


\begin{rem}\label{rem-conjecture}
When $\La_{0}<0$, we are unable to establish the relationship $\E_{\mu}^{\ep}[T_{0}^{\ep}]\approx_{\ep}\frac{1}{\la_{\ep,1}}$ for $\mu\in \PP((0,\infty))$ having compact support in $(0,\infty)$, and hence, can not apply Theorem \ref{thm-asymptotic-mean-extinction} (1) to conclude $\frac{C_{1}}{|\ln \ep|}\lesssim_{\ep}\la_{\ep,1}\lesssim_{\ep} \frac{C_{2}}{|\ln \ep|}$. Nonetheless, we believe that the lower bound for $\la_{\ep,1}$ obtained in Theorem \ref{thm-bounds-eigenvalues} (1) is sharp. 
\end{rem}

\section{\bf Preliminary} \label{s:prelim}

This is a service section. We collect basic materials for later purposes.


\subsection{Generator, spectral theory and dynamics}\label{subsec-generator-spectrum-dynamics}

In this subsection, we present some general results about the spectral theory of the generator of $X_{t}^{\ep}$ and the dynamics of the corresponding semigroup.  

We start with the rigorous formalism of the generator of $X_{t}^{\ep}$. Set $$
\al_{\ep}:=\ep^2 a+\si^2\quad\text{on}\quad[0,\infty),\qquad V_{\ep}:=-\int_1^{\bullet}\frac{b}{\al_{\ep}}ds\quad\text{on}\quad(0,\infty).  
$$
Thanks to {\bf (H)}, we have
\begin{equation}\label{limit-alpha-V}
\begin{split}
\lim_{\ep\to0}\al_{\ep}=\si^2 \quad\text{in}\quad C^{2}([0,\infty)),\qquad \lim_{\ep\to0}V_{\ep}=-\int_1^{\bullet}\frac{b}{\si^2}ds\quad\text{in}\quad C^{2}((0,\infty)).
\end{split}
\end{equation}

Consider the symmetric quadratic form $\EE_{\ep}:C_{0}^{\infty}((0,\infty))\times C_{0}^{\infty}((0,\infty))\to\R$ defined by
$$
\EE_{\ep}(\phi,\psi)=\frac{1}{2}\int_{0}^{\infty}\al_{\ep}\phi'\psi'u^{G}_{\ep}dx,\quad \forall \phi,\psi\in C^{\infty}_{0}((0,\infty)),
$$
where 
$$
u_{\ep}^{G}:=\frac{1}{\al_{\ep}}e^{-2V_{\ep}}=\frac{1}{\al_{\ep}}e^{2\int_1^{\bullet}\frac{b}{\al_{\ep}}ds}\quad\text{in}\quad (0,\infty)
$$ 
is the non-integrable Gibbs density. The non-integrability of $u_{\ep}^{G}$ comes from the singularity of order $\frac{1}{x}$ near $0$. Recall $u_{0}^{G}$ from \eqref{gibbs-environmental-only} and note that clearly one has
\begin{equation}\label{gibbs-to-gibbs}
    \lim_{\ep\to0}u_{\ep}^{G}=u_{0}^{G}\quad\text{in}\quad C^{2}((0,\infty)).
\end{equation}
This fact is frequently used in the sequel. Under {\bf (H)}, it is not hard to check that the form $\EE_{\ep}$ is Markovian and closable (see e.g. \cite{Fukushima80}). Its smallest closed extension, still denoted by $\EE_{\ep}$, is a Dirichlet form with domain $D(\EE_{\ep})$ being the closure of $C_0^{\infty}((0,\infty))$ under the norm $\|\phi\|^2_{D(\EE_{\ep})}:=\|\phi\|^2_{L^2(u_{\ep}^G)}+\EE_{\ep}(\phi,\phi)$, where $L^{2}(u_{\ep}^{G}):=L^{2}((0,\infty),u_{\ep}^{G}dx)$. Denote by $(\LL_{\ep}, D(\LL_{\ep}))$ the non-positive self-adjoint operator associated with $(\EE_{\ep}, D(\EE_{\ep}))$, that is, 
\begin{equation*}\label{symmetric-form-vs-generator}
\EE_{\ep}(\phi,\psi)=\lan-\LL_{\ep}\phi,\psi\ran_{L^{2}(u_{\ep}^{G})},\quad \forall\phi\in D(\LL_{\ep}),\,\,\psi\in D(\EE_{\ep}),
\end{equation*}
where  
$$
D(\LL_{\ep}):=\left\{u\in D(\EE_{\ep}): \exists f\in L^2(u^G_{\ep})\,\, \text{s.t.}\,\,\EE_{\ep}(u,\phi)=\langle f,\phi\rangle_{L^2(u^G_{\ep})},\forall \phi\in D(\EE_{\ep})\right\}.
$$ 
It is informative to mention that
$$
\LL_{\ep}\phi=\frac{1}{2}(\ep^2a+\si^2)\phi''+b\phi',\quad\forall\phi\in C_{0}^{\infty}((0,\infty)).
$$
The operator $\LL_{\ep}$ is a self-adjoint extension in $L^2(u^G_{\ep})$ of the generator of \eqref{main-diffusion-eqn}.

In the next result, we collect basic properties about the spectrum of $-\LL_{\ep}$ and the semigroup generated by $\LL_{\ep}$.

\begin{lem}[\cite{CCLMMS09,JQSY21}]\label{prop-spectral-structure}
Assume {\bf (H)}. For each $0<\ep\ll1$, the following hold.
\begin{enumerate}
\item[\rm(1)] $-\LL_{\ep}$ has purely discrete spectrum contained in $(0,\infty)$ and listed as follows: $\la_{\ep,1}<\la_{\ep,2}<\la_{\ep,3}<\cdots\to\infty$.

\item[\rm(2)] Each $\la_{\ep,i}$ is associated with a unique eigenfunction $\phi_{\ep,i}\in D(\LL_{\ep})\cap L^1(u_{\ep}^G)\cap C^{2}((0,\infty))$ subject to the normalization $\|\phi_{\ep,i}\|_{L^{2}(u_{\ep}^{G})}=1$. Moreover, $\phi_{\ep,1}>0$.

\item[\rm(3)] The set $\{\phi_{\ep,i},i\in\N\}$ forms an orthonormal basis of $L^{2}(u_{\ep}^{G})$.

\item[\rm(4)] $\LL_{\ep}$ generates a positive analytic semigroup $(P^{\ep}_t)_{t\geq 0}$ of contractions on $L^2(u_{\ep}^G)$ having the stochastic representation (or Feynman–Kac formula):
$$
P^{\ep}_tf=\E^{\ep}_{\bullet}[f(X_t^{\ep})\mathbbm{1}_{t<T^{\ep}_0}],\quad \forall f\in L^2(u_{\ep}^G)\cap C_b((0,\infty))\andd t\geq0.
$$

\item[\rm(5)] For each $k\in\N$, $f\in L^2(u^{G}_{\ep})$ and $t>0$,
\begin{equation}\label{eqn-2020-8-13-4}
    \begin{split}
        P^{\ep}_tf=\sum_{i=1}^{k-1}e^{-\la_{\ep,i}t}\langle f,\phi_{\ep,i}\rangle_{L^2(u_{\ep}^G)}\phi_{\ep,i}+P^{\ep}_tQ^{\ep}_kf,
    \end{split}
\end{equation}
where $Q^{\ep}_k$ is the spectral projection of $\LL_{\ep}$ corresponding to $\{-\la_{\ep,j}\}_{j\geq k}$. Moreover, 
$$
\|P^{\ep}_tQ^{\ep}_k\|_{L^2(u^{G}_{\ep})\to L^2(u^{G}_{\ep})}\leq e^{-\la_{\ep,k}t},\quad t\geq0.
$$

\item[\rm(6)] For each $f\in C_b((0,\infty))$, the stochastic representation in {\rm (4)} and \eqref{eqn-2020-8-13-4} hold pointwisely.
\end{enumerate}
\end{lem}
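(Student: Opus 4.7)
The plan is to combine the Dirichlet-form framework for $\LL_{\ep}$ with classical one-dimensional Sturm--Liouville theory, following the scheme used in \cite{CCLMMS09,JQSY21}. The key preliminary step is to unitarily conjugate $-\LL_{\ep}$ on $L^{2}(u_{\ep}^{G})$ to a Schr\"{o}dinger-type operator on $L^{2}((0,\infty),dx)$ via the transform $\phi\mapsto \phi\sqrt{u_{\ep}^{G}}$, composed if necessary with the Liouville change of variable $y=\int_{1}^{x}\sqrt{2/\al_{\ep}(s)}\,ds$ to remove the drift. The resulting potential $W_{\ep}$ can be written explicitly in terms of $b$, $\si$, $a$, and by {\bf(H)}(1)--(3) a direct computation shows $W_{\ep}(y)\to+\infty$ at both endpoints of the transformed interval: near $x=0$ a barrier of order $1/x^{2}$ arises from the first-order degeneracy of $\al_{\ep}$, and the decay conditions in {\bf(H)}(4) (in particular $\limsup_{x\to\infty}xb/\si^{2}=-\infty$) provide the confinement at infinity. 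Standard criteria then yield compact resolvent and hence purely discrete spectrum with $\la_{\ep,i}\to\infty$, proving (1).

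Parts (2) and (3) follow essentially from classical Sturm--Liouville theory applied to the Schr\"{o}dinger realization: each eigenvalue is simple and the ground state is zero-free, so $\phi_{\ep,1}$ can be taken strictly positive, while interior elliptic regularity (the coefficients of $\LL_{\ep}$ are $C^{2}$ and $\al_{\ep}>0$ on $(0,\infty)$) upgrades $\phi_{\ep,i}\in D(\LL_{\ep})$ to $C^{2}((0,\infty))$. The delicate point is $L^{1}(u_{\ep}^{G})$-integrability near $0$, where $u_{\ep}^{G}$ diverges like $1/x$: here a Frobenius-type analysis of the eigen-ODE $\LL_{\ep}\phi=-\la_{\ep,i}\phi$ at the regular singular endpoint $0$ is required to show that the solution lying in $D(\EE_{\ep})$ is the admissible Frobenius branch, which decays at $0$ at a polynomial rate fast enough to be integrated against $u_{\ep}^{G}dx$. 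The orthonormal basis statement in (3) is then the spectral theorem for self-adjoint operators with compact resolvent.

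For (4), self-adjointness of $\LL_{\ep}$ together with $\LL_{\ep}\leq -\la_{\ep,1}<0$ gives by spectral calculus that $P_{t}^{\ep}=e^{t\LL_{\ep}}$ is an analytic semigroup of contractions on $L^{2}(u_{\ep}^{G})$, while the Markovian nature of $\EE_{\ep}$ supplies positivity. The Feynman--Kac identification is the standard Dirichlet-form fact that the Hunt process associated with $\EE_{\ep}$ is $X_{t}^{\ep}$ killed on reaching $0$; this requires verifying that the minimal Dirichlet diffusion matches the unique strong solution of \eqref{main-diffusion-eqn}, which is a routine identification-of-semigroups argument once strong uniqueness is in hand. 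Formula (5) is then the $L^{2}$-eigenfunction expansion, with the operator-norm estimate $\|P_{t}^{\ep}Q_{k}^{\ep}\|\leq e^{-\la_{\ep,k}t}$ following directly from the spectral gap and functional calculus. The pointwise statement (6) requires upgrading the $L^{2}$-identities to pointwise ones: for $t>0$ the analytic semigroup sends $L^{2}(u_{\ep}^{G})$ into $\bigcap_{n}D(\LL_{\ep}^{n})$, and an elliptic-bootstrap plus Sobolev-embedding argument on compact subintervals of $(0,\infty)$ produces continuous representatives; on the other side, the Feynman--Kac integral is continuous in $x$ by dominated convergence for $f\in C_{b}((0,\infty))$, and matching the two continuous representatives gives the pointwise identity.

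The main obstacle I expect is the $L^{1}(u_{\ep}^{G})$-integrability part of (2): the non-integrability of $u_{\ep}^{G}$ near $0$ forces a careful local asymptotic analysis of the eigen-ODE at the singular endpoint to select the admissible Frobenius solution, and this same analysis is what ensures the QSD-density $u_{\ep}=\phi_{\ep,1}u_{\ep}^{G}/\|\phi_{\ep,1}\|_{L^{1}(u_{\ep}^{G})}$ is genuinely an integrable probability density. A secondary technical hurdle is the clean separation of the measure-theoretic ($L^{2}$) spectral decomposition in (5) from the pointwise Feynman--Kac formula in (6), which relies on the smoothing properties of the analytic semigroup away from $t=0$.
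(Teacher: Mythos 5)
Your proposal takes essentially the same route the paper points to: the lemma is stated as a citation to \cite{CCLMMS09,JQSY21}, and in Subsection \ref{subsec-schrodinger-eqn} the paper sketches exactly the Liouville-plus-ground-state transform to a Schr\"odinger operator $\LL_{\ep}^{S}$ on $(0,y_{\ep,\infty})$, shows via Lemma \ref{properties-schrodinger-potential} that the potential $W_{\ep}$ is bounded below and tends to $+\infty$ at both endpoints, and then invokes classical spectral theory for purely discrete spectrum. Your parts (2)--(6) (Sturm--Liouville simplicity and ground-state positivity, Frobenius analysis at the singular endpoint for $L^{1}(u_{\ep}^{G})$-integrability, Dirichlet-form Feynman--Kac, and the $L^{2}$-to-pointwise upgrade by analytic smoothing) match the standard arguments in the cited references.

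One small inaccuracy worth fixing: the barrier at the left endpoint is of order $1/y^{2}$ in the Liouville variable $y$ (equivalently, of order $\ep^{2}/x$ in the original variable $x$, coming from the term $\frac{3|\al_{\ep}'|^{2}}{16\al_{\ep}}$ with $\al_{\ep}\sim\ep^{2}a'(0)x$ near $0$), not $1/x^{2}$ as you wrote; compare Lemma \ref{properties-schrodinger-potential}\,(2). The slip does not affect the conclusion since the potential still diverges, but the correct exponent is what underlies the quantitative concentration estimates used later in the paper.
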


The following result addressing the uniform-in-$\ep$ boundedness of the $i$-th eigenvalue of $-\LL_{\ep}$ is useful.

\begin{lem}\label{lem-upper-bound-eigenvalues}
Assume {\bf (H)}. For each $i\in\N$, there holds $\limsup_{\ep\to 0}\la_{\ep,i}<\infty$.
\end{lem}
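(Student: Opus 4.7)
The plan is to use the min-max (Courant-Fischer) variational characterization of the $i$-th eigenvalue of the non-negative self-adjoint operator $-\LL_{\ep}$, namely
\[
\la_{\ep,i}=\inf_{\substack{V\subset D(\EE_{\ep})\\ \dim V=i}}\sup_{\phi\in V\setminus\{0\}}\frac{\EE_{\ep}(\phi,\phi)}{\|\phi\|^{2}_{L^{2}(u_{\ep}^{G})}}.
\]
To bound $\la_{\ep,i}$ from above uniformly in $\ep$, it suffices to exhibit a \emph{single} $i$-dimensional subspace $V\subset C_{0}^{\infty}((0,\infty))\subset D(\EE_{\ep})$ on which the Rayleigh quotient is bounded by a constant independent of $\ep$.

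First I would fix a compact interval $\KK=[x_{1},x_{2}]\Subset(0,\infty)$ and choose $i$ functions $\psi_{1},\dots,\psi_{i}\in C_{0}^{\infty}((0,\infty))$ with pairwise disjoint supports all contained in $\KK$. Disjointness of supports is crucial: for any $\phi=\sum_{j=1}^{i}c_{j}\psi_{j}$ one has the clean decompositions
\[
\EE_{\ep}(\phi,\phi)=\sum_{j=1}^{i}c_{j}^{2}\,\EE_{\ep}(\psi_{j},\psi_{j}),\qquad \|\phi\|_{L^{2}(u_{\ep}^{G})}^{2}=\sum_{j=1}^{i}c_{j}^{2}\,\|\psi_{j}\|_{L^{2}(u_{\ep}^{G})}^{2}.
\]
So the Rayleigh quotient on $V:=\mathrm{span}\{\psi_{1},\dots,\psi_{i}\}$ is bounded by $\max_{j}\EE_{\ep}(\psi_{j},\psi_{j})/\min_{j}\|\psi_{j}\|^{2}_{L^{2}(u_{\ep}^{G})}$.

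Next I would use the $\ep$-stability of the coefficients on the fixed compact set $\KK$. By \eqref{limit-alpha-V} and \eqref{gibbs-to-gibbs} we have $\al_{\ep}\to\si^{2}$ and $u_{\ep}^{G}\to u_{0}^{G}$ uniformly on $\KK$, and $u_{0}^{G}$ is a positive continuous function on $\KK$. Hence there exist constants $0<m\le M<\infty$, independent of $\ep$ for all $\ep$ small, such that $m\le u_{\ep}^{G}\le M$ and $\al_{\ep}\le M$ on $\KK$. This gives
\[
\EE_{\ep}(\psi_{j},\psi_{j})=\tfrac{1}{2}\int_{\KK}\al_{\ep}|\psi_{j}'|^{2}u_{\ep}^{G}\,dx\le \tfrac{M^{2}}{2}\|\psi_{j}'\|_{L^{2}(\KK)}^{2},\qquad \|\psi_{j}\|_{L^{2}(u_{\ep}^{G})}^{2}\ge m\,\|\psi_{j}\|_{L^{2}(\KK)}^{2},
\]
both bounds being uniform in $\ep$. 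Plugging these into the variational formula yields
\[
\la_{\ep,i}\le \frac{M^{2}}{2m}\,\max_{1\le j\le i}\frac{\|\psi_{j}'\|_{L^{2}(\KK)}^{2}}{\|\psi_{j}\|_{L^{2}(\KK)}^{2}},
\]
a bound independent of $\ep$, whence $\limsup_{\ep\to 0}\la_{\ep,i}<\infty$.

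There is no real obstacle; the only point requiring a touch of care is confirming that the test functions $\psi_{1},\dots,\psi_{i}\in C_{0}^{\infty}((0,\infty))$ indeed lie in $D(\EE_{\ep})$, which holds because $C_{0}^{\infty}((0,\infty))$ is dense in $D(\EE_{\ep})$ by construction (the form was defined as the closure starting from $C_{0}^{\infty}((0,\infty))$). Everything else reduces to the elementary fact that on any compact set $\KK\Subset(0,\infty)$ the coefficients $\al_{\ep}$ and the Gibbs density $u_{\ep}^{G}$ converge uniformly as $\ep\to 0$ and are uniformly bounded above and below.
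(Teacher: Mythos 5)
Your proposal is correct and takes essentially the same approach as the paper: both apply the Min-Max principle to an $i$-dimensional span of disjointly supported test functions in $C_{0}^{\infty}((0,\infty))$ and then use the locally uniform convergence of $\al_{\ep}$ and $u_{\ep}^{G}$ (from \eqref{limit-alpha-V} and \eqref{gibbs-to-gibbs}) to bound the resulting Rayleigh quotients independently of $\ep$.
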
 
\begin{proof}
Let $\{\phi_\ell\}_{\ell\in\N}\subset C^{\infty}((0,\infty))$ satisfy $\supp(\phi_{\ell})\subset ({\ell},{\ell}+1)$ and $\|\phi_{\ell}\|_{L^2(u^G_{\ep})}=1$. We find from \eqref{limit-alpha-V} that the limit $\ga_{\ell}:=\lim_{\ep\to 0}\EE_{\ep}(\phi_{\ell},\phi_{\ell})>0$ exists for each $\ell\in\N$.

Fix $i\in\N$ and set $S_i:=\text{span}\{\phi_1,\dots, \phi_{i}\}$. Since $-\LL_{\ep}$ is self-adjoint in $L^2(u^G_{\ep})$, the Min-Max principle says in particular 
\begin{equation*}\label{eqn-2022-03-18-2}
    \la_{\ep,i}\leq\max_{\phi\in S_i}\frac{\lan-\LL_{\ep}\phi,\phi\ran_{L^{2}(u_{\ep}^{G})}}{\|\phi\|^2_{L^2(u^G_{\ep})}}=\max_{\phi\in S_i}\frac{\EE_{\ep}(\phi,\phi)}{\|\phi\|^2_{L^2(u^G_{\ep})}}.
\end{equation*}
Note that each element $\phi\in S_i$ can be written as $\phi:=\sum_{\ell=1}^i c_{\ell}\phi_{\ell}$ for some $c_{\ell}\in \R$, $\ell=1,\dots, i$. As the supports of $\{\phi_{\ell}\}_{\ell}$ are disjoint, we calculate $\|\phi\|^2_{L^2(u^G_{\ep})}=\sum_{\ell=1}^i c_{\ell}^2$ and $\EE_{\ep}(\phi,\phi)=\sum_{\ell=1}^i c_{\ell}^2 \EE_{\ep}(\phi_{\ell},\phi_{\ell})$. It follows that
\begin{equation*}
    \begin{split}
        \la_{\ep,i}\leq\max_{c_{\ell}\in\R,\ell=1,\dots,i}\frac{\sum_{\ell=1}^i c_\ell^2 \EE_{\ep}(\phi_{\ell},\phi_{\ell})}{\sum_{\ell=1}^n c_{\ell}^2 }&\leq \max_{\ell=1,\dots, i}\EE_{\ep}(\phi_{\ell},\phi_{\ell}),
    \end{split}
\end{equation*}
leading to $\limsup_{\ep\to0}\la_{\ep,i}\leq\max_{\ell=1,\dots, i}\ga_{\ell}$.
\end{proof}

\subsection{Schr\"odinger operators}\label{subsec-schrodinger-eqn}

In this subsection, we follow the canonical procedure (see e.g. \cite{CCLMMS09}) to derive the Schr\"odinger operator that is unitarily equivalent to $\LL_{\ep}$ and establish some properties of its potential. These results will play a significant technical role in the sequel.

Note that $X^{\ep}_t$ has the same distribution as the solution process of
\begin{equation}\label{main-diffusion-eqn-equivalent}
    d\tilde{X}^{\ep}_t=b(\tilde{X}^{\ep}_t)dt+\sqrt{\al_{\ep}(\tilde{X}^{\ep}_t)}d\tilde{W}_t\quad\text{in}\quad[0,\infty),
\end{equation}
where $\tilde{W}_t$ is a standard one-dimensional Brownian motion. Consider the change of variable 
$$
y=\xi_{\ep}(x):=\int_0^{x}\frac{1}{\sqrt{\al_{\ep}}}ds=\int_0^x \frac{1}{\sqrt{\ep^2 a+\si^2}}ds,\quad x\in (0,\infty).
$$
Clearly, $\xi_{\ep}$ is increasing and satisfies $\xi_{\ep}(0+)=0$.
Set $y_{\ep,\infty}:=\xi_{\ep}(\infty)$. Then, $\xi_{\ep}:(0,\infty)\to(0,y_{\ep,\infty})$ is invertible. Its inverse is denoted by $\xi_{\ep}^{-1}:(0,y_{\ep,\infty})\to(0,\infty)$. This is the canonical transform converting the SDE \eqref{main-diffusion-eqn-equivalent} into the one with the simplest noise coefficient. More precisely, applying It\^{o}'s formula, we find that  $Y^{\ep}_t:=\xi_{\ep}(\tilde{X}^{\ep}_t)$ solves
\begin{equation}\label{SDE-simplest-noise}
    d Y^{\ep}_t=q_{\ep}(Y^{\ep}_t) dt+d\tilde{W}_t,
\end{equation}
where $q_{\ep}:=\left(\frac{b}{\sqrt{\al_{\ep}}}-\frac{\al'_{\ep}}{4\sqrt{\al_{\ep}}}\right)\circ \xi^{-1}_{\ep}$. 

Set $v_{\ep}^G:=(u^G_{\ep}\sqrt{\al_{\ep}})\circ\xi^{-1}_{\ep}$ and $L^{2}(v_{\ep}^{G}):=L^{2}((0,y_{\ep,\infty}),v_{\ep}^{G}dy)$. The generator of \eqref{SDE-simplest-noise} is given by 
$$
\LL^Y_{\ep}:=\frac{1}{2}\frac{d^2}{dy^2} +q_{\ep}(y)\frac{d}{dy}\quad\text{in}\quad L^{2}(v_{\ep}^{G}).
$$
It is straightforward to check that $\LL_{\ep}^{Y}$ is unitarily equivalent to $\LL_{\ep}$. More precisely, there holds $U_{\ep}\LL_{\ep}=\LL_{\ep}^{Y}U_{\ep}$, where $U_{\ep}: L^{2}(u_{\ep}^{G})\to L^{2}(v_{\ep}^{G})$, $f\mapsto f\circ\xi^{-1}_{\ep}$ is unitary.

Now, consider the Schr\"{o}dinger operator
\begin{equation}\label{semi-classical-S-op}
\LL_{\ep}^{S}:=\frac{1}{2}\frac{d^{2}}{dy^{2}}-\frac{1}{2}\left(q^2_{\ep}(y)+q'_{\ep}(y)\right)\quad\text{in}\quad L^{2}((0,y_{\ep,\infty})).
\end{equation}
It is not hard to check that $\tilde{U}_{\ep}\LL_{\ep}^{Y}=\LL_{\ep}^{S}\tilde{U}_{\ep}$, where $\tilde{U}_{\ep}:L^{2}(v_{\ep}^{G})\to L^{2}((0,y_{\ep,\infty}))$, $ f\mapsto f\sqrt{v_{\ep}^{G}}$ is unitary.


We include the following commutative diagram for readers' convenience:
$$
\begin{tikzcd}
L^{2}(u_{\ep}^{G}) \arrow[d, "\LL_{\ep}"] \arrow[r, "U_{\ep}"]  &\quad  L^{2}(v_{\ep}^{G})\arrow[d, "\LL_{\ep}^{Y}"] \arrow[r, "\tilde{U}_{\ep}"]   &  \quad L^{2}((0,y_{\ep,\infty}))\arrow[d, "\LL_{\ep}^{S}"] \\
L^{2}(u_{\ep}^{G})  \arrow[r, "U_{\ep}"]  & \quad L^{2}(v_{\ep}^{G}) \arrow[r, "\tilde{U}_{\ep}"]  &\quad L^{2}((0,y_{\ep,\infty}))
\end{tikzcd}
$$

We point out that rigorous definitions of $\LL_{\ep}^{Y}$ and $\LL_{\ep}^{S}$ can be done using quadratic forms as done in Subsection \ref{subsec-generator-spectrum-dynamics} for  $\LL_{\ep}$. By the unitary transforms, the domains of $\LL_{\ep}^{Y}$ and $\LL_{\ep}^{S}$ are respectively given by $U_{\ep}D(\LL_{\ep})$ and $\tilde{U}_{\ep}U_{\ep}D(\LL_{\ep})$, and the domains of quadratic forms associated with $\LL_{\ep}^{Y}$ and $\LL_{\ep}^{S}$ are respectively given by $U_{\ep}D(\EE_{\ep})$ and $\tilde{U}_{\ep}U_{\ep}D(\EE_{\ep})$.

The potential of the Schr\"{o}dinger operator $\LL^S_{\ep}$ is 
denoted by 
$$
W_{\ep}:= \frac{1}{2}\left(q^2_{\ep}+q'_{\ep}\right)\quad\text{on}\quad (0,y_{\ep,\infty}).
$$
Some elementary properties of $W_{\ep}$ are collected in the following result.

\begin{lem}\label{properties-schrodinger-potential}
Assume {\bf (H)}. The following hold.
\begin{enumerate}
\item[\rm(1)] $2W_{\ep}\circ\xi_{\ep}=\frac{3|\al'_{\ep}|^2}{16\al_{\ep}}-\frac{\al_{\ep}''}{4}+b'-\frac{b\al_{\ep}'}{\al_{\ep}}+\frac{b^2}{\al_{\ep}}$. 

\item[\rm(2)] There exist $y_1\in (0,\infty)$ and $C>0$ such that $\inf_{\ep}W_{\ep}(y)\geq \frac{C}{y^2}$ for all $y\in(0,y_{1}]$.

\item[\rm(3)] There exists $x_*\in (0,\infty)$ such that
$$
W_{\ep}(y)\geq \frac{b^{2}(\xi^{-1}_{\ep}(y))}{4\si^2(\xi^{-1}_{\ep}(y))},\quad\forall y\in[\xi_{\ep}(x_*),y_{\ep, \infty})\andd 0<\ep\ll1.
$$

\item[\rm(4)] The family $\{W_{\ep}\}_{\ep}$ is uniformly lower bounded, that is, $\inf_{\ep}\min W_{\ep}>-\infty$.
\end{enumerate}
\end{lem}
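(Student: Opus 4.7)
The plan for part (1) is a direct computation. Starting from $2W_\ep = q_\ep^2 + q_\ep'$ and $q_\ep\circ\xi_\ep = (b-\al_\ep'/4)/\sqrt{\al_\ep}$, I square to get $(q_\ep\circ\xi_\ep)^2 = \frac{b^2}{\al_\ep}-\frac{b\al_\ep'}{2\al_\ep}+\frac{(\al_\ep')^2}{16\al_\ep}$. Then I use $q_\ep'\circ\xi_\ep = \sqrt{\al_\ep}\,(q_\ep\circ\xi_\ep)'$, which follows from $\xi_\ep'=1/\sqrt{\al_\ep}$, to compute $q_\ep'\circ\xi_\ep = b' - \frac{\al_\ep''}{4}-\frac{b\al_\ep'}{2\al_\ep}+\frac{(\al_\ep')^2}{8\al_\ep}$. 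Adding the two and collecting the $(\al_\ep')^2/\al_\ep$ contribution as $\frac{3(\al_\ep')^2}{16\al_\ep}$ produces the claimed identity.

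For part (2) I would analyse the formula from (1) near $x=0$ using the Taylor expansions $a(x)=a'(0)x+O(x^2)$, $\si^2(x)=|\si'(0)|^2 x^2+O(x^3)$, which give $\al_\ep(x) = \ep^2 a'(0)x + |\si'(0)|^2 x^2 + O(\ep^2 x^2+x^3)$ and $\al_\ep'(x)= \ep^2 a'(0) + 2|\si'(0)|^2 x + O(\ep^2 x+x^2)$. The substitution $t := |\si'(0)|^2 x / (\ep^2 a'(0))$ yields $\frac{3(\al_\ep')^2}{16\al_\ep} = \frac{3|\si'(0)|^2}{4}\cdot\frac{(1+2t)^2}{4t(1+t)}(1+o(1))$. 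A parallel substitution inside $\xi_\ep$ evaluates the integral in closed form to $y=\frac{2}{|\si'(0)|}\,\mathrm{arcsinh}(\sqrt{t})$, so setting $z:=y|\si'(0)|/2$ gives $\sqrt{t}=\sinh z$ and, via $\cosh(2z)=1+2\sinh^2 z$, the identification $\frac{(1+2t)^2}{t(1+t)} = 4\coth^2(y|\si'(0)|)$. Hence $\frac{3(\al_\ep')^2}{16\al_\ep}=\frac{3|\si'(0)|^2}{4}\coth^2(y|\si'(0)|)(1+o(1))\sim \frac{3}{4y^2}$ as $y\to 0^{+}$. The remaining four terms in (1) have finite limits as $x\to 0^{+}$ uniformly in $\ep\in(0,1]$ (every apparent singularity from $1/\al_\ep$ is cancelled by a matching factor of $x$ coming from $b$ or $\si^2$), so they are dominated by $\frac{3}{4y^2}$ for $y$ small, which yields $W_\ep(y)\geq C/y^2$ on a uniform interval $(0,y_1]$.

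For part (3), the dominant term as $x\to\infty$ is $b^2/\al_\ep$. By {\bf (H)}(4), $a/\si^2$ is bounded by some $M$, so $\al_\ep\leq (1+\ep^2 M)\si^2$ and $\frac{b^2}{\al_\ep} \geq \frac{b^2}{(1+\ep^2 M)\si^2}\geq \frac{3b^2}{4\si^2}$ for $\ep$ small. The other four terms are $o(b^2/\si^2)$: both $\frac{\si^2}{b^2}|b'|$ and $\frac{\si^2}{b^2}|\al_\ep''|$ go to $0$ by {\bf (H)}(4), while $\frac{\si^2}{b^2}\bigl|\frac{b\al_\ep'}{\al_\ep}\bigr|\leq \frac{\si^2}{|b|}\bigl(\frac{2|\si'|}{|\si|}+\ep^2\frac{a'}{\si^2}\bigr)\to 0$ using the two derivative bounds in {\bf (H)}(4) together with $a/\si^2$ bounded. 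Consequently $2W_\ep\circ\xi_\ep\geq \frac{b^2}{2\si^2}$ for $x\geq x_*$ large and $\ep$ small, which translates into the stated inequality in $y$-coordinates.

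Part (4) combines the previous parts with a uniform bound on the intermediate region. Parts (2) and (3) give $W_\ep\geq 0$ on $(0,y_1]$ and on $[\xi_\ep(x_*),y_{\ep,\infty})$ respectively (the lower bound in (3) is manifestly nonnegative). The remaining $y$-values correspond, in $x$-coordinates, to a subset of $(0,x_*]$; on the latter every term in (1) other than $\frac{3(\al_\ep')^2}{16\al_\ep}\geq 0$ is uniformly bounded in $\ep\in(0,1]$ by the same cancellation used in part (2), which produces a uniform lower bound on $W_\ep$ in the middle region. The main obstacle is clearly part (2): the singular $1/y^2$ behaviour comes from a genuine two-scale competition between $\ep^2 a'(0)x$ and $|\si'(0)|^2 x^2$ inside $\al_\ep$, and the bound has to be uniform in $\ep$; the hyperbolic substitution $\sqrt{t}=\sinh z$ is the device that unifies the two regimes and extracts the correct asymptotic in one clean formula.
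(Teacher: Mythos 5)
Your proofs of (1), (3) and (4) follow essentially the same route as the paper: (1) is the identical computation; (3) isolates $b^2/\al_\ep$ as the dominant term and uses the growth conditions in {\bf(H)}(4) to absorb the other three into a fraction of it; (4) uses (2) and (3) for the two ends and uniform boundedness of the subleading terms in the middle.

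Part (2) is where you take a genuinely different route, and it is worth comparing. The paper avoids the two-scale competition entirely by restricting to the inner regime $x\in(0,\kappa\ep^2)$, where $\al_\ep(x)\approx\ep^2 a'(0)x$ with uniformly controlled constants; it then checks that the resulting $\ep$-dependent $y$-interval $(0,\xi_\ep(\kappa\ep^2))$ always contains a fixed interval $(0,y_1)$, so the bound $W_\ep\gtrsim 1/y^2$ transfers to $(0,y_1)$ for all $\ep$. You instead introduce the dimensionless ratio $t=|\si'(0)|^2 x/(\ep^2 a'(0))$ and the hyperbolic change of variables $\sqrt{t}=\sinh(y|\si'(0)|/2)$, which shows that the leading term $\frac{3|\al_\ep'|^2}{16\al_\ep}$ is, to leading order and uniformly in $\ep$, the $\ep$-independent function $\frac{3|\si'(0)|^2}{4}\coth^2(y|\si'(0)|)$ of $y$. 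This is more work but more informative: it exhibits the sharp uniform-in-$\ep$ asymptotic $W_\ep(y)\sim\frac{3}{8y^2}$ as $y\to 0^+$ and makes transparent \emph{why} a uniform $1/y^2$ bound holds across both the $\ep^2 a'(0)x$- and $|\si'(0)|^2 x^2$-dominated regimes, whereas the paper only produces a lower bound. The one point you should state explicitly is that the $(1+o(1))$ correction coming from the higher-order Taylor terms is uniform over $\ep$: this holds because, for $y\in(0,y_1]$ and $\ep$ in the admissible (bounded) range, $x=\xi_\ep^{-1}(y)\leq\xi_\ep^{-1}(y_1)$ stays below a fixed small threshold once $y_1$ is chosen small enough. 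Also, in your claim that ``the remaining four terms have finite limits as $x\to 0^+$ uniformly in $\ep$,'' what you actually need and what is true is that they are uniformly bounded near $0$; the pointwise limits (e.g.\ of $b\al_\ep'/\al_\ep$) do depend on the relative sizes of $x$ and $\ep^2$, so ``uniformly bounded'' is the accurate formulation.
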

\begin{proof}
(1) Straightforward calculations yield
\begin{equation*}
    \begin{split}
        q^2_{\ep}\circ\xi_{\ep}=\frac{b^2}{\al_{\ep}}-\frac{b\al'_{\ep}}{2\al_{\ep}}+\frac{|\al'_{\ep}|^2}{16\al_{\ep}},\quad q'_{\ep}\circ\xi_{\ep}=b'-\frac{b\al'_{\ep}}{2\al_{\ep}}-\frac{\al''_{\ep}}{4}+\frac{|\al'_{\ep}|^2}{8\al_{\ep}}.
    \end{split}
\end{equation*}
The expression for $W_{\ep}$ follows immediately. 
 
(2) Thanks to {\bf (H)}(1)-(3) and Taylor's expansion at $x=0$, 
\begin{equation*}
    \begin{split}
    \al_{\ep}(x)&=\ep^2 a(x)+\si^2(x)=\ep^2 a'(0)x+\left(\frac{\ep^2}{2}a''(0)+|\si'(0)|^2\right)x^2+o(x^2),\\
    \al'_{\ep}(x)&=\ep^2 a'(x)+2\si(x)\si'(x)=\ep^2 a'(0)+\left(\ep^2 a''(0)+2|\si'(0)|^2\right)x+o(x),\\
    \al''_{\ep}(x)&=\ep^2 a''(x)+2(|\si'(x)|^2+\si(x)\si''(x))=\ep^2 a''(0)+2|\si'(0)|^2+o(1),\quad\andd\\
     b(x)&=b'(0)x+o(1).
    \end{split}
\end{equation*}
For fixed $\de\in(0,\frac{1}{2})$ (to be specified), we find $0<\ka\ll1$ such that 
\begin{gather*}
        1-\de\leq \frac{\al_{\ep}(x)}{\ep^2 a'(0)x}\leq1+\de,\quad 1-\de\leq \frac{\al'_{\ep}(x)}{\ep^2 a'(0)}\leq1+\de,\\
        |\al''_{\ep}(x)|\leq 3|\si'(0)|^2,  \quad  b'(x)>0\quad \andd \quad 0<b(x)<2b'(0)x,\quad\forall  x\in (0,\ka\ep^2).
\end{gather*}
Hence, with $y=\xi_{\ep}(x)$,
\begin{equation*}
    \begin{split}
    2W_{\ep}(y)&\geq  \frac{3|\al'_{\ep}(x)|^2}{16\al_{\ep}(x)}-\frac{\al_{\ep}''(x)}{4}-\frac{b(x)\al_{\ep}'(x)}{\al_{\ep}(x)}\\
    &\geq \frac{3(1-\de)^2}{16(1+\de)}\frac{\ep^2 a'(0)}{x}-\frac{3}{4}|\si'(0)|^2-\frac{2b'(0)(1+\de)}{(1-\de)},\quad\forall x\in (0,\ka\ep^2).
    \end{split}
\end{equation*}
Since
$$
\frac{3}{4}|\si'(0)|^2+\frac{2b'(0)(1+\de)}{(1-\de)}<\frac{a'(0)}{18\ka}<\frac{1}{18}\frac{\ep^2 a'(0)}{x},\quad\forall x\in (0,\ka\ep^2), 
$$
where the first inequality is due to the smallness of $\ka$, we arrive at 
\begin{equation*}
    2W_{\ep}(y)\geq \frac{3(1-\de)^2}{16(1+\de)}\frac{\ep^2 a'(0)}{x}- \frac{1}{18}\frac{\ep^2a'(0)}{x}=\frac{1}{9}\frac{\ep^2a'(0)}{x},\quad\forall x\in (0,\ka\ep^2),
\end{equation*}
where we fixed $\de$ so that $\frac{(1-\de)^2}{(1+\de)}=\frac{8}{9}$ in the equality.

Note that 
$$
y=\xi_{\ep}(x)\geq \int_0^x \frac{ds}{\sqrt{(1+\de)\ep^2 a'(0)s}}=\frac{2\sqrt{x}}{\sqrt{(1+\de)\ep^2a'(0)}},\quad\forall x\in (0,\ka\ep^2),
$$
leading to 
$$
2W_{\ep}(y)\geq \frac{1}{9}\frac{4\ep^2 a'(0)}{(1+\de)\ep^2 a'(0) y^2}=\frac{4}{9(1+\de)}\frac{1}{y^2},\quad  \forall y\in (0,y_1),
$$
where $y_1:=\frac{2\sqrt{\ka}}{\sqrt{(1+\de)a'(0)}}\leq\xi_{\ep}(\ka\ep^2)$. 
This proves (2).

(3) Obviously, $2W_{\ep}\circ\xi_{\ep}\geq \frac{b^2}{\al_{\ep}}-\frac{b\al'_{\ep}}{\al_{\ep}}-\frac{\al''_{\ep}}{4}+b'$. By {\bf (H)}(4), there is $x_*>0$ such that
$$
\frac{b^2}{\al_{\ep}}\geq \frac{3b^2}{4\si^2},\quad 
\frac{\al'_{\ep}}{\al_{\ep}}\leq \frac{|b|}{8\si^2}\quad\andd\quad \frac{\al''_{\ep}}{4}-b'\leq \frac{b^2}{8\si^2}\quad \text{in}\quad (x_*,\infty).
$$
Then, $2W_{\ep}\geq \frac{b^2\circ\xi^{-1}_{\ep}}{2\si^2\circ\xi^{-1}_{\ep}}$ in $(\xi^{-1}_{\ep}(x_*),\infty)$, verifying (3). 

(4) Let $x_*$ be as in (3). The assumption {\bf (H)} implies $\sup_{[0,x_*]}\max\left\{|\al''_{\ep}|, |b'|,\frac{|b\al'_{\ep}|}{\al_{\ep}}\right\}<\infty$. The conclusion then follows from (1) and (3).
 \end{proof}

Lemma \ref{properties-schrodinger-potential} says that the potential $W_{\ep}$ is lower bounded and satisfies $W_{\ep}(y)\to\infty$ as $y\to0^{+}$ and $y_{\ep,\infty}^{-}$. Classical spectral theory of Schr\"{o}dinger operators then ensures that the spectrum of $-\LL_{\ep}^{S}$ is purely discrete, and so is that of $-\LL_{\ep}$ by the unitary equivalence.  This is the idea in \cite{CCLMMS09} of obtaining the spectral structure of $-\LL_{\ep}$.


\subsection{Quasi-stationary distributions}

The existence and uniqueness of QSDs of $X_{t}^{\ep}$ and their properties are investigated in \cite{CCLMMS09} (see also \cite{SE07, LC12,MV12, KS12, CMSM13,Miura14,CV18,HK19,HQSY}). We summarize relevant results in the following lemma. Denote by $\LL^{*}_{\ep}$ the Fokker-Planck operator associated with $X_{t}^{\ep}$, namely,
$$
\LL^{*}_{\ep}\phi=\frac{1}{2}(\al_{\ep}\phi)''-(b\phi)',\quad\forall\phi\in C^{2}((0,\infty)),
$$
where we recall $\al_{\ep}:=\ep^2 a+\si^2$. Recall from Lemma \ref{prop-spectral-structure} that $\la_{\ep,1}$ and $\la_{\ep,2}$ are the first two eigenvalues of $-\LL_{\ep}$. The associated normalized eigenfunctions are denoted by $\phi_{\ep,1}$ and $\phi_{\ep,2}$ with $\phi_{\ep,1}>0$.

\begin{lem}[\cite{CCLMMS09}]\label{qsd-existence-uniqueness}
Assume {\bf (H)}. For each $0<\ep\ll 1$, $X^{\ep}_t$ admits a unique QSD $\mu_{\ep}$ with the extinction rate $\la_{\ep,1}$. Moreover, $\mu_{\ep}$ admits a positive density $u_{\ep}\in C^2((0,\infty))$ satisfying $\LL^{*}_{\ep} u_{\ep}=-\la_{\ep,1} u_{\ep}$ and given by $u_{\ep}=\frac{\phi_{\ep,1}u_{\ep}^G}{\|\phi_{\ep,1}\|_{L^1(u^G_{\ep})}}$. In addition, if $\mu\in\PP((0,\infty))$ has compact support in $(0,\infty)$, then for any $B\in\BB((0,\infty))$,
\begin{equation*}\label{convergence-rate-to-qsd}
    \begin{split}
        &\lim_{t\to\infty}e^{(\la_{\ep,2}-\la_{\ep,1})t}\left(\P^{\ep}_{\mu}\left[X_{t}^{\ep}\in B|t<T_{0}^{\ep}\right]-\mu_{\ep}(B)\right)\\
        &\qquad=\frac{\displaystyle\int_{0}^{\infty}\phi_{\ep,2} d\mu }{\displaystyle\int_{0}^{\infty}\phi_{\ep,1} d\mu }\left(\frac{\langle\mathbbm{1}_B,\phi_{\ep,2} \rangle_{L^2(u_{\ep}^G)}}{\|\phi_{\ep,1}\|_{L^1(u_{\ep}^G)}}-\frac{\langle \mathbbm{1}_B, \phi_{\ep,1}\rangle_{L^2(u_{\ep}^G)}\langle \mathbbm{1}, \phi_{\ep,2}\rangle_{L^2(u_{\ep}^G)}}{\|\phi_{\ep,1}\|^2_{L^1(u_{\ep}^G)}} \right).
    \end{split}
\end{equation*}
\end{lem}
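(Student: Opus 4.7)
The plan is to derive every assertion from the spectral structure recorded in Lemma \ref{prop-spectral-structure}. The crucial ingredient is the pointwise duality identity $\LL_{\ep}^{*}(\phi u_{\ep}^{G})=(\LL_{\ep}\phi)u_{\ep}^{G}$ for $\phi\in C^{2}((0,\infty))$, which I would verify by writing $\LL_{\ep}\phi=\frac{1}{2u_{\ep}^{G}}(\al_{\ep}u_{\ep}^{G}\phi')'$ (an identity forced by $V_{\ep}'=-b/\al_{\ep}$, so that $\al_{\ep}u_{\ep}^{G}=e^{-2V_{\ep}}$) and expanding both sides in closed form. Applied to $\phi_{\ep,1}$, this immediately shows that $u_{\ep}:=\phi_{\ep,1}u_{\ep}^{G}/\|\phi_{\ep,1}\|_{L^{1}(u_{\ep}^{G})}$ is a positive $C^{2}$ probability density on $(0,\infty)$ satisfying $\LL_{\ep}^{*}u_{\ep}=-\la_{\ep,1}u_{\ep}$.

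To show that $\mu_{\ep}:=u_{\ep}\,dx$ is a QSD with extinction rate $\la_{\ep,1}$, I would combine the Feynman--Kac formula of Lemma \ref{prop-spectral-structure} (4) with the self-adjointness of $\LL_{\ep}$: for any $B\in\BB((0,\infty))$,
$$
\P_{\mu_{\ep}}^{\ep}[X_{t}^{\ep}\in B,\,T_{0}^{\ep}>t]=\int_{0}^{\infty}P_{t}^{\ep}\mathbbm{1}_{B}\,u_{\ep}\,dx=\frac{\langle P_{t}^{\ep}\mathbbm{1}_{B},\phi_{\ep,1}\rangle_{L^{2}(u_{\ep}^{G})}}{\|\phi_{\ep,1}\|_{L^{1}(u_{\ep}^{G})}}=e^{-\la_{\ep,1}t}\mu_{\ep}(B),
$$
after moving the semigroup onto $\phi_{\ep,1}$ via self-adjointness and using $\LL_{\ep}\phi_{\ep,1}=-\la_{\ep,1}\phi_{\ep,1}$. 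The choice $B=(0,\infty)$ gives $\P_{\mu_{\ep}}^{\ep}[T_{0}^{\ep}>t]=e^{-\la_{\ep,1}t}$, and the QSD identity follows upon taking the ratio. For uniqueness, let $\nu$ be any QSD with extinction rate $\la>0$. Parabolic regularity applied to the evolution of $\nu$ under the killed semigroup (as in \cite{CCLMMS09}) endows $\nu$ with a density $v\in C^{2}((0,\infty))\cap L^{1}((0,\infty))$ satisfying $\LL_{\ep}^{*}v=-\la v$; by the duality identity, $v/u_{\ep}^{G}$ is then a positive eigenfunction of $\LL_{\ep}$ with eigenvalue $-\la$. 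Simplicity of the principal eigenvalue together with the fact that every other eigenfunction of a self-adjoint Sturm--Liouville operator must change sign (Lemma \ref{prop-spectral-structure} (1)--(2)) forces $\la=\la_{\ep,1}$ and $v\propto\phi_{\ep,1}u_{\ep}^{G}$, whence $\nu=\mu_{\ep}$ after normalization.

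Finally, for the convergence rate, the plan is to apply \eqref{eqn-2020-8-13-4} with $k=3$ pointwise to $f=\mathbbm{1}_{B}$ and $f=\mathbbm{1}$ and integrate against $\mu$. Since $\supp(\mu)\stst(0,\infty)$, this yields
$$
\int P_{t}^{\ep}\mathbbm{1}_{B}\,d\mu=\sum_{i=1}^{2}e^{-\la_{\ep,i}t}\langle \mathbbm{1}_{B},\phi_{\ep,i}\rangle_{L^{2}(u_{\ep}^{G})}\int\phi_{\ep,i}\,d\mu+O\!\left(e^{-\la_{\ep,3}t}\right),
$$
and similarly for $\mathbbm{1}$. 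Forming the quotient, factoring $e^{-\la_{\ep,1}t}$ out of numerator and denominator, and performing a geometric expansion of the denominator in the small quantity $e^{-(\la_{\ep,2}-\la_{\ep,1})t}$ produces $\mu_{\ep}(B)$ at leading order and, after multiplication by $e^{(\la_{\ep,2}-\la_{\ep,1})t}$, the stated next-order coefficient; the $Q_{3}^{\ep}$ remainder decays at rate $\la_{\ep,3}>\la_{\ep,2}$ and is negligible. The main obstacle is justifying the pointwise eigenfunction expansion for the indicator $\mathbbm{1}_{B}$, which is bounded but discontinuous and not in $L^{2}(u_{\ep}^{G})$ (the weight $u_{\ep}^{G}$ blows up like $1/x$ at the origin). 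I would handle this by expressing $P_{t}^{\ep}\mathbbm{1}_{B}(x)=\int_{B}p_{t}^{\ep}(x,y)\,dy$ via the smooth transition density of the killed process, inserting the eigenfunction expansion of $p_{t}^{\ep}$ (whose locally uniform convergence follows from the analyticity of $(P_{t}^{\ep})$), and then using the $L^{1}(u_{\ep}^{G})$ integrability of each $\phi_{\ep,i}$ against bounded functions to swap integration and summation, following the approach of \cite{CCLMMS09}.
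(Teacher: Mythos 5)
Your treatment of existence and the formula for the QSD density is sound and follows the same spirit as the paper's brief commentary: the duality identity $\LL_\ep^*(\phi u_\ep^G)=(\LL_\ep\phi)u_\ep^G$ combined with Lemma \ref{prop-spectral-structure} does give the positive eigen-density and the QSD identity, and your sketch of the convergence rate via \eqref{eqn-2020-8-13-4} with $k=3$ and an integration against the transition kernel is the right idea; you even correctly flag and propose a fix for the issue that $\mathbbm{1}_B\notin L^2(u_\ep^G)$.

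The genuine gap is in your uniqueness argument. You take an arbitrary QSD $\nu$ with density $v\in C^2\cap L^1$, apply duality to deduce that $\phi:=v/u_\ep^G$ solves the ODE $\LL_\ep\phi=-\la\phi$ classically, and then invoke simplicity of the principal eigenvalue together with sign changes of higher eigenfunctions to conclude $\la=\la_{\ep,1}$. But those facts from Lemma \ref{prop-spectral-structure} concern eigenfunctions of the self-adjoint extension $(\LL_\ep,D(\LL_\ep))$ in $L^2(u_\ep^G)$, whereas from $v\in L^1((0,\infty))$ you only know $\phi u_\ep^G\in L^1$, not $\phi\in L^2(u_\ep^G)$, i.e.\ $\int v^2/u_\ep^G\,dx<\infty$. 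Near $0$ this is harmless, but near $\infty$ the weight $1/u_\ep^G=\al_\ep e^{2V_\ep}$ blows up, so $L^2(u_\ep^G)$-membership of $\phi$ is a nontrivial decay condition on $v$ that is not implied by $v\in L^1$. In fact, for a one-dimensional diffusion absorbed at $0$ the ODE $\LL_\ep\phi=-\la\phi$ admits positive solutions vanishing at $0$ with $\phi u_\ep^G\in L^1$ for an interval of $\la$ values unless $\infty$ is an entrance boundary; it is precisely the ``coming down from infinity'' (entrance) condition that rules out these spurious QSDs. The paper's own comment after the lemma acknowledges this explicitly: uniqueness is ``much more challenging,'' and in \cite{CCLMMS09} it is obtained by verifying that $\infty$ is an entrance boundary, not from the $L^2$ spectral picture alone. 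Without that step your argument could, in principle, apply to a process with a natural boundary at $\infty$ and a whole family of QSDs, so the conclusion would be false. To repair the proof you must either verify the entrance-boundary integral condition under {\bf (H)} and show this forces $\phi\in L^2(u_\ep^G)$, or reproduce the ``coming down from infinity'' argument of \cite{CCLMMS09} showing that the conditioned law converges to a single QSD from any starting point.

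A smaller point: in the display verifying the QSD identity, the step ``moving the semigroup onto $\phi_{\ep,1}$ via self-adjointness'' is not a direct application of $L^2(u_\ep^G)$-self-adjointness because $\mathbbm{1}_B$ need not lie in $L^2(u_\ep^G)$. It should instead be justified at the kernel level, using the detailed-balance symmetry $p_t^\ep(x,y)u_\ep^G(x)=p_t^\ep(y,x)u_\ep^G(y)$ together with Tonelli (everything is nonnegative), and then $P_t^\ep\phi_{\ep,1}=e^{-\la_{\ep,1}t}\phi_{\ep,1}$. This is routine but worth stating, since you already flag the $L^2$ issue in the convergence-rate step but not here.
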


We point out that $d\mu_{\ep}=u_{\ep}dx$ being a QSD of $X_{t}^{\ep}$ is a direct consequence of Lemma \ref{prop-spectral-structure}. Verifying the uniqueness is however much more challenging. In \cite{CCLMMS09}, the authors achieve this by exploring the so-called ``coming down from infinity" saying that $\infty$ is an entrance boundary for $X_{t}^{\ep}$, and obtain a necessary and sufficient condition. As a result, they show for any initial distribution $\mu\in\PP((0,\infty))$ the conditioned dynamics $\P_{\mu}^{\ep}[X_{t}^{\ep}\in\bullet|t<T_{0}^{\ep}]$ converges to $\mu_{\ep}$ under the topology of weak convergence as $t\to\infty$. This can be improved to exponential convergence with rate $\la_{\ep,2}-\la_{\ep,1}$ if $\mu$ is compactly supported in $(0,\infty)$ as stated in Lemma \ref{qsd-existence-uniqueness}. 

The next result is a stepping stone to obtaining finer results of the QSD $\mu_{\ep}$ or its density $u_{\ep}$ near $0$.

\begin{lem}\label{bound-u_ep-near-0}
Assume {\bf (H)}. For each $0<\ep\ll 1$,  there holds $\limsup_{x\to 0}u_{\ep}(x)<\infty$.
\end{lem}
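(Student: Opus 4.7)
My plan is to exploit the Fokker--Planck equation $\LL_{\ep}^{*}u_{\ep}=-\la_{\ep,1}u_{\ep}$ by recasting it in terms of the probability current $f_{\ep}:=\tfrac{1}{2}(\al_{\ep}u_{\ep})'-b u_{\ep}$, which obeys the conservation identity $f_{\ep}'=-\la_{\ep,1}u_{\ep}$. Combining this with the fact that $u_{\ep}$ is a probability density on $(0,\infty)$ will let me pull near-$0$ pointwise information out of the $L^{1}$-integrability. The relevant background fact is that $\al_{\ep}(x)\sim\ep^{2}a'(0)x$ as $x\to 0^{+}$, by \textbf{(H)}(2)--(3) together with $\si(x)=O(x)$; this first-order boundary degeneracy of the diffusion coefficient drives the entire argument.

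First I would show that $f_{\ep}$ extends continuously up to $0$. Writing $f_{\ep}(x)=f_{\ep}(1)+\la_{\ep,1}\int_{x}^{1}u_{\ep}\,ds$ and using $u_{\ep}\in L^{1}((0,1))$, the limit $f_{\ep}(0^{+})$ exists and is finite, so $f_{\ep}\in L^{\infty}([0,1])$. Next, integrating $\tfrac{1}{2}(\al_{\ep}u_{\ep})'=b u_{\ep}+f_{\ep}$ from $x$ to $1$ yields
\[
\al_{\ep}(x)u_{\ep}(x)=\al_{\ep}(1)u_{\ep}(1)-2\int_{x}^{1}(b u_{\ep}+f_{\ep})\,ds,
\]
whose right-hand side has a finite limit $\ell_{\ep}$ as $x\to 0^{+}$ (since $b u_{\ep}, f_{\ep}\in L^{1}((0,1))$). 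If $\ell_{\ep}\neq 0$, then $u_{\ep}(x)\sim \ell_{\ep}/(\ep^{2}a'(0)x)$ near $0$, contradicting $u_{\ep}\in L^{1}((0,1))$; hence $\ell_{\ep}=0$.

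Substituting $\ell_{\ep}=0$ back reduces the identity to $\al_{\ep}(x)u_{\ep}(x)=2\int_{0}^{x}(b u_{\ep}+f_{\ep})\,ds$. Since $b\in C^{1}([0,\infty))$ with $b(0)=0$ gives $|b(s)|\leq Cs$ near $0$, we have $\bigl|\int_{0}^{x}b u_{\ep}\,ds\bigr|\leq Cx\int_{0}^{x}u_{\ep}\,ds\leq Cx$, and trivially $\bigl|\int_{0}^{x}f_{\ep}\,ds\bigr|\leq \|f_{\ep}\|_{L^{\infty}([0,1])}\,x$. Hence $\al_{\ep}(x)u_{\ep}(x)=O(x)$ as $x\to 0^{+}$, and dividing by $\al_{\ep}(x)\sim \ep^{2}a'(0)x$ gives $u_{\ep}(x)=O(1)$, which is exactly the claim.

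The only delicate step is the contradiction argument forcing $\ell_{\ep}=0$; this is precisely where integrability of the probability density interacts with the first-order degeneracy of $\al_{\ep}$ at $0$. Everything else is a bootstrap via the rewritten Fokker--Planck equation. The resulting upper bound is of order $\ep^{-2}$, which is consistent with the fact that obtaining $\ep$-uniform control of $u_{\ep}$ near $0$, as needed in the sequel, is a substantially more demanding task (and is addressed by the two-step approach announced after Theorem \ref{thm-concentration}).
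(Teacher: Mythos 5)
Your proof is correct and, unlike the paper's, is self-contained: the paper itself only verifies that the hypotheses of an external result (\cite[Corollary 3.1]{SWY}) are satisfied and does not reproduce the argument. Your mechanism is the natural one for this statement and is almost certainly what underlies the cited proof: integrate the Fokker--Planck equation once to form the probability current $f_\ep=\tfrac12(\al_\ep u_\ep)'-bu_\ep$, which satisfies $f_\ep'=-\la_{\ep,1}u_\ep$ and hence extends continuously to $0$; integrate once more to see that $\al_\ep u_\ep$ has a finite limit $\ell_\ep$ at $0$; then $L^1$-integrability (or positivity) of $u_\ep$ together with the first-order degeneracy $\al_\ep(x)\sim\ep^2 a'(0)x$ forces $\ell_\ep=0$; finally $\al_\ep(x)u_\ep(x)=2\int_0^x(bu_\ep+f_\ep)\,ds=O(x)$, and dividing by $\al_\ep(x)$ gives $u_\ep=O(1)$. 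All steps are justified — in particular $bu_\ep,f_\ep\in L^1((0,1))$ follow from $b\in C^1$, $f_\ep\in L^\infty$, and $u_\ep$ being a probability density, and the contradiction for $\ell_\ep\neq0$ works whether $\ell_\ep$ is positive (non-integrability) or negative (violates $u_\ep>0$). Your closing remark about the $\ep^{-2}$ dependence of the bound is also accurate and consistent with the paper's subsequent effort (Lemma \ref{lem-upper-soln-x^-k} and Proposition \ref{prop-concentration-0}) to upgrade this pointwise-in-$\ep$ estimate to one that is uniform in $\ep$.
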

\begin{proof}
It is actually a special case of \cite[Corollary 3.1]{SWY}. Indeed, the authors consider in \cite{SWY} the following SDE:
\begin{equation}\label{sde-in-earlier-work}
d\tilde{X}_t=\tilde{b}(\tilde{X}_t)dt+\ep\sqrt{\tilde{a}(\tilde{X}_t)}dW_t,
\end{equation}
where $\tilde{b}:[0,\infty)\to\R$ satisfies $\tilde{b}\in C([0,\infty))\cap C^{1}((0,\infty))$, $\tilde{b}(0)=0$, $\tilde{b}(x)>0$ for all $0<x\ll1$, and $\tilde{b}(x)<0$ for all $x\gg1$, and $\tilde{a}:[0,\infty)\to[0,\infty)$ satisfies $\tilde{a}\in C^{2}([0,\infty))$, $\tilde{a}(0)=0$, $\tilde{a}>0$
on $(0,\infty)$ and $\int_{0}^{1}\frac{1}{\sqrt{\tilde{a}}}ds<\infty$. See  \cite[{\bf (A1)} and {\bf(A3)}]{SWY}; these are assumptions on $\tilde{a}$ and $\tilde{b}$ needed to prove \cite[Corollary 3.1]{SWY}. Assuming the existence of a QSD $\tilde{\mu}_{\ep}$ with density $\tilde{u}_{\ep}$ (whose regularity is guaranteed by the elliptic regularity) and extinction rate $\tilde{\la}_{\ep}$, the authors show that $\limsup_{x\to 0}\tilde{u}_{\ep}(x)<\infty$.
The proof given in \cite{SWY} is analytic and utilizes the eigen-equation satisfied by $\tilde{u}_{\ep}$, namely, $\frac{\ep^{2}}{2}(\tilde{a}\tilde{u}_{\ep})''-(\tilde{b}\tilde{u}_{\ep})'=-\tilde{\la}_{\ep}\tilde{u}_{\ep}$. In particular, the proof is insensitive to the form of the noise term in the SDE \eqref{sde-in-earlier-work}. It is crucial to mention that the above result is pointwise in $\ep$. 

In our case, Lemma \ref{qsd-existence-uniqueness} says that $u_{\ep}$ obeys $\LL^{*}_{\ep} u_{\ep}=-\la_{\ep,1} u_{\ep}$, that is, $\frac{1}{2}(\al_{\ep}u_{\ep})''-(bu_{\ep})'=-\la_{\ep,1} u_{\ep}$. It is easy to see from {\bf(H)}(1)-(3) that $b$ and $\frac{\al_{\ep}}{2}$($=\frac{1}{2}(\ep^{2}a+\si)$) satisfy conditions for $\tilde{b}$ and $\frac{\ep^{2}}{2}\tilde{a}$. As a result, we conclude $\limsup_{x\to 0}u_{\ep}(x)<\infty$.
\end{proof}


\subsection{SDE with only environmental noise}\label{subsec-sde-environmental}

In this subsection, we consider the SDE \eqref{eqn-diffusion-extrinsic}, whose solutions generate the diffusion process $X_{t}^{0}$. Recall from \eqref{gibbs-environmental-only} that 
$u_0^G=\frac{1}{\si^2} e^{2\int_1^{\bullet}\frac{b}{\si^2}ds}$ in $(0,\infty)$, which is a non-normalized and not necessarily integrable Gibbs density associated with \eqref{eqn-diffusion-extrinsic} or $X_{t}^{0}$.

The following lemma addresses the integrability/non-integrability of $u_0^G$. Recall that the external Lyapunov exponent $\Lambda_{0}$ is defined in \eqref{e:lambda0}.

\begin{lem}\label{lem-integral-gibbs-zero-demographic}
Assume ${\bf(H)}$. Then, $u_{0}^{G}\in L^{1}((1,\infty))$, and $u_{0}^{G}\in L^{1}((0,1))$ if  $\La_0>0$ and $u_{0}^{G}\notin L^{1}((0,1))$ if  $\La_0<0$.
\end{lem}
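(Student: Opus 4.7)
The plan is to examine integrability at $\infty$ and integrability at $0$ separately. Integrability at $\infty$ follows from the dissipativity built into {\bf(H)}, while the dichotomy at $0$ is driven by a leading-order asymptotic expansion of $u_0^G$ that exposes the threshold role of $\La_0$.

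\emph{Integrability at $\infty$.} I would set $F(x) := \int_1^x (b/\si^2)\,ds$ so that $u_0^G = \si^{-2}e^{2F}$ and, by direct differentiation, $(e^{2F})' = 2b\,u_0^G$. By {\bf(H)}(1) there exist $c>0$ and $x_0\geq 1$ with $b(x)\leq -c$ for $x\geq x_0$, and by {\bf(H)}(4) we have $b/\si^2<0$ for large $x$, so $(e^{2F})'\leq 0$ near $\infty$. Hence $u_0^G = (e^{2F})'/(2b) \leq -(e^{2F})'/(2c)$ on $[x_0,\infty)$, which integrates to $\int_{x_0}^R u_0^G\,dx \leq e^{2F(x_0)}/(2c)$ uniformly in $R$. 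Combined with the continuity of $u_0^G$ on $[1,x_0]$, this yields $u_0^G\in L^1((1,\infty))$.

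\emph{Integrability at $0$.} The target is the sharp leading behavior $u_0^G(x) \asymp x^{2\La_0/|\si'(0)|^2-1}$ as $x\to 0^+$, from which the dichotomy is immediate because $x^\ga\in L^1((0,1))$ iff $\ga>-1$. To derive this, I would use the Taylor expansions $b(x)=b'(0)x+O(x^2)$ and $\si^2(x)=|\si'(0)|^2 x^2+O(x^3)$ from {\bf(H)}(1),(2) to obtain
\begin{equation*}
\frac{b(x)}{\si^2(x)} = \frac{b'(0)}{|\si'(0)|^2}\cdot\frac{1}{x} + g(x),
\end{equation*}
where $g$ is bounded in a neighborhood of $0$. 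Integrating yields $\int_1^x (b/\si^2)\,ds = \frac{b'(0)}{|\si'(0)|^2}\ln x + O(1)$ as $x\to 0^+$. Exponentiating and multiplying by $\si^{-2}(x)\asymp x^{-2}$ gives $u_0^G(x)\asymp x^{2b'(0)/|\si'(0)|^2-2}$, and the rewriting $2b'(0)/|\si'(0)|^2-2 = 2\La_0/|\si'(0)|^2-1$ (via $\La_0 = b'(0)-|\si'(0)|^2/2$) puts the exponent in the desired form.

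The only mild subtlety is checking that the $O(1)$ remainder $g$ in the expansion of $b/\si^2$ contributes only a bounded multiplicative factor upon exponentiation, which is immediate as $g$ is integrable on any neighborhood of $0$. No real obstacle arises — the lemma is an elementary computation, but its significance lies in pinpointing the precise exponent governed by $\La_0$, which is what will drive the concentration of $u_\ep$ near $0$ in the later sections.
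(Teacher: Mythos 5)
Your proof is correct, and both halves deliver the result, but the argument at $\infty$ takes a genuinely different route from the paper's. For integrability near $0$ you and the paper do the same thing: expand $b/\si^2\sim \frac{b'(0)}{|\si'(0)|^2 x}$, integrate, and read off the exponent $\ga=2b'(0)/|\si'(0)|^2-2$ whose sign relative to $-1$ is exactly the sign of $\La_0$. For integrability near $\infty$, the paper first shows $f'\leq b/\si^2$ (where $u_0^G=e^f$) using the condition $\limsup_{x\to\infty}\frac{\si^2}{|b|}\frac{|(\si^2)'|}{\si^2}=0$ from {\bf(H)}(4), and then invokes $\lim_{x\to\infty}\frac{xb}{\si^2}=-\infty$ to obtain a polynomial decay bound $u_0^G\lesssim (x/x_2)^{-M}$ with $M\gg1$. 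Your approach is more economical: writing $(e^{2F})'=2b\,u_0^G$ and using only $\limsup_{x\to\infty}b<0$ from {\bf(H)}(1), you control $u_0^G$ by the exact derivative $|(e^{2F})'|/(2c)$ whose integral telescopes. This avoids the quantitative growth conditions of {\bf(H)}(4) entirely and is the cleaner argument. One small attribution slip: $b/\si^2<0$ for large $x$ follows from $\limsup_{x\to\infty}b<0$ in {\bf(H)}(1) (together with $\si^2>0$), not from {\bf(H)}(4); this does not affect the validity.
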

\begin{proof}
We first prove $u_{0}^{G}\in L^{1}((1,\infty))$. Note that $u^G_0=e^{f}$, where $f=2\int_1^{\bullet}\frac{b}{\si^2}ds-\ln \si^2$. Clearly, $
f'=\frac{2b}{\si^2}-\frac{(\si^2)'}{\si^2}$. Since $\limsup_{x\to \infty}\frac{\si^2}{|b|}\frac{|(\si^2)'|}{\si^2}=0$ (by {\bf (H)}(4)) and $b(x)<0$ for $x\gg1$, there is $x_1\gg1$ such that $f'\leq \frac{b}{\si^2}$ in $[x_1,\infty)$. Thus, $u_0^G(x)\leq \exp\left\{f(x_1)+\int_{x_1}^x \frac{b}{\si^2}ds\right\}$ for all $x\geq x_{1}$. Thanks to $\lim_{x\to \infty}\frac{xb(x)}{\si^2(x)}=-\infty$ (by {\bf (H)}(4)), we find $M\gg 1$ and $x_2>x_1$ such that $\frac{xb(x)}{\si^2(x)}\leq -M$ for $x\geq x_{2}$. It follows that
\begin{equation*}
    \begin{split}
        u_0^G(x)&\leq \exp\left\{f(x_1)+\int_{x_1}^{x_2} \frac{b}{\si^2}ds-M\int_{x_2}^x \frac{1}{s}ds\right\}\\
        &=\exp\left\{f(x_1)+\int_{x_1}^{x_2} \frac{b}{\si^2}ds\right\}\left(\frac{x}{x_2}\right)^{-M},\quad \forall x\geq x_2. 
    \end{split}
\end{equation*}
The integrability of $u_0^G$ in $(1,\infty)$ follows. 

Thanks to {\bf (H)} (1)-(2), the Taylor expansions of $b$ and $\si$ near $0$ give $u_0^{G}(x)\approx x^{\ga}$ in the vicinity of $0$, where $\ga=\frac{2b'(0)}{|\si'(0)|^2}-2$. It follows the integrability (resp. non-integrability) of $u_0^G$ in $(0,1)$ when $\La_0>0$ (resp. $\La_0<0$). 
\end{proof}

The next result concerning the global dynamics of $X_{t}^{0}$ is classical (see e.g. \cite{Kallenberg02, EHS15, HN18}). 

\begin{prop}\label{prop-existence-uniqueness-sd}
Assume {\bf(H)}. Then, for any $\mu\in \PP((0,\infty))$ and $t\geq 0$, there holds
    $X^{0}_t>0$ $\P_{\mu}$-a.e. Furthermore, the following hold.
\begin{itemize}
    \item[\rm(1)] If $\La_0<0$, then $\de_0$ is the unique stationary distribution of $X_{t}^{0}$. Moreover, for any $\mu\in\PP((0,\infty))$, $\lim_{t\to\infty}X^0_{t}=0$ $\P_{\mu}$-a.e.
    
    \item[\rm(2)] If $\La_0>0$, then $X_{t}^{0}$ admits a unique stationary distribution $\mu_{0}$ with a positive density $u_{0}\in C^{2}((0,\infty))$ given by the normalized Gibbs density, that is, $u_0=\frac{u_0^G}{\|u_0^G\|_{L^1((0,\infty))}}$. Moreover, there is some $\ga>0$ such that for any $\mu\in\PP((0,\infty))$, there exists $C=C(\mu)>0$ such that
$$
\left\|\P_{\mu}^{0}\left[X_{t}^{0}\in\bullet\right]-\mu_{0}\right\|_{TV}\leq Ce^{-\ga t},\quad\forall t\geq0.
$$
\end{itemize}
\end{prop}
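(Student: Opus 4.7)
The plan is to treat the three assertions separately. For the non-attainment of $0$, I would apply It\^{o}'s formula to $Y_t := \ln X_t^0$ on $\{X_t^0 > 0\}$:
\[
dY_t = \tilde g(X_t^0)\,dt + \tilde h(X_t^0)\,dB_t, \quad \tilde g(x) := \tfrac{b(x)}{x} - \tfrac{\si^2(x)}{2x^2}, \quad \tilde h(x) := \tfrac{\si(x)}{x}.
\]
By {\bf (H)}(1)--(2) and Taylor's expansion, $\tilde g$ and $\tilde h$ extend continuously to $[0,\infty)$ with $\tilde g(0) = \La_0$ and $\tilde h(0) = \si'(0)$, so both are locally bounded on $[0, R]$ for every $R > 0$. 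A standard localization/martingale-inequality argument (equivalently, Feller's test on the Lamperti-transformed process) then rules out $Y_t \to -\infty$ in finite time, so $X_t^0 > 0$ for all $t \geq 0$, $\P_{\mu}$-a.s.

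For case $\La_0 < 0$, I would pick $\al \in (0, 2|\La_0|/|\si'(0)|^2)$ and set $V(x) := x^{\al}$. A direct computation
\[
\LL_0 V(x) = \al x^{\al}\Big(\tfrac{b(x)}{x} + \tfrac{\al - 1}{2}\tfrac{\si^2(x)}{x^2}\Big)
\]
combined with Taylor expansion at $0$ gives $\LL_0 V(x)/V(x) \to \al(\La_0 + \tfrac{\al}{2}|\si'(0)|^2) < 0$ as $x \to 0^+$, and {\bf (H)}(1) produces negativity for large $x$. Hence $\LL_0 V \leq -\ka V$ on some $(0, R]$ and $V(X^0_{t\wedge\tau_R}) e^{\ka(t\wedge\tau_R)}$ is a positive supermartingale. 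Coupled with tightness from {\bf (H)}(1),(4), which prevents persistence of large values of $X_t^0$, this forces $X_t^0 \to 0$ $\P_{\mu}$-a.s. Any invariant probability must then be $\de_0$.

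For case $\La_0 > 0$, a direct differentiation gives $(\si^2 u_0^G)' = 2 b u_0^G$, hence $\LL^*_0 u_0^G = 0$; together with Lemma \ref{lem-integral-gibbs-zero-demographic}, this identifies $\mu_0$ with density $u_0 = u_0^G/\|u_0^G\|_{L^1((0,\infty))}$ as an invariant probability. For uniqueness and exponential TV convergence, I would apply Harris' theorem with Lyapunov function $V(x) := x + x^{-\be}$ where $\be \in (0, 2\La_0/|\si'(0)|^2)$: the same Taylor computation as above (but with the opposite sign, thanks to $\La_0 > 0$) yields $\LL_0 x^{-\be} \leq -\ka x^{-\be}$ near $0$, while {\bf (H)}(1),(4) yield $\LL_0 x \leq -\ka x + D$ for large $x$. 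Non-degeneracy of $\si$ on $(0,\infty)$ plus continuity of the coefficients supplies a minorization on any compact set $K \stst (0,\infty)$, so Harris' theorem delivers the exponential rate $\ga > 0$ and the $\mu$-dependent constant $C(\mu)$.

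The main obstacle I anticipate is case $\La_0 > 0$: constructing a single Lyapunov function that simultaneously controls the entrance-like behavior at $0$ (quantified by $\La_0 > 0$) and the dissipativity at $\infty$, with $\be$ chosen in a $\La_0$-dependent range, and then verifying the small-set minorization with constants uniform enough to produce a single exponential rate $\ga$. The non-attainment result and the extinction argument in case $\La_0 < 0$ reduce, by contrast, to standard one-dimensional boundary classification and elementary Lyapunov techniques.
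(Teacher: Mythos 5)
The paper states Proposition \ref{prop-existence-uniqueness-sd} as a classical fact (citing \cite{Kallenberg02,EHS15,HN18}) and gives no proof, so there is no paper argument to compare against; I assess your proposal on its own merits. Your non-attainment step via the Lamperti transform is correct, with the small caveat that what you need (and what {\bf (H)}(1)--(2) supply) is boundedness of $\tilde g,\tilde h$ on half-lines $(-\infty,M]$ in the $y$-variable, not merely on bounded intervals. The $\La_0<0$ extinction argument is also essentially right, though the one-dimensional scale-function criterion $s(0+)>-\infty$, $s(\infty)=\infty$ delivers $X_t^0\to0$ $\P_{\mu}$-a.s.\ more cleanly than the supermartingale-plus-tightness sketch; and your verification that $\LL_0^*u_0^G=0$ together with Lemma \ref{lem-integral-gibbs-zero-demographic} correctly identifies $\mu_0$.

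The genuine gap is in the drift condition for $\La_0>0$. You assert that {\bf (H)}(1),(4) yield $\LL_0 x=b(x)\le-\ka x+D$ for large $x$, but they do not: {\bf (H)}(1) gives only $\limsup_{x\to\infty}b(x)<0$, and the ratio limits in {\bf (H)}(4) (e.g.\ $xb/\si^2\to-\infty$, $b/|\si|\to-\infty$) are all consistent with $b\equiv-c$ for large $x$ once $\si$ decays fast enough, in which case $b(x)\le-\ka x+D$ fails for every $\ka>0$. The fix is to replace the linear part of your Lyapunov function with one adapted to the generator's dissipativity, for instance
\[
V:=\exp\Bigl(-2\al\int_1^{\bullet}\frac{b}{\si^2}\,ds\Bigr)\quad\text{for small }\al\in(0,1).
\]
A direct computation, using precisely the derivative bounds in {\bf (H)}(4), gives $\LL_0 V=\bigl(2\al(\al-1)+o(1)\bigr)\tfrac{b^2}{\si^2}\,V$ as $x\to\infty$, and since $b^2/\si^2\to\infty$ this gives $\LL_0 V\le-\ka V$ near $\infty$ for any $\ka$. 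Near $0$ this same $V$ behaves like $x^{-\be}$ with $\be=2\al b'(0)/|\si'(0)|^2$, so your Taylor computation there applies verbatim and forces $\al<1-|\si'(0)|^2/(2b'(0))$, equivalently $\be<2\La_0/|\si'(0)|^2$, which is exactly the range you proposed. With this single $V$ and the compact-set minorization you sketched, Harris' theorem yields the claimed geometric ergodicity.
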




Denote by $\LL_{0}^{*}$ the Fokker-Planck operator associated with $X_{t}^{0}$, that is, 
$$
\LL^*_0\phi:=\frac{1}{2}(\si^2 \phi)''-(b \phi)',\quad \forall \phi\in C^2((0,\infty)).
$$
We need the following uniqueness result about solutions of the stationary Fokker-Planck equation $\LL^*_0 u=0$.

\begin{lem}\label{lem-unique-L^1-soln}
Assume {\bf (H)}{\rm(1)}-{\rm(2)} and $\La_0>0$. If $u\in C^2((0,\infty))\cap L^1((0,1))$ solves $\LL^*_0 u=0$, then $u=Cu^G_0$ for some $C\in\R$.
\end{lem}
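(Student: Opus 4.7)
The plan is to integrate the stationary Fokker--Planck equation once to reduce it to a first--order linear ODE, write down the two--parameter family of classical solutions explicitly in terms of $u_0^G$, and then use the hypothesis $u\in L^1((0,1))$ together with the assumption $\Lambda_0>0$ to show that only the $u_0^G$ direction is integrable near $0$.

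\medskip

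\textbf{Step 1: reduce to a first--order ODE.} Since $u\in C^2((0,\infty))$ solves $\LL_0^* u=0$, i.e.\ $\bigl(\tfrac12(\si^2 u)'-bu\bigr)'=0$ on $(0,\infty)$, there exists $C_1\in\R$ with
\begin{equation*}
\tfrac12(\si^2 u)'(x)-b(x)u(x)=C_1,\qquad x\in(0,\infty).
\end{equation*}
Setting $v:=\si^2 u$, this reads $v'-\tfrac{2b}{\si^2}v=2C_1$. The integrating factor is $\exp\bigl(-2\int_1^x\tfrac{b}{\si^2}ds\bigr)$, which is precisely $\si^2 u_0^G$ divided by the integrand factor; integrating from $1$ to $x$ and dividing by $\si^2$ we obtain the representation
\begin{equation*}
u(x)=A\,u_0^G(x)+2C_1\,u_0^G(x)\int_1^{x}e^{-2\int_1^{t}\frac{b}{\si^2}ds}\,dt,\qquad x\in(0,\infty),
\end{equation*}
where $A=v(1)=\si^2(1)u(1)\in\R$.

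\medskip

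\textbf{Step 2: extract the asymptotics at $0$.} Using {\bf(H)}(1)--(2), Taylor expansion at $0$ gives $b(s)=b'(0)s+o(s)$ and $\si^2(s)=|\si'(0)|^2s^2+o(s^2)$, so $\tfrac{b}{\si^2}(s)\sim\tfrac{\al}{s}$ as $s\to0^+$ with $\al:=\tfrac{b'(0)}{|\si'(0)|^2}$. Since $\Lambda_0>0$, we have $2\al>1$. A routine computation then yields, as $x\to 0^+$,
\begin{equation*}
u_0^G(x)\asymp x^{2\al-2},\qquad e^{-2\int_1^{x}\frac{b}{\si^2}ds}\asymp x^{-2\al},
\end{equation*}
so that (again since $2\al>1$)
\begin{equation*}
\int_1^{x}e^{-2\int_1^{t}\frac{b}{\si^2}ds}\,dt=-\int_x^{1}e^{-2\int_1^{t}\frac{b}{\si^2}ds}\,dt\;\asymp\;-\tfrac{1}{2\al-1}\,x^{1-2\al}.
\end{equation*}
Multiplying, the second term of the representation satisfies
\begin{equation*}
u_0^G(x)\int_1^{x}e^{-2\int_1^{t}\frac{b}{\si^2}ds}\,dt\;\asymp\;-\tfrac{c}{2\al-1}\,x^{-1}\quad\text{as }x\to 0^+
\end{equation*}
for some $c>0$, whereas the first term $A\,u_0^G(x)\asymp x^{2\al-2}$ is integrable near $0$ by $2\al-2>-1$ (this is exactly Lemma \ref{lem-integral-gibbs-zero-demographic} for $\Lambda_0>0$).

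\medskip

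\textbf{Step 3: conclude $C_1=0$.} The two asymptotic orders $x^{2\al-2}$ and $x^{-1}$ differ (the first is strictly larger near $0$, as $2\al-2>-1$), so the $L^1((0,1))$--summability of $u$ forces the $x^{-1}$ contribution to vanish, i.e.\ $C_1=0$. Consequently $u=A\,u_0^G$, proving the lemma with $C=A$.

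\medskip

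The only non--routine step is the asymptotic matching in Step 2: one has to be a bit careful that the $o$--terms in the Taylor expansions of $b$ and $\si^2$ only perturb the leading $x^{-1}$ singularity by lower--order, integrable terms, so that non--integrability of the second summand is not accidentally cancelled. Once this is pinned down, the hypothesis $\Lambda_0>0$ enters precisely to guarantee $2\al>1$ (which makes $u_0^G$ integrable near $0$ and the complementary solution non--integrable), and the uniqueness follows.
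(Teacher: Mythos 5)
Your proof is correct and follows essentially the same route as the paper: the representation in Step 1 is exactly the paper's decomposition $u=C_1\,\RN{1}+C_2\,u_0^G$ (your second summand telescopes to $\frac{1}{\si^2(x)}\int_1^x e^{2\int_t^x b/\si^2\,ds}\,dt$), and the conclusion likewise rests on $u_0^G\in L^1((0,1))$ versus non-integrability of the complementary solution when $\Lambda_0>0$. The only point to pin down is the one you flag yourself: since $b$ is merely $C^1$, the exact asymptotics $u_0^G(x)\asymp x^{2\al-2}$ need not hold, and the paper instead uses one-sided bounds $b(x)\le(1+\de)b'(0)x$, $\si^2(x)\ge(1-\de)|\si'(0)|^2x^2$ to get $-\RN{1}(x)\gtrsim \frac{1}{x}-Cx^{\ka-2}$ with $\ka>1$, which suffices.
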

\begin{proof}
Since $\LL^*_0 u=\frac{1}{2}(\si^2 u)''-(bu)'=0$, we integrate to find $C_1\in \R$ such that $\frac{1}{2}(\si^2 u)'-bu=\frac{1}{2}C_1$, which is rewritten as $(\si^2 u)'-\frac{2b}{\si^2}(\si^2 u)=C_1$. Applying the variation of constants formula yields the existence of $C_2\in\R$ such that 
\begin{equation*}
u(x)=\frac{C_1}{\si^2}\int_1^x e^{2\int_y^x\frac{b}{\si^2} ds}dy+\frac{C_2}{\si^2}e^{2\int_1^x\frac{b}{\si^2} ds}=:C_1 \RN{1}(x)+C_2 u_0^G(x),\quad\forall x\in (0,\infty).
\end{equation*}

By Lemma \ref{lem-integral-gibbs-zero-demographic}, $u_0^G\in L^1((0,1))$. We show that $\RN{1}$ is not integrable near $0$. Then, the assumption $u\in L^{1}((0,1))$ implies that $C_1=0$, leading to the conclusion. 

Let $0<\de\ll 1$ and set $\ka:=\frac{2(1+\de)b'(0)}{(1-\de)|\si'(0)|^2}$. Note that $\ka>1$ due to $\La_0>0$. By {\bf (H)}(1)-(2), there exists $x_*\in(0,1)$ such that $b(x)\leq (1+\de)b'(0)x$ and $1-\de\leq \frac{\si^2(x)}{|\si'(0)|^2 x^2}\leq 1+\de$ for all $x\in (0,x_*)$. Then, we derive from
$$
\int_y^{x}\frac{b}{\si^2}ds\geq \int_{y}^x \frac{(1+\de)b'(0)}{(1-\de)|\si'(0)|^2 s}ds=\frac{\ka}{2}\ln \left(\frac{x}{y}\right),\quad \forall 0<x<y<x_*
$$
that 
\begin{equation*}
    \begin{split}
       \int_{x}^{x_*}e^{2\int_{y}^{x}\frac{b}{\si^2}ds}dy\geq \int_{x}^{x_*} \left(\frac{x}{y}\right)^{\ka}dy=\frac{x^{\ka}}{\ka-1}\left(\frac{1}{x^{\ka-1}}-\frac{1}{x_*^{\ka-1}}\right),\quad \forall x\in (0,x_*).
    \end{split}
\end{equation*}
From which, it follows that 
\begin{equation*}
    \begin{split}
    -\RN{1}(x)
    \geq \frac{1}{\si^2(x)}\int^{x_*}_x e^{2\int_{y}^{x}\frac{b}{\si^2}ds}dy&\geq\frac{1}{(1+\de)|\si' (0)|^2x^2}\frac{x^{\ka}}{\ka-1}\left(\frac{1}{x^{\ka-1}}-\frac{1}{x_*^{\ka-1}}\right)\\
    &=\frac{1}{(1+\de)(\ka-1)|\si'(0)|^2}\left(\frac{1}{x}-\frac{x^{\ka-2}}{x_*^{\ka-1}}\right),\quad\forall x\in (0,x_*).
    \end{split}
\end{equation*}
Since $\RN{1}<0$ in $(0,x_{*})$, the non-integrability of $\RN{1}$ near $0$ follows. This completes the proof.
\end{proof}

Denote by $\LL_{0}$ the generator associated with $X_{t}^{0}$, that is, 
$$
\LL_0 \phi:=\frac{\si^2}{2} \phi''+b\phi',\quad \forall \phi\in C^2((0,\infty)).
$$
The generator $\LL_{0}$ extends to a self-adjoint operator in $L^{2}(u_{0}^{G}):=L^{2}((0,\infty);u_{0}^{G}dx)$. The rigorous formalism can be done using quadratic forms as it is done for $\LL_{\ep}$ in Subsection \ref{subsec-generator-spectrum-dynamics}. We end up this subsection with some discussion regarding the spectral properties of $\LL_{0}$. While not needed in the sequel, this will provide evidence that the spectrum of $\LL_{\ep}$ (in particular, $\la_{\ep,1}$ and $\la_{\ep,2}$) behaves in a non-trivial way as $\ep\to0$.  

\begin{rem}\label{rem-troubles-from-spectrum}
Note that the coefficient of the second order term of $\LL_{\ep}$ vanishes like $\frac{\ep^{2}a'(0)}{2}x$ as $x\to0$, while that of $\LL_{0}$ vanishes like $\frac{(\si'(0))^{2}}{2}x^{2}$ as $x\to0$. This singular limit of $\LL_{\ep}$ as $\ep\to0$ accounts for the non-trivial behaviour of the spectrum of $\LL_{\ep}$ as $\ep\to0$. Below are some consequences.

\begin{itemize}
\item[\rm(1)] Unlike $\LL_{\ep}$, $\si(\LL_0)$ -- the spectrum of $\LL_{0}$ -- is not purely discrete. To see this, we modify calculations in Subsection \ref{subsec-schrodinger-eqn} to convert $\LL_{0}$ to an unitarily equivalent Schr\"{o}dinger operator. Since $\frac{1}{\si}$ is non-integrable near $0$, we consider the change of variable
$$
y=\xi_{0}(x):=\int_1^x \frac{1}{\si}ds,\quad x\in (0,\infty).
$$
Clearly, $\xi_{0}$ is increasing and satisfies $\xi_{0}(0+)=-\infty$.
Set $y_{0,\infty}:=\xi_{0}(\infty)$. Then, $\xi_{0}:(0,\infty)\to(-\infty,y_{0,\infty})$ is invertible. Its inverse is denoted by $\xi_{0}^{-1}$. Then, $Y^{0}_t:=\xi_{0}(X^{0}_t)$ solves
\begin{equation}\label{SDE-simplest-noise-environmental}
    d Y^{0}_t=q_{0}(Y^{0}_t) dt+dW_t,
\end{equation}
where $q_{0}:=\left(\frac{b}{\si}-\frac{\si'}{2}\right)\circ\xi_{0}^{-1}$. Set $v_{0}^{G}:=(u_{0}^{G}\si)\circ\xi_{0}^{-1}$ and $L^{2}(v_{0}^{G}):=L^{2}((-\infty,y_{0,\infty}),v_{0}^{G}dy)$. Note that
$U_{0}: L^{2}(u_{0}^{G})\to L^{2}(v_{0}^{G})$, $f\mapsto f\circ\xi^{-1}_{0}$ and $\tilde{U}_{0}:L^{2}(v_{0}^{G})\to L^{2}((-\infty,y_{0,\infty}))$, $f\mapsto f\sqrt{v_{0}^{G}}$ are unitary transforms. The operator $\LL^{S}_{0}:=\tilde{U}_{0}U_{0}\LL_{0}U_{0}^{-1}\tilde{U}_{0}^{-1}$ turns out to be a Schr\"{o}dinger operator on $(-\infty,y_{0,\infty})$ and is given by
    \begin{equation*}
\LL_{0}^{S}:=\frac{1}{2}\frac{d^{2}}{dy^{2}}-\frac{1}{2}\left(q^2_{0}(y)+q'_{0}(y)\right)\quad\text{in}\quad L^{2}((-\infty,y_{0,\infty})).
\end{equation*}
It is easy to check that the potential $W_{0}:= \frac{1}{2}\left(q^2_{0}+q'_{0}\right)$ of $-\LL_{0}^{S}$ satisfies $W_{0}(-\infty)\in\R$ and $W_{0}(y_{0,\infty}-)=\infty$. Hence, the spectrum of $\LL_{0}^{S}$ is not purely discrete; neither is the spectrum of $\LL_{0}$.

\item[\rm(2)] When $\La_{0}<0$, the bottom of the spectrum of $\LL_0$ is positive, namely, $\inf\si(\LL_{0})>0$. Clearly, $\inf\si(\LL_{0})\geq0$ as $\LL_{0}$ is self-adjoint and non-negative. To see $0\notin\si(\LL_{0})$, we note that two linearly independent solutions of $\LL_{0}u=0$ are given by $u_{1}\equiv1$ and $u_{2}=\int_{1}^{\bullet}e^{-\int_{1}^{y}\frac{2b}{\si^{2}}ds}dy$. It is elementary to verify that $\lim_{x\to0^{+}}u_{2}(x)$ exists and is negative. Since $u_{0}^{G}\notin L^{1}((0,1))$ in this case by Lemma \ref{lem-integral-gibbs-zero-demographic}, we conclude $u_{1},u_{2}\notin L^{2}((0,1),u_{0}^{G}dx)$. Moreover, it is not hard to see that $u_1\in L^{2}((1,\infty),u_{0}^{G}dx)$ and $u_{2}\notin L^{2}((1,\infty),u_{0}^{G}dx)$. Hence, $C_{1}u_{1}+C_{2}u_{2}\notin L^{2}(u_{0}^{G})$ for any $(C_{1},C_{2})\neq(0,0)$, implying $0\notin\si(\LL_{0})$.

Theorem \ref{thm-bounds-eigenvalues} gives $\lim_{\ep\to0}\la_{\ep,1}=0$, saying that the limit of the principal eigenvalue $\la_{\ep,1}$ of $\LL_{\ep}$ is not an eigenvalue, but a generalized eigenvalue of $\LL_{0}$.

\item[\rm(3)] When $\La_{0}>0$, $0=\inf\si(\LL_{0})$ is a simple eigenvalue with constant eigenfunctions. However, obtaining information about the bottom of the rest of the spectrum, namely, $\inf\si(\LL_{0})\setminus\{0\}$, is difficult. Given the complicated structure of $\si(\LL_{0})$, it is even hard to determine whether $\inf\si(\LL_{0})\setminus\{0\}$ is an eigenvalue. This is what prevents us from establishing a more precise asymptotic of $\la_{\ep,2}$ beyond what we were able to show in Theorem \ref{thm-bounds-eigenvalues} (2).
\end{itemize}
\end{rem}


\section{\bf Tightness and concentration of QSDs} \label{s:tight}

In this section, we study the tightness and concentration of $\mu_{\ep}$ as $\ep\to 0$, and prove Theorem \ref{thm-concentration} in particular. We study concentration properties of $\mu_{\ep}$ near $\infty$ and $0$ in Subsections \ref{subsec-near-infinity} and \ref{subsec-near-0}, respectively, leading to the tightness of $\{\mu_{\ep}\}_{\ep}$. Theorem \ref{thm-concentration} is proven in Subsection \ref{subsec-proof}.

\subsection{Concentration near infinity}\label{subsec-near-infinity}

We prove the following result addressing the concentration of $\mu_{\ep}$ near $\infty$. The proof mainly uses techniques on the basis of Lyapunov-type functions.

\begin{prop}\label{prop-concentration-infty}
Assume {\bf (H)}. Then, $\lim_{x\to\infty}\sup_{\ep}\mu_{\ep}((x,\infty))=0$.
\end{prop}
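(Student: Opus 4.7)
The plan is to run a Foster--Lyapunov / Hasminski\u{\i}-type argument, tailored to the QSD via the eigenvalue identity. The starting point is that testing the eigen-equation $\LL_\ep^* u_\ep = -\la_{\ep,1} u_\ep$ (Lemma \ref{qsd-existence-uniqueness}) against a suitable test function $V$ vanishing at $0$ and integrating by parts on $(\de,M)$ yields, after sending $\de\to 0^+$ and $M\to\infty$, the identity
\begin{equation*}
\int_0^\infty \LL_\ep V\, d\mu_\ep = -\la_{\ep,1}\int_0^\infty V\, d\mu_\ep
\end{equation*}
for any bounded $V\in C^2([0,\infty))$ with $V(0)=0$ and adequate decay of $V'$ at infinity. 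The boundary contributions at $\de\to 0^+$ vanish because $V(0)=0$, $\al_\ep(0)=0$, and $u_\ep$ is locally bounded near $0$ by Lemma \ref{bound-u_ep-near-0}; the contributions at $M\to\infty$ vanish from $u_\ep\in L^1((0,\infty))$ together with adequate decay of $\al_\ep u_\ep$ and $bu_\ep$, inherited from the coming-down-from-infinity structure of the QSD construction in \cite{CCLMMS09}.

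The core of the proof will be to build a bounded nonnegative $V\in C^2([0,\infty))$ with $V(0)=0$, a continuous function $\psi:[0,\infty)\to[0,\infty)$ with $\psi(x)\to\infty$ as $x\to\infty$, and positive constants $R,C_1,C_2$ (independent of $\ep$ for small $\ep$), such that
\begin{equation*}
\LL_\ep V(x)\leq C_1\mathbbm{1}_{[0,R]}(x)-C_2\psi(x)\mathbbm{1}_{(R,\infty)}(x),\quad\forall x\in[0,\infty).
\end{equation*}
Plugging into the identity and using the boundedness of $V$ together with $\limsup_{\ep\to 0}\la_{\ep,1}<\infty$ from Lemma \ref{lem-upper-bound-eigenvalues}, one gets $\sup_\ep\int\psi\, d\mu_\ep<\infty$, and Markov's inequality then yields
\begin{equation*}
\sup_\ep\mu_\ep((x,\infty))\leq\Bigl(\inf_{y\geq x}\psi(y)\Bigr)^{-1}\sup_\ep\int\psi\, d\mu_\ep\longrightarrow 0\quad\text{as } x\to\infty,
\end{equation*}
which is the desired conclusion.

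The hard step will be the construction of $V$. The guiding observation is that under {\bf(H)} one has $b(x)\to-\infty$ (consequence of $b/|\si|\to-\infty$ together with $|\si|$ bounded away from $0$ for large $x$), and $\al_\ep\leq(1+C)\si^2$ for $0<\ep<1$ by $\limsup a/\si^2<\infty$, so the drift contribution $bV'$ can be arranged to dominate the diffusion contribution $\tfrac{1}{2}\al_\ep V''$ uniformly in small $\ep$. A first candidate is $V(x)=\int_0^x g(s)\, ds$ with a smooth positive $g\in L^1((0,\infty))$ calibrated so that $|b|g$ tends to infinity; alternatively, a smoothed truncation of $V(x)=x$ at scale $n\to\infty$ can be used, with the error on the truncation zone controlled by the growth conditions on $(\si^2)''$ and $b'$ in {\bf(H)}(4) together with the integrability of $u_\ep$. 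The main obstacle is the delicate trade-off between the boundedness of $V$ and the unboundedness of $\psi$ dictated by the asymptotic balance of $b$, $\si$, $a$ in {\bf(H)}(4), in particular the need to simultaneously exploit $xb/\si^2\to-\infty$ to produce divergence of $\psi$ and kill the $\ep^2 a$ term uniformly; once $V$ is in hand, the remainder is routine.
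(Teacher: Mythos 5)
Your overall strategy is the right one and matches the paper's: test the eigen-equation $\LL_\ep^*u_\ep=-\la_{\ep,1}u_\ep$ against a Lyapunov-type function, extract a uniform-in-$\ep$ bound, and finish with a Markov-type estimate. The gap is in the choice of Lyapunov function. Your ``guiding observation'' that $b(x)\to-\infty$ does not follow from {\bf(H)}, because nothing in {\bf(H)} keeps $|\si|$ away from zero at infinity. For instance, take $b\equiv-1$, $\si(x)=1/x$, $a(x)=1/x^2$ for $x$ large (glued smoothly near $0$ so that {\bf(H)}(1)--(3) hold); then $b/|\si|=-x\to-\infty$, $xb/\si^2=-x^3\to-\infty$, $a/\si^2\equiv1$, and the derivative conditions in {\bf(H)}(4) are satisfied, while $b$ stays bounded. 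In that example no bounded $C^2$ function $V$ can satisfy $\LL_\ep V\le C_1\mathbbm{1}_{[0,R]}-C_2\psi\mathbbm{1}_{(R,\infty)}$ with $\psi\to\infty$: a bounded $V$ of eventual monotone type has $V'\to0$, hence $bV'$ bounded, and a diffusion term $\tfrac12\al_\ep V''\to-\infty$ would force $V\to-\infty$; your alternative of truncating $V(x)=x$ fails for the same reason since $\LL_\ep V=b$ is then bounded.

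The correct choice, and the crucial point you miss, is $V:=-\int_1^{\bullet}\tfrac{b}{\si^2}\,ds$ (the logarithm of the Gibbs density, up to sign and constants). Under {\bf(H)}(4) one gets $\LL_\ep V\le-\tfrac{b^2}{2\si^2}$ on $(N_0,\infty)$ for small $\ep$, and $\tfrac{b^2}{\si^2}=(b/|\si|)^2\to\infty$; this is exactly how $\tfrac{b}{|\si|}\to-\infty$ is converted into a coercive drift. Since this $V$ is unbounded, the paper does not use your identity; it tests against $\zeta_n(V)-(n+1)$ where $\zeta_n$ is flat outside a window and concave on the outer ramp, so that all derivatives of the test function are compactly supported (no boundary contributions) and the truncation error $\tfrac12\al_\ep\zeta_n''(V)|V'|^2$ carries a favorable sign. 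That also sidesteps the boundary-term handling you glossed over: as $\de\to0^+$ you did not control $V(\de)\bigl(\tfrac12\al_\ep u_\ep\bigr)'(\de)$, and as $M\to\infty$ the fact $u_\ep\in L^1$ alone does not give decay of $\al_\ep u_\ep$, $(\al_\ep u_\ep)'$, or $bu_\ep$. Lastly, once the test function is $\le0$, the term $\la_{\ep,1}\int(\zeta_n(V)-(n+1))\,d\mu_\ep$ already has the right sign and can be dropped using only $\la_{\ep,1}>0$, so Lemma \ref{lem-upper-bound-eigenvalues} is not needed here (though invoking it is not wrong).
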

\begin{proof}
Set $V:=-\int_1^{\bullet}\frac{b}{\si^2}ds$ in $(0,\infty)$. Then,  
\begin{equation*}
    \LL_{\ep} V=\frac{\ep^2}{2}\left(\frac{ab(\si^2)'}{\si^4}-\frac{a b'}{\si^2}\right)+\frac{1}{2}\left(\frac{b(\si^2)'}{\si^2}-b'\right)-\frac{b^2}{\si^2}.
\end{equation*}
Thanks to {\bf (H)}(4), we find some $N_0\in\N$ such that $\LL_{\ep}V\leq -\frac{b^2}{2\si^2}$ in $(N_0,\infty)$. As $V(\infty)=\infty$ by {\bf (H)}, there is $n_{0}\gg1$ such that $\{n_{0}\leq V\}\subset(N_{0},\infty)$, and hence,
\begin{equation}\label{LLV-2022-03-10}
     \LL_{\ep}V\leq -\frac{b^2}{2\si^2}\quad\text{in}\quad\{n_0\leq V\}.
\end{equation}

Let $\{\zeta_n\}_{n>n_0}$ be a sequence of smooth and non-decreasing functions on $(0,\infty)$ satisfying
\begin{equation*}
\zeta_n(x)=\begin{cases}
0,& x\in (0,n_0),\\
x,& x\in (n_0+1,n),\\
n+1,& x\in (n+2, \infty),
\end{cases}
\quad\andd\quad \zeta''_n\leq0\text{ on }[n,n+2].
\end{equation*}
In addition, we let $\{\zeta_n\}_{n}$ coincide on $[0,n_0+1]$. 

Due to $V(\infty)=\infty$ and {\bf (H)}(1)-(2), the function $\zeta_n(V)-(n+1)$ is twice continuously differentiable and compactly supported. As $\LL^*_{\ep}u_{\ep}=-\la_{\ep,1}u_{\ep}$ (in the weak sense), we derive 
\begin{equation*}
\begin{split}
0&=\int_0^{\infty}\LL_{\ep}\left[ \zeta_n(V)-(n+1)\right] u_{\ep}dx +\la_{\ep,1}\int_0^{\infty}\left[\zeta_n(V)-(n+1)\right] u_{\ep}dx\\
& 
= \int_0^{\infty}\left[\zeta'_n(V)\LL_{\ep}V+\frac{1}{2}\al_{\ep}\zeta''_n(V) |V'|^2\right] u_{\ep} dx+\la_{\ep,1}\int_0^{\infty}\left[\zeta_n(V)-(n+1)\right] u_{\ep}dx
\\
&\leq\int_0^{\infty}\left[\zeta'_n(V)\LL_{\ep}V+\frac{1}{2}\al_{\ep}\zeta''_n(V) |V'|^2\right] u_{\ep} dx\\
&=\int_{\{n_{0}\leq V\leq n+2\}}\left[\zeta'_n(V)\LL_{\ep}V+\frac{1}{2}\al_{\ep}\zeta''_n(V) |V'|^2\right] u_{\ep} dx
\end{split}
\end{equation*}
where we used $\la_{\ep,1}>0$ and $\zeta_n-( n+1)\leq 0$ in the inequality, and $\zeta_{n}=0$ on $(0,n_{0})$ and $\zeta_{n}'=\zeta_{n}''=0$ on $(n+2,\infty)$ in the last equality. 

We deduce from $\zeta_{n}'=1$ on $[n_0+1, n]$, $\zeta_{n}'\geq0$ and \eqref{LLV-2022-03-10} that
\begin{equation*}
    \begin{split}
        &\int_{\{n_{0}\leq V\leq n+2\}}\zeta'_n(V)\LL_{\ep}Vu_{\ep} dx\\
        &\qquad  =\int_{\{n_0\leq V\leq n_0+1\}\cup\{n\leq V\leq n+2\}}\zeta'_n(V)\LL_{\ep}V u_{\ep}d x+\int_{\{n_0+1\leq V\leq n\}}\LL_{\ep}V u_{\ep}d x\\
        &\qquad\leq\int_{\{n_0+1\leq V\leq n\}}\LL_{\ep}V u_{\ep}d x\\
        &\qquad\leq-\frac{1}{2}\left(\inf_{(n_0+1,\infty)}\frac{b^2}{\si^2}\right) \mu_{\ep}(\{n_0+1\leq V\leq n\}).
    \end{split}
\end{equation*}
As $\zeta_{n}''=0$ on $[n_0+1, n]$ and $\zeta''_n\leq 0$ on $[n,n+2]$, we find
\begin{equation*}
    \begin{split}
        \int_{\{n_{0}\leq V\leq n+2\}}\frac{1}{2}\al_{\ep}\zeta''_n(V) |V'|^2 u_{\ep} dx\leq \frac{C_{\ep,n}}{2}\int_{\{n_0\leq V\leq n_0+1\}}u_{\ep} dx\leq \frac{C_{\ep,n}}{2},
    \end{split}
\end{equation*}
where $C_{\ep,n}=\max_{\{n_0\leq V\leq n_0+1\}}\al_{\ep}|\zeta''_n(V)| |V'|^2$.
Hence, we find
\begin{equation*}
\begin{split}
\mu_{\ep}(\{n_0+1\leq V\})=\lim_{n\to\infty}\mu_{\ep}(\{n_0+1\leq V\leq n\})\leq C_{\ep,n}\left(\inf_{(n_0+1,\infty)}\frac{b^2}{\si^2}\right)^{-1}.
\end{split}
\end{equation*}
Recalling that $\zeta''_n$ is independent of $n$ on $[n_0,n_0+1]$ and $\sup_{\ep}\al_{\ep}$ is locally bounded on $(0,\infty)$, we find $\sup_{\ep,n}C_{\ep,n}<\infty$. Since {\bf (H)} ensures $\lim_{x\to \infty}\frac{b^2(x)}{\si^2(x)}=\infty$, there must hold $\lim_{n_0\to \infty}\sup_{\ep}\mu_{\ep}(\{n_0+1\leq V\})=0$. The conclusion follows. 
\end{proof}


\subsection{Concentration near the extinction state}\label{subsec-near-0}

We prove the following result quantifying $
\mu_{\ep}$ or $u_{\ep}$ near $0$ in the case $\Lambda_{0}>0$. 

\begin{prop}\label{prop-concentration-0}
Assume {\bf (H)} and $\La_0>0$. Then, there are $k\in (0,1)$, $x_*>0$ and $C>0$ such that 
$$
\sup_{\ep}u_{\ep}(x)\leq \frac{C}{x^{k}},\quad \forall x\in (0,x_*).
$$
In particular, $\lim_{x\to0}\sup_{\ep}\mu_{\ep}((0,x))=0$.
\end{prop}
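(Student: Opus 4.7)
The plan is to construct an upper barrier of the form $h(x) = x^{-k}$ for a suitable $k \in (0,1)$ and apply the maximum principle to the ratio $v_{\ep} := u_{\ep}/h$.

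First I would pick $k \in (0,1)$ with $k > 2 - \frac{2b'(0)}{|\si'(0)|^2}$; since $\La_{0}>0$ is equivalent to $\frac{2b'(0)}{|\si'(0)|^2}>1$, such a $k$ exists. Using the Taylor expansions of $a$, $b$, $\si^2$ at $0$ from $\mathbf{(H)}$, a direct computation yields
\begin{equation*}
    \LL_{\ep}^{*} h(x) + \la_{\ep,1} h(x) = -\frac{\ep^{2} a'(0) k(1-k)}{2}\, x^{-k-1} + A_{\ep,k}\, x^{-k} + O(x^{1-k}),
\end{equation*}
where $A_{\ep,k} = (1-k)\bigl[\tfrac{|\si'(0)|^{2}(2-k)}{2} - b'(0)\bigr] + \la_{\ep,1} + O(\ep^{2})$. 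The leading constant in $A_{\ep,k}$ is strictly negative by the choice of $k$, and a refinement of Lemma \ref{lem-upper-bound-eigenvalues} (plugging in test functions supported on $[R, 2R]$ with $R$ large and using $\lim_{\ep\to 0}\al_{\ep} = \si^{2}$ together with the rapid decay of $u_{\ep}^{G}$ at infinity from $\mathbf{(H)}(4)$) yields $\limsup_{\ep\to 0}\la_{\ep,1} = 0$. Thus $A_{\ep,k}$ lies below a strictly negative constant for small $\ep$, and since the $x^{-k-1}$ coefficient is also non-positive, there exist $x_{*} \in (0,1)$ and $\ep_{*} > 0$ such that $\LL_{\ep}^{*} h + \la_{\ep,1} h < 0$ on $(0, x_{*})$ for every $\ep \in (0, \ep_{*})$.

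Substituting $u_{\ep} = v_{\ep} h$ into $\LL_{\ep}^{*} u_{\ep} + \la_{\ep,1} u_{\ep} = 0$ gives
\begin{equation*}
    \tfrac{1}{2}\al_{\ep}\, h\, v_{\ep}'' + c_{1,\ep}(x)\, v_{\ep}' + \bigl[\LL_{\ep}^{*} h + \la_{\ep,1} h\bigr] v_{\ep} = 0 \quad\text{on } (0, x_{*}),
\end{equation*}
with strictly negative zero-order coefficient. The maximum principle then rules out a positive interior maximum of $v_{\ep}$ on $(0, x_{*})$: at such a point $x_{0}$ one would have $v_{\ep}'(x_{0}) = 0$, $v_{\ep}''(x_{0}) \leq 0$, and $v_{\ep}(x_{0}) > 0$, making the left-hand side strictly negative, a contradiction. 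Hence $\sup_{(0, x_{*})} v_{\ep} \leq \max\bigl\{\limsup_{x \to 0^{+}} v_{\ep}(x),\, v_{\ep}(x_{*})\bigr\}$.

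By Lemma \ref{bound-u_ep-near-0}, $\limsup_{x \to 0^{+}} u_{\ep}(x) < \infty$ for each $\ep$, so $\limsup_{x \to 0^{+}} x^{k} u_{\ep}(x) = 0$. A uniform-in-$\ep$ bound $\sup_{\ep} u_{\ep}(x_{*}) \leq K$ is provided by standard interior elliptic regularity (e.g., Moser's Harnack inequality): on a compact neighborhood of $x_{*}$ the operator $\LL_{\ep}^{*}$ is uniformly elliptic with uniformly bounded coefficients for small $\ep$, $\la_{\ep,1}$ is uniformly bounded, and $\int_{0}^{\infty} u_{\ep} dx = 1$. Setting $C := K x_{*}^{k}$, we obtain $u_{\ep}(x) \leq C x^{-k}$ on $(0, x_{*})$ for small $\ep$, and since $k < 1$, integrating gives $\mu_{\ep}((0, x)) \leq \tfrac{C}{1-k} x^{1-k} \to 0$ uniformly in $\ep$. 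The main subtlety is securing the supersolution property uniformly all the way down to $x = 0$: this hinges on the smallness of $\la_{\ep,1}$ (which controls the macroscopic regime $x \gg \ep^{2}$ where the $x^{-k}$ coefficient dominates) combined with the strongly negative $x^{-k-1}$ contribution produced by the demographic noise (which automatically dominates in the competing microscopic regime $x \lesssim \ep^{2}$).
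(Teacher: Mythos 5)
Your barrier construction, the choice of $k>2-\frac{2b'(0)}{|\si'(0)|^2}$, the substitution $v_{\ep}=x^{k}u_{\ep}$, the use of Lemma \ref{bound-u_ep-near-0} to kill the boundary contribution at $0$, the maximum principle on $(0,x_*)$, and the Harnack-plus-unit-mass bound at $x_{*}$ reproduce the paper's argument (Lemma \ref{lem-upper-soln-x^-k} together with the proof of Proposition \ref{prop-concentration-0}) essentially line by line, and your computation of the coefficients of $x^{-k-1}$ and $x^{-k}$ is correct. The one step that fails is your justification of $\limsup_{\ep\to0}\la_{\ep,1}=0$. Test functions supported on $[R,2R]$ with $R$ large do \emph{not} have small Rayleigh quotient: although $\si^{2}|\phi_R'|^{2}=O(1)$ on such an interval, the weight $u_{\ep}^{G}\to u_{0}^{G}$ decays super-polynomially at infinity by {\bf(H)}(4) (e.g.\ like $x^{2\mu/\si^2-2}e^{-2\ka x/\si^{2}}$ for the logistic model), so the mass of $u_{0}^{G}$ on $[R,2R]$ concentrates at the left edge, which is exactly where $\phi_R'\neq0$ and $\phi_R$ is small. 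The quotient $\int\si^{2}|\phi_R'|^{2}u_{0}^{G}\,dx\big/\int|\phi_R|^{2}u_{0}^{G}\,dx$ then typically diverges as $R\to\infty$ rather than vanishing; this is precisely why Lemma \ref{lem-upper-bound-eigenvalues} delivers only $\limsup_{\ep\to0}\la_{\ep,1}<\infty$. That weaker bound is not enough here, because the negative constant in your $A_{\ep,k}$ is bounded above independently of $k$ (for instance by $\frac{|\si'(0)|^{2}}{8}$ when $b'(0)=|\si'(0)|^{2}$), so you genuinely need $\la_{\ep,1}$ to become small.

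The fact you need is true but requires a different argument. The paper establishes $\lim_{\ep\to0}\la_{\ep,1}=0$ as Theorem \ref{thm-0-super-limit-principal-eigenvalue}, by a probabilistic route: tightness of $\{\mu_{\ep}\}_{\ep}$ near infinity (Proposition \ref{prop-concentration-infty}), Freidlin--Wentzell convergence of $X^{\ep}$ to $X^{0}$ on finite time intervals, and the QSD identity $\E^{\ep}_{\mu_{\ep}}[f(X_{t}^{\ep})]=e^{-\la_{\ep,1}t}\int_0^\infty f\,d\mu_{\ep}$, which forces any subsequential limit $\la_{*}$ to satisfy $1=e^{-\la_{*}t}$ for all $t$. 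If you prefer to stay with the variational formula, the correct test function is a cutoff near the \emph{origin} at scale $\ep$ (equal to $0$ on $(0,\ep)$ and to $1$ on $(2\ep,\infty)$), which exploits the fact that when $\La_{0}>0$ the $\frac{1}{x}$-singularity of $u_{\ep}^{G}$ at $0$ is only borderline non-integrable; this yields $\la_{\ep,1}\lesssim_{\ep}\ep^{\ka-1}$ with $\ka>1$, as in the upper bound of Theorem \ref{Lower-bound-principal-eigenvalue}. With either substitute for the $[R,2R]$ argument, the remainder of your proof goes through as written.
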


We establish some results before proving Proposition \ref{prop-concentration-0}. The following result addressing the limit of $\la_{\ep,1}$ as $\ep\to0$ is the general part of Theorem \ref{thm-bounds-eigenvalues}.

\begin{thm}\label{thm-0-super-limit-principal-eigenvalue}
Assume {\bf (H)}. Then, $\lim_{\ep\to 0}\la_{\ep,1}=0$.
\end{thm}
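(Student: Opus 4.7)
The plan is to use the variational characterization
\[\la_{\ep,1} = \inf_{\phi \in D(\EE_\ep)\setminus\{0\}} \frac{\EE_\ep(\phi,\phi)}{\|\phi\|^2_{L^2(u^G_\ep)}}\]
and exhibit a family of test functions whose Rayleigh quotient tends to $0$ as $\ep\to 0$. A key simplification is the identity $\al_\ep u^G_\ep = e^{-2V_\ep}$, which reduces the Dirichlet form to $\EE_\ep(\phi,\phi) = \tfrac12 \int (\phi')^2 e^{-2V_\ep}\,dx$ and allows optimal test functions to be identified with $\LL_\ep$-harmonic functions.

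Specifically, introduce the ``harmonic coordinate''
\[H_\ep(x) := \int_0^x e^{2V_\ep(y)}\,dy,\]
which satisfies $\LL_\ep H_\ep = 0$. Using the near-$0$ expansion $V_\ep(y)\approx V_\ep(\ep^2) + b'(0)/a'(0)$ on $y\ll\ep^2$ (obtained as in the proof of Lemma \ref{properties-schrodinger-potential}), $e^{2V_\ep}$ is bounded as $y\to 0^+$ with value of order $\ep^{-2\kappa}$, where $\kappa := 2b'(0)/|\si'(0)|^2$. Hence $H_\ep$ is well-defined, smooth, and strictly increasing, with $H_\ep(0^+)=0$. For parameters $C>0$ and $N>0$, I would consider the ``harmonic tent''
\[\phi_\ep(x) := \min\bigl(H_\ep(x)/C,\,1\bigr)\,\chi_N(x),\]
where $\chi_N$ is a smooth cutoff equal to $1$ on $[0,N]$ and vanishing on $[N+1,\infty)$. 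The harmonic property of the ascending piece yields the clean identity $\EE_\ep(\phi_\ep,\phi_\ep) = 1/(2C) + \tfrac12\int_N^{N+1} e^{-2V_\ep}\,dx$, while $\|\phi_\ep\|^2_{L^2(u^G_\ep)} \geq \int_{H_\ep^{-1}(C)}^{N} u^G_\ep\,dx$.

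Next I would choose the parameter $C$ according to each case. When $\Lambda_0 > 0$ (so $\kappa>1$), the explicit asymptotic $H_\ep(x) \sim \bigl(\kappa/(\kappa-1)\bigr)\,\ep^{2-2\kappa}$ for any fixed $x>0$ suggests taking $C=\ep^{2-2\kappa}$, which keeps $H_\ep^{-1}(C)$ bounded; sending $N\to\infty$ first and using $u^G_\ep\to u_0^G$ locally uniformly and the integrability of $u_0^G$ (Lemma \ref{lem-integral-gibbs-zero-demographic}), we get $\la_{\ep,1}\lesssim \ep^{2\kappa-2}\to 0$. When $\Lambda_0<0$ (so $\kappa<1$), $H_\ep(x)\to G_0(x):=\int_0^x e^{2V_0(y)}dy$ locally uniformly and $G_0$ is finite, so I would take $C = G_0(M)$ for large $M$; passing first $\ep\to 0$ then $N\to\infty$ gives
\[\limsup_{\ep\to 0}\la_{\ep,1}\leq \frac{1}{2\,G_0(M)\,\int_M^\infty u_0^G\,dx}.\]

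The main obstacle is the last step in the case $\Lambda_0<0$: one has to show that the product $G_0(M)\cdot \int_M^\infty u_0^G\,dx \to \infty$ as $M\to\infty$. The natural tool is the Cauchy--Schwarz-type relation $(M_2-M_1)^2 \leq \bigl(G_0(M_2)-G_0(M_1)\bigr)\cdot\int_{M_1}^{M_2}\!\si^2 u_0^G\,dx$, combined with the dissipativity condition {\bf (H)}(4), which upgrades $\lim_{x\to\infty} xb/\si^2 = -\infty$ to polynomial lower bounds on $e^{2V_0}$ and matching polynomial bounds on $u_0^G$. If the product is merely bounded (as could occur if $V_0$ grows linearly), the test function would have to be refined: for instance by using a symmetric harmonic tent with a flat top on $[M_1,M_2]$ whose descending branch consumes the extra mass, and then balancing the two harmonic contributions to $\EE_\ep$.
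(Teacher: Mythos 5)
Your argument in the case $\La_0>0$ is correct and takes a genuinely different route from the paper's. The paper proves Theorem \ref{thm-0-super-limit-principal-eigenvalue} probabilistically: it extracts a weak limit $\mu_*$ of the QSDs, passes to the limit in the relation $\P^{\ep}_{\mu_\ep}[T_0^{\ep}>t]=e^{-\la_{\ep,1}t}$ using the convergence of $X^{\ep}$ to the never-absorbed process $X^{0}$, and reads off $\la_*=0$; no test functions appear. Your harmonic-tent computation is clean -- the identity $\EE_\ep(\phi_\ep,\phi_\ep)=\tfrac{1}{2C}+\cdots$ holds exactly because the ascending branch is $\LL_\ep$-harmonic -- and, taking $C=H_\ep(x_0)\asymp\ep^{2-2\kappa}$ for a \emph{fixed} small $x_0$ (safer than $C=\ep^{2-2\kappa}$, which need not keep $H_\ep^{-1}(C)$ bounded), it gives $\la_{\ep,1}\lesssim_{\ep}\ep^{2\kappa-2}$. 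This is sharper than the paper's own variational upper bound in Theorem \ref{Lower-bound-principal-eigenvalue} and already matches the order in Corollary \ref{corx-first-eigenvalue-sharp-asymptotic}.

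The case $\La_0<0$ is a genuine gap, and it cannot be closed by refining the test function. First, the product $G_0(M)\int_M^\infty u_0^G\,dx$ does not tend to infinity; under {\bf(H)}(4) it typically tends to $0$ (e.g.\ if $b/\si^2=-x$ for large $x$, then $G_0(M)\asymp e^{M^2}/(2M)$ while $\int_M^\infty u_0^G\,dx\asymp e^{-M^2}/(2M\si^2(M))$, so the product is $O(M^{-2})$ and your bound diverges). Second, the same $O(1)$ balance blocks the other end: with $\ka:=2b'(0)/|\si'(0)|^2<1$ one has $e^{2V_\ep(y)}\asymp(\ep^2+y)^{-\ka}$ and $u_\ep^G(y)\asymp(\ep^2+y)^{\ka-1}/y$ near $0$, uniformly in $\ep$, whence $H_\ep(\de)\asymp\de^{1-\ka}$ and $\int_\de^{1}u_\ep^G\,dx\asymp\de^{\ka-1}$ for every scale $\de\in(\ep^2,1)$; the product $H_\ep(\de)\int_\de^1 u_\ep^G\,dx$ is therefore bounded above and below by $\ep$-independent constants at \emph{all} scales. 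A tent anchored anywhere thus has Rayleigh quotient bounded below by a positive constant independent of $\ep$.

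The obstruction is structural, not an artifact of tent-shaped test functions. In the Schr\"odinger picture of Subsection \ref{subsec-schrodinger-eqn}, $\la_{\ep,1}$ is the bottom of the spectrum of $-\tfrac12\tfrac{d^2}{dy^2}+W_\ep$, and the formula in Lemma \ref{properties-schrodinger-potential}(1) gives $W_\ep=\tfrac{\La_0^2}{2|\si'(0)|^2}+o(1)$ throughout the entire intermediate region $\ep^2\ll\xi_\ep^{-1}(y)\ll1$, while outside it $W_\ep$ either blows up or converges to $W_0$, and Remark \ref{rem-troubles-from-spectrum}(2) shows $\inf\si(-\LL_0)>0$ when $\La_0<0$. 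So no admissible $\phi$ can push the quadratic-form quotient below a fixed positive threshold of order $\min\{\La_0^2/(2|\si'(0)|^2),\,\inf\si(-\LL_0)\}$; a purely variational proof of $\la_{\ep,1}\to0$ is not available in this case, and you should treat this tension with the statement itself seriously rather than as a technicality to be engineered around. The paper's proof is of an entirely different nature precisely because the mechanism at play -- mass of the QSD escaping to the absorbing state $0$ -- is invisible to the Dirichlet form on $C_0^\infty((0,\infty))$.
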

\begin{proof}
We extend $\mu_{\ep}$ to be a Borel probability measure on $[0,\infty)$ by setting $\mu_{\ep}(\{0\})=0$. Proposition \ref{prop-concentration-infty} ensures that $\{\mu_{\ep}\}_{\ep}$ is tight as Borel probability measures on $[0,\infty)$. We assume, up to a sequence, that $\mu_{\ep}$ weakly converges to some Borel probability measure $\mu_*$ on $[0,\infty)$ as $\ep\to0$. Since $\limsup_{\ep\to 0}\la_{\ep,1}<\infty$ by Lemma \ref{lem-upper-bound-eigenvalues}, we assume without loss of generality that $\lim_{\ep\to 0}\la_{\ep,1}=\la_*\geq 0$.

Let $f:[0,\infty)\to\R$ be bounded and uniformly continuous. We claim that 
\begin{equation}\label{eqn-markov-group-extrinsic-only}
    \E_{\mu_*}[f(X^0_{t})]=e^{-\la_*t}\int_0^{\infty}fd\mu_*,\quad \forall t\geq 0.
\end{equation}
Setting $f\equiv 1$ yields $1=e^{-\la_*t}$ for all $t\geq 0$, resulting $\la_*=0$. The theorem then follows.

It remains to prove \eqref{eqn-markov-group-extrinsic-only}. Fix any $t>0$. Note that for any $\de>0$,
\begin{equation*}
    \begin{split}
    &\left|\E_{x}[f(X^{\ep}_t)]-\E_{x}[f(X^{0}_t)]\right|\\
    &\qquad\leq \int_{|X^{\ep}_t-X^0_t|>\de}|f(X^{\ep}_t)-f(X^0_t)|d\P_x +\int_{|X^{\ep}_t-X^0_t|\leq \de}|f(X^{\ep}_t)-f(X^0_t)|d\P_x\\
    &\qquad\leq 2\|f\|_{\infty}\P_x \left\{\max_{0\leq s\leq t}|X^{\ep}_s-X^0_s|>\de\right\}+\int_{|X^{\ep}_t-X^0_t|\leq \de}|f(X^{\ep}_t)-f(X^0_t)|d\P_x.
    \end{split}
\end{equation*}
As \eqref{main-diffusion-eqn} is a small random perturbation of \eqref{eqn-diffusion-extrinsic}, we apply \cite[Theorem 2.1.2]{FW98} with standard modifications to find  
\begin{equation*}
    \lim_{\ep\to 0}\P_x \left\{\max_{0\leq s\leq t}|X^{\ep}_s-X^0_s|>\de\right\}=0 \quad \text{locally uniformly in } x\in [0,\infty).
\end{equation*} 
The uniform continuity of $f$ implies
$$
\lim_{\de\to 0}\limsup_{\ep\to 0}\int_{|X^{\ep}_t-X^0_t|\leq \de}|f(X^{\ep}_t)-f(X^0_t)|d\P_x=0 \quad \text{locally uniformly in } x\in [0, \infty).
$$ 
Hence, we arrive at $\lim_{\ep\to 0}\E^{\ep}_x[f(X^{\ep}_t)]=\E_x[f(X^0_t)]$ locally uniformly in $x\in [0, \infty)$. It follows that
\begin{equation*}
    \begin{split}
   &\limsup_{\ep\to 0} \int_0^{\infty}\left|\E^{\ep}_{\bullet}[f(X^{\ep}_t)]-\E_{\bullet}[f(X^0_t)]\right|d\mu_{\ep}\\
   & 
   \qquad\leq \limsup_{\ep\to 0}\int_0^{A}\left|\E^{\ep}_{\bullet}[f(X^{\ep}_t)]-\E_{\bullet}[f(X^0_t)]\right|d\mu_{\ep} +2\|f\|_{\infty}\times \limsup_{\ep\to 0}\mu_{\ep}((A,\infty))
   \\
  &\qquad \leq 2\|f\|_{\infty}\sup_{\ep}\mu_{\ep}((A,\infty)),\quad\forall A>0.
   \end{split}
\end{equation*}
Thanks to Proposition \ref{prop-concentration-infty}, we pass to the limit $A\to\infty$ to find
$$
\limsup_{\ep\to 0} \int_0^{\infty}\left|\E^{\ep}_{\bullet}[f(X^{\ep}_t)]-\E_{\bullet}[f(X^0_t)]\right|d\mu_{\ep}=0.
$$

The regularity of $b$ and $\si$ ensures that $\E_{\bullet}[f(X^0_t)]\in C_b([0,\infty))$. Hence, the weak limit $\lim_{\ep\to 0}\mu_{\ep}=\mu_*$ implies that $\lim_{\ep\to 0} \int_0^{\infty} \E_{\bullet}[f(X^0_t)]d\mu_{\ep}=\int_0^{\infty}\E_{\bullet}[f(X^0_t)]d\mu_*$. As a result, 
\begin{equation*}
    \begin{split}
    &\left|\E^{\ep}_{\mu_{\ep}}[f(X^{\ep}_t)]-\E_{\mu_*}[f(X^{0}_t)]\right|\\
    &\qquad\leq \int_0^{\infty}\left|\E^{\ep}_{\bullet}[f(X^{\ep}_t)]-\E_{\bullet}[f(X^0_t)]\right|d\mu_{\ep}\\
    &\qquad\quad+\left|\int_0^{\infty} \E_{\bullet}[f(X^0_t)]d\mu_{\ep}-\int_0^{\infty}\E_{\bullet}[f(X^0_t)]d\mu_*\right|\to 0\quad \text{as}\quad \ep\to 0.
    \end{split}
\end{equation*}
Considering the facts $\E^{\ep}_{\mu_{\ep}}[f(X^{\ep}_t)]=e^{-\la_{\ep,1}t}\int_0^{\infty}fd\mu_{\ep}$, $\lim_{\ep\to 0}\int_0^{\infty}fd\mu_{\ep}=\int_0^{\infty}fd\mu_*$ and $\lim_{j\to \infty} \la_{\ep_j}=\la_*$, we deduce
$$
\E_{\mu_*}[f(X^0_{t})]=\lim_{\ep\to 0}\E^{\ep}_{\mu_{\ep}}[f(X^{\ep}_{t})]=\lim_{\ep\to 0}e^{-\la_{\ep,1}t}\int_0^{\infty}fd\mu_{\ep}= e^{-\la_{*}t}\int_0^{\infty}fd\mu_{*},
$$
leading to \eqref{eqn-markov-group-extrinsic-only}. This completes the proof.
\end{proof}

The next technical lemma is needed.

\begin{lem}\label{lem-upper-soln-x^-k}
Assume {\bf (H)} and $\La_0>0$. Then, there is $k_*\in (0,1)$ such that for any $k\in (k_*,1)$, there exist $x_*>0$, $\ep_*=\ep_*(k)>0$ and $C=C(k)>0$ such that 
$$
\LL^*_{\ep}x^{-k}\leq -Cx^{-k}\quad\text{in}\quad (0,x_*),\quad\forall \ep\in (0,\ep_*).
$$
\end{lem}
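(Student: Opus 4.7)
The plan is to prove this by direct computation of $\LL^*_\ep x^{-k}$ using the explicit formula $\LL^*_\ep \phi = \frac{1}{2}(\al_\ep\phi)''-(b\phi)'$, together with the Taylor expansions of $\al_\ep$ and $b$ near $0$ guaranteed by \textbf{(H)}. Let $\phi(x)=x^{-k}$. A straightforward computation yields
\begin{equation*}
x^{k}\LL^*_\ep x^{-k}=\frac{1}{2}\al_\ep''(x)-k\frac{\al_\ep'(x)}{x}+\frac{k(k+1)}{2}\frac{\al_\ep(x)}{x^{2}}-b'(x)+k\frac{b(x)}{x}.
\end{equation*}
Using the expansions $\al_\ep(x)=\ep^{2}a'(0)x+(\tfrac{\ep^{2}}{2}a''(0)+|\si'(0)|^{2})x^{2}+o(x^{2})$, $b(x)=b'(0)x+o(x)$ (uniformly in small $\ep$), one can write the right-hand side as the sum of an $\ep^{2}/x$ term, a constant term, and a remainder that is $o(1)$ as $x\to 0$ with an additional $O(\ep^{2})$ error.

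Collecting terms, the $\ep^{2}/x$ contribution is $\frac{k(k-1)}{2}\cdot\frac{\ep^{2}a'(0)}{x}$, which is nonpositive for every $k\in(0,1)$ since $a'(0)>0$; so this term can only help. The leading constant piece, after elementary algebra, comes out to
\begin{equation*}
(1-k)\left[\frac{(2-k)|\si'(0)|^{2}}{2}-b'(0)\right],
\end{equation*}
and the key observation is that this quantity is strictly negative precisely when $k>2-\frac{2b'(0)}{|\si'(0)|^{2}}$. Since $\La_{0}>0$ is equivalent to $\frac{2b'(0)}{|\si'(0)|^{2}}>1$, the threshold $2-\frac{2b'(0)}{|\si'(0)|^{2}}$ is strictly less than $1$. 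I would therefore define $k_*:=\max\{0,\,2-\frac{2b'(0)}{|\si'(0)|^{2}}\}\in[0,1)$; for any $k\in(k_*,1)$, the constant piece equals some $-c_{k}<0$.

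To finish, I would choose $x_*>0$ small enough that the $o(1)$ remainder in the Taylor expansion is bounded in modulus by $c_{k}/4$ on $(0,x_*)$ (possible because the remainder depends continuously on $\ep$ down to $\ep=0$), and then choose $\ep_*=\ep_*(k)>0$ small enough that the $O(\ep^{2})$ part is also bounded by $c_{k}/4$ for $\ep\in(0,\ep_*)$. Since the $\ep^{2}/x$ contribution is nonpositive, the three controls combine to give $x^{k}\LL^*_\ep x^{-k}\leq-c_{k}/2$ on $(0,x_*)\times(0,\ep_*)$, which is the required estimate with $C=c_{k}/2$. The only delicate point is keeping track that all error terms in the Taylor expansions are uniform in $\ep$ on $[0,\ep_*]$, which is immediate from the smoothness of $a,b,\si$ and the identity $\al_\ep=\ep^{2}a+\si^{2}$; beyond this bookkeeping the argument is a bare-hands calculation, so I do not expect any serious obstacle.
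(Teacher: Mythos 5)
Your proposal is correct and follows essentially the same approach as the paper: both proofs expand $\LL^*_\ep x^{-k}$ explicitly, split off the nonpositive $\ep^2 a'(0)/x$ contribution, identify the leading $\ep$-independent constant $\frac{k^2-3k+2}{2}|\si'(0)|^2-(1-k)b'(0)$ (which you usefully factor as $(1-k)\bigl[\tfrac{(2-k)|\si'(0)|^2}{2}-b'(0)\bigr]$), and derive the same threshold $k_*=2-\tfrac{2b'(0)}{|\si'(0)|^2}<1$ from $\La_0>0$ before choosing $x_*$ and then $\ep_*$ to control the remaining $o(1)$ and $O(\ep^2)$ errors.
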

\begin{proof}
Let $k\in (0,1)$. Straightforward calculations yield
\begin{equation}\label{eqn-2021-04-09-1}
\begin{split}
\LL^*_{\ep}x^{-k}
&=\left[\frac{\ep^2}{2} a''+\frac{(\si^2)''}{2}-k\frac{(\si^2)'}{x} +\frac{k(k+1)\si^2}{2x^2}-b'+k\frac{b}{x}\right] x^{-k}\\
&\quad+\left[\frac{k(k+1)\ep^2 a}{2x}-k\ep^2 a'\right] x^{-k-1}.
\end{split}
\end{equation}
We see from {\bf(H)}(1)-(3) that for $0<x\ll 1$
\begin{equation*}
\begin{split}
&\frac{(\si^2)''}{2}-k\frac{(\si^2)'}{x} +\frac{k(k+1)\si^2}{2x^2}
=\frac{k^2-3k+2}{2}|\si'(0)|^2+o(1),\\
&-b'+k\frac{b}{x}=-(1-k)b'(0)+o(1),\quad \frac{k(k+1) a}{2x}-k a'=\frac{k(k-1)}{2} \left(a'(0)+o(1)\right)<0.
\end{split}
\end{equation*}
It follows from \eqref{eqn-2021-04-09-1} that 
\begin{equation*}
\begin{split}
\LL^*_{\ep}x^{-k}\leq\left[\frac{\ep^2}{2}a''+\frac{k^2-3k+2}{2}|\si'(0)|^2-(1-k)b'(0)+o(1)\right] x^{-k},\quad\forall 0<x\ll 1.
\end{split}
\end{equation*}
Since $\lim_{\ep\to 0}\frac{\ep^2}{2}\sup_{(0,1)}a=0$, the conclusion of the lemma follows if we show the existence of some $k_*\in (0,1)$ such that 
\begin{equation}\label{eqn-2022-03-16-1}
    \frac{k^2-3k+2}{2}|\si'(0)|^2-(1-k)b'(0)<0,\quad\forall k\in (k_*,1).
\end{equation}

Since $\La_0>0$,  there exists $\de_*>0$ such that $b'(0)>\left(\frac{1}{2}+\de_*\right)|\si'(0)|$, and thus,
\begin{equation*}
\begin{split}
\frac{k^2-3k+2}{2}|\si'(0)|^2-(1-k)b'(0)
\leq\frac{1}{2}(k-1)(k-1+2\de_*)|\si'(0)|^2.
\end{split}
\end{equation*}
Setting $k_*:=1-2\de_*$ leads to \eqref{eqn-2022-03-16-1}.
\end{proof}

Now, we prove Proposition \ref{prop-concentration-0}.

\begin{proof}[Proof of Proposition \ref{prop-concentration-0}]
By Theorem \ref{thm-0-super-limit-principal-eigenvalue} and Lemma \ref{lem-upper-soln-x^-k}, there are $k\in(0,1)$ and $x_*>0$ such that 
\begin{equation}\label{inequality-2022-04-13}
(\LL^*_{\ep}+\la_{\ep,1})x^{-k}<0\quad\text{in}\quad (0,x_*).    
\end{equation}
 
Set $v_{\ep}:=x^{k}u_{\ep}$. The fact $u_{\ep}>0$ and Lemma \ref{bound-u_ep-near-0} imply that 
$$
0\leq \liminf_{x\to 0}v_{\ep}(x)\leq  \limsup_{x\to 0}v_{\ep}(x)\leq \lim_{x\to 0} x^k \times  \limsup_{x\to 0}u_{\ep}(x)=0.
$$
That is, 
\begin{equation}\label{value-of-v-ep-at-zero}
    \lim_{x\to 0}v_{\ep}(x)=0.
\end{equation}  
Noting that $(\LL^*_{\ep}+\la_{\ep,1}) u_{\ep}=0$ (by Lemma \ref{qsd-existence-uniqueness}), $u_{\ep}=x^{-k} v_{\ep}$ and 
$$
(\LL^*_{\ep}+ \la_{\ep,1})u_{\ep}=\frac{1}{2}\al_{\ep} u''_{\ep}+(\al'_{\ep}-b)u_{\ep} +\left(\frac{1}{2}\al''_{\ep}-b'+ \la_{\ep,1}\right) u_{\ep},
$$
we calculate
$$
0=(\LL^*_{\ep}+\la_{\ep,1}) u_{\ep}=x^{-k}\left( \frac{1}{2}\al_{\ep} v''_{\ep} +(\al'_{\ep}-b)v'_{\ep}\right)+v_{\ep}(\LL^*_{\ep}+\la_{\ep,1}) x^{-k}+\al_{\ep} (-k x^{-k-1}) v'_{\ep}.
$$
Multiplying the above equation by $x^k$ and rearranging the terms, we arrive at 
\begin{equation}\label{eqn-v-ep}
\frac{1}{2}\al_{\ep} v''_{\ep}+\left(\al'_{\ep}-b-\frac{k}{x}\al_{\ep}\right) v'_{\ep}+\frac{(\LL^*_{\ep}+\la_{\ep,1})x^{-k}}{x^{-k}}v_{\ep}=0.
\end{equation}

Note that $(\LL_{\ep}^{*}+\la_{\ep,1})u_{\ep}=0$ is the same as $\frac{1}{2}(\al_{\ep}u'_{\ep})'+\left[\left(\frac{\al'_{\ep}}{2}-b\right)u_{\ep} \right]'+\la_{\ep,1}u_{\ep}=0$. Considering the first limit in \eqref{limit-alpha-V} and Theorem \ref{thm-0-super-limit-principal-eigenvalue}, we apply Harnack's inequality to find  $C_1>0$ (independent of $\ep$) such that
\begin{equation*}
\sup_{(\frac{x_*}{4},\frac{x_*}{2})} u_{\ep}\leq C_1 \inf_{(\frac{x_*}{4},\frac{x_*}{2})} u_{\ep}\leq \frac{4C_1}{x_*}\int_{\frac{x_*}{4}}^{\frac{x_*}{2}}u_{\ep}dx \leq \frac{4C_1}{x_*}.
\end{equation*}
Hence, $\sup_{(\frac{x_*}{4},\frac{x_*}{2})}v_{\ep}=\sup_{(\frac{x_*}{4},\frac{x_*}{2})}x^k u_{\ep}\leq \frac{4C_1}{x_*} \left(\frac{x_*}{2}\right)^k$.

Due to \eqref{inequality-2022-04-13}, the coefficient of $v_{\ep}$ in \eqref{eqn-v-ep} is negative on $(0,x_{*})$. Given \eqref{value-of-v-ep-at-zero}, we apply the maximum principle to $v_{\ep}$ on $(0,\frac{x_*}{2})$ to conclude that
$\max_{(0,\frac{x_*}{2})}v_{\ep}=v_{\ep}\left(\frac{x_*}{2}\right)\leq \frac{4C_1}{x_*} \left(\frac{x_*}{2}\right)^k$. The conclusion follows from the relation $u_{\ep}=\frac{v_{\ep}}{x^{k}}$. 
\end{proof}


\subsection{Proof of Theorem \ref{thm-concentration}}\label{subsec-proof}

\medskip

(1) If $\La_0<0$, we extend $\mu_{\ep}$ to be a Borel probability measure on $[0,\infty)$ by setting $\mu_{\ep}(\{0\})=0$. Arguments as in the proof of Theorem 
\ref{thm-0-super-limit-principal-eigenvalue} show that up to a sequence $\mu_{\ep}$ weakly converges to some Borel probability measure $\mu_*$ on $[0,\infty)$ as $\ep\to0$. Moreover, $\E_{\mu_*}[\phi(X^0_t)]=\int_0^{\infty} \phi d\mu_*$ for all $t\geq 0$ and $\phi\in C_b([0,\infty))$. 

Since Proposition \ref{prop-existence-uniqueness-sd} says $\lim_{t\to \infty}X^0_t=0$ $\P_x$-a.e. for any $x>0$, we deduce from the dominated convergence theorem that 
$\int_0^{\infty} \phi d\mu_*=\lim_{t\to \infty}\E_{\mu_*}[\phi(X^0_t)]=\phi(0)$ for all $\phi\in C_b([0,\infty))$, leading to $\mu_*=\de_0$. As a result, $\lim_{\ep\to0}\mu_{\ep}=\de_{0}$ weakly, and in particular, $\lim_{\ep\to 0} \int_0^{\infty}\phi d\mu_{\ep}=0$ for all $\phi\in C_b([0,\infty))$ with $\phi(0)=0$.



\medskip

(2) If $\La_0>0$, Propositions \ref{prop-concentration-infty} and \ref{prop-concentration-0} ensure the tightness of $\{\mu_{\ep}\}_{\ep}$. We assume up to a sequence that $\mu_{\ep}$ weakly converges to some Borel probability measure $\mu_*$ on $(0,\infty)$ as $\ep\to0$. By Lemma \ref{qsd-existence-uniqueness}, the density $u_{\ep}$ of $\mu_{\ep}$ satisfies
$\frac{1}{2}(\al_{\ep}u_{\ep})''-(bu_{\ep})'+\la_{\ep,1}u_{\ep}=0$. This together with the first limit in \eqref{limit-alpha-V} and Theorem \ref{thm-0-super-limit-principal-eigenvalue} implies that $\mu_*$ must satisfy $\LL^*_0 u=0$ in the weak sense, that is, $\int_{0}^{\infty}\LL_{0}\phi d\mu_{*}=0$ for all $\phi\in C_{0}^{2}((0,\infty))$.

We claim $\mu_*$ admits a non-negative density $u_*\in C^{2}((0,\infty))$ and $\lim_{j\to \infty}u_{\ep_j}=u_*$ in $C^{2}((0,\infty))$. Then,  $\LL^*_0 u_*=0$, and hence,  $u_*=u_0$ and $\mu_*=\mu_0$ by Lemma \ref{lem-unique-L^1-soln}. That is, $\mu_0$ is the unique limiting measure of $\{\mu_{\ep}\}$ and $\lim_{\ep\to 0}u_{\ep}=u_0$ locally in $C^2((0,\infty))$, giving the desired result. 

It remains to prove the claim. Let $\II_1$ and $\II_2$ be open intervals in $(0,\infty)$ and satisfy $\II_1\stst \II_2\stst (0,\infty)$. Given \eqref{limit-alpha-V} and Theorem \ref{thm-0-super-limit-principal-eigenvalue}, we apply Harnack's inequality to $u_{\ep}$ on $\II_2$ to find $C_1=C_1(\II_1,\II_2)>0$ (independent of $\ep$) such that 
\begin{equation*}\label{eqn-2022-04-1-3}
    \sup_{\II_1}u_{\ep}\leq C_1\inf_{\II_1}u_{\ep}\leq \frac{C_1}{|\II_1|}\int_{\II_1}u_{\ep}dx\leq \frac{C_1}{|\II_1|}. 
\end{equation*}
Setting $\phi_{\ep}:=\frac{u_{\ep}}{u_{\ep}^G}$,  we find from \eqref{gibbs-to-gibbs} that $\sup_{\II_1}\phi_{\ep}\leq \frac{2\sup_{\II_1}u_{\ep}}{\inf_{\II_1}u^G_0} \leq\frac{2C_1}{|\II_1|\inf_{\II_1}u^G_0}$. That is, $\{\phi_{\ep}\}_{\ep}$ is locally uniformly bounded. In comparison with the expression for $u_{\ep}$ given in Lemma \ref{qsd-existence-uniqueness}, we readily see that $\phi_{\ep}$ is a positive eigenfunction of $-\LL_{\ep}$ associated with $\la_{\ep,1}$, and hence, satisfies
$\frac{1}{2}\al_{\ep}\phi''_{\ep}+b\phi'_{\ep}=-\la_{\ep,1}\phi_{\ep}$. Given the first limit in \eqref{limit-alpha-V} and Theorem \ref{thm-0-super-limit-principal-eigenvalue}, we apply the classical interior Schauder estimates to $\{\phi_{\ep}\}_{\ep}$ to arrive at $\sup_{\ep}\sup_{\II}\left(|\phi'_{\ep}|+|\phi''_{\ep}|+|\phi'''_{\ep}|\right)<\infty$ for any $\II\stst(0,\infty)$. An application of the  Arzel\`a-Ascoli theorem then yields the precompactness of $\{\phi'_{\ep}\}_{\ep}$ and  $\{\phi''_{\ep}\}_{\ep}$ in $C(\ol{\II})$. Since $\II\stst (0,\infty)$ is arbitrary, we may assume without loss of generality according to the diagonal argument that $\phi_{\ep_j}$ locally converges to some non-negative $\phi_*$ in $ C^2((0,\infty))$ as $j\to\infty$. Thanks to \eqref{gibbs-to-gibbs} and the weak limit $\lim_{j\to \infty}\mu_{\ep_j}=\mu_*$, we find $d\mu_*=u_*dx$ with $u_*:=\phi_*u^G_0$ and $u_{\ep}$ converges to $u_*$ in $C^2((0,\infty))$ as $\ep\to0$. This proves the claim, and thus, completes the proof.


\section{\bf  Asymptotic bounds of the first two eigenvalues} \label{s:asym}

This section is devoted to the proof of Theorem \ref{thm-bounds-eigenvalues}. The asymptotic bounds of the first and second eigenvalues are respectively treated in Subsections \ref{subsec-asymptotic-first-eigenvalue} and \ref{subsec-asymptotic-second-eigenvalue}.

We start with a technical result that is frequently used in the sequel. It says that appropriately normalized eigenfunctions of $-\LL_{\ep}$ have uniform-in-$\ep$ small tails (against a weight) near $\infty$, and is only used for eigenfunctions associated with the first two eigenvalues. 

\begin{lem}\label{lem-compactness-2-infty}
Assume {\bf (H)} and fix $i\in\N$. For each $0<\ep\ll 1$, let $\tilde{\phi}_{\ep,i}$ be an eigenfunction of $-\LL_{\ep}$ associated with the eigenvalue $\la_{\ep,i}$. If $\sup_{\ep}\int_{x_0}^{\infty}|\tilde{\phi}_{\ep,i}|^2u^G_{\ep}dx\leq 1$ for some $x_0>0$, then 
$$
\lim_{z\to\infty}\sup_{\ep}\int_{z}^{\infty}|\tilde{\phi}_{\ep,i}|^2 u^G_{\ep} dx=0.
$$
\end{lem}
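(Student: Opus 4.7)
The plan is to pass to the Schr\"odinger picture of Subsection~\ref{subsec-schrodinger-eqn} and exploit the uniform-in-$\ep$ blow-up of the potential $W_\ep$ at the right endpoint $y_{\ep,\infty}$. Define $\Psi_{\ep,i} := \tilde{U}_\ep U_\ep \tilde\phi_{\ep,i}$; by unitarity, $\Psi_{\ep,i}$ is an $L^2$ eigenfunction of $\LL^S_\ep$ with eigenvalue $\la_{\ep,i}$, and the change of variable $y = \xi_\ep(x)$ converts the desired tail $\int_z^\infty \tilde\phi_{\ep,i}^2\, u^G_\ep \, dx$ into $\int_{\xi_\ep(z)}^{y_{\ep,\infty}} \Psi_{\ep,i}^2 \, dy$, so it suffices to bound the latter uniformly in $\ep$ by something tending to zero as $z \to \infty$.

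The core step is a Caccioppoli-type inequality. Given a bounded smooth cutoff $\chi$ on $(0, y_{\ep,\infty})$, testing the quadratic form identity associated with $\LL^S_\ep$ against $\chi^2 \Psi_{\ep,i}$, expanding $(\chi^2 \Psi_{\ep,i})' = 2\chi\chi' \Psi_{\ep,i} + \chi^2 \Psi_{\ep,i}'$, and absorbing the cross term via Cauchy--Schwarz yields
\[
    \int (W_\ep - \la_{\ep,i})\chi^2 \Psi_{\ep,i}^2 \, dy \;\leq\; \int (\chi')^2 \Psi_{\ep,i}^2 \, dy,
\]
with no boundary terms. Set $\La_i := \sup_{0 < \ep \ll 1}\la_{\ep,i}$, finite by Lemma~\ref{lem-upper-bound-eigenvalues}. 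For any $M > \La_i + 1$, Lemma~\ref{properties-schrodinger-potential}(3) together with $b^2/\si^2 \to \infty$ from ${\bf (H)}(4)$ produces $X = X(M) \geq \max\{x_*, x_0\}$, independent of $\ep$, with $W_\ep(y) \geq M$ for all $y \geq \xi_\ep(X)$ and $0 < \ep \ll 1$. Taking $\chi$ supported in $[\xi_\ep(X), y_{\ep,\infty})$ with $\chi \equiv 1$ on $[\xi_\ep(X)+1, y_{\ep,\infty})$ and $|\chi'| \leq 2$, the assumed normalization $\int_{\xi_\ep(x_0)}^{y_{\ep,\infty}} \Psi_{\ep,i}^2 \, dy \leq 1$ bounds the right-hand side above by $4$, so
\[
    \int_{\xi_\ep(X)+1}^{y_{\ep,\infty}} \Psi_{\ep,i}^2 \, dy \;\leq\; \frac{4}{M - \La_i}.
\]

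To return to $x$, let $x_\ep^* := \xi_\ep^{-1}(\xi_\ep(X)+1)$, i.e.\ the solution of $\int_X^{x_\ep^*} \frac{ds}{\sqrt{\al_\ep(s)}} = 1$. Using $\sqrt{\al_\ep} \leq \sqrt{2}\,\si$ on $[X,\infty)$ for $\ep$ sufficiently small gives $\int_X^{x_\ep^*} \frac{ds}{\si(s)} \leq \sqrt{2}$, hence $x_\ep^* \leq z(M)$ for some $\ep$-independent $z(M) < \infty$; this is immediate in the generic case $\int_X^\infty \frac{ds}{\si(s)} = \infty$ (which covers the logistic and stochastic theta logistic models), and in the remaining case one takes the cutoff in $y$ of width $\de(X) := \tfrac{1}{2}\int_X^\infty \frac{ds}{\si(s)}$, combining with the quantitative strengthening $xb/\si^2 \to -\infty$ from ${\bf (H)}(4)$ to ensure $M(X)\,\de(X)^2 \to \infty$. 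Either way,
\[
    \sup_{0 < \ep \ll 1} \int_{z(M)}^\infty \tilde\phi_{\ep,i}^2 \, u^G_\ep \, dx \;\leq\; \frac{4}{M - \La_i},
\]
and letting $M \to \infty$, which forces $X \to \infty$ and hence $z(M) \to \infty$, finishes the argument.

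The hard part is that a cutoff constructed directly in the $x$-variable introduces an $\al_\ep$ factor on the support of $\chi'$ which, for borderline examples such as the logistic diffusion ($\si \propto x$, $b \propto -x^2$), is not controlled by the growth of the effective potential $b^2/\si^2$; a naive Caccioppoli in $x$ yields only a bounded tail bound that fails to decay. Working in the $y$-variable renders $|\chi'|$ a dimensionless $O(1)$ quantity, so the Caccioppoli bound is driven purely by the uniform divergence of $W_\ep$ at $y_{\ep,\infty}$ guaranteed by Lemma~\ref{properties-schrodinger-potential}(3) and ${\bf (H)}(4)$.
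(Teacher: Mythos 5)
Your overall strategy---pass to the Schr\"odinger picture, run a Caccioppoli estimate against $\chi^2\Psi_{\ep,i}$, and exploit the uniform blow-up of $W_{\ep}$ near $y_{\ep,\infty}$---is the same as the paper's, and your argument is complete whenever $\int^{\infty}\si^{-1}\,ds=\infty$ (in particular for the featured examples with $\si(x)=\si x$). The gap is the remaining case $\int^{\infty}\si^{-1}\,ds<\infty$, which {\bf(H)} permits (e.g.\ $\si\sim x^{2}$ at infinity with $|b|\gg x^{3}$): there the $y$-interval on which you know $W_{\ep}\geq M(X)$ has length of order $\de(X)\to0$, your cutoff derivative costs $\de(X)^{-2}$, and the whole proof reduces to the unproved claim $M(X)\de(X)^{2}\to\infty$. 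You assert this follows from $xb/\si^{2}\to-\infty$, but that condition lower-bounds $|b|/\si$ only by multiples of $\si/x$, whereas $\de(X)=\tfrac12\int_{X}^{\infty}\si^{-1}ds$ is governed by the global decay of $\si^{-1}$; no inequality of the form $\inf_{x\geq X}\frac{|b(x)|}{\si(x)}\cdot\int_{X}^{\infty}\frac{ds}{\si}\to\infty$ is derived from {\bf(H)}(4). It does hold for regularly varying $\si$, but it is not an evident consequence of the hypotheses for general $C^{2}$ coefficients, so this case needs an actual argument rather than a parenthetical.

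The paper avoids the dichotomy entirely by building the cutoff in the $x$-variable with an outer ramp of width $n$ on $[n,2n]$, so that $|\eta_n'|\leq 2/n\leq 4/x$ there and the transferred derivative satisfies $|\tilde{\eta}_{n,\ep}'|=|\eta_n'|\sqrt{\al_{\ep}}\lesssim \si/x\lesssim |b|/\si$ by {\bf(H)}(4); the error term $\int|\tilde{\eta}_{n,\ep}'|^2|\psi_{\ep,i}|^2dy$ is then absorbed into a fraction of $\int \frac{b^2}{\si^2}|\psi_{\ep,i}|^2dy\leq 4\int W_{\ep}|\psi_{\ep,i}|^2dy$ via Lemma \ref{properties-schrodinger-potential}(3). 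This yields the uniform bound $\int_{n_0}^{\infty}\frac{b^2}{\si^2}|\tilde{\phi}_{\ep,i}|^2u^G_{\ep}dx\leq C$, and the tail decay then comes from Chebyshev, $\int_z^{\infty}|\tilde{\phi}_{\ep,i}|^2u^G_{\ep}dx\leq C/\inf_{(z,\infty)}(b^2/\si^2)$, rather than from pushing the cutoff location to infinity. So your diagnosis that an $x$-variable cutoff ``fails to decay'' applies only to fixed-width ramps; with the $O(1/x)$ ramp the $x$-variable construction is precisely what closes the case your proof leaves open. A further small point: when $y_{\ep,\infty}<\infty$ your $\chi$ is not compactly supported in $(0,y_{\ep,\infty})$, so the ``no boundary terms'' claim needs the observation that $\chi^2\Psi_{\ep,i}$ still lies in the form domain; the paper sidesteps this by using compactly supported $\tilde{\eta}_{n,\ep}$ and passing to the limit $n\to\infty$.
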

\begin{proof}
Set $\psi_{\ep,i}:=\tilde{U}_{\ep}U_{\ep}\tilde{\phi}_{\ep,i}$, where $\tilde{U}_{\ep}$ and $U_{\ep}$ are unitary transforms defined in Subsection \ref{subsec-schrodinger-eqn}. Then, 
\begin{equation}\label{estimate-2022-03-20}
\int_{\xi_{\ep}(x_1)}^{\xi_{\ep}(x_2)}|\psi_{\ep,i}|^2 dy=\int_{x_1}^{x_2}|\tilde{\phi}_{\ep,i}|^2u_{\ep}^{G}dx\leq 1,\quad\forall x_0\leq x_1<x_2\leq\infty, 
\end{equation}
where the inequality is a result of the assumption. Moreover,  $-\LL^S_{\ep}\psi_{\ep,i}=\la_{\ep,i}\psi_{\ep,i}$, that is, 
\begin{equation}\label{eqn-2021-09-28-1}
-\frac{1}{2}\psi''_{\ep,i}+W_{\ep}\psi_{\ep,i}=\la_{\ep,i}\psi_{\ep,i}\quad \text{in}\quad (0,y_{\ep,\infty}). 
\end{equation}

Fix some integer $n_{0}>x_0+1$. Let  $\{\eta_n\}_{n>n_{0}}$ be a  sequence of functions in $ C^{\infty}_0((0,\infty))$, take values in $[0,1]$ and satisfy 
\begin{equation*}
\eta_n(x)=\begin{cases}
0,& x\in (0,n_{0}-1)\cup (2n,\infty),\\
1,& x\in (n_{0}, n),
\end{cases}
\quad\andd \quad |\eta'_n(x)|\leq
\begin{cases}
2,&  x\in [n_{0}-1,n_{0}],\\
\frac{2}{n},& x\in [n,2n].
\end{cases}
\end{equation*} 
In addition, we require $\{\eta_n\}_{n>n_{0}}$ to coincide on $(0,n_{0}]$. Clearly, as $n\to\infty$, $\eta_n$ converges (uniformly in $(0,M)$ for any $M>0$) to some function $\eta\in C^{\infty}((0,\infty))$ taking values in $[0,1]$ and satisfying $\eta=\eta_{n_{0}+1}$ on $(0,n_{0}]$ and $\eta=1$ on $(n_{0},\infty)$. 

Set $\tilde{\eta}_{n,\ep}:=\eta_n\circ\xi^{-1}_{\ep}$. Obviously, $\tilde{\eta}_{n,\ep}\in C_0^{2}((0,y_{\ep,\infty}))$ with $\supp(\tilde{\eta}_{n,\ep})\subset (\xi_{\ep}(n_{0}-1), y_{\ep,\infty})$. Multiplying \eqref{eqn-2021-09-28-1} by $\tilde{\eta}^2_{n,\ep}\psi_{\ep,i}$ and integrating over $(0,y_{\ep,\infty})$, we find from integration by parts and \eqref{estimate-2022-03-20} that
\begin{equation*}\label{eqn-2021-05-02-3}
\begin{split}
&\frac{1}{2}\int_0^{y_{\ep,\infty}}\tilde{\eta}^2_{n,\ep}|\psi'_{\ep,i}|^2 dy+\int_0^{y_{\ep,\infty}}\tilde{\eta}_{n,\ep}\tilde{\eta}'_{n,\ep}\psi_{\ep,i}\psi'_{\ep,i} dy+\int_0^{y_{\ep,\infty}}\tilde{\eta}^2_{n,\ep}W_{\ep}|\psi_{\ep,i}|^2 dy\\
&\qquad=\la_{\ep,i}\int_0^{y_{\ep,\infty}}\tilde{\eta}^2_{n,\ep}|\psi_{\ep,i}|^2 dy \leq\la_{\ep,i}\int_{\xi_{\ep}(n_{0}-1)}^{y_{\ep,\infty}}|\psi_{\ep,i}|^2dy\leq \la_{\ep,i}.
\end{split}
\end{equation*}
An application of H\"{o}lder's inequality yields 
\begin{equation*}
\begin{split}
\left|\int_0^{y_{\ep,\infty}}\tilde{\eta}_{n,\ep}\tilde{\eta}'_{n,\ep}\psi'_{\ep,i}\psi_{\ep,i} dy\right|&\leq \left(\frac{1}{2}\int_0^{y_{\ep,\infty}}|\tilde{\eta}_{n,\ep}\psi'_{\ep,i}|^2 dy\right)^{\frac{1}{2}}\left(2\int_0^{y_{\ep,\infty}}|\tilde{\eta}'_{n,\ep}\psi_{\ep,i}|^2 dy\right)^{\frac{1}{2}}\\
&\leq \frac{1}{4}\int_0^{y_{\ep,\infty}}\tilde{\eta}^2_{n,\ep}|\psi'_{\ep,i}|^2 dy+\int_0^{y_{\ep,\infty}}|\tilde{\eta}'_{n,\ep}|^2|\psi_{\ep,i}|^2 dy.
\end{split}
\end{equation*}
Absorbing $\frac{1}{4}\int_0^{y_{\ep,\infty}}\tilde{\eta}^2_{n,\ep}|\psi'_{\ep,i}|^2 dy$ and dropping the remaining $\frac{1}{4}\int_0^{y_{\ep,\infty}}\tilde{\eta}^2_{n,\ep}|\psi'_{\ep,i}|^2 dy$ yield
\begin{equation}\label{eqn-2022-02-09-8}
    \int_0^{y_{\ep,\infty}}\tilde{\eta}^2_{n,\ep}W_{\ep}|\psi_{\ep,i}|^2 dy\leq \la_{\ep,i}+\int_0^{y_{\ep,\infty}}|\tilde{\eta}'_{n,\ep}|^2|\psi_{\ep,i}|^2 dy.
\end{equation}

Since $\tilde{\eta}'_{n,\ep}=(\eta'_{n}\sqrt{\al_{\ep}})\circ\xi^{-1}_{\ep}$ and $\{\eta_n\}_{n>n_{0}}$ coincide on $[n_{0}-1, n_{0}]$, we see from the first limit in \eqref{limit-alpha-V} that there is $M_1>0$ such that
$$
\sup_{[\xi_{\ep}(n_{0}-1),\xi_{\ep}(n_{0})]}|\tilde{\eta}'_{n,\ep}|=\sup_{[n_{0}-1,n_{0}]}|\eta'_{n}|\sqrt{\al_{\ep}}<\sqrt{M_1},\quad \forall n>n_{0}.
$$
Thanks to {\bf (H)}(4), we can make $n_{0}$ larger if necessary to ensure the existence of some $M_2>0$ such that $\al_{\ep}\leq M_2\si^2$ and $\frac{|\si|}{x}\leq \frac{|b|}{8\sqrt{2 M_2}|\si|}$ in $(n_{0},\infty)$. As $|\eta'_n|\leq \frac{2}{n}$ on $[n,2n]$, we derive that for $n>n_{0}$ and $y=\xi_{\ep}(x)\in [\xi_{\ep}(n),\xi_{\ep}(2n)]$,
$$
|\tilde{\eta}'_{n,\ep}(y)|=|\eta'_n(x)|\sqrt{\al_{\ep}(x)}\leq \frac{2}{n}\sqrt{\al_{\ep}(x)}\leq 4\frac{\sqrt{\al_{\ep}(x)}}{x}\leq 4\sqrt{M_2}  \frac{|\si(x)|}{x}\leq \frac{|b(x)|}{2\sqrt{2}|\si(x)|}.
$$
Therefore,
\begin{equation*}
\begin{split}
    \int_0^{y_{\ep,\infty}}|\tilde{\eta}'_{n,\ep}|^2|\psi_{\ep,i}|^2 dy&=\int_{\xi_{\ep}(n_{0}-1)}^{\xi_{\ep}(n_{0})}|\tilde{\eta}'_{n,\ep}|^2|\psi_{\ep,i}|^2 dy+\int_{\xi_{\ep}(n)}^{\xi_{\ep}(2n)}|\tilde{\eta}'_{n,\ep}|^2|\psi_{\ep,i}|^2 dy\\
    &\leq M_{1}\int_{\xi_{\ep}(n_{0}-1)}^{\xi_{\ep}(n_{0})}|\psi_{\ep,i}|^2 dy+\frac{1}{8}\int_{\xi_{\ep}(n_{0})}^{\infty}\frac{b^2\circ\xi^{-1}_{\ep}}{\si^2\circ\xi^{-1}_{\ep}}|\psi_{\ep,i}|^2 dy\\
    &\leq M_{1}+\frac{1}{8}\int_{\xi_{\ep}(n_{0})}^{\infty}\frac{b^2\circ\xi^{-1}_{\ep}}{\si^2\circ\xi^{-1}_{\ep}}|\psi_{\ep,i}|^2 dy,
\end{split}
\end{equation*}
where we used \eqref{estimate-2022-03-20} in the last inequality. It follows from \eqref{eqn-2022-02-09-8} that 
\begin{equation*}
    \begin{split}
         \int_0^{y_{\ep,\infty}}\tilde{\eta}^2_{n,\ep}W_{\ep}|\psi_{\ep,i}|^2 dy & \leq \la_{\ep,i}+M_{1}+\frac{1}{8}\int_{\xi_{\ep}(n_{0})}^{\infty}\frac{b^2\circ\xi^{-1}_{\ep}}{\si^2\circ\xi^{-1}_{\ep}}|\psi_{\ep,i}|^2 dy\\
         &\leq 2M_1+\frac{1}{8}\int_{\xi_{\ep}(n_{0})}^{\infty}\frac{b^2\circ\xi^{-1}_{\ep}}{\si^2\circ\xi^{-1}_{\ep}}|\psi_{\ep,i}|^2 dy,
    \end{split}
\end{equation*}
where we assumed without loss of generality that $\limsup_{\ep\to 0}\la_{\ep,i}<M_1$ in the last inequality (ensured by Lemma \ref{lem-upper-bound-eigenvalues}). Since $\eta_n\uparrow\eta$ as $n\to\infty$, letting $n\to \infty$ in the above inequality leads to
\begin{equation*}
    \begin{split}
       \int_0^{y_{\ep,\infty}}\tilde{\eta}^2_{\ep}W_{\ep}|\psi_{\ep,i}|^2 dy\leq 2M_1+\frac{1}{8}\int_{\xi_{\ep}(n_{0})}^{\infty}\frac{b^2\circ\xi^{-1}_{\ep}}{\si^2\circ\xi^{-1}_{\ep}}|\psi_{\ep,i}|^2 dy,
    \end{split}
\end{equation*}
where $\tilde{\eta}_{\ep}:=\eta\circ \xi^{-1}_{\ep}$ satisfies $\tilde{\eta}_{\ep}=1$ on $[\xi_{\ep}(n_{0}),y_{\ep,\infty})$. By Lemma \ref{properties-schrodinger-potential} (3), we can make $n_{0}$ larger if necessary so that $W_{\ep}\geq\frac{b^2\circ\xi^{-1}_{\ep}}{4\si^2\circ\xi^{-1}_{\ep}}$ in $(\xi_{\ep}(n_{0}),y_{\ep,\infty})$. As a result, 
\begin{equation*}
    \begin{split}
        \frac{1}{4}\int_{\xi_{\ep}(n_{0})}^{\infty}\frac{b^2\circ\xi^{-1}_{\ep}}{\si^2\circ\xi^{-1}_{\ep}}|\psi_{\ep,i}|^2 dy\leq 2M_1+\frac{1}{8}\int_{\xi_{\ep}(n_{0})}^{\infty}\frac{b^2\circ\xi^{-1}_{\ep}}{\si^2\circ\xi^{-1}_{\ep}}|\psi_{\ep,i}|^2 dy.
    \end{split}
\end{equation*}
Hence,  we see from \eqref{estimate-2022-03-20} that
\begin{equation*}
    \begin{split}
        \frac{1}{8}\int_{n_{0}}^{\infty}\frac{b^2}{\si^2}|\tilde{\phi}_{\ep,i}|^2 u^G_{\ep}dx=\frac{1}{8}\int_{\xi_{\ep}(n_{0})}^{y_{\ep,\infty}}\frac{b^2\circ\xi^{-1}_{\ep}}{\si^2\circ\xi^{-1}_{\ep}}|\psi_{\ep,i}|^2 dy\leq 2M_1,
    \end{split}
\end{equation*}
giving $\int_{z}^{\infty}|\tilde{\phi}_{\ep,i}|^2 u^G_{\ep}dx\leq \frac{16 M_1}{\inf_{(z,\infty)}\frac{b^2}{\si^2}}$ for any $z>n_{0}$. The conclusion follows immediately from the fact $\lim_{z\to \infty}\frac{b^2(z)}{\si^2(z)}=\infty$ ensured by {\bf (H)}(4). This completes the proof. 
\end{proof}


\subsection{Asymptotic bounds of the first eigenvalue}\label{subsec-asymptotic-first-eigenvalue}

Note that the limit $\lim_{\ep\to0}\la_{\ep,1}=0$ has been established in Theorem \ref{thm-0-super-limit-principal-eigenvalue}. In the rest of this subsection, we prove finer asymptotic bounds of the first eigenvalue $\la_{\ep,1}$ of $-\LL_{\ep}$ stated in Theorem \ref{thm-bounds-eigenvalues}.

The asymptotic bounds of $\la_{\ep,1}$ under the condition $\La_0>0$ stated in Theorem \ref{thm-bounds-eigenvalues} (2) is restated in the following result.

\begin{thm}\label{Lower-bound-principal-eigenvalue}
Assume {\bf (H)} and $\La_0>0$. For each $0<\ga\ll 1$, there holds
$$
\ep^{(1+\ga)\frac{4b'(0)}{|\si'(0)|^2}-2}\lesssim_{\ep}\la_{\ep,1}\lesssim_{\ep}  \ep^{(1-\ga)\frac{2b'(0)}{|\si'(0)|^2}-1}.
$$
\end{thm}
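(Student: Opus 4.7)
My plan is to work with the classical Rayleigh variational characterization
\[
\la_{\ep,1} = \inf_{\phi\in D(\EE_\ep)\setminus\{0\}} \frac{\EE_\ep(\phi,\phi)}{\|\phi\|^2_{L^2(u_\ep^G)}} = \inf_{\phi} \frac{\frac{1}{2}\int_0^{\infty}\al_\ep|\phi'|^2 u_\ep^G\,dx}{\int_0^{\infty}|\phi|^2 u_\ep^G\,dx},
\]
together with an explicit asymptotic for the product $\al_\ep u_\ep^G = e^{-2V_\ep}$ near $x=0$. Writing $\beta := 2b'(0)/|\si'(0)|^2$, which exceeds $1$ precisely because $\La_0>0$, Taylor expansion at $0$ gives $\al_\ep(x)\approx \ep^2 a'(0)x + |\si'(0)|^2 x^2$; in particular $\ep^2 a(x)$ and $\si^2(x)$ become comparable at the transition scale $s_*\sim\ep^2$. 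Plugging this into $\int_1^x b/\al_\ep\,ds$ shows that, up to multiplicative constants uniform in $\ep$,
\[
\al_\ep u_\ep^G \asymp \ep^{2\beta}\ \text{on}\ (0,\ep^2], \qquad \al_\ep u_\ep^G \asymp x^{\beta}\ \text{on}\ [\ep^2,\delta],
\]
for a fixed small $\delta>0$.

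For the upper bound, I would plug into the Rayleigh quotient the Lipschitz test function $\phi_\ep(x) := \min(1,x/\ep)$, which belongs to $D(\EE_\ep)$ (via truncation at $0$ and $\infty$, using $u_\ep^G\to u_0^G\in L^1((1,\infty))$ from Lemma~\ref{lem-integral-gibbs-zero-demographic} and the near-$0$ asymptotics above). Since $\phi_\ep'=1/\ep$ on $(0,\ep)$ and vanishes on $(\ep,\infty)$, the numerator equals $\frac{1}{2\ep^2}\int_0^{\ep}\al_\ep u_\ep^G\,dx$; the asymptotic above yields $\int_0^{\ep}\al_\ep u_\ep^G\,dx \asymp \int_{\ep^2}^{\ep}x^\beta\,dx \asymp \ep^{\beta+1}$, so the numerator is $O(\ep^{\beta-1})$. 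The denominator $\int|\phi_\ep|^2 u_\ep^G\,dx\geq \int_\ep^\infty u_\ep^G\,dx \to \|u_0^G\|_{L^1((0,\infty))}>0$ by \eqref{gibbs-to-gibbs} and $u_0^G\in L^1((0,\infty))$ (Lemma~\ref{lem-integral-gibbs-zero-demographic}). Since $\beta-1 > (1-\gamma)\beta - 1$ for $\gamma>0$, this gives $\la_{\ep,1}\lesssim_\ep \ep^{(1-\gamma)\beta-1}$.

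For the lower bound, I would apply Cauchy--Schwarz to the positive principal eigenfunction $\phi_{\ep,1}$, normalized so that $\|\phi_{\ep,1}\|_{L^2(u_\ep^G)}=1$, so that $\EE_\ep(\phi_{\ep,1},\phi_{\ep,1})=\la_{\ep,1}$. The relation $\phi_{\ep,1} = c_\ep u_\ep/u_\ep^G$ from Lemma~\ref{qsd-existence-uniqueness}, combined with $\limsup_{x\to 0}u_\ep(x)<\infty$ (Lemma~\ref{bound-u_ep-near-0}) and $u_\ep^G(0^+)=+\infty$, forces $\phi_{\ep,1}(0^+)=0$. Hence for any fixed $x_0\in(0,\delta)$,
\[
\phi_{\ep,1}(x_0)^2 = \Bigl(\int_0^{x_0}\phi_{\ep,1}'\,dx\Bigr)^2 \leq \Bigl(\int_0^{x_0}|\phi_{\ep,1}'|^2\al_\ep u_\ep^G\,dx\Bigr)\Bigl(\int_0^{x_0}\frac{dx}{\al_\ep u_\ep^G}\Bigr) \leq 2\la_{\ep,1}\int_0^{x_0}\frac{dx}{\al_\ep u_\ep^G}.
\]
Using the asymptotic for $\al_\ep u_\ep^G$, one has $\int_0^{x_0}dx/(\al_\ep u_\ep^G) \asymp \ep^{-2\beta}\cdot\ep^2 + \int_{\ep^2}^{x_0} x^{-\beta}\,dx \asymp \ep^{2-2\beta}$ (the dominant contribution is from the transition region $x\sim\ep^2$, using $\beta>1$). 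Therefore $\la_{\ep,1} \gtrsim \phi_{\ep,1}(x_0)^2\,\ep^{2\beta-2}$, and the claimed bound $\la_{\ep,1}\gtrsim_\ep \ep^{2(1+\gamma)\beta-2}$ will follow as soon as $\liminf_{\ep\to 0}\phi_{\ep,1}(x_0) > 0$ at some fixed $x_0$.

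Producing this uniform positive pointwise lower bound on $\phi_{\ep,1}$ is the main obstacle, since Theorems~\ref{thm-multiscale}--\ref{thm-asymptotic-mean-extinction} are not yet available at this stage. My plan is to combine (i) the $L^2(u_\ep^G)$-normalization $\int \phi_{\ep,1}^2 u_\ep^G\,dx=1$; (ii) the concentration of $\mu_\ep = u_\ep\,dx$ away from $0$ and $\infty$ from Propositions~\ref{prop-concentration-infty} and \ref{prop-concentration-0}, transported to $\phi_{\ep,1}^2 u_\ep^G$ via $u_\ep = c_\ep \phi_{\ep,1}u_\ep^G$ together with the local convergence $u_\ep^G\to u_0^G$ in \eqref{gibbs-to-gibbs}; and (iii) Harnack's inequality applied on compacts $\KK\stst(0,\infty)$ to the positive solution $\phi_{\ep,1}$ of $\LL_\ep\phi_{\ep,1}=-\la_{\ep,1}\phi_{\ep,1}$, where $\LL_\ep$ is uniformly elliptic in the limit and $\la_{\ep,1}\to 0$ by Theorem~\ref{thm-0-super-limit-principal-eigenvalue}. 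Together these should yield a fixed $x_0\in(0,\delta)$ and $c>0$ with $\phi_{\ep,1}(x_0)\geq c$ for all small $\ep$, closing the argument.
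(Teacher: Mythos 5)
Your upper bound is essentially the paper's argument: both plug a test function with a transition layer at scale $\ep$ into the Rayleigh quotient and estimate numerator and denominator using the near-$0$ asymptotics of $e^{-2V_\ep}$; the paper's $\phi_\ep$ vanishes on $(0,\ep)$ while yours equals $x/\ep$ there, but the resulting estimate is the same. This part is correct.

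The lower bound is where you diverge from the paper, and it is where your proposal has a genuine gap. Your pointwise Cauchy--Schwarz at a fixed $x_0$,
\[
\phi_{\ep,1}(x_0)^2\leq 2\la_{\ep,1}\int_0^{x_0}\frac{dy}{\al_\ep u_\ep^G},
\]
is correct and the estimate $\int_0^{x_0}(\al_\ep u_\ep^G)^{-1}\,dy\lesssim_\ep \ep^{2-2\ka_+}$ with $\ka_+=\frac{2(1+\de)b'(0)}{(1-\de)|\si'(0)|^2}$ is also fine. But it yields the claimed bound only after one proves $\liminf_{\ep\to 0}\phi_{\ep,1}(x_0)>0$, and you correctly identify this as the ``main obstacle''. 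Your sketch (i)--(iii) for closing it is under-specified in exactly the place where the real work lies: transporting the $L^1(dx)$-concentration of $\mu_\ep\propto\phi_{\ep,1}u_\ep^G$ (Propositions~\ref{prop-concentration-infty}--\ref{prop-concentration-0}) to the $L^2(u_\ep^G)$-concentration of $\phi_{\ep,1}^2u_\ep^G$ requires a uniform-in-$\ep$ bound $\sup_\ep\|\phi_{\ep,1}\|_{L^1(u_\ep^G)}<\infty$, which is itself nontrivial (one needs an absorption argument near $0$ that uses the polynomial bound of Proposition~\ref{prop-concentration-0} on $u_\ep$ together with $\sup_\ep\int_{1/\ka}^\infty u_\ep^G\,dx<\infty$), plus the monotonicity $\phi_{\ep,1}'>0$ of Lemma~\ref{monotonicity-first-eigenfunction-2022-04-07} to control $\phi_{\ep,1}$ pointwise on $(0,z)$, and finally Harnack to convert the resulting lower bound on $\int_{z_1}^{z_2}\phi_{\ep,1}^2u_\ep^G\,dx$ into a pointwise one. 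All of this is provable and non-circular, but it is a substantial detour that reproduces machinery the paper develops later for Theorem~\ref{thm-lower-bound-lamda_2} and Theorem~\ref{thm-multiscale}; as stated, ``together these should yield'' is not a proof.

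The paper avoids the pointwise lower bound on $\phi_{\ep,1}$ altogether, and the device is worth noting because it is a clean repair of your argument: rather than evaluating the Cauchy--Schwarz inequality at a single $x_0$, multiply it by $u_\ep^G(x_0)=\frac{1}{\al_\ep(x_0)}e^{-2V_\ep(x_0)}$ and integrate $x_0$ over $(0,x^*)$. The left side becomes $\int_0^{x^*}\phi_{\ep,1}^2u_\ep^G\,dx\geq 1-\de$ by the $L^2(u_\ep^G)$-normalization together with the uniform tail bound at infinity from Lemma~\ref{lem-compactness-2-infty} (which you do not even invoke), and the right side becomes $2\la_{\ep,1}\II_\ep$ with $\II_\ep=\int_0^{x^*}\int_0^x\frac{1}{\al_\ep(x)}e^{2[V_\ep(y)-V_\ep(x)]}\,dy\,dx$. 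The lower bound then reduces to estimating $\II_\ep$, which is an elementary double-integral computation driven by the same near-$0$ asymptotics you already wrote down. This ``integrated'' version needs neither Harnack, nor the $L^1$-bound on $\phi_{\ep,1}$, nor the results of Section~\ref{s:tight}, and it is the intended proof. You should either adopt this integrated form or, if you keep the pointwise route, explicitly prove the $L^1(u_\ep^G)$-bound and the $L^2$-tightness near $0$ that your plan (ii) glosses over.
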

\begin{proof}
The upper bound and lower bound are treated separately.

\medskip

\noindent{\bf Upper bound.} As the first eigenvalue of the self-adjoint operator $-\LL_{\ep}$,  $\la_{\ep,1}$ admits the variational formula:
\begin{equation}\label{upper-bound-2022-03-13}
\la_{\ep,1}=\inf_{\phi\in D(\EE_{\ep})} \frac{\displaystyle\int_0^{\infty}\al_{\ep}|\phi'|^2 u_{\ep}^{G}dx}{\displaystyle\int_0^{\infty}|\phi|^2 u_{\ep}^{G}dx}\leq \frac{\displaystyle\int_0^{\infty}|\phi'_{\ep}|^2 e^{-2V_{\ep}}dx}{\displaystyle\int_0^{\infty}|\phi_{\ep}|^2 \frac{1}{\al_{\ep}}e^{-2V_{\ep}}dx},
\end{equation}
where $\phi_{\ep}\in C_0^{\infty}((0,\infty))$ is non-decreasing and satisfies
\begin{equation*}
\phi_{\ep}(x)=\begin{cases}
0,& 0<x<\ep,\\
1,& x>2\ep,
\end{cases}
\quad\andd\quad 0<\phi'_{\ep}\leq \frac{2}{\ep}\text{ on } \left(\ep,2\ep\right).
\end{equation*}

By {\bf (H)}(1), $b>0$ in $(0,x_*)$ for some $x_*>0$. Split
$-V_{\ep}=\int_1^{x_*}\frac{b}{\al_{\ep}}ds +\int_{x_*}^{\bullet} \frac{b}{\al_{\ep}}ds=:A_{\ep}+B_{\ep}$. Clearly, $B_{\ep}$ is increasing in  $(0,x_*)$. Hence,
\begin{equation}\label{eqn-2021-05-05-1}
\int_0^{\infty}|\phi'_{\ep}|^2 e^{-2V_{\ep}}dx=\int_{\ep}^{2\ep}|\phi'_{\ep}|^2 e^{-2V_{\ep}}dx\leq \frac{4}{\ep^2} e^{2A_{\ep}} \int_{\ep}^{2\ep}e^{2B_{\ep}(x)}dx\leq \frac{4}{\ep} e^{2A_{\ep}+2B_{\ep}(2\ep)},
\end{equation}
and 
\begin{equation}\label{eqn-2021-05-05-2}
\begin{split}
\int_0^{\infty}|\phi_{\ep}|^2 \frac{1}{\al_{\ep}}e^{-2V_{\ep}}dx\geq  e^{2A_{\ep}} \int_{2\ep}^{\infty} \frac{1}{\al_{\ep}} e^{2B_{\ep}}dx= e^{2A_{\ep}+2B_{\ep}(2\ep)}\int_{2\ep}^{\infty} \frac{1}{\al_{\ep}(x)} e^{2B_{\ep}(x)-2B_{\ep}(2\ep)} dx.
\end{split}
\end{equation}

As $\La_0>0$, there is $\ga_0>0$ such that $\ka=\ka(\ga):=\frac{2b'(0)(1-\ga)}{|\si'(0)|^2}>1$ for all $\ga\in(0,\ga_0)$. By {\bf (H)}(1)-(3), we can make $x_*=x_{*}(\ga)$ smaller if necessary so that
$$
\frac{b(x)}{\al_{\ep}(x)}\geq \frac{(1-\ga) b'(0)}{\ep^2 a'(0)+|\si'(0)|^2 x}\,\,\andd \,\, \al_{\ep}(x)\leq (1+\ga)( \ep^2 a'(0)x+|\si'(0)|^2 x^2),\quad\forall x\in (0,x_*),
$$
As a consequence, we find 
\begin{equation*}
\begin{split}
2B_{\ep}(x)-2B_{\ep}(2\ep)&\geq 2\int_{2\ep}^{x}\frac{(1-\ga)b'(0)}{\ep^2 a'(0)+|\si'(0)|^2 s} ds\\
&=\ka \ln \frac{\ep^2 a'(0)+|\si'(0)|^2x}{\ep^2 a'(0)+2\ep |\si'(0)|^2},\quad \forall x\in (2\ep, x_*),
\end{split}
\end{equation*}
leading to 
\begin{equation*}
\begin{split}
&\int_{2\ep}^{\infty}\frac{1}{\al_{\ep}(x)}e^{2B_{\ep}(x)-2B_{\ep}(2\ep)}dx \\
&\qquad\geq \frac{1}{1+\ga}\int_{\frac{x_*}{2}}^{x_*}  \frac{\left(\ep^2 a'(0)+|\si'(0)|^2 x\right)^{\ka-1 }}{\left(\ep^2 a'(0)+2\ep |\si'(0)|^2\right)^{\ka}}\frac{1}{x} dx\\
 &\qquad
 \geq \frac{1}{1+\ga} \left[ \frac{1}{\ep^2 a'(0)+2\ep |\si'(0)|^2}\right]^{\ka} \min_{[\frac{x_*}{2},x_*]}\left(\ep^2 a'(0)+|\si'(0)|^2 x\right)^{\ka-1}\int_{\frac{x_*}{2}}^{x_*}\frac{1}{x} dx
 \\
 &\qquad
\geq \frac{1}{1+\ga} \left[ \frac{1}{\ep^2 a'(0)+2\ep |\si'(0)|^2}\right]^{\ka} \left[\frac{x_*|\si'(0)|^2}{2} \right]^{\ka-1} \ln 2
\\
 &\qquad
\geq \frac{\ln 2}{1+\ga}  \left[\frac{x_*|\si'(0)|^2}{2} \right]^{\ka-1} \left[3\ep|\si'(0)|^2\right]^{-\ka}=:\frac{C'_{\ga}}{\ep^{\ka}}.
\end{split}
\end{equation*}
Hence, we see from \eqref{upper-bound-2022-03-13}, \eqref{eqn-2021-05-05-1} and  \eqref{eqn-2021-05-05-2} that $\la_{\ep,1}\lesssim_{\ep}C_{\ga}\ep^{\ka-1}$ for some $C_{\ga}>0$. Since this is true for all $\ga\in(0,\ga_{0})$, the constant $C_{\ga}$ can be replaced by $1$, establishing the upper bound.


\medskip

\noindent{\bf Lower bound.} Recall from Lemma \ref{prop-spectral-structure} that $\phi_{\ep,1}$ is the positive eigenfunction  of $-\LL_{\ep}$ associated with $\la_{\ep,1}$ and satisfies $\|\phi_{\ep,1}\|_{L^2(u^G_{\ep})}=1$. In particular,
\begin{equation}\label{eqn-2022-02-09-2}
    \la_{\ep,1}=\EE_{\ep}(\phi_{\ep,1},\phi_{\ep,1})=\int_0^{\infty}|\phi'_{\ep,1}|^2 e^{-2V_{\ep}}dx.
\end{equation}
Let $0<\de\ll 1$. By Lemma \ref{lem-compactness-2-infty}, there is $x^*=x^*(\de)\gg1$ such that $\int_{x^*}^{\infty}|\phi_{\ep,1}|^2u^G_{\ep}dx\leq\de$. Then, 
\begin{equation}\label{eqn-2022-02-09-1}
\begin{split}
1=\|\phi_{\ep,1}\|^2_{L^2(u^G_{\ep})}\leq\de+\int_0^{x^*}|\phi_{\ep,1}|^2 \frac{1}{\al_{\ep}} e^{-2V_{\ep}}dx
\end{split}
\end{equation}

By Lemma \ref{bound-u_ep-near-0}, there are $C_{\ep}>0$ and $x_{\ep}>0$ such that $u_{\ep}\leq C_{\ep}$ in $(0,x_{\ep})$. This together with $u_{\ep}=\frac{\phi_{\ep,1} }{\|\phi_{\ep,1}\|_{L^1(u^G_{\ep})}}\frac{1}{\al_{\ep}}e^{-2V_{\ep}}$ (by Lemma \ref{qsd-existence-uniqueness}) yields $\phi_{\ep,1}\leq C_{\ep} \|\phi_{\ep,1}\|_{L^1(u^G_{\ep})} \al_{\ep}e^{2V_{\ep}}$ in $(0,x_{\ep})$.
Since $\al_{\ep}=\ep^2 a+\si^2$ and $V_{\ep}(0+)=\int_0^1 \frac{b}{\al_{\ep}}ds<\infty$, {\bf (H)}(2)-(3) ensures the existence of $C'_{\ep}>0$ such that $\phi_{\ep,1}(x)\leq C_{\ep} C'_{\ep} \|\phi_{\ep,1}\|_{L^1(u^G_{\ep})} x$ for $x\in (0,x_{\ep})$. In particular, $\phi_{\ep,1}(0+)=0$, and hence, $\phi_{\ep,1}=\int_0^{\bullet}\phi'_{\ep,1}dx$. This together with \eqref{eqn-2022-02-09-1} and H\"older's inequality yields 
\begin{equation*}
    \begin{split}
        1-\de&\leq \int_0^{x^*}\left| \int_0^x \phi'_{\ep,1}(y) e^{-V_{\ep}(y)}e^{V_{\ep}(y)}dy\right|^2\frac{1}{\al_{\ep}(x)} e^{-2V_{\ep}(x)}dx\\
    &\leq \int_0^{x^*}\left(\int_0^x |\phi'_{\ep,1}(y)|^2e^{-2V_{\ep}(y)} dy\right) \left(\int_0^x e^{2V_{\ep}(y)}dy\right) \frac{1}{\al_{\ep}(x)} e^{-2V_{\ep}(x)}dx\\
    &\leq \left(\int_0^{\infty} |\phi'_{\ep,1}(y)|^2e^{-2V_{\ep}(y)} dy\right)\II_{\ep},
    \end{split}
\end{equation*}
where $\II_{\ep}=\int_0^{x^*}\int_0^x  \frac{1}{\al_{\ep}(x)} e^{2[V_{\ep}(y)-V_{\ep}(x)]}dydx$. It then follows from  \eqref{eqn-2022-02-09-2} that  
\begin{equation}\label{eqn-2022-02-09-7}
\la_{\ep,1}\geq\frac{1-\de}{\II_{\ep}}.
\end{equation}



To finish the proof, it suffices to derive an appropriate upper bound for $\II_{\ep}$. From {\bf (H)}(1)-(3) we find $x_*=x_*(\de)>0$ such that 
\begin{equation*}
    b(x)\leq (1+\de)b'(0)x\,\,\andd\,\,  \ep^2a(x)+\si^2(x)\geq (1-\de)x[\ep^2 a'(0)+|\si'(0)|^2x],\quad\forall x\in (0,x_*).
\end{equation*}
Clearly, $x_{*}\ll x^{*}$. We write 
\begin{equation}\label{eqn-2022-02-09-5}
    \begin{split}
    \II_{\ep}=\left(\int_0^{x_*}+\int_{x_*}^{x^*}\right)\int_0^x  \frac{1}{\al_{\ep}(x)} e^{2[V_{\ep}(y)-V_{\ep}(x)]}dydx=:\RN{1}_{\ep}+\RN{2}_{\ep}.
    \end{split}
\end{equation}

We first treat $\RN{1}_{\ep}$. Note that for $0<y<x\leq x_{*}$,
\begin{equation}\label{eqn-2021-09-07-1}
    \begin{split}
    V_{\ep}(y)-V_{\ep}(x)\leq \frac{b'(0)(1+\de)}{(1-\de)}\int_y^x\frac{1}{\ep^2a'(0)+|\si'(0)|^2 s}ds=\frac{\ka}{2}\ln \frac{\ep^2 a'(0)+|\si'(0)|^2x}{\ep^2 a'(0)+|\si'(0)|^2 y},
    \end{split}
\end{equation}
where $\ka:=\frac{2(1+\de)b'(0)}{(1-\de)|\si'(0)|^2}>1$ due to the assumption $\La_0>0$. It follows that 
\begin{equation}\label{eqn-2022-02-16-1}
    \begin{split}
    \RN{1}_{\ep}&\leq \int_0^{x_*}\int_0^x\frac{1}{(1-\de)x[\ep^2 a'(0)+|\si'(0)|^2x]}\left[\frac{\ep^2 a'(0)+|\si'(0)|^2x}{\ep^2 a'(0)+|\si'(0)|^2 y}\right]^{\ka}dy dx\\
    &=\frac{1}{1-\de}\int_0^{x_*}\frac{[\ep^2 a'(0)+|\si'(0)|^2x]^{\ka-1}}{x}\int_0^x [\ep^2 a'(0)+|\si'(0)|^2y]^{-\ka} dydx. 
    \end{split}
\end{equation}

Clearly, $\int_0^{x}[\ep^2a'(0)+|\si'(0)|^2y]^{-\ka}dy\leq [\ep^2a'(0)]^{-\ka} x$. Calculating the integral and dropping the negative term (due to $\ka>1$), we find
\begin{equation}\label{eqn-2022-02-09-4}
    \begin{split}
    \int_0^{x}[\ep^2a'(0)+|\si'(0)|^2y]^{-\ka}dy\leq\frac{|\si'(0)|^{-2}}{\ka-1} [\ep^2a'(0)]^{1-\ka}.
    \end{split}
\end{equation}
Thus, for any $\ga\in(0,1)$, 
\begin{equation*}
    \begin{split}
    \int_0^{x}[\ep^2a'(0)+|\si'(0)|^2y]^{-\ka}dy&\leq \left([\ep^2a'(0)]^{-\ka} x\right)^{\ga} \left(\frac{|\si'(0)|^{-2}}{\ka-1} [\ep^2a'(0)]^{1-\ka}\right)^{1-\ga}\\
    &=C_1 \ep^{2-2\ka-2\ga}x^{\ga},
    \end{split}
\end{equation*}
where $C_1:=[a'(0)]^{1-\ka-\ga}|\si'(0)|^{2(\ga-1)}(\ka-1)^{\ga-1}$. It then follows from \eqref{eqn-2022-02-16-1} that
\begin{equation}\label{eqn-2022-02-09-6}
    \begin{split}
    \RN{1}_{\ep}&\leq \frac{C_1}{1-\de}\ep^{2-2\ka-2\ga}\int_0^{x_*}x^{\ga-1}[\ep^2 a'(0)+|\si'(0)|^2x]^{\ka-1} dx\\
    &\leq \frac{C_1}{1-\de}\ep^{2-2\ka-2\ga}[\ep^2 a'(0)+|\si'(0)|^2x_*]^{\ka-1}\frac{x_*^{\ga}}{\ga}\leq C_2 \ep^{2-2\ka-2\ga},
    \end{split}
\end{equation}
where $C_2:=\frac{2C_1}{(1-\de)\ga}|\si'(0)|^{2(\ka-1)}x_*^{\ka-1+\ga}$.

Now, we treat $\RN{2}_{\ep}$. By \eqref{eqn-2021-09-07-1}, for $x\in [x_*,x^*)$,
\begin{equation*}
    \begin{split}
    V_{\ep}(y)-V_{\ep}(x)&=\int_y^{x_*} \frac{b}{\ep^2 a+\si^2}ds+\int_{x_*}^x \frac{b}{\ep^2 a+\si^2}ds\\
    &\leq \frac{\ka}{2}\ln \frac{\ep^2 a'(0)+|\si'(0)|^2x_*}{\ep^2 a'(0)+|\si'(0)|^2 y}+ \int_{x_*}^{x^*}\frac{|b|}{\si^2}ds,\quad\forall y\in (0,x_*),
    \end{split}
\end{equation*}
and $V_{\ep}(y)-V_{\ep}(x)\leq \int_{x_*}^{x^*}\frac{|b|}{\si^2}ds$ for $y\in [x_*,x)$. Hence, there are $C_{3},C_{4}>0$ such that
\begin{equation}\label{eqn-2022-03-16-2}
    \begin{split}
        \RN{2}_{\ep}&=\int_{x_*}^{x^*}\left(\int_0^{x_*}+\int_{x_*}^x\right)\frac{1}{\al_{\ep}(x)} e^{2[V_{\ep}(y)-V_{\ep}(x)]}dydx\\
        &\leq \int_{x_*}^{x^*}\int_0^{x_*}\frac{1}{\al_{\ep}(x)} \left[\frac{\ep^2 a'(0)+|\si'(0)|^2x_*}{\ep^2 a'(0)+|\si'(0)|^2 y}\right]^{\ka} e^{2\int_{x_*}^{x^*}\frac{|b|}{\si^2}ds}dydx\\
        &\quad+\int_{x_*}^{x^*}\int_{x_*}^{x}\frac{1}{\al_{\ep}(x)}e^{2\int_{x_*}^{x^*}\frac{|b|}{\si^2}ds}dydx
        \\
        & 
        =[\ep^2 a'(0)+|\si'(0)|^2x_*]^{\ka} e^{2\int_{x_*}^{x^*}\frac{|b|}{\si^2}ds} \int_{x_*}^{x^*}\frac{1}{\al_{\ep}(x)}\int_0^{x_*}[\ep^2 a'(0)+|\si'(0)|^2 y]^{-\ka}dy dx
        \\
        & 
        \quad+e^{2\int_{x_*}^{x^*}\frac{|b|}{\si^2}ds}\int_{x_*}^{x^*}\frac{x-x_*}{\al_{\ep}(x)}dx
         \\
        &\leq 2[|\si'(0)|^2x_*]^{\ka} e^{2\int_{x_*}^{x^*}\frac{|b|}{\si^2}ds} \int_{x_*}^{x^*}\frac{1}{\si^2(x)}\int_0^{x_*}[\ep^2 a'(0)+|\si'(0)|^2 y]^{-\ka}dy dx
        \\
        & 
        \quad+x^*e^{2\int_{x_*}^{x^*}\frac{|b|}{\si^2}ds}\int_{x_*}^{x^*}\frac{1}{\si^2(x)}dx
        \\
        &\leq C_{3}\int_0^{x_*}[\ep^2 a'(0)+|\si'(0)|^2 y]^{-\ka}dy+C_{4}.\\
    \end{split}
\end{equation}
Applying \eqref{eqn-2022-02-09-4} to the integral $\int_0^{x_*}[\ep^2 a'(0)+|\si'(0)|^2 y]^{-\ka}dy$, we find for some $C_{5}>0$,
\begin{equation*}
    \begin{split}
        \RN{2}_{\ep}\leq C_{3}\frac{|\si'(0)|^{-2}}{\ka-1} [\ep^2a'(0)]^{1-\ka} +C_{4}\leq C_{5}\ep^{2-2\ka}
    \end{split}
\end{equation*}
This together with  \eqref{eqn-2022-02-09-5} and \eqref{eqn-2022-02-09-6} leads to $\II_{\ep}\leq(C_2+C_5)\ep^{2-2\ka-2\ga}$. Thanks to \eqref{eqn-2022-02-09-7}, the conclusion follows readily from  $\ka=\frac{2b'(0)(1+\de)}{|\si'(0)|^2(1-\de)}$ and the arbitrariness of $0<\de\ll 1$ and $\ga\in (0,1)$.
\end{proof}

Theorem \ref{thm-bounds-eigenvalues} (1) regarding the asymptotic lower bound of $\la_{\ep,1}$ under the condition $\La_0<0$ is restated as the next result.

\begin{thm}
Assume {\bf (H)} and $\La_0<0$. There exists $C>0$ such that $\la_{\ep,1}\gtrsim_{\ep} \frac{C}{|\ln \ep|}$.
\end{thm}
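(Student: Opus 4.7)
The plan is to recycle the variational lower-bound machinery from the proof of Theorem \ref{Lower-bound-principal-eigenvalue} and rework only the estimate of $\II_\ep$ in the new regime $\La_0<0$. Concretely, the inequality
$$\la_{\ep,1}\geq\frac{1-\de}{\II_\ep},\qquad \II_\ep=\int_0^{x^*}\int_0^x\frac{1}{\al_\ep(x)}e^{2[V_\ep(y)-V_\ep(x)]}\,dy\,dx,$$
was obtained there from (i) Lemma \ref{lem-compactness-2-infty}, (ii) the boundary relation $\phi_{\ep,1}(0+)=0$ (which rests on Lemma \ref{bound-u_ep-near-0}, the finiteness of $V_\ep(0+)$ coming from $b(x)/\al_\ep(x)=O(1)$ near $0$, and the vanishing of $\al_\ep$ at $0$), and (iii) Cauchy--Schwarz. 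None of these ingredients uses the sign of $\La_0$, so all carry over verbatim, and it suffices to show $\II_\ep\lesssim |\ln\ep|$.

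Since {\bf (H)}(1) still gives $b'(0)>0$, the Taylor expansions of $b$ and $\al_\ep$ near $0$ again furnish $x_*>0$ and the exponent $\ka:=2(1+\de)b'(0)/[(1-\de)|\si'(0)|^2]$, with the pointwise estimate
$$V_\ep(y)-V_\ep(x)\leq \frac{\ka}{2}\ln\frac{\ep^2a'(0)+|\si'(0)|^2 x}{\ep^2a'(0)+|\si'(0)|^2 y},\qquad 0<y<x\leq x_*.$$
The crucial new point is that under $\La_0<0$ we may fix $\de$ so small that $\ka<1$; this reverses the sign of $1-\ka$ and thus flips the behaviour of the primitive $\int_0^x[A+By]^{-\ka}dy$ with $A:=\ep^2a'(0)$, $B:=|\si'(0)|^2$. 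Splitting $\II_\ep=\RN{1}_\ep+\RN{2}_\ep$ at $x=x_*$ as before, the exact computation
$$\int_0^x(A+By)^{-\ka}dy=\frac{1}{B(1-\ka)}\bigl\{(A+Bx)^{1-\ka}-A^{1-\ka}\bigr\}$$
combined with the elementary concavity inequality $(1-u)^{1-\ka}\geq 1-u$ on $[0,1]$ (applied to $u=Bx/(A+Bx)$) reduces the $\RN{1}_\ep$-integrand to $\lesssim (A+Bx)^{-1}$. Integrating gives
$$\RN{1}_\ep\lesssim \int_0^{x_*}\frac{dx}{A+Bx}=\frac{1}{B}\ln\!\left(1+\frac{Bx_*}{A}\right)=O(|\ln\ep|).$$
The term $\RN{2}_\ep$ is controlled by the same bound $\RN{2}_\ep\leq C_3\int_0^{x_*}(A+By)^{-\ka}dy+C_4$ as in Theorem \ref{Lower-bound-principal-eigenvalue}, and with $\ka<1$ the integral is uniformly bounded in $\ep$, so $\RN{2}_\ep=O(1)$. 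Combining yields $\II_\ep\lesssim|\ln\ep|$ and hence $\la_{\ep,1}\gtrsim_\ep C/|\ln\ep|$, where $C$ can be read off as a positive multiple of $(1-\de)(1-\ka)|\si'(0)|^2$.

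The only genuine subtlety is the inner-integral estimate: unlike the $\ka>1$ regime where $\int_0^x(A+By)^{-\ka}dy$ produced a clean polynomial-in-$\ep$ factor independent of $x$, here the dominant contribution is the logarithmic growth generated over $x\in(\ep^2,x_*)$, and the crude bound $\int_0^x(A+By)^{-\ka}dy\leq\frac{(A+Bx)^{1-\ka}}{B(1-\ka)}$ gives only a non-integrable $1/x$ integrand. The concavity inequality above is what subtracts off the $A^{1-\ka}$ term and upgrades the estimate to the integrable $(A+Bx)^{-1}$, allowing the $|\ln\ep|$ asymptotic to be extracted cleanly. The rest of the argument is a direct transposition of the earlier proof.
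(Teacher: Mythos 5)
Your proposal is correct and follows essentially the same route as the paper: after noting that the reduction $\la_{\ep,1}\geq\frac{1-\de}{\II_\ep}$ and the splitting $\II_\ep=\RN{1}_\ep+\RN{2}_\ep$ are sign-independent, both arguments exploit the same inner-integral computation and extract the $|\ln\ep|$ from the same elementary bound. The only difference is cosmetic: the paper performs two changes of variable and then applies $\frac{1-t^{1-\ka}}{1-t}<1$, whereas you apply the equivalent fact $(1-u)^{1-\ka}\geq 1-u$ directly to the integrand in the original variable; these are the same inequality under $t=1-u$.
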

\begin{proof}
We proceed as in the proof of the lower bound in Theorem \ref{Lower-bound-principal-eigenvalue}. As $\La_0<0$, we let $0<\de\ll 1$ be such that $\ka:=\frac{2(1+\de)b'(0)}{(1-\de)|\si'(0)|^2}<1$. Following arguments leading to \eqref{eqn-2022-02-09-7}, we find 
\begin{equation}\label{eqn-2022-02-09-7-1}
\la_{\ep,1}\geq\frac{1-\de}{\II_{\ep}}
\end{equation}
for some $x^*=x^{*}(\de)\gg1$, where $\II_{\ep}=\int_0^{x^*}\int_0^x  \frac{1}{\al_{\ep}(x)} e^{2[V_{\ep}(y)-V_{\ep}(x)]}dydx$. Due to {\bf (H)}(1)-(3), there exists  $x_*=x_*(\de)>0$ such that
\begin{equation*}
    b(x)\leq (1+\de)b'(0)x\,\,\andd\,\,  \ep^2a(x)+\si^2(x)\geq (1-\de)x[\ep^2 a'(0)+|\si'(0)|^2x],\quad\forall x\in (0,x_*).
\end{equation*}
We split 
\begin{equation}\label{eqn-2022-02-09-5-1}
    \begin{split}
    \II_{\ep}=\left(\int_0^{x_*}+\int_{x_*}^{x^*}\right)\int_0^x  \frac{1}{\al_{\ep}(x)} e^{2[V_{\ep}(y)-V_{\ep}(x)]}dydx=:\RN{1}_{\ep}+\RN{2}_{\ep}.
    \end{split}
\end{equation}

We first treat $\RN{1}_{\ep}$. Since for any $0<y<x\leq x_{*}$
\begin{equation*}\label{eqn-2021-09-07-1-1}
    \begin{split}
    V_{\ep}(y)-V_{\ep}(x)\leq \frac{b'(0)(1+\de)}{(1-\de)}\int_y^x\frac{1}{\ep^2a'(0)+|\si'(0)|^2 s}ds=\frac{\ka}{2}\ln \frac{\ep^2 a'(0)+|\si'(0)|^2x}{\ep^2 a'(0)+|\si'(0)|^2 y},
    \end{split}
\end{equation*}
we deduce
\begin{equation}\label{eqn-2022-03-16-3}
    \begin{split}
        \RN{1}_{\ep}&\leq \int_0^{x_*}\int_0^x\frac{1}{(1-\de)x[\ep^2 a'(0)+|\si'(0)|^2x]}\left[\frac{\ep^2 a'(0)+|\si'(0)|^2x}{\ep^2 a'(0)+|\si'(0)|^2 y}\right]^{\ka}dy dx\\
    & 
    =\frac{1}{1-\de}\int_0^{x_*}\frac{[\ep^2 a'(0)+|\si'(0)|^2x]^{\ka-1}}{x} \int_0^x [\ep^2 a'(0)+|\si'(0)|^2y]^{-\ka} dydx
    \\
    & 
    =\frac{1}{1-\de}\int_0^{x_*}\frac{[\ep^2 a'(0)+|\si'(0)|^2x]^{\ka-1}}{x}\frac{\left([\ep^2 a'(0)+|\si'(0)|^2 x]^{1-\ka}-[\ep^2 a'(0)]^{1-\ka}\right)}{(1-\ka)|\si'(0)|^2}dx 
    \\
    &=\frac{1}{(1-\de)(1-\ka)|\si'(0)|^2}\int_0^{x_*}\frac{1}{x} \left[1-\left(\frac{\ep^2 a'(0)}{\ep^2 a'(0)+|\si'(0)|^2x}\right)^{1-\ka}\right]dx\\
    &=\frac{1}{(1-\de)(1-\ka)|\si'(0)|^2}\int_0^{\frac{|\si'(0)|^2x_*}{\ep^2 a'(0)}}\frac{1}{x}\left(1-\frac{1}{(1+x)^{1-\ka}}\right)dx\\
    &=\frac{1}{(1-\de)(1-\ka)|\si'(0)|^2}\int^1_{\frac{1}{1+\frac{|\si'(0)|^2x_*}{\ep^2 a'(0)}}}\frac{1-t^{1-\ka}}{t(1-t)}dt,
  \end{split}
\end{equation}
where the first equality follows from straightforward calculation, the second one is a result of an obvious change of variable, and the third one is due to the change of variable $t=\frac{1}{1+x}$. Since $\frac{1-t^{1-\ka}}{1-t}<1$ for $t\in(0,1)$, we deduce
$$
\int^1_{\frac{1}{1+\frac{|\si'(0)|^2x_*}{\ep^2 a'(0)}}}\frac{1-t^{1-\ka}}{t(1-t)}dt\leq \int^1_{\frac{1}{1+\frac{|\si'(0)|^2x_*}{\ep^2 a'(0)}}}\frac{1}{t}dt=\ln \left(1+\frac{|\si'(0)|^2x_*}{\ep^2 a'(0)}\right) \leq 3|\ln \ep|,
$$
which together with \eqref{eqn-2022-03-16-3} leads to
\begin{equation}\label{eqn-2022-03-16-4}
    \RN{1}_{\ep}\leq\frac{3}{(1-\de)(1-\ka)|\si'(0)|^2}|\ln\ep|=: C_1 |\ln \ep|.
\end{equation}

Now, we treat $\RN{2}_{\ep}$. Direct computation yields 
\begin{equation*}
\begin{split}
    \int_0^{x_*}[\ep^2 a'(0)+|\si'(0)|^2 y]^{-\ka}dy & \leq\frac{1}{(1-\ka)|\si'(0)|^2}[\ep^2 a'(0)+|\si'(0)|^2 x_*]^{1-\ka}\\
    &\leq \frac{2}{(1-\ka)|\si'(0)|^2}[|\si'(0)|^2 x_*]^{1-\ka}.
\end{split}
\end{equation*}
This together with similar arguments leading to \eqref{eqn-2022-03-16-2} yields $\RN{2}_{\ep}\leq C_{2}$ for some $C_{2}>0$.
As a result of \eqref{eqn-2022-02-09-5-1} and \eqref{eqn-2022-03-16-4}, $\II_{\ep}\leq C_1 |\ln \ep|+C_2\leq 2C_1 |\ln \ep|$. From which and \eqref{eqn-2022-02-09-7-1}, the conclusion follows.
\end{proof}


\subsection{Asymptotic bounds of the second eigenvalue}\label{subsec-asymptotic-second-eigenvalue}

The purpose of this subsection is to prove the asymptotic bounds of $\la_{\ep,2}$ stated in Theorem \ref{thm-bounds-eigenvalues} (2). Clearly, it follows from Lemma \ref{lem-upper-bound-eigenvalues} and the following result.
 
\begin{thm}\label{thm-lower-bound-lamda_2}
Assume {\bf (H)} and $\La_0>0$. Then, $\inf_{\ep}\la_{\ep,2}>0$. 
\end{thm}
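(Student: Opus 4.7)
The plan is to argue by contradiction. Suppose there exists a sequence $\ep_j\to 0$ with $\la_{\ep_j,2}\to 0$. Together with $\la_{\ep_j,1}\to 0$ from Theorem \ref{thm-0-super-limit-principal-eigenvalue}, the strategy is to show that, along a further subsequence, both $L^2(u^G_\ep)$-normalized eigenfunctions $\phi_{\ep_j,1}$ and $\phi_{\ep_j,2}$ converge to nonzero \emph{constants} $c_1,c_2$, and in such a way that the normalization and the orthogonality relations pass to the limit as
$$
c_i^2\,\|u^G_0\|_{L^1((0,\infty))}=1\quad\andd\quad c_1c_2\,\|u^G_0\|_{L^1((0,\infty))}=0.
$$
These two identities are clearly incompatible, yielding the desired contradiction. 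Morally this encodes the fact that $0$ is a simple eigenvalue of $\LL_0$ with constant eigenfunction when $\La_0>0$; see Remark \ref{rem-troubles-from-spectrum}(3) and Lemma \ref{lem-unique-L^1-soln}.

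The first step is a uniform local compactness of $\{\phi_{\ep,i}\}_{\ep}$. Using the uniform interior ellipticity of $\frac{1}{2}\al_\ep\frac{d^2}{dx^2}+b\frac{d}{dx}$ on any $K\stst(0,\infty)$ (which follows from \eqref{limit-alpha-V}), the normalization $\|\phi_{\ep,i}\|_{L^2(u^G_\ep)}=1$, and the boundedness $\la_{\ep,i}=O(1)$ from Lemma \ref{lem-upper-bound-eigenvalues}, standard $L^\infty_{\rm loc}$--$L^2_{\rm loc}$ estimates followed by interior Schauder theory applied to $\frac12\al_\ep\phi_{\ep,i}''+b\phi_{\ep,i}'=-\la_{\ep,i}\phi_{\ep,i}$ produce uniform $C^{2,\al}_{\rm loc}((0,\infty))$ bounds. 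Passing to a subsequence, $\phi_{\ep_j,i}\to\phi^*_i$ in $C^2_{\rm loc}((0,\infty))$, and sending $\ep_j\to 0$ in the equation yields $\LL_0\phi^*_i=0$. Integrating this ODE explicitly, every solution has the form $c+d\int_1^\bullet e^{-2\int_1^y b/\si^2 ds}dy$, and by the computation in Lemma \ref{lem-unique-L^1-soln} the non-constant piece blows up at $x=0^+$ at rate $x^{1-\ka_0}$, $\ka_0:=2b'(0)/|\si'(0)|^2>1$.

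The decisive second step is to (i) rule out this blow-up branch and (ii) simultaneously track the $L^2(u^G_\ep)$-mass through the limit. Both issues will be handled via the unitary conjugation of Subsection \ref{subsec-schrodinger-eqn}: setting $\psi_{\ep,i}:=\tilde U_\ep U_\ep \phi_{\ep,i}$, one has $\|\psi_{\ep,i}\|_{L^2((0,y_{\ep,\infty}))}=1$ and $-\frac12\psi_{\ep,i}''+W_\ep\psi_{\ep,i}=\la_{\ep,i}\psi_{\ep,i}$ (see \eqref{eqn-2021-09-28-1}). Multiplying by $\psi_{\ep,i}$, integrating, and combining the uniform lower bound $\inf_\ep\min W_\ep>-\infty$ with $W_\ep(y)\ge C/y^2$ on $(0,y_1]$ (Lemma \ref{properties-schrodinger-potential}(2),(4)), one obtains the uniform estimate $\int_0^{y_1}|\psi_{\ep,i}(y)|^2 y^{-2}\,dy\le C$. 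Using $|\psi_{\ep,i}|^2 dy=|\phi_{\ep,i}|^2 u^G_\ep dx$, this translates into the key near-$0$ mass bound
\begin{equation*}
\int_0^{\xi_\ep^{-1}(y_0)}|\phi_{\ep,i}|^2 u^G_\ep\,dx=\int_0^{y_0}|\psi_{\ep,i}|^2\,dy\leq C y_0^2,\qquad\forall\, 0<y_0\le y_1,
\end{equation*}
uniform in $\ep$, with the bound vanishing as $y_0\to 0$. Combining this with Lemma \ref{lem-compactness-2-infty} for the tail near $\infty$, the locally uniform convergences $\phi_{\ep_j,i}\to\phi^*_i$ in $C^2_{\rm loc}((0,\infty))$ and $u^G_\ep\to u^G_0$ in $C^2_{\rm loc}((0,\infty))$, and a dominated convergence argument on each middle region $[\xi_\ep^{-1}(y_0),z]$ against the integrable weight $u^G_0\in L^1((0,\infty))$, one lets $y_0\downarrow 0$ and $z\uparrow\infty$ to conclude that $\phi^*_i\in L^2(u^G_0)$ — whence $d=0$ and $\phi^*_i=c_i$ is a constant — and that the limiting identities displayed in the first paragraph hold, giving the contradiction.

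The hardest part will be the uniform mass control near $x=0$ in the third paragraph: the weight $u^G_\ep$ has a $1/x$ singularity for $\ep>0$ but only an integrable $x^{\ka_0-2}$ singularity in the limit, and the change of variables $\xi_\ep$ itself degenerates, since $\xi_\ep^{-1}(y_0)\to 0$ as $\ep\to 0$. Working on the Schr\"odinger side, where the second-order blowup of $W_\ep$ supplies a clean weight, is what makes this tractable; conveniently, the same device already underlies the tail estimate of Lemma \ref{lem-compactness-2-infty} via Lemma \ref{properties-schrodinger-potential}(3), so both endpoints will be treated by variants of the same mechanism.
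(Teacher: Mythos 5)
Your broad strategy --- contradiction via orthogonality of the first two eigenfunctions, identification of the $C^2$-local limits as nonzero constants through Lemma~\ref{lem-unique-L^1-soln}, and endpoint compactness on the Schr\"odinger side --- matches the paper's, and your first step (uniform local elliptic estimates giving $C^2$-local convergence to solutions of $\LL_0\phi=0$, with the non-constant branch blowing up like $x^{1-\ka_0}$, $\ka_0=2b'(0)/|\si'(0)|^2>1$) is essentially the paper's Steps~1--2. The decisive difference is the normalization and the corresponding near-$0$ no-escape estimate, and this is where the argument as written has a genuine gap.

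The bound you extract from the form identity and Lemma~\ref{properties-schrodinger-potential}(2),(4),
\begin{equation*}
\int_0^{y_1}\frac{|\psi_{\ep,i}(y)|^2}{y^2}\,dy\le C
\quad\Longrightarrow\quad
\int_0^{\xi_\ep^{-1}(y_0)}|\phi_{\ep,i}|^2 u^G_\ep\,dx=\int_0^{y_0}|\psi_{\ep,i}|^2\,dy\le Cy_0^2,\qquad y_0\le y_1,
\end{equation*}
is correct and uniform in $\ep$, but it controls the $L^2(u^G_\ep)$-mass only on the $\ep$-dependent interval $(0,\xi_\ep^{-1}(y_0))$; the proof of Lemma~\ref{properties-schrodinger-potential}(2) shows $\xi_\ep^{-1}(y_1)=O(\ep^2)$, so that interval collapses to a point as $\ep\to0$. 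For any \emph{fixed} $\de>0$ one has $\xi_\ep(\de)\to\infty$ (since $1/\si$ is non-integrable at $0$), so $\int_0^\de|\phi_{\ep,i}|^2u^G_\ep\,dx=\int_0^{\xi_\ep(\de)}|\psi_{\ep,i}|^2\,dy$ can a priori carry almost the full unit mass. In the intermediate region $\ep^2\ll x\ll1$ the potential $W_\ep\circ\xi_\ep$ is merely bounded (a Taylor computation from Lemma~\ref{properties-schrodinger-potential}(1) shows $2W_\ep\circ\xi_\ep\to\frac{|\si'(0)|^2}{4}(\ka_0-1)^2$ there), so the form identity gives no decay of $\int_{y_1}^{\xi_\ep(\de)}|\psi_{\ep,i}|^2\,dy$. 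Your ``dominated convergence on $[\xi_\ep^{-1}(y_0),z]$'' therefore lacks a uniform integrable dominant, and the argument does not preclude $L^2(u^G_\ep)$-mass of $\phi_{\ep,2}$ escaping to $x=0^+$. If that happens (fully or partially), $c_2^2\|u^G_0\|_{L^1}<1$ and the cross-term $\int_0^\de\phi_{\ep,1}\phi_{\ep,2}u^G_\ep\,dx$ need not vanish in the limit, so neither of your two displayed identities is established and the contradiction evaporates.

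This is exactly the difficulty the paper routes around by choosing the asymmetric normalization $\|\tilde\phi_{\ep,i}\|_{L^1((0,2);u^G_\ep)}+\|\tilde\phi_{\ep,i}\|^2_{L^2((1,\infty);u^G_\ep)}=1$ and proving only an \emph{$L^1$} no-escape bound near $0$, $\lim_{z\to0}\sup_\ep\int_0^z|\tilde\phi_{\ep,i}|u^G_\ep\,dx=0$ (Lemma~\ref{lem-compactness-2-0}), via a maximum-principle comparison with the supersolution $x^{-k}$ (Lemma~\ref{lem-upper-soln-x^-k}), seeded by the $\ep$-dependent pointwise bound of Lemma~\ref{lem-bound-u-ep-2-near-0}. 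Passage to the limit in the orthogonality integral then uses this $L^1$-bound on $\tilde\phi_{\ep,2}$ together with the uniform boundedness and monotonicity of $\tilde\phi_{\ep,1}$ (Lemma~\ref{monotonicity-first-eigenfunction-2022-04-07}); an $L^2$-bound on $\tilde\phi_{\ep,2}$ near $0$ is never needed. Your Hardy-type observation is useful --- a version of it already underlies Lemma~\ref{lem-bound-u-ep-2-near-0} --- but because the change of variables $\xi_\ep$ degenerates at $x=0$ in the limit, it cannot by itself control a fixed neighborhood of the origin; it must be supplemented by the maximum-principle mechanism (and, in turn, a normalization that makes the $L^1$-bound the relevant one).
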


We establish some lemmas before proving Theorem \ref{thm-lower-bound-lamda_2}. Recall from Lemma \ref{prop-spectral-structure} that $\la_{\ep,2}>\la_{\ep,1}>0$ are simple eigenvalues of $-\LL_{\ep}$ and have eigenfunctions in $D(\LL_{\ep})\cap L^1(u_{\ep}^G)\cap C^{2}((0,\infty))$. For $i=1,2$,  let $\tilde{\phi}_{\ep,i}$ be an eigenfunction of $-\LL_{\ep}$ associated with $\la_{\ep,i}$ and satisfy the normalization
\begin{equation}\label{normalization-2022-03-31}
    \|\tilde{\phi}_{\ep,i}\|_{L^1((0,2);u^G_{\ep})}+\|\tilde{\phi}_{\ep,i}\|^2_{L^2((1,\infty);u^G_{\ep})}=1.
\end{equation}
Such a normalization is chosen to acquire certain compactness of $\{\tilde{\phi}_{\ep,i}\}_{\ep}$ that plays a key role in the proof of Theorem \ref{thm-lower-bound-lamda_2}. Moreover, we let $\tilde{\phi}_{\ep,1}>0$. Direct calculations show that $u_{\ep, i}:=\tilde{\phi}_{\ep,i}u^G_{\ep}$ satisfies
\begin{equation}\label{eigen-equation-2022-04-27}
    \LL^*_{\ep}u_{\ep,i}=-\la_{\ep,i}u_{\ep,i}.
\end{equation}

The first lemma establishes an upper bound for $|u_{\ep,2}|$ near $0$.

\begin{lem}\label{lem-bound-u-ep-2-near-0}
Assume {\bf (H)}. For each $\ga>0$ and $0<\ep\ll 1$, there exist $C=C(\ep,\ga)>0$ and $x_*=x_*(\ep)>0$ such that $|u_{\ep,2}(x)|\leq Cx^{-\frac{1}{2}-\ga}$ for $x\in(0,x_*)$.
\end{lem}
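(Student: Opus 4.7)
The plan is a Frobenius-type singular-point analysis of the ODE satisfied by $u_{\ep,2}$ at $x=0$, combined with the $L^{2}(u_{\ep}^{G})$-integrability of $\tilde{\phi}_{\ep,2}=u_{\ep,2}/u_{\ep}^{G}$ to rule out the singular mode. All work is at fixed $\ep$, so constants may depend on $\ep$.

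First I will rewrite $\LL_{\ep}^{*}u_{\ep,2}=-\la_{\ep,2}u_{\ep,2}$ as the second-order linear ODE
\begin{equation*}
\tfrac{1}{2}\al_{\ep}u_{\ep,2}''+(\al_{\ep}'-b)\,u_{\ep,2}'+\bigl(\tfrac{1}{2}\al_{\ep}''-b'+\la_{\ep,2}\bigr)u_{\ep,2}=0.
\end{equation*}
By \textbf{(H)}(1)--(3), as $x\to 0^{+}$ one has $\al_{\ep}(x)=\ep^{2}a'(0)x+O(x^{2})$, $\al_{\ep}'(x)=\ep^{2}a'(0)+O(x)$, and $b(x)=b'(0)x+O(x^{2})$. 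Dividing through by $\al_{\ep}(x)/2$ recasts the equation in the standard form $u''+x^{-1}p_{\ep}(x)u'+x^{-1}q_{\ep}(x)u=0$, where $p_{\ep},q_{\ep}$ are smooth near $0$ and $p_{\ep}(0)=2$. Thus $x=0$ is a regular singular point whose indicial equation $\rho(\rho-1)+2\rho=\rho^{2}+\rho=0$ yields the two indices $\rho=0$ and $\rho=-1$.

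Next, classical Frobenius theory furnishes, on some $(0,x_{*})$ with $x_{*}=x_{*}(\ep)>0$, two linearly independent solutions of the form
\begin{equation*}
f_{\ep}(x)=1+O(x),\qquad g_{\ep}(x)=\tfrac{1}{x}\bigl(1+O(x)\bigr)+c_{\ep}\,f_{\ep}(x)\log x,
\end{equation*}
where $c_{\ep}\in\R$ (the logarithmic correction may appear since the indices differ by an integer). Write $u_{\ep,2}=A_{\ep}f_{\ep}+B_{\ep}g_{\ep}$ on $(0,x_{*})$ for some $A_{\ep},B_{\ep}\in\R$. The normalization \eqref{normalization-2022-03-31} implies $\tilde{\phi}_{\ep,2}\in L^{2}(u_{\ep}^{G})$, whence $\int_{0}^{x_{*}}|u_{\ep,2}|^{2}/u_{\ep}^{G}\,dx<\infty$. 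Since $V_{\ep}(0^{+})$ is finite (because $b/\al_{\ep}$ is bounded near $0$) and $\al_{\ep}(x)\sim\ep^{2}a'(0)x$, one has $u_{\ep}^{G}(x)\sim C_{\ep}/x$ as $x\to 0^{+}$, so the condition reduces to $\int_{0}^{x_{*}}x|u_{\ep,2}|^{2}\,dx<\infty$. If $B_{\ep}\neq 0$, then the dominant term $g_{\ep}(x)\sim 1/x$ forces $x|u_{\ep,2}(x)|^{2}\sim B_{\ep}^{2}/x$ near $0$, which is not integrable. Therefore $B_{\ep}=0$, and $u_{\ep,2}$ is in fact bounded near $0$; this is strictly stronger than the claimed $|u_{\ep,2}(x)|\leq Cx^{-1/2-\ga}$, which then follows trivially.

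The main obstacle will be the clean bookkeeping in the Frobenius step, particularly handling the possible logarithmic correction in $g_{\ep}$ and making the error terms quantitative enough; however this is classical once one observes that $xp_{\ep}(x)$ and $xq_{\ep}(x)$ are analytic near $0$ by the smoothness of $a,b,\si$ from \textbf{(H)}. A fully equivalent alternative, more consonant with Subsection \ref{subsec-schrodinger-eqn}, is to pass to the Schr\"{o}dinger picture: $\psi_{\ep,2}=\tilde{U}_{\ep}U_{\ep}\tilde{\phi}_{\ep,2}$ satisfies $-\tfrac{1}{2}\psi_{\ep,2}''+W_{\ep}\psi_{\ep,2}=\la_{\ep,2}\psi_{\ep,2}$ with $\psi_{\ep,2}\in L^{2}$, and multiplying by $\psi_{\ep,2}$ and integrating, together with $W_{\ep}(y)\geq C/y^{2}$ near $0$ from Lemma \ref{properties-schrodinger-potential}(2), forces $\int_{0}^{y_{1}}|\psi_{\ep,2}|^{2}/y^{2}\,dy<\infty$; combining this with the $H^{1}$-bound via Hardy's inequality and pulling back through $x=\xi_{\ep}^{-1}(y)$ then gives the desired estimate on $u_{\ep,2}$.
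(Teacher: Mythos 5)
Your main route — a local ODE/Frobenius analysis of $\LL^{*}_{\ep}u_{\ep,2}=-\la_{\ep,2}u_{\ep,2}$ at the regular singular point $x=0$, identifying the indices $\rho=0$ and $\rho=-1$, and then using $\int_0^{x_*}x|u_{\ep,2}|^2\,dx<\infty$ to annihilate the $1/x$-mode — is genuinely different from the paper's argument and in fact yields the strictly stronger conclusion that $u_{\ep,2}$ is \emph{bounded} near $0$. The paper instead stays in the Schr\"odinger picture: it tests the eigenequation for $\psi_{\ep,2}$ against a family of cutoff functions $\eta_{\de}^2\psi_{\ep,2}$, absorbs the cutoff errors by exploiting the blowup $W_{\ep}(y)\geq C/y^2$ near $0$ from Lemma~\ref{properties-schrodinger-potential}(2), obtains a $\de$-uniform $H^1$ bound for $\eta_{\de}\psi_{\ep,2}$, and then applies the one-dimensional Sobolev embedding $H^1\hookrightarrow C^{1/2-\ga}$ before transforming back through $\xi_{\ep}^{-1}$; your closing sketch is essentially this alternative. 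The one genuine defect in your primary route is the assertion that $xp_{\ep}(x)$ and $xq_{\ep}(x)$ are \emph{analytic} near $0$ ``by the smoothness of $a,b,\si$ from \textbf{(H)}'': in fact \textbf{(H)} only gives $b\in C^1$ and $a,\si\in C^2$, so after dividing by $\al_{\ep}/2$ the reduced coefficients are merely continuous near $0$, and classical Frobenius theory (which requires analyticity) is not available. The remedy is to pass to the first-order system in $Y=(u,xu')^{T}$, which takes the form $Y'=\left(x^{-1}A+B(x)\right)Y$ with $A$ having eigenvalues $0$ and $-1$ and with $B$ continuous up to $0$ — the $C^2$ control on $\al_{\ep}$ is exactly what prevents $B$ from acquiring a residual $1/x$ singularity — and then apply Levinson's asymptotic theorem for systems with integrable perturbations of a constant-coefficient Euler system. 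This produces a solution $f_{\ep}(x)=1+o(1)$ and a solution $g_{\ep}(x)=x^{-1}(c+o(1))$ with $c\neq0$ (with $o(1)$, not your claimed $O(x)$, error terms — but $o(1)$ is all the integrability argument uses). With Levinson in place of Frobenius, the identification $u^G_{\ep}(x)\sim C_{\ep}/x$, the reduction to $\int_0^{x_*}x|u_{\ep,2}|^2dx<\infty$, and the conclusion $B_{\ep}=0$ are all correct.
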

\begin{proof}
Set $\psi_{\ep,2}:=\tilde{U}_{\ep}U_{\ep}\tilde{\phi}_{\ep,2}$, where $\tilde{U}_{\ep}$ and $U_{\ep}$ are unitary transforms defined in Subsection \ref{subsec-schrodinger-eqn}. Then, $\psi_{\ep,2}$ satisfies $-\LL_{\ep}^{S}\psi_{\ep,2}=\la_{\ep,2}\psi_{\ep,2}$ in $(0,y_{\ep,\infty})$, that is,
\begin{equation}\label{eqn-2021-09-28-2}
-\frac{1}{2}\psi''_{\ep,2}+W_{\ep}\psi_{\ep,2}=\la_{\ep,2}\psi_{\ep,2}\quad \text{in}\quad (0,y_{\ep,\infty}),
\end{equation}
where we recall that $y_{\ep,\infty}=\int_0^{\infty}\frac{1}{\sqrt{\ep^2 a+|\si|^2}}ds$. By {\bf (H)}(2)-(3), $y_{\ep,\infty}$ increases to $\infty$ as $\ep\to 0$.

Fix $\eta_{*}\in (0,1)$ (whose exactly value is to be determined) and $0<\de_{*}\ll1$. Let $\{\eta_{\de}\}_{0<\de<\de_{*}}$ be a family of  functions in $C^{\infty}_0((0,1))$, take values in $[0,\eta_*]$ and satisfy
\begin{equation}\label{eqn-2021-09-28-3}
\eta_{\de}(x)=\begin{cases}
0,& y\in (0,\de),\\
\eta_{*},& y\in (2\de,\frac{1}{2}),
\end{cases}
\quad 0\leq \eta'_{\de}\leq \frac{2\eta_{*}}{\de} \text{ on } [\de,2\de]\quad\andd\quad |\eta'_{\de}|\leq 4\eta_{*} \text{ on } \left[\frac{1}{2},1\right).
\end{equation}
Multiplying \eqref{eqn-2021-09-28-2} by $\eta^2_{\de}\psi_{\ep,2}$ and integrating by parts yield
\begin{equation}\label{eqn-2021-09-28-4}
    \frac{1}{2}\int_0^1 \eta^2_{\de}|\psi'_{\ep,2}|^2 dy+\int_0^1 \eta_{\de}\psi_{\ep,2} \eta'_{\de}\psi'_{\ep,2} dy+\int_0^1 W_{\ep}\eta^2_{\de} |\psi_{\ep,2}|^2dy=\la_{\ep,2}\int_0^1 \eta_{\de}^2 |\psi_{\ep,2}|^2dy.
\end{equation}

An application of H\"older's inequality and  \eqref{eqn-2021-09-28-3} yields
\begin{equation*}
    \begin{split}
    \left| \int_0^1 \eta_{\de}\psi_{\ep,2} \eta'_{\de}\psi'_{\ep,2} dy \right|
    &\leq\frac{1}{4}\int_0^1 \eta^2_{\de}|\psi'_{\ep,2}|^2 dy+\int_0^1 |\eta'_{\de}|^2|\psi_{\ep,2}|^2 dy\\
    &\leq  \frac{1}{4}\int_0^1 \eta^2_{\de}|\psi'_{\ep,2}|^2 dy+\frac{4\eta_{*}^2}{\de^2}\int_{\de}^{2\de} |\psi_{\ep,2}|^2 dy+16\eta_{*}^2\int_{\frac{1}{2}}^{1} |\psi_{\ep,2}|^2 dy.
    \end{split}
\end{equation*}
Thanks to Lemma \ref{properties-schrodinger-potential} (2) and (4), we can find $C>0$ and $M>1$, and make $\de_*$ smaller if necessary (all independent of $\ep$) such that $W_{\ep}(y)\geq \frac{C}{y^2}\geq \frac{C}{4\de^2}$ for $y\in (\de,2\de)$ and $\de\in (0,\de_*)$ and $W_{\ep}+M\geq 1$. Setting $\eta_{*}:=\min\left\{\frac{\sqrt{C}}{4\sqrt{2}},\frac{1}{4\sqrt{2}}\right\}$, we derive
\begin{equation*}
    \begin{split}
     \left| \int_0^1 \eta_{\de}\psi_{\ep,2} \eta'_{\de}\psi'_{\ep,2} dy \right|&\leq \frac{1}{4}\int_0^1 \eta^2_{\de}|\psi'_{\ep,2}|^2 dy+\frac{C}{8\de^2}\int_{\de}^{2\de}|\psi_{\ep,2}|^2dy+\frac{1}{2}\int_{\frac{1}{2}}^1|\psi_{\ep,2}|^2 dy\\
     &\leq \frac{1}{4}\int_0^1 \eta^2_{\de}|\psi'_{\ep,2}|^2 dy+\frac{1}{2} \int_0^1 (W_{\ep}+M)\eta^2_{\de} |\psi_{\ep,2}|^2dy.
     \end{split}
\end{equation*}
It then follows from \eqref{eqn-2021-09-28-4} that
\begin{equation*}
     \frac{1}{4}\int_0^1 \eta^2_{\de}|\psi'_{\ep,2}|^2 dy+
    \frac{1}{2}\int_0^1 (W_{\ep}+M)\eta^2_{\de} |\psi_{\ep,2}|^2dy\leq \left(\la_{\ep,2}+\frac{M}{2}\right)\int_0^1 \eta^2_{\de} |\psi_{\ep,2}|^2dy,
\end{equation*}
and thus, 
\begin{equation*}
     \frac{1}{4}\int_0^1 \eta^2_{\de}|\psi'_{\ep,2}|^2 dy+
    \frac{1}{2}\int_0^1 \eta^2_{\de} |\psi_{\ep,2}|^2dy\leq \left(\la_{\ep,2}+\frac{M}{2}\right)\eta_*^2\int_0^1  |\psi_{\ep,2}|^2dy.
\end{equation*}

Note that it suffices to prove the result for each $0<\ga\ll1$. Let $\ga$ be such a number. As $\limsup_{\ep\to 0}\la_{\ep,2}<\infty$ (by Lemma \ref{lem-upper-bound-eigenvalues}) and  $\|\psi_{\ep,2}\|_{L^2((0,y_{\ep,\infty}))}=\|\tilde{\phi}_{\ep,2}\|_{L^2(u^G_{\ep})}<\infty$, the Sobolev embedding theorem gives $\|\eta_{\de}\psi_{\ep,2}\|_{C^{\frac{1}{2}-\ga}([0,1])}\leq C_1\|\tilde{\phi}_{\ep,2}\|_{L^2(u^G_{\ep})}$ for some $C_1=C_1(\ga)>0$  (independent of $\de$).
In particular, 
$$
|\eta_{\de}(y)\psi_{\ep,2}(y)|\leq C_1\|\tilde{\phi}_{\ep,2}\|_{L^2(u^G_{\ep})} y^{\frac{1}{2}-\ga},\quad \forall y\in\left(0,\frac{1}{2}\right).
$$
As $\eta_{\de}$ converges to $\eta_*$ on $(0,\frac{1}{2})$, we let $\de\to 0^+$ in the above inequality to find 
$$
|\psi_{\ep,2}(y)|\leq \frac{C_1}{\eta_*}\|\tilde{\phi}_{\ep,2}\|_{L^2(u^G_{\ep})} y^{\frac{1}{2}-\ga}=:C_2y^{\frac{1}{2}-\ga}, \quad \forall  y\in \left(0,\frac{1}{2}\right),
$$
where $C_2=C_{2}(\ep,\ga):=\frac{C_1}{\eta_*}\|\tilde{\phi}_{\ep,2}\|_{L^2(u^G_{\ep})}$. Setting $x^{\ep}_*:=\xi^{-1}_{\ep}(\frac{1}{2})$, we see from $u_{\ep,2}=\tilde{\phi}_{\ep,2}u^G_{\ep}$ and $\tilde{\phi}_{\ep,2}=U_{\ep}^{-1}\tilde{U}_{\ep}^{-1}\psi_{\ep,2}$ that 
\begin{equation}\label{eqn-2022-03-23-1}
\begin{split}
|u_{\ep,2}|=|\psi_{\ep,2}\circ\xi_{\ep}|\al_{\ep}^{-\frac{3}{4}} e^{-V_{\ep}}\leq C_2 \xi_{\ep}^{\frac{1}{2}-\ga}(\ep^2 a)^{-\frac{3}{4}} e^{-V_{\ep}}\quad\text{in}\quad(0,x^{\ep}_*).
\end{split}
\end{equation}

Since $\lim_{\ep\to 0} \xi_{\ep}(x)=\infty$ for any $x>0$, there must hold $\lim_{\ep\to 0}x^{\ep}_*=0$. Hence, $a(x)\geq \frac{1}{2}a'(0)x$ for $x\in (0,x^{\ep}_*)$. Then, 
$$
\xi_{\ep}(x)\leq \frac{\sqrt{2}}{\ep \sqrt{a'(0)}}\int_0^x\frac{1}{\sqrt{s}}ds = \frac{2\sqrt{2}}{\ep}\sqrt{\frac{x}{a'(0)}}, \quad \forall x\in (0,x^{\ep}_*).
$$
As $V'_{\ep}=-\frac{b}{\al_{\ep}}\leq0$ near $0$,  $V_{\ep}$ is non-increasing in $(0,x^{\ep}_*)$. It follows from \eqref{eqn-2022-03-23-1} that
\begin{equation*}
\begin{split}
|u_{\ep,2}(x)| 
&\leq C_2  \left(\frac{2\sqrt{2}}{\ep}\sqrt{\frac{x}{a'(0)}}\right)^{\frac{1}{2}-\ga}\left[\frac{1}{2}\ep^2 a'(0)x\right]^{-\frac{3}{4}}e^{-V_{\ep}(x^{\ep}_*)}=:\frac{C}{x^{\frac{1}{2}+\frac{\ga}{2}}},\quad \forall x\in (0,x_*^{\ep}).
\end{split}
\end{equation*}
This completes the proof.
\end{proof}

The following result is in preparation for the contradiction arguments for $\tilde{\phi}_{\ep,i}$ to be used in the proof of Theorem \ref{thm-lower-bound-lamda_2}.

\begin{lem}\label{lem-compactness-2-0}
Assume {\bf (H)} and $\La_0>0$. 
\begin{enumerate}
    \item[\rm(1)] $\lim_{z\to0}\sup_{\ep}\int_0^{z}\tilde{\phi}_{\ep,1} u^G_{\ep}dx=0$.
    
    \item[\rm(2)] If $\lim_{\ep\to 0}\la_{\ep,2}=0$, then $\lim_{z\to0}\sup_{\ep}\int_0^{z}|\tilde{\phi}_{\ep,2}| u^G_{\ep}dx=0$.
\end{enumerate}
\end{lem}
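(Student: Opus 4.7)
For (1), I will leverage the simplicity of the principal eigenvalue. Since both $u_{\ep,1}=\tilde{\phi}_{\ep,1}u^G_\ep$ and the QSD density $u_\ep$ from Lemma \ref{qsd-existence-uniqueness} are positive solutions of $(\LL^*_\ep+\la_{\ep,1})u=0$, Lemma \ref{prop-spectral-structure} (2) yields $u_{\ep,1}=c_\ep u_\ep$ for some $c_\ep>0$. The $L^1$ piece of the normalization \eqref{normalization-2022-03-31} forces $c_\ep\int_0^2 u_\ep\,dx\leq 1$, while Theorem \ref{thm-concentration} (2) together with the fact that $\mu_0$ has a positive continuous density gives $\liminf_{\ep\to 0}\int_0^2 u_\ep\,dx\geq \mu_0((0,2))>0$. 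Hence $\sup_\ep c_\ep<\infty$ for small $\ep$. Combining this with the uniform pointwise estimate $u_\ep(x)\leq Cx^{-k}$ on $(0,x_*)$ for some $k\in(0,1)$ provided by Proposition \ref{prop-concentration-0} and integrating will deliver (1).

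For (2), the plan is to establish a uniform-in-$\ep$ pointwise bound of the form $|u_{\ep,2}(x)|\leq Cx^{-k}$ on a neighborhood of $0$, mimicking the maximum-principle argument used in Proposition \ref{prop-concentration-0}. Using Lemma \ref{lem-upper-soln-x^-k} I will fix $k\in(\max\{k_*,1/2\},1)$, and then the assumption $\lim_{\ep\to 0}\la_{\ep,2}=0$ will ensure that $(\LL^*_\ep+\la_{\ep,2})x^{-k}<0$ on $(0,x_*)$ for all small $\ep$. Setting $v_\ep:=x^{k}u_{\ep,2}$, the computation leading to \eqref{eqn-v-ep} shows that $v_\ep$ obeys a linear second-order ODE with strictly negative zeroth-order coefficient on $(0,x_*)$. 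The strong maximum principle then forbids $v_\ep$ from attaining a positive interior max or a negative interior min on $(0,x_*/2)$, giving
$$
\max_{(0,x_*/2)}|v_\ep|\leq \max\bigl\{|v_\ep(0^+)|,\,|v_\ep(x_*/2)|\bigr\}.
$$
Lemma \ref{lem-bound-u-ep-2-near-0}, applied with any $\ga<k-1/2$, shows $|v_\ep(x)|\leq C(\ep,\ga)x^{k-1/2-\ga}\to 0$ as $x\to 0^+$, so $v_\ep(0^+)=0$.

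The main obstacle will be establishing a bound on $|v_\ep(x_*/2)|=(x_*/2)^k|u_{\ep,2}(x_*/2)|$ that is independent of $\ep$. I intend to deduce this from interior regularity for the ODE $(\LL^*_\ep+\la_{\ep,2})u_{\ep,2}=0$ on a compact subinterval $[x_*/4,3x_*/4]\stst(0,\infty)$. On this interval $\al_\ep$ is uniformly elliptic (bounded below by $\min_{[x_*/4,3x_*/4]}\si^2>0$), and $\al_\ep$, $b$, and $\la_{\ep,2}$ have coefficients bounded uniformly in $\ep$ by virtue of \eqref{limit-alpha-V} and Lemma \ref{lem-upper-bound-eigenvalues}. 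Viewing $(u_{\ep,2},u_{\ep,2}')$ as a solution of a first-order linear system with uniformly bounded coefficients, Gronwall's inequality propagates pointwise bounds; combined with a mean-value selection of sample points in $[x_*/4,x_*/3]$ and $[2x_*/3,3x_*/4]$ where $|u_{\ep,2}|$ is controlled by the average of $|u_{\ep,2}|$ over $[x_*/4,3x_*/4]$, this will yield
$$
|u_{\ep,2}(x_*/2)|\leq C\int_{x_*/4}^{3x_*/4}|\tilde{\phi}_{\ep,2}|\,u^G_\ep\,dx\leq C\,\|\tilde{\phi}_{\ep,2}\|_{L^1((0,2);u^G_\ep)}\leq C,
$$
where we take $x_*<2$ and use the normalization \eqref{normalization-2022-03-31}. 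Putting the pieces together will give $|u_{\ep,2}(x)|\leq C'x^{-k}$ on $(0,x_*/2)$ uniformly in small $\ep$, and integration will then yield $\int_0^z |\tilde{\phi}_{\ep,2}|\,u^G_\ep\,dx\leq C''z^{1-k}\to 0$ uniformly as $z\to 0$, establishing (2).
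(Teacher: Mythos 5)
Your argument is correct in substance, and the two parts sit differently relative to the paper. For (1) you take a genuinely different route: the paper proves both parts by one unified scheme (reduce to $\lim_{z\to0}\sup_\ep\int_0^z|u_{\ep,i}|dx=0$, then run the barrier/maximum-principle argument of Proposition \ref{prop-concentration-0} with the interior bound $|v_{\ep,i}(x_2)|\leq Cx_2^{k-1}$ supplied by De Giorgi--Nash--Moser), whereas you exploit the simplicity of $\la_{\ep,1}$ to write $\tilde\phi_{\ep,1}u^G_\ep=c_\ep u_\ep$, bound $\sup_\ep c_\ep$ via the normalization \eqref{normalization-2022-03-31} and the Portmanteau lower bound $\liminf_\ep\mu_\ep((0,2))\geq\mu_0((0,2))>0$ from Theorem \ref{thm-concentration}(2), and then simply integrate the uniform estimate of Proposition \ref{prop-concentration-0}. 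This is shorter and recycles work already done in Section \ref{s:tight}; there is no circularity, since Theorem \ref{thm-concentration} and Proposition \ref{prop-concentration-0} do not depend on this lemma. For (2) you follow essentially the paper's route (Lemma \ref{lem-upper-soln-x^-k} with $k\in(\max\{k_*,1/2\},1)$, Lemma \ref{lem-bound-u-ep-2-near-0} to get $v_\ep(0^+)=0$, maximum principle on $(0,x_*/2)$, then integrate), the only real difference being that you replace the De Giorgi--Nash--Moser local boundedness estimate by an elementary ODE argument to get the $\ep$-uniform bound $|u_{\ep,2}(x_*/2)|\leq C\|u_{\ep,2}\|_{L^1}$.

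One small gap in that last step: Gronwall for the system $(u_{\ep,2},u_{\ep,2}')$ propagates a bound on the \emph{pair}, so you must initialize it at a point where both $|u_{\ep,2}|$ and $|u_{\ep,2}'|$ are controlled. Your mean-value selection controls $|u_{\ep,2}|$ at two points $a,b$ by the $L^1$ average, and a further mean value theorem gives a point $\xi\in(a,b)$ where $|u_{\ep,2}'(\xi)|$ is controlled, but at $\xi$ you no longer control $|u_{\ep,2}(\xi)|$, so Gronwall cannot be started there as written. This is fixable by standard means (e.g., integrate the divergence form equation $\frac12(\al_\ep u)''+\dots=0$ from $a$, use the MVT point only for $(\al_\ep u)'$, and absorb a $\sup|u_{\ep,2}|$ term on a sufficiently short subinterval, then cover $[x_*/4,3x_*/4]$ by finitely many such subintervals), or you can simply invoke the interior De Giorgi--Nash--Moser estimate as the paper does, whose constant is uniform in $\ep$ thanks to \eqref{limit-alpha-V} and Lemma \ref{lem-upper-bound-eigenvalues}. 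With that repair, your proof goes through.
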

\begin{proof}
As $\lim_{\ep\to0}\la_{\ep,1}=0$ by Theorem \ref{thm-0-super-limit-principal-eigenvalue}, the proof is done if we show that for $i=1,2$, the condition $\lim_{\ep\to 0}\la_{\ep,i}=0$ implies $\lim_{z\to0}\sup_{\ep}\int_0^{z}|\tilde{\phi}_{\ep,i}| u^G_{\ep}dx=0$. As $u_{\ep, i}=\tilde{\phi}_{\ep,i}u^G_{\ep}$, it is the same as showing  
\begin{equation}\label{limit-2022-04-27}
\lim_{z\to0}\sup_{\ep}\int_0^{z}|u_{\ep,i}|dx=0.
\end{equation}

We proceed as in the proof of Proposition \ref{prop-concentration-0}. Given $\lim_{\ep\to 0}\la_{\ep,i}=0$, we apply Lemma \ref{lem-upper-soln-x^-k} to find for fixed $k\in (\frac{1}{2},1)$ the existence of $x_1\in (0,1)$ such that $(\LL^*_{\ep}+\la_{\ep,i}) x^{-k}<0$ in $(0,x_1)$. Setting $v_{\ep,i}:=\frac{u_{\ep,i}}{x^{-k}}$, we compute using \eqref{eigen-equation-2022-04-27}
\begin{equation}\label{eqn-2022-03-17-1}
\frac{1}{2}\al_{\ep} v''_{\ep,i}+\left(\al'_{\ep}-b-\frac{k}{x}\al_{\ep}\right) v'_{\ep,i}+\frac{(\LL^*_{\ep}+\la_{\ep,i})x^{-k}}{x^{-k}}v_{\ep,i}=0 \quad \text{in}\quad (0,\infty).
\end{equation}

Note that $\lim_{x\to 0}|v_{\ep,i}(x)|=0$. Indeed, Lemma \ref{lem-bound-u-ep-2-near-0} implies that there are $C_{\ep}>0$ and $x_{\ep}>0$ such that $|u_{\ep,i}(x)|\leq C_{\ep} x^{-\frac{1}{2}(k+\frac{1}{2})}$ for $x\in(0,x_{\ep})$. Hence, 
$\lim_{x\to 0}|v_{\ep,i}(x)|\leq C_{\ep}\lim_{x\to 0}x^{k-\frac{1}{2}(k+\frac{1}{2})}=0$.

Let $x_2\in (0,x_1)$. Note that the equation \eqref{eigen-equation-2022-04-27} can be written as
\begin{equation}\label{eqn-2022-04-27}
    \frac{1}{2}\left(\al_{\ep}u'_{\ep,i}\right)'+\left[\left(\frac{1}{2}\al'_{\ep}-b\right)u_{\ep,i}\right]'+\la_{\ep,i}u_{\ep,i}=0.
\end{equation}
Due to the first limit in \eqref{limit-alpha-V} and $\lim_{\ep\to 0}\la_{\ep,i}=0$ (by assumption), we apply the classical interior De Giorgi-Nash-Moser estimates (see e.g. \cite{CW98, GT01}) in $(\frac{x_2}{2},\frac{3x_2}{2})$ to find $C>0$ (independent of $\ep$) such that
\begin{equation*}
    \sup_{(\frac{3 x_2}{4},\frac{5x_2}{4})}|u_{\ep,i}|\leq \frac{C}{x_2}\int_{\frac{x_2}{2}}^{\frac{3x_2}{2}} |u_{\ep,i}|dx= \frac{C}{x_2}\int_{\frac{x_2}{2}}^{\frac{3x_2}{2}} |\tilde{\phi}_{\ep,i}|u^G_{\ep}dx\leq \frac{C}{x_2}, 
\end{equation*}
where we used the normalization \eqref{normalization-2022-03-31} in the last inequality. Hence, $|v_{\ep,i}(x_2)|\leq C x_2^{k-1}$. 

Since $v_{\ep,i}$ satisfies \eqref{eqn-2022-03-17-1}, $\lim_{x\to 0}v_{\ep,i}(x)=0$ and  $(\LL^*_{\ep}+\la_{\ep,i}) x^{-k}<0$, we apply the maximum principle to $v_{\ep,i}$ in $(0,x_2)$ and conclude that $\sup_{x\in (0,x_2)}|v_{\ep,i}(x)|=|v_{\ep,i}(x_2)|\leq C x_2^{k-1}$, leading to $|u_{\ep,i}(x)|\leq C x_2^{k-1} x^{-k}$ for $x\in (0,x_2)$. Thus,
\begin{equation*}
    \int_0^{x^2_2}|\tilde{\phi}_{\ep,i}|u^G_{\ep}dx=\int_0^{x^2_2}|u_{\ep,i}|dx\leq C x_2^{k-1}\int_0^{x^2_2}x^{-k}dx=\frac{C}{1-k} x_2^{1-k}.
\end{equation*}
Since the above estimate holds for any $x_{2}\in(0,x_{1})$ and is uniform in $0<\ep\ll1$, we arrive at \eqref{limit-2022-04-27}, and hence, proves the lemma.
\end{proof}

The monotonicity of $\tilde{\phi}_{\ep,1}$ is addressed in the next result.

\begin{lem}\label{monotonicity-first-eigenfunction-2022-04-07}
Assume {\bf (H)}. There holds $\tilde{\phi}_{\ep,1}'>0$.
\end{lem}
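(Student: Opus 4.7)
My plan is to put the eigenvalue equation into self-adjoint (divergence) form and then combine a monotonicity argument with a contradiction at infinity. A direct calculation using $V_{\ep}'=-b/\al_{\ep}$ gives $(\al_{\ep}u_{\ep}^{G})'=(e^{-2V_{\ep}})'=2bu_{\ep}^{G}$, which yields
\[
\LL_{\ep}\phi=\frac{1}{u_{\ep}^{G}}\left(\tfrac{1}{2}\al_{\ep}u_{\ep}^{G}\phi'\right)'.
\]
Thus, setting $\psi_{\ep}:=\tfrac{1}{2}\al_{\ep}u_{\ep}^{G}\tilde{\phi}_{\ep,1}'$, the eigen-equation $-\LL_{\ep}\tilde{\phi}_{\ep,1}=\la_{\ep,1}\tilde{\phi}_{\ep,1}$ rewrites as
\[
\psi_{\ep}'(x)=-\la_{\ep,1}\tilde{\phi}_{\ep,1}(x)u_{\ep}^{G}(x)<0,\quad x\in(0,\infty),
\]
so $\psi_{\ep}$ is strictly decreasing on $(0,\infty)$. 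Since any positive eigenfunction of $-\LL_{\ep}$ associated with $\la_{\ep,1}$ is a positive multiple of the spectrally normalized $\phi_{\ep,1}\in L^{1}(u_{\ep}^{G})$ from Lemma \ref{prop-spectral-structure} (2), we also have $\tilde{\phi}_{\ep,1}\in L^{1}(u_{\ep}^{G})$; hence $\psi_{\ep}'$ is integrable on $(0,\infty)$ and $L_{\infty}:=\lim_{x\to\infty}\psi_{\ep}(x)$ exists and is finite.

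Strict monotonicity gives $\psi_{\ep}(x)>L_{\infty}$ for every $x\in(0,\infty)$, so the entire proof reduces to establishing $L_{\infty}\geq 0$: then $\tilde{\phi}_{\ep,1}'(x)=2\psi_{\ep}(x)/(\al_{\ep}u_{\ep}^{G})(x)>0$ on $(0,\infty)$, which is the desired conclusion.

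To verify $L_{\infty}\geq 0$ I argue by contradiction. Suppose $L_{\infty}<0$ and pick $x_{0}>0$ with $\psi_{\ep}(x)\leq L_{\infty}/2$ for $x\geq x_{0}$. Using $\al_{\ep}u_{\ep}^{G}=e^{-2V_{\ep}}$, this reads
\[
\tilde{\phi}_{\ep,1}'(x)\leq L_{\infty}\,e^{2V_{\ep}(x)},\quad x\geq x_{0}.
\]
Assumption {\bf (H)}(4) (in particular $\lim_{x\to\infty}xb/\si^{2}=-\infty$ combined with $\limsup_{x\to\infty}a/\si^{2}<\infty$) forces $-b/\al_{\ep}\gtrsim 1/x$ near infinity, so $V_{\ep}(x)\to+\infty$; in particular the integrand $e^{2V_{\ep}}$ blows up and $\int_{x_{0}}^{\infty}e^{2V_{\ep}(s)}ds=+\infty$. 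Integrating the previous inequality from $x_{0}$ to $x$ then yields $\tilde{\phi}_{\ep,1}(x)\to-\infty$, which contradicts $\tilde{\phi}_{\ep,1}>0$. Hence $L_{\infty}\geq 0$ and the lemma follows.

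The only substantive step is this final contradiction, which converts the assumed strictly negative asymptotic value of $\psi_{\ep}$ into unboundedness of $\tilde{\phi}_{\ep,1}$ via the blow-up of $e^{2V_{\ep}}$ at infinity. All other ingredients — the divergence form of $\LL_{\ep}$, the $L^{1}(u_{\ep}^{G})$-integrability of $\tilde{\phi}_{\ep,1}$, and the $C^{2}$ regularity that makes $\tilde{\phi}_{\ep,1}'$ classically well-defined — are direct consequences of the framework already in place.
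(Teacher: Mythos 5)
Your proof is correct and follows essentially the same route as the paper's: both rewrite the eigen-equation in divergence form to see that $\psi_{\ep}=\tfrac12 e^{-2V_{\ep}}\tilde{\phi}_{\ep,1}'$ is strictly decreasing (using $\tilde{\phi}_{\ep,1}>0$ and $\la_{\ep,1}>0$), and both derive a contradiction with positivity of $\tilde{\phi}_{\ep,1}$ by exploiting the blowup of $e^{2V_{\ep}}$ as $x\to\infty$, which is guaranteed by {\bf (H)}(4). The only cosmetic difference is that you interpose the $L^{1}(u_{\ep}^{G})$-integrability of $\tilde{\phi}_{\ep,1}$ to establish that $L_{\infty}=\lim_{x\to\infty}\psi_{\ep}(x)$ is finite before arguing $L_{\infty}\geq 0$; the paper avoids this by directly assuming $\tilde{\phi}_{\ep,1}'(x_0)\leq 0$ at some $x_0$, using strict monotonicity of $\psi_{\ep}$ to find $x_1>x_0$ with $\psi_{\ep}(x_1)<0$, and then running the same integration argument from $x_1$. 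Your extra step is harmless but unnecessary, since whether $\psi_{\ep}$ tends to a finite negative limit or to $-\infty$, strict monotonicity alone already pins it below a negative constant beyond some point, which is all the final contradiction requires.
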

\begin{proof}
Note that $\tilde{\phi}_{\ep,1}$ satisfies $\LL_{\ep}\tilde{\phi}_{\ep,1}=-\la_{\ep,1}\tilde{\phi}_{\ep,1}$, or $\left( e^{-2V_{\ep}}\tilde{\phi}'_{\ep,1}\right)'=-\frac{2}{\al_{\ep}}\la_{\ep,1}e^{-2V_{\ep}}\tilde{\phi}_{\ep,1}$. Since $\tilde{\phi}_{\ep,1}>0$, $e^{-2V_{\ep}}\tilde{\phi}'_{\ep,1}$ is strictly decreasing.  

Suppose on the contrary that $\tilde{\phi}'_{\ep,1}(x_0)\leq 0$ for some $x_0\in(0,\infty)$. Then, there is $x_1>x_0$ such that 
\begin{equation*}
    e^{-2V_{\ep}(x)}\tilde{\phi}'_{\ep,1}(x)<e^{-2V_{\ep}(x_1)}\tilde{\phi}'_{\ep,1}(x_1)<0,\quad \forall x>x_1,
\end{equation*}
yielding $\tilde{\phi}'_{\ep,1}(x)<e^{2(V_{\ep}(x)-V_{\ep}(x_1))}\tilde{\phi}_{\ep,1}'(x_1)$ for $x>x_1$. As $V_{\ep}(x)-V_{\ep}(x_1)=-\int_{x_1}^x\frac{b}{\al_{\ep}}ds\to\infty$ as $x\to\infty$, we find $\tilde{\phi}'_{\ep,1}(x)\to -\infty$ as $x\to \infty$, and hence, $\limsup_{x\to \infty}\tilde{\phi}_{\ep,1}(x)=-\infty$, contradicting $\tilde{\phi}_{\ep,1}>0$. Hence, $\tilde{\phi}'_{\ep,1}>0$. 
\end{proof}

The last lemma is elementary. 

\begin{lem}\label{lem-integral-gibb}
Assume {\bf (H)}. Then, $\sup_{\ep}\int_1^{\infty}u^G_{\ep}dx<\infty$.
\end{lem}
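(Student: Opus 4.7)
The plan is to mimic the proof of Lemma \ref{lem-integral-gibbs-zero-demographic} but to track everything uniformly in $\ep$, exploiting the fact that the key hypotheses in {\bf (H)}(4) are stated in terms of $\si$ and $b$ alone, while $\al_\ep = \ep^2 a + \si^2$ is comparable to $\si^2$ at infinity uniformly in $\ep$ because of $\limsup_{x\to\infty}\frac{a}{\si^2}<\infty$.

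Concretely, I would write $u_\ep^G = e^{f_\ep}$ with $f_\ep := 2\int_1^\bullet\frac{b}{\al_\ep}ds - \ln\al_\ep$, so that
$$f_\ep' = \frac{2b}{\al_\ep} - \frac{\al_\ep'}{\al_\ep}.$$
The first step is to show that there exist $N_0 > 1$ and $\ep_0 > 0$ such that for all $x \ge N_0$ and $\ep \in (0,\ep_0)$,
\begin{equation*}
\frac{1}{2}\si^2(x) \le \al_\ep(x) \le 2\si^2(x), \qquad \left|\frac{\al_\ep'(x)}{\al_\ep(x)}\right| \le \frac{|b(x)|}{5\,\si^2(x)}.
\end{equation*}
The first pair of inequalities is immediate from $\limsup_{x\to\infty}\frac{a}{\si^2}<\infty$. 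For the second, I estimate
$$\left|\frac{\al_\ep'}{\al_\ep}\right|=\left|\frac{\ep^2 a'+(\si^2)'}{\ep^2 a+\si^2}\right|\le \frac{|a'|}{a}+\frac{|(\si^2)'|}{\si^2} = \frac{|a'|}{a}+\frac{2|\si'|}{|\si|},$$
and then invoke $\limsup_{x\to\infty}\frac{\si^2}{|b|}\max\{a'/a,|\si'|/|\si|\}=0$ from {\bf (H)}(4) to absorb both quantities into $|b|/\si^2$.

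Combining these with $b<0$ for large $x$ (from {\bf (H)}(1)) yields, for $x \ge N_0$ and $\ep < \ep_0$,
$$f_\ep'(x) \le \frac{2b(x)}{\al_\ep(x)} + \frac{|\al_\ep'(x)|}{\al_\ep(x)} \le \frac{b(x)}{\si^2(x)} - \frac{b(x)}{5\si^2(x)} = \frac{4b(x)}{5\si^2(x)}.$$
Next, I use $\lim_{x\to\infty}\frac{xb(x)}{\si^2(x)} = -\infty$ to choose $N_1 \ge N_0$ so large that $\frac{b(x)}{\si^2(x)} \le -\frac{10}{x}$ for $x \ge N_1$. Integrating $f_\ep'(x) \le -8/x$ from $N_1$ to $x$ gives
$$u_\ep^G(x) = e^{f_\ep(x)} \le e^{f_\ep(N_1)}\left(\frac{x}{N_1}\right)^{-8}\quad\text{for all } x\ge N_1,\; \ep<\ep_0.$$

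Finally, the locally uniform convergence $\al_\ep \to \si^2$ from \eqref{limit-alpha-V} gives $u_\ep^G \to u_0^G$ in $C([1,N_1])$ and also makes $f_\ep(N_1)$ bounded uniformly in $\ep$. Together these control both the tail (by the polynomial bound above, which is integrable on $(N_1,\infty)$) and the compact piece $\int_1^{N_1} u_\ep^G\,dx$. The main obstacle is the bookkeeping in the first step -- producing a uniform-in-$\ep$ inequality of the form $|\al_\ep'|/\al_\ep \le \ga\, |b|/\si^2$ requires separating the $\ep^2 a$ and $\si^2$ contributions to $\al_\ep$ carefully so that the assumptions in {\bf (H)}(4), which only involve $a$, $\si$, $b$ individually, can be applied cleanly; once that is in place, the rest is a straightforward adaptation of Lemma \ref{lem-integral-gibbs-zero-demographic}.
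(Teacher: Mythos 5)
Your plan --- mimic Lemma \ref{lem-integral-gibbs-zero-demographic} by estimating $f_\ep'$ directly with $u_\ep^G=e^{f_\ep}$ --- is natural and close in spirit to the paper, but the very step you identify as the ``main obstacle'' is where the argument breaks. The inequality $\left|\frac{\al_\ep'}{\al_\ep}\right|\le\frac{|a'|}{a}+\frac{|(\si^2)'|}{\si^2}$ is correct, but {\bf (H)}(4) does \emph{not} control $\frac{|a'|}{a}$. The hypothesis reads $\limsup_{x\to\infty}\frac{\si^2}{|b|}\max\left\{\frac{a'}{a},\frac{|\si'|}{|\si|}\right\}=0$, and since $\frac{|\si'|}{|\si|}\ge 0$, any negativity of $\frac{a'}{a}$ is invisible to the $\max$: the hypothesis only yields $\frac{a'}{a}\le o(1)\frac{|b|}{\si^2}$ from above and says nothing if $a'<0$, which {\bf (H)}(3) does not exclude. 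So you cannot conclude $\frac{|\al_\ep'|}{\al_\ep}\le\frac{|b|}{5\si^2}$ as your proof needs. (The $(\si^2)'$ term is genuinely two-sidedly controlled since $\frac{|\si'|}{|\si|}=\frac{|(\si^2)'|}{2\si^2}$; the asymmetry is entirely in the $a$ part.)

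The paper sidesteps this by never differentiating $\al_\ep$. It writes $u_\ep^G=\frac{1}{\al_\ep}\exp\{2\int_1^\bullet\frac{b}{\al_\ep}\,ds\}$ and uses only the pointwise inequalities $\si^2\le\al_\ep\le 2\si^2$ on $(x_1,\infty)$ for small $\ep$ (the upper one from $\limsup_{x\to\infty}\frac{a}{\si^2}<\infty$), which with $b<0$ give $\frac{1}{\al_\ep}\le\frac{1}{\si^2}$ and $\frac{2b}{\al_\ep}\le\frac{b}{\si^2}$. This bounds $u_\ep^G(x)$ by $\exp\{2\int_1^{x_1}\frac{b}{\al_\ep}ds\}\cdot\exp\{\int_{x_1}^x\frac{b}{2\si^2}ds-\ln\si^2(x)\}$, whose second factor is $\ep$-free and integrable by Lemma \ref{lem-integral-gibbs-zero-demographic}'s argument (which only needs the $\si$-half of {\bf (H)}(4)), while the first is bounded uniformly in $\ep$ by \eqref{limit-alpha-V}. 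Your proof is easily repaired by the same device: do not fold $-\ln\al_\ep$ into what you differentiate. Estimate the prefactor $\frac{1}{\al_\ep}\le\frac{1}{\si^2}$ directly and only differentiate $g_\ep:=2\int_1^\bullet\frac{b}{\al_\ep}ds$, so that $g_\ep'=\frac{2b}{\al_\ep}\le\frac{b}{\si^2}\le-\frac{10}{x}$ for large $x$ and no derivative of $a$ ever appears; the rest of your argument (integrating to get a polynomial tail, bounding the compact piece by \eqref{gibbs-to-gibbs}) then goes through unchanged.
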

\begin{proof}
By {\bf (H)}(1)(4), there exist $x_1>0$ and $\ep_*>0$ such that $b<0$ and $\ep^2a+\si^2\leq 2 \si^2$ in $(x_1,\infty)$ for all $0<\ep<\ep_*$. Therefore, 
\begin{equation*}
    \begin{split}
    u^G_{\ep}(x)&=\exp\left\{2\int_1^x\frac{b}{\ep^2a+\si^2}ds-\ln (\ep^2 a(x)+\si^2(x))\right\}\\
    &\leq \exp\left\{2\int_1^{x_1}\frac{b}{\ep^2a+\si^2}ds\right\}\times \exp\left\{\int_{x_1}^x\frac{b}{2\si^2}ds-\ln \si^2(x)\right\},\quad \forall x>x_1.
    \end{split}
\end{equation*}

Obviously, the conclusion follows if we show 
$\int_{x_1}^{\infty}\exp\left\{\int_{x_1}^x\frac{b}{2\si^2}ds-\ln \si^2(x)\right\}dx<\infty$, which can be verified by arguments as in the proof of  Lemma \ref{lem-integral-gibbs-zero-demographic}.
\end{proof}

We are ready to prove Theorem \ref{thm-lower-bound-lamda_2}.

\begin{proof}[Proof of Theorem \ref{thm-lower-bound-lamda_2}]
Suppose on the contrary that $\inf_{\ep}\la_{\ep,2}=0$. Up to a subsequence, we may assume, without loss of generality, that $\lim_{\ep\to 0}\la_{\ep,2}= 0$. We derive a contradiction within four steps. 

\medskip

\noindent{\bf Step 1.} We show for $i=1,2$ the existence of $u_{i}\in C^{2}((0,\infty))\cap L^1((0,\infty))$ satisfying $\LL^*_0 u_i=0$ such that $\lim_{\ep\to0}u_{\ep,i}=u_{i}$ in $C^2((0,\infty))$. 

Recall that $u_{\ep,i}=\tilde{\phi}_{\ep,i}u^G_{\ep}$ and $\tilde{\phi}_{\ep,i}$ satisfies \eqref{normalization-2022-03-31}. We apply H\"older's inequality to find
\begin{equation}\label{eqn-2022-04-1-02}
\begin{split}
    \sup_{\ep}\int_{0}^{\infty}|u_{\ep,i}|dx
    &\leq1+\sup_{\ep}\int_{2}^{\infty}|\tilde{\phi}_{\ep,i}|u^G_{\ep}dx\\
    &\leq 1+\sup_{\ep}\left( \int_{2}^{\infty}|\tilde{\phi}_{\ep,i}|^2 u^G_{\ep}dx\right)^{\frac{1}{2}} \left(\int_2^{\infty} u^G_{\ep}dx\right)^{\frac{1}{2}}
    \\
    &\leq 1+\sup_{\ep}\left(\int_2^{\infty} u^G_{\ep}dx\right)^{\frac{1}{2}}<\infty,
    \end{split}
\end{equation}
where we used the normalization \eqref{normalization-2022-03-31} in the first and last inequalities, and Lemma \ref{lem-integral-gibb} to derive the final uniform boundedness. Considering the positive and negative parts of $\{u_{\ep,i}\}_{\ep}$ separately, we apply Helly's selection principle (see e.g. \cite[Theorem 4.3.3 and 4.4.1]{Chung2001}) to find a signed Borel measure $\mu_i$ on $(0,\infty)$ such that, up to a subsequence, $\lim_{\ep\to 0}\int_0^{\infty}\phi u_{\ep,i}dx=\int_0^{\infty}\phi d\mu_i$ for any $\phi\in C_c((0,\infty))$.

Letting $\ep\to0$ in \eqref{eigen-equation-2022-04-27}, we find $\LL^*_0 \mu_i=0$ in the weak sense from the first limit in \eqref{limit-alpha-V} and $\lim_{\ep\to 0}\la_{\ep,i}=0$ (by Theorem \ref{thm-0-super-limit-principal-eigenvalue} if $i=1$ and assumption if $i=2$). Moreover, we apply the classical interior De Giorgi-Nash-Moser estimates (see e.g. \cite{CW98, GT01}) to find that for any open intervals $\II$ and $\II'$ with $\II\stst\II'\stst(0,\infty)$, there exists $C=C(\II,\II')>0$ (independent of $\ep$) such that
$\sup_{\II}|u_{\ep,i}|\leq C\|u_{\ep,i}\|_{L^1(\II')}$. Then, we can follow the arguments as in the proof of Theorem \ref{thm-concentration} (2) to conclude that $\mu_{i}$ admits a density $u_{i}\in C^{2}((0,\infty))$ and $\lim_{\ep\to0}u_{\ep,i}=u_i$ in $C^2((0,\infty))$. The estimate \eqref{eqn-2022-04-1-02} and Fatou's lemma guarantee $u_i\in L^1((0,\infty))$. 

\medskip

\noindent{\bf Step 2.} We show the existence of $C_1>0$ and $C_2\neq 0$ such that $\lim_{\ep\to0}\tilde{\phi}_{\ep,i}=C_{i}$ in $C^{2}((0,\infty))$ for $i=1,2$.

By {\bf Step 1} and $\tilde{\phi}_{\ep,1}>0$, we apply Lemma \ref{lem-unique-L^1-soln} to find  $C_1\geq 0$ and $C_2\in \R$ such that $u_i=C_i u^G_0$ for $i=1,2$. Recall $\tilde{\phi}_{\ep,i}=\frac{u_{\ep,i}}{u^G_{\ep}}$. Thanks to $\lim_{\ep\to0}u_{\ep,i}=u_{i}$ in $C^{2}((0,\infty))$ (by {\bf Step 1}) and \eqref{gibbs-to-gibbs}, the limit $\lim_{\ep\to0}\tilde{\phi}_{\ep,i}=C_i$ holds in $C^{2}((0,\infty))$. 

By Lemmas \ref{lem-compactness-2-infty} and \ref{lem-compactness-2-0}, the normalization \eqref{normalization-2022-03-31} ensures the existence of some $\ka\gg1$ such that $\int_{\frac{1}{\ka}}^{2}|\tilde{\phi}_{\ep,i}|u^G_{\ep}dx + \int_{1}^{\ka}|\tilde{\phi}_{\ep,i}|^2 u^G_{\ep}dx\geq \frac{1}{2}$.
Letting $\ep\to 0$ yields $|C_i|\int_{\frac{1}{\ka}}^{2}u^G_{0}dx+C_i^2\int_{1}^{\ka}u^G_{0}dx\geq\frac{1}{2}$. Hence, $C_i\neq 0$. In particular, $C_1>0$.

\medskip

\noindent{\bf Step 3.} We show that $\lim_{\ep\to 0}\int_0^{\infty}\tilde{\phi}_{\ep,1}\tilde{\phi}_{\ep,2}u^G_{\ep}dx=C_1C_2\int_0^{\infty}u^G_0dx\neq 0$.

Obviously, for any  $\ka>1$, there holds 
\begin{equation}\label{eqn-2021-09-13-2}
\begin{split}
&\left|\int_0^{\infty}\tilde{\phi}_{\ep,1}\tilde{\phi}_{\ep,2}u^G_{\ep}dx-C_1C_2\int_0^{\infty} u^G_0dx\right|\\
&\qquad\leq \left|\int^{\ka}_{\frac{1}{\ka}}\tilde{\phi}_{\ep,1}\tilde{\phi}_{\ep,2}u^G_{\ep}dx-C_1C_2\int^{\ka}_{\frac{1}{\ka}} u^G_0 dx\right| + \left(\int_0^{\frac{1}{\ka}}+\int_{\ka}^{\infty}\right)\tilde{\phi}_{\ep,1}|\tilde{\phi}_{\ep,2}|u^G_{\ep}dx\\
&\qquad\quad +C_1|C_2|\left(\int_0^{\frac{1}{\ka}}+\int_{\ka}^{\infty}\right) u^G_0dx=:\RN{1}_{\ep}(\ka)+\RN{2}_{\ep}(\ka)+\RN{3}(\ka).
    \end{split}
\end{equation}
We claim that 
\begin{equation}\label{claim-2022-07-13}
\lim_{\ep\to0}\RN{1}_{\ep}(\ka)=0,\quad\forall \ka>1,\quad  \lim_{\ka\to\infty}\sup_{\ep}\RN{2}_{\ep}(\ka)=0\quad\text{and}\quad  \lim_{\ka\to\infty}\RN{3}(\ka)=0.
\end{equation}
Given \eqref{claim-2022-07-13}, the conclusion follows from taking the limit $\ep\to0$ and then $\ka\to\infty$ in \eqref{eqn-2021-09-13-2}.

We prove \eqref{claim-2022-07-13}. Clearly, $\int_0^{\infty}u_0^Gdx<\infty$ (by Lemma \ref{lem-integral-gibbs-zero-demographic}) yields $\lim_{\ka\to\infty}\RN{3}(\ka)=0$. Thanks to {\bf Step 2} and  \eqref{gibbs-to-gibbs}, we see $\lim_{\ep\to0}\RN{1}_{\ep}(\ka)=0$ for any $\ka>1$.


For $\RN{2}_{\ep}(\ka)$, H\"{o}lder's inequality and Lemma \ref{lem-compactness-2-infty} yield $\lim_{\ka\to\infty}\sup_{\ep}\int_{\ka}^{\infty}\tilde{\phi}_{\ep,1}|\tilde{\phi}_{\ep,2}|u^G_{\ep}dx=0$. Note that Lemma \ref{monotonicity-first-eigenfunction-2022-04-07} and {\bf Step 2} imply
\begin{equation}\label{uniform-bound-2022-04-09}
    \sup_{\ep}\sup_{(0,x)}\tilde{\phi}_{\ep,1}\leq \sup_{\ep} \tilde{\phi}_{\ep,1}(x)<\infty,\quad\forall x>0,
\end{equation}
which together with Lemma \ref{lem-compactness-2-0} (2) leads to $\lim_{\ka\to\infty}\sup_{\ep}\int_{0}^{\frac{1}{\ka}} \tilde{\phi}_{\ep,1}|\tilde{\phi}_{\ep,2}|u^G_{\ep}dx=0$. It follows that $\lim_{\ka\to\infty}\sup_{\ep}\RN{2}_{\ep}(\ka)=0$. The claim \eqref{claim-2022-07-13}, and hence, the conclusion is proven. 

\medskip

\noindent{\bf Step 4.} Recall that $-\LL_{\ep}$ is self-adjoint in $L^2(u^G_{\ep})$. Since $\tilde{\phi}_{1,\ep}$ and $\tilde{\phi}_{2,\ep}$ are eigenfunctions associated with $\la_{\ep,1}$ and $\la_{\ep,2}$, respectively, they are orthogonal in $L^2(u^G_{\ep})$, namely, $\int_0^{\infty}\tilde{\phi}_{\ep,1}\tilde{\phi}_{\ep,2} u^G_{\ep}dx=0$, which is in contradictory to {\bf Step 3}. 

In conclusion, $\inf_{\ep} \la_{\ep,2}>0$ and the theorem is proven. 
\end{proof}
\section{\bf Multiscale dynamics} \label{s:multi}

In this section, we study the multiscale dynamics of the distribution of $X_{t}^{\ep}$ and prove Theorem \ref{thm-multiscale}. Recall from Lemma \ref{prop-spectral-structure} the semigroup $(P_{t}^{\ep})_{t\geq0}$ and for each $k\in\N$ the spectral projection  $Q_{k}^{\ep}$ of $\LL_{\ep}$ corresponding to the eigenvalues $\{-\la_{\ep,j}\}_{j\geq k}$. The following lemma plays a crucial role in the proof of Theorem \ref{thm-multiscale}. 

\begin{lem}\label{lem-semigroup-property}
Assume {\bf (H)} and $\La_0>0$. For each $k\in \N$, there is $C_{k}>0$ such that for $0<\ep\ll 1$,
\begin{equation*}
    |P^{\ep}_tQ^{\ep}_kf|\leq C_{k}\al_{\ep}^{\frac{1}{4}}e^{V_{\ep}} e^{-\la_{\ep,k}t}\|f\|_{L^{\infty}}\quad\text{in}\quad (0,\infty),\quad \forall t>2\andd f\in C_b((0,\infty)).
\end{equation*}
\end{lem}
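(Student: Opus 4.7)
The plan is to transfer the problem to the unitarily equivalent Schr\"odinger picture of Subsection \ref{subsec-schrodinger-eqn}, establish uniform ultracontractivity of $P_1^{\ep,S}$ via the Feynman--Kac formula and the uniform lower bound on $W_\ep$, and then chain a three-step operator-norm decomposition for $t>2$. The intertwinings $\tilde U_\ep U_\ep\circ P_t^\ep=P_t^{\ep,S}\circ\tilde U_\ep U_\ep$ and $\tilde U_\ep U_\ep\circ Q_k^\ep=Q_k^{\ep,S}\circ\tilde U_\ep U_\ep$, combined with $(\tilde U_\ep U_\ep)^{-1}g(x)=\al_\ep^{1/4}(x)\,e^{V_\ep(x)}\,g(\xi_\ep(x))$ (which follows from $u_\ep^G=\al_\ep^{-1}e^{-2V_\ep}$ and $v_\ep^G=(u_\ep^G\sqrt{\al_\ep})\circ\xi_\ep^{-1}$), reduce the claim to
\[
\bigl\|P_t^{\ep,S}Q_k^{\ep,S}\tilde f\bigr\|_{L^\infty((0,y_{\ep,\infty}))}\leq C_k\,e^{-\la_{\ep,k}t}\,\|f\|_\infty,
\]
where $\tilde f:=\tilde U_\ep U_\ep f$ satisfies the pointwise majorization $|\tilde f(y)|\leq\|f\|_\infty\sqrt{v_\ep^G(y)}$.

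The second step would be to prove uniform ultracontractivity of $P_1^{\ep,S}$. The Feynman--Kac representation
\[
P_t^{\ep,S}g(y)=\E_y\!\left[e^{-\int_0^tW_\ep(B_s)ds}\,g(B_t)\,\mathbbm 1_{t<\tau}\right]
\]
combined with $W_\ep\geq -M$ uniformly in $\ep$ (Lemma \ref{properties-schrodinger-potential}(4)) and the Gaussian bound for the free Brownian kernel yield $p_t^{\ep,S}(y,y')\leq e^{Mt}(2\pi t)^{-1/2}$ for $0<t\leq 1$. Hence $\|P_1^{\ep,S}\|_{L^1\to L^\infty}\leq C$ uniformly in $\ep$; by self-adjointness and the usual semigroup-duality identity this also produces $\|P_1^{\ep,S}\|_{L^2\to L^\infty}=\|P_1^{\ep,S}\|_{L^1\to L^2}\leq C^{1/2}$ uniformly in $\ep$.

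For $t>2$, the core of the argument is the factorization
\[
P_t^{\ep,S}Q_k^{\ep,S}=P_1^{\ep,S}\circ\bigl(P_{t-2}^{\ep,S}Q_k^{\ep,S}\bigr)\circ P_1^{\ep,S},
\]
whose middle factor obeys $\|P_{t-2}^{\ep,S}Q_k^{\ep,S}\|_{L^2\to L^2}\leq e^{-\la_{\ep,k}(t-2)}$ by Lemma \ref{prop-spectral-structure}(5). The main obstacle is obtaining a uniform $L^2$-bound on $P_1^{\ep,S}\tilde f$ to couple this contraction to the outer $L^2\to L^\infty$ bound, because $\tilde f$ need not be square-integrable: its majorant $\sqrt{v_\ep^G}$ blows up non-integrably at $y=0$, reflecting the singularity of $u_\ep^G$ at the extinction state. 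The way around this is that $\sqrt{v_\ep^G}$ is nevertheless in $L^1((0,y_{\ep,\infty}))$ uniformly in $\ep$: changing variables,
\[
\|\sqrt{v_\ep^G}\|_{L^1}=\int_0^\infty\al_\ep^{-3/4}(x)\,e^{-V_\ep(x)}\,dx,
\]
and this integral is controlled uniformly in $\ep$ using $\La_0>0$ (so that $\ka:=2b'(0)/|\si'(0)|^2>1$ makes the singular profile $x^{\ka/2-3/2}$ integrable on $(0,1)$) together with the tail growth conditions in {\bf(H)}(4), which force super-polynomial decay of $e^{-V_\ep}$ at infinity. Combined with $\|P_1^{\ep,S}\|_{L^1\to L^2}\leq C^{1/2}$ this yields $\|P_1^{\ep,S}\tilde f\|_{L^2}\leq C''\|f\|_\infty$ uniformly in $\ep$. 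Multiplying the three bounds and absorbing the harmless factor $e^{2\la_{\ep,k}}$ (uniformly bounded by Lemma \ref{lem-upper-bound-eigenvalues}) into $C_k$ finishes the proof.
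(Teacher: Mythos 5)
Your argument is correct and shares the paper's overall skeleton — pass to the Schr\"odinger picture, factor $P_t^{\ep,S}Q_k^{\ep,S}=P_1^{\ep,S}\,(P_{t-2}^{\ep,S}Q_k^{\ep,S})\,P_1^{\ep,S}$ for $t>2$, use the spectral contraction in the middle, and close with a uniform integrability estimate on $\tilde f=\tilde U_\ep U_\ep f$ — but it differs in two of the three ingredients. For the uniform smoothing of $P_1^{\ep,S}$ the paper does not invoke Feynman--Kac at all: it proves $\sup_\ep\|\tilde P^\ep_1\|_{L^2\to L^p}<\infty$ for each $p\in(2,\infty]$ by combining the analytic-semigroup derivative bound from \cite[Theorem 2.5.2]{Pa83} with an energy estimate driven by $W_\ep+M\geq 1$ and the Sobolev embedding $W_0^{1,2}\hookrightarrow L^p$, then dualizes; your Feynman--Kac Gaussian-kernel route uses only the same uniform lower bound $W_\ep\geq -M$ (Lemma \ref{properties-schrodinger-potential}(4)) and is shorter, at the price of having to justify the Schr\"odinger Feynman--Kac representation and kernel domination for the Dirichlet realization on $(0,y_{\ep,\infty})$ with a boundary-singular potential, which the paper's framework does not supply for free. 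For the integrability of $\tilde f$ the paper works with $\sup_\ep\|\tilde f\|_{L^{p_*}}\lesssim\|f\|_\infty$ for a carefully chosen $p_*\in\bigl(\max\{1,(\ka-\tfrac12)^{-1}\},2\bigr)$; you work at the endpoint $p=1$, observing that $\La_0>0$ (hence $\ka>1$) already makes the near-zero profile $x^{\ka/2-3/2}$ of $\al_\ep^{-3/4}e^{-V_\ep}$ integrable. This is legitimate: the paper's lower constraint on $p_*$ is only there so that the exponent on $[\ep^2a'(0)+|\si'(0)|^2x]$ is nonnegative and the bracket can be bounded by its value at $x_1$; when the exponent is negative the complementary bound $[\ep^2a'(0)+|\si'(0)|^2x]^\theta\leq[|\si'(0)|^2x]^\theta$ closes the estimate uniformly in $\ep$ exactly as you indicate, so the $L^1$ route is slightly more economical. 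One small slip: the line ``$\|P_1^{\ep,S}\|_{L^2\to L^\infty}=\|P_1^{\ep,S}\|_{L^1\to L^2}\leq C^{1/2}$'' does not follow from $\|P_1^{\ep,S}\|_{L^1\to L^\infty}\leq C$ by duality alone; the clean derivation uses $\|P_t^{\ep,S}\|_{L^2\to L^\infty}^2=\sup_y p_{2t}^{\ep,S}(y,y)\leq\|P_{2t}^{\ep,S}\|_{L^1\to L^\infty}$, for which you need your kernel bound up to time $2$ rather than time $1$ — easily fixed and harmless.
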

\begin{proof}
Set $\Tilde{P}^{\ep}_t:=\tilde{U}_{\ep}U_{\ep}P^{\ep}_tU_{\ep}^{-1} \tilde{U}_{\ep}^{-1}$, where $U_{\ep}$ and $\tilde{U}_{\ep}$ are unitary transforms specified in Subsection \ref{subsec-schrodinger-eqn}. Then,  $(\tilde{P}^{\ep}_t)_{t\geq 0}$ is an analytic semigroup of contractions on $L^2((0,y_{\ep,\infty}))$ generated by $\LL_{\ep}^S$. The spectrum of $\LL_{\ep}^S$, being the same as that of $\LL_{\ep}$, consists of simple eigenvalues $\{-\la_{\ep,i}\}_{i\in\N}$ (see Lemma \ref{prop-spectral-structure}). We finish the proof within four steps.

\medskip

\noindent{\bf Step 1.} We show for each $p\in (2,\infty]$, there is $D_1(p)>0$  such that
$\sup_{\ep}\|\tilde{P}^{\ep}_1\|_{L^{2}\to L^p}\leq D_1(p)$.

According to Lemma \ref{properties-schrodinger-potential} (4), there is $M>0$ such that $W_{\ep}+M\geq 1$.  Since $\si(\LL_{\ep}^{S}-M)\subset(-\infty,-M)$ and
$\|(\la-(\LL_{\ep}^S-M))^{-1}\|_{L^2\to L^2}=\frac{1}{{\rm dist}(\la,\si(\LL_{\ep}^{S}-M))}$ for all $\la\in\rho(\LL_{\ep}^{S}-M)$, we find
\begin{equation}\label{eqn-2020-8-10-6}
\|(\la-(\LL_{\ep}^S-M))^{-1}\|_{L^2\to L^2}\leq \frac{1}{|\la|},\quad \forall  \la \in \C\text{ with }\Re\la>0.
\end{equation}
As $\LL_{\ep}^S-M$ generates the analytic semigroup  $(e^{-Mt}\tilde{P}^{\ep}_t)_{t\geq 0}$ of contractions on $L^2((0,y_{\ep,\infty}))$, and the right-hand side of \eqref{eqn-2020-8-10-6} is independent of $\ep$, we apply \cite[Theorem 2.5.2]{Pa83} to find $C_1>0$ (independent of $\ep$) such that  
\begin{equation}\label{eqn-2020-8-9-1}
    \|(\LL_{\ep}^S-M)e^{-Mt}\tilde{P}^{\ep}_{t}\|_{L^2\to L^{2}}\leq \frac{C_1}{t},\quad \forall  t>0.
\end{equation}

Let $D(\LL_{\ep}^S)$ be the domain of $\LL_{\ep}^{S}$. Since
\begin{equation*}
    \begin{split}
        \langle-(\LL_{\ep}^S-M)u,u\rangle_{L^2}=\frac{1}{2}\int_0^{y_{\ep,\infty}}|u'| dy+\int_0^{y_{\ep,\infty}} (W_{\ep}+M) |u|^2 dy,\quad\forall u\in D(\LL_{\ep}^S),
    \end{split}
\end{equation*}
we derive from $W_{\ep}+M\geq 1$ and \eqref{eqn-2020-8-9-1}  that for $\tilde{f}\in L^{2}((0,y_{\ep,\infty}))$ and $t>0$,
\begin{equation*}
\begin{split}
    &\frac{1}{2}\int_0^{y_{\ep,\infty}}|\pa_y \tilde{P}^{\ep}_{t}\tilde{f}|^2 dy+\int_0^{y_{\ep,\infty}} |\tilde{P}^{\ep}_{t}\tilde{f}|^2 dy\\
    &\qquad\leq\frac{1}{2}\int_0^{y_{\ep,\infty}}|\pa_y \tilde{P}^{\ep}_{t}\tilde{f}|^2 dy+\int_0^{y_{\ep,\infty}} (W_{\ep}+M) |\tilde{P}^{\ep}_{t}\tilde{f}|^2 dy\\
    &\qquad=\langle-(\LL_{\ep}^S-M)\tilde{P}^{\ep}_{t}\tilde{f},\tilde{P}^{\ep}_{t}\tilde{f}\rangle_{L^2}\\
    &\qquad\leq \frac{C_1e^{Mt}}{t}\|\tilde{f}\|_{L^2}\|\tilde{P}^{\ep}_{t}\tilde{f}\|_{L^2}\leq \frac{C^2_1e^{2Mt}}{2t^2}\|\tilde{f}\|^2_{L^2}+\frac{1}{2} \int_0^{y_{\ep,\infty}}  |\tilde{P}^{\ep}_{t}\tilde{f}|^2 dy,
\end{split}
\end{equation*}
leading to
$$
\int_0^{y_{\ep,\infty}}|\pa_y \tilde{P}^{\ep}_{t}\tilde{f}|^2 dy+\int_0^{y_{\ep,\infty}} |\tilde{P}^{\ep}_{t}\tilde{f}|^2 dy\leq \frac{C^2_1e^{2Mt}}{t^2}\|\tilde{f}\|^2_{L^2}.
$$
Since Lemma \ref{properties-schrodinger-potential} (2)-(3) ensures $W_{\ep}+M$ blows up at $0$ and $y_{\ep,\infty}$, we see that $\tilde{P}^{\ep}_t\tilde{f}$ belongs to $W_0^{1,2}((0,y_{\ep,\infty}))$ (the closure of $C_0^{\infty}((0,y_{\ep,\infty}))$ under the $W^{1,2}((0,y_{\ep,\infty}))$-norm). Hence, 
the Sobolev embedding theorem ensures that for each $p>2$ there is $C_2(p)>0$ such that
\begin{equation*}\label{eqn-2021-09-11-1}
\begin{split}
\|\tilde{P}^{\ep}_t\tilde{f}\|_{L^{p}}&\leq C_2(p)\left( \|\pa_y \tilde{P}^{\ep}_{t}\tilde{f}\|_{L^2}+\|\tilde{P}^{\ep}_{t}\tilde{f}\|_{L^2}\right)\leq \frac{\sqrt{2}C_1C_2(p)}{t} e^{Mt}\|\tilde{f}\|_{L^2}.
\end{split}
\end{equation*}
Setting $t=1$ yields the result with $D_1(p):=\sqrt{2}C_1C_2(p)e^M$.

\medskip

\noindent{\bf Step 2.} We prove that for each $p\in (1,2)$, there holds $\sup_{\ep}\|\tilde{P}^{\ep}_1\|_{L^p\to L^{2}}\leq D_1(p')$, where $p'$ is the dual exponent of $p$, namely, $\frac{1}{p}+\frac{1}{p'}=1$.

The result in {\bf Step 1} says $\|\tilde{P}^{\ep}_1\|_{L^2\to L^{p'}}\leq D_1(p')$, which together with the symmetry of  $\tilde{P}^{\ep}_1$ yields
\begin{equation*}\label{eqn-2020-8-10-4}
\|\tilde{P}^{\ep}_1\tilde{f}\|_{L^2}\leq D_1(p') \|\tilde{f}\|_{L^{p}},\quad\forall \tilde{f}\in L^2((0,y_{\ep,\infty}))\cap L^{p}((0,y_{\ep,\infty})).
\end{equation*}
Thus, $\tilde{P}^{\ep}_1$ uniquely extends to be a bounded linear operator from $L^{p}((0,y_{\ep,\infty}))$ to $L^{2}((0,y_{\ep,\infty}))$, and satisfies $\|\tilde{P}^{\ep}_1\|_{L^{p}\to L^2}\leq D_1(p')$.

\medskip

\noindent{\bf Step 3.} We show the existence of $p_*\in (1,2)$ and $D_2>0$ such that $\sup_{\ep}\|\tilde{U}_{\ep}U_{\ep}f\|_{L^{p_*}}\leq D_2 \|f\|_{\infty}$ for all $f\in C_b((0,\infty))$.

Let $f\in C_b((0,\infty))$ and set $\tilde{f}_{\ep}:=\tilde{U}_{\ep}U_{\ep}f$. Straightforward calculations yield that for each $p\in (1,2)$,
\begin{equation*}
\begin{split}
        \int_0^{y_{\ep,\infty}}|\tilde{f}_{\ep}|^p dy
        &=\int_0^{y_{\ep,\infty}} \left|\sqrt{v^{G}_{\ep}}f\circ \xi_{\ep}^{-1}\right|^p dy
        \\
        &=\int_0^{y_{\ep,\infty}}\left((u^G_{\ep})^{\frac{p}{2}}\al_{\ep}^{\frac{p}{4}}|f|^{p}\right)\circ \xi_{\ep}^{-1}dy\leq \|f\|^p_{\infty}\int_0^{\infty}\frac{e^{-pV_{\ep}}}{\al_{\ep}^{\frac{p}{4}+\frac{1}{2}}}dx. 
\end{split}
\end{equation*}
Note that if there exists $p_{*}\in(1,2)$  such that
\begin{equation}\label{eqn-2022-03-24-4}
   \sup_{\ep}\int_0^{\infty}\frac{e^{-p_{*}V_{\ep}}}{\al_{\ep}^{\frac{p_{*}}{4}+\frac{1}{2}}}dx<\infty,
\end{equation}
then the result holds with $D_{2}=\sup_{\ep}\left(\int_0^{\infty}\frac{e^{-p_{*}V_{\ep}}}{\al_{\ep}^{\frac{p_{*}}{4}+\frac{1}{2}}}dx\right)^{\frac{1}{p_{*}}}$.

We show \eqref{eqn-2022-03-24-4} for some $p_{*}\in(1,2)$. Fix $0<\de\ll 1$. By {\bf (H)}(1)-(3), there is $x_1\in(0,1)$ such that 
\begin{gather}\label{eqn-2022-02-18-1}
    b(x)\geq (1-\de)b'(0)x\,\, \andd \,\, 
    1-\de\leq \frac{\al_{\ep}(x)}{ x(\ep^2 a'(0)+|\si'(0)|^2 x)}\leq 1+\de,\quad\forall x\in (0,x_1). 
\end{gather}

We split 
\begin{equation*}
    \begin{split}
        \int_0^{\infty}\frac{e^{-pV_{\ep}}}{\al_{\ep}^{\frac{p}{4}+\frac{1}{2}}}dx&=\int_0^{x_1}\frac{e^{-pV_{\ep}}}{\al_{\ep}^{\frac{p}{4}+\frac{1}{2}}}dx+\int_{x_1}^{\infty}\frac{e^{-pV_{\ep}}}{\al_{\ep}^{\frac{p}{4}+\frac{1}{2}}}dx=:\RN{1}_{\ep}(p)+\RN{2}_{\ep}(p).
    \end{split}
\end{equation*}
Following arguments as in the proof of Lemma \ref{lem-integral-gibbs-zero-demographic}, we find $\sup_{\ep}\RN{2}_{\ep}(p)<\infty$ for each $p\in (1,2)$. 

Now, we treat $\RN{1}_{\ep}(p)$. We deduce from \eqref{eqn-2022-02-18-1} that
\begin{equation}\label{eqn-2022-03-24-2}
    \begin{split}
    -V_{\ep}(x)&=\int_{x_1}^x\frac{b}{\al_{\ep}}ds+\int_1^{x_1}\frac{b}{\al_{\ep}}ds\\
    &\leq \int_{x_1}^x\frac{(1-\de)b'(0)}{(1+\de)(\ep^2 a'(0)+|\si'(0)|^2 s)}ds+\int_1^{x_1}\frac{b}{\al_{\ep}}ds\\
    &\leq \frac{(1-\de)b'(0)}{(1+\de)|\si'(0)|^2}\ln \frac{\ep^2 a'(0)+|\si'(0)|^2 x}{\ep^2 a'(0)+|\si'(0)|^2 x_1}+\int_{x_1}^1\frac{|b|}{\si^2}ds,\quad\forall x\in (0,x_1).
    \end{split}
\end{equation}
As $\La_0>0$, $\ka:=\frac{2(1-\de)b'(0)}{(1+\de)|\si'(0)|^2}>1$. Fix some $p_*\in\left(\max\left\{1,\left(\ka-\frac{1}{2}\right)^{-1}\right\},2\right)$. It follows from \eqref{eqn-2022-02-18-1} and  \eqref{eqn-2022-03-24-2} that
\begin{equation*}
    \begin{split}
        \RN{1}_{\ep}(p_*)
        & 
        \leq \int_0^{x_1}\left[(1-\de)(\ep^2 a'(0)x+|\si'(0)|^2 x^2)\right]^{-\frac{p_*}{4}-\frac{1}{2}}\left[\frac{\ep^2 a'(0)+|\si'(0)|^2 x}{\ep^2 a'(0)+|\si'(0)|^2 x_1}\right]^{\frac{p_*\ka}{2}}e^{p_*\int_{x_1}^1\frac{|b|}{\si^2}ds}dx
        \\
        &\leq\frac{C}{[\ep^2 a'(0)+|\si'(0)|^2 x_1]^{\frac{p_*\ka}{2}}}\int_0^{x_1}\frac{[\ep^2 a'(0)+|\si'(0)|^2 x]^{\frac{p_*\ka}{2}-\frac{p_*}{4}-\frac{1}{2}}}{x^{\frac{p_*}{4}+\frac{1}{2}}}dx\\
        &\leq \frac{C}{\left[(\ep^2 a'(0)+|\si'(0)|^2 x_1)\right]^{\frac{p_*}{4}+\frac{1}{2}}}\int_0^{x_1}\frac{1}{x^{\frac{p_*}{4}+\frac{1}{2}}}dx
\leq\frac{C}{(\frac{1}{2}-\frac{p_*}{4})\left[|\si'(0)|^2\right]^{\frac{p_*}{4}+\frac{1}{2}}x_1^{\frac{p_*}{2}}}, 
    \end{split}
\end{equation*}
where $C=(1-\de)^{-\frac{p_*}{4}-\frac{1}{2}}e^{p_*\int_{x_1}^1\frac{|b|}{\si^2}ds}$, and we used in the second inequality the fact $\frac{p_*\ka}{2}-\frac{p_*}{4}-\frac{1}{2}>0$ so that  
$$
\int_0^{x_1}\frac{[\ep^2 a'(0)+|\si'(0)|^2 x]^{\frac{p_*\ka}{2}-\frac{p_*}{4}-\frac{1}{2}}}{x^{\frac{p_*}{4}+\frac{1}{2}}}dx\leq [\ep^2 a'(0)+|\si'(0)|^2 x_1]^{\frac{p_*\ka}{2}-\frac{p_*}{4}-\frac{1}{2}}\int_0^{x_1}\frac{1}{x^{\frac{p_*}{4}+\frac{1}{2}}}dx.
$$

As a result, $\sup_{\ep}\int_0^{\infty}\frac{e^{-p_{*}V_{\ep}}}{\al_{\ep}^{\frac{p_{*}}{4}+\frac{1}{2}}}dx=\sup_{\ep}[\RN{1}_{\ep}(p_*)+\RN{2}_{\ep}(p_*)]<\infty$, that is, \eqref{eqn-2022-03-24-4} is true.

\medskip

\noindent{\bf Step 4.} We finish the proof. Note that $\tilde{Q}^{\ep}_k:=\tilde{U}_{\ep}U_{\ep}Q_{k}^{\ep}U_{\ep}^{-1} \tilde{U}_{\ep}^{-1}$ is the spectral projection of $\LL_{\ep}^S$ corresponding to $\{-\la_{\ep,j}\}_{j\geq k}$. As  $\tilde{P}^{\ep}_t$ and $\tilde{Q}^{\ep}_k$ are commutative, we apply {\bf Steps 1-2} to deduce for $\tilde{f}\in L^{p_*}((0,y_{\ep,\infty}))$ (where $p_{*}$ is given in {\bf Step 3}) and $t>2$ that
\begin{equation*}
    \begin{split}
        \|\tilde{P}^{\ep}_t\tilde{Q}^{\ep}_k\tilde{f}\|_{\infty}&\leq D_1(\infty) \|\tilde{P}^{\ep}_{t-1}\tilde{Q}^{\ep}_k\tilde{f}\|_{L^2}\\
        &\leq D_1(\infty)  e^{-\la_{\ep,k}(t-2)}\|\tilde{P}^{\ep}_{1}\tilde{f}\|_{L^2}\leq D_1(\infty)D_1(p'_*)  e^{-\la_{\ep,k}(t-2)}\|\tilde{f}\|_{L^{p_*}},
    \end{split}
\end{equation*}
where $p_{*}'$ is the dual exponent of $p_{*}$. This together with {\bf Step 3} yields for $f\in C_b((0,\infty))$ and $t>2$,
\begin{equation*}
    \begin{split}
        |P^{\ep}_t Q^{\ep}_kf|
        &=|U_{\ep}^{-1}\tilde{U}_{\ep}^{-1}\tilde{P}^{\ep}_t\tilde{Q}_{k}^{\ep}\tilde{U}_{\ep}U_{\ep}f|\\
        &=|(\tilde{P}^{\ep}_t\tilde{Q}^{\ep}_k\tilde{U}_{\ep}U_{\ep}f)\circ \xi_{\ep}|\left( u_{\ep}^G \sqrt{\al_{\ep}} \right)^{-\frac{1}{2}}\\
        &\leq \|\tilde{P}^{\ep}_t\tilde{Q}^{\ep}_k\tilde{U}_{\ep}U_{\ep}f\|_{L^{\infty}} \al_{\ep}^{\frac{1}{4}} e^{V_{\ep}}\\
        &\leq D_1(\infty)D_1(p'_*)  e^{-\la_{\ep,k}(t-2)}\|\tilde{U}_{\ep}U_{\ep}f\|_{L^{p_*}}\al_{\ep}^{\frac{1}{4}} e^{V_{\ep}}\\
        &\leq D_1(\infty)D_1(p'_*)D_2  e^{-\la_{\ep,k}(t-2)}\al_{\ep}^{\frac{1}{4}} e^{V_{\ep}}\|f\|_{\infty}.
     \end{split}
\end{equation*}
As $\sup_{\ep} \la_{\ep,k}<\infty$ by Lemma \ref{lem-upper-bound-eigenvalues}, the result follows. 
\end{proof}

Now, we prove Theorem \ref{thm-multiscale}. 

\begin{proof}[Proof of Theorem \ref{thm-multiscale}]
Let $\mu\in \PP((0,\infty))$ be such that $\supp(\mu)\subset\KK$. Recall that $\phi_{\ep,1}$ is the positive eigenfunction of $-\LL_{\ep}$ associated with $\la_{\ep,1}$ and satisfies the normalization $\|\phi_{\ep,1}\|_{L^2(u^G_{\ep})}=1$. 
We apply Lemma \ref{prop-spectral-structure} (6) to find that for $f\in C_b((0,\infty))$ and $t>0$, 
\begin{equation*}\label{eqn-2021-07-27-2}
\begin{split}
\E^{\ep}_{\mu}[f(X^{\ep}_t)\mathbbm{1}_{t<T^{\ep}_0} ]&=e^{-\la_{\ep,1}t}\langle f, \phi_{\ep,1}\rangle_{L^2(u^G_{\ep})}\int_0^{\infty}\phi_{\ep,1}d\mu+\int_0^{\infty}P^{\ep}_tQ^{\ep}_2fd\mu.
\end{split}
\end{equation*}
Recall the density of the QSD $\mu_{\ep}$ is given by  $u_{\ep}=\frac{\phi_{\ep,1}u^G_{\ep}}{\|\phi_{\ep,1}\|_{L^1(u^G_{\ep})}}$. Let $\al_{\ep,1}=\|\phi_{\ep,1}\|_{L^1(u^G_{\ep})}\phi_{\ep,1}$ be as in the statement. Then, 
\begin{equation*}
\begin{split}
\E^{\ep}_{\mu}[f(X^{\ep}_t)\mathbbm{1}_{t<T^{\ep}_0}] &=e^{-\la_{\ep,1}t} \|\phi_{\ep,1}\|_{L^1(u^G_{\ep})}\int_0^{\infty}\phi_{\ep,1}d\mu\int_0^{\infty}f u_{\ep}dx+\int_0^{\infty}P^{\ep}_tQ^{\ep}_2fd\mu\\
&=e^{-\la_{\ep,1}t}\langle \mu,\al_{\ep,1}\rangle\int_0^{\infty}f u_{\ep}dx+\int_0^{\infty}P^{\ep}_tQ^{\ep}_2fd\mu.
\end{split}
\end{equation*}
Setting $f \equiv 1$ yields $\P^{\ep}_{\mu}[t<T^{\ep}_0]=e^{-\la_{\ep,1}t}\langle \mu,\al_{\ep,1}\rangle+\int_0^{\infty}P^{\ep}_tQ^{\ep}_2\mathbbm{1}d\mu$.
Hence, 
\begin{equation*}
    \begin{split}
        \E^{\ep}_{\mu}[f(X^{\ep}_t)]&=\E^{\ep}_{\mu}[f(X^{\ep}_t)\mathbbm{1}_{t<T^{\ep}_0}]+f(0)\left(1-\P^{\ep}[t<T^{\ep}_0]\right)\\
        &=e^{-\la_{\ep,1}t}\langle \mu,\al_{\ep,1}\rangle \int_0^{\infty}f u_{\ep}dx+\left(1-e^{-\la_{\ep,1}t }\langle \mu,\al_{\ep,1}\rangle\right) f(0)\\
        &\quad+\int_0^{\infty}P^{\ep}_tQ^{\ep}_2(f-f(0))d\mu.
    \end{split}
\end{equation*}
It follows from Lemma \ref{lem-semigroup-property} that there is $C>0$ such that for $t>2$,
\begin{equation*}
    \begin{split}
        & \left| \E^{\ep}_{\mu}[f(X^{\ep}_t)]-\left[e^{-\la_{\ep,1}t}\langle \mu,\al_{\ep,1}\rangle \int_0^{\infty}f u_{\ep}dx+\left(1-e^{-\la_{\ep,1}t }\langle \mu,\al_{\ep,1}\rangle\right) f(0)\right] \right| 
        \\ 
        &\qquad\leq C\|f\|_{\infty}e^{-\la_{\ep,2}t}
        \int_0^{\infty}\al_{\ep}^{\frac{1}{4}}e^{V_{\ep}} d\mu\leq \tilde{C} e^{-\la_{\ep,2}t}\|f\|_{L^{\infty}},
    \end{split}
\end{equation*}
where $\tilde{C}=\tilde{C}(\KK):=1+\sup_{\KK}\sqrt{|\si|}e^{\int_1^{\bullet}\frac{b}{\si^2}ds}$. As a result, we find 
\begin{equation*}
    \begin{split}
       & \left\|\P^{\ep}_{\mu}[X^{\ep}_t\in \bullet]-\left[e^{-\la_{\ep,1}t }\langle \mu,\al_{\ep,1}\rangle \mu_{\ep}+\left(1-e^{-\la_{\ep,1}t }\langle \mu,\al_{\ep,1}\rangle\right)\de_0\right]\right\|_{TV}\\
       &
         \qquad =\sup_{\substack{f\in C_b([0,\infty))\\\|f\|_{\infty}\leq 1}}\left| \E^{\ep}_{\mu}[f(X^{\ep}_t)]-\left[e^{-\la_{\ep,1}t}\langle \mu,\al_{\ep,1}\rangle \int_0^{\infty}f u_{\ep}dx+\left(1-e^{-\la_{\ep,1}t }\langle \mu,\al_{\ep,1}\rangle\right) f(0)\right] \right| 
        \\
        &
        \qquad =\sup_{\substack{f\in C_b([0,\infty))\\ 
        \|f\|_{\infty}\leq 1}} \left|\int_0^{\infty}P^{\ep}_tQ^{\ep}_2(f-f(0))d\mu \right|
        \\
        &\qquad \leq \tilde{C} e^{-\la_{\ep,2}t}, \quad \forall t>2.
    \end{split}
\end{equation*}
Note that if we establish the limit
\begin{equation}\label{coefficient-limit-2022-04-09}
    \lim_{\ep\to0}\al_{\ep}=1\quad\text{locally uniformly in}\,\,(0,\infty),
\end{equation}
then the conclusion of the theorem follows for $t>2$. Making $\tilde{C}$ larger if necessary, the conclusion holds for all $t\geq0$. Thus, it remains to show \eqref{coefficient-limit-2022-04-09}.

To do so, we let $\tilde{\phi}_{\ep,1}$ be the positive eigenfunction of $-\LL_{\ep}$ associated with $\la_{\ep,1}$ and satisfy the normalization \eqref{normalization-2022-03-31}, namely, $\|\tilde{\phi}_{\ep,1}\|_{L^1((0,2);u^G_{\ep})}+\|\tilde{\phi}_{\ep,1}\|_{L^2((1,\infty);u^G_{\ep})}=1$. Since $\tilde{\phi}_{\ep,1}$ is proportional to $\phi_{\ep,1}$, there holds $\al_{\ep,1}=\frac{\|\tilde{\phi}_{\ep,1}\|_{L^1(u^G_{\ep})}}{\|\tilde{\phi}_{\ep,1}\|^2_{L^2(u^G_{\ep})}}\tilde{\phi}_{\ep,1}$. As {\bf Step 2} in the proof of Theorem \ref{thm-lower-bound-lamda_2} says 
\begin{equation}\label{a-convergent-result-2022-04-09}
\lim_{\ep\to0}\tilde{\phi}_{\ep,1}=C_{1}\quad\text{locally uniformly in}\,\, (0,\infty)
\end{equation}
for some constant $C_1>0$, \eqref{coefficient-limit-2022-04-09} follows if we can show 
\begin{equation}\label{eqn-2021-09-14-1}
    \lim_{\ep\to 0}\|\tilde{\phi}_{\ep,1}\|_{L^1(u^G_{\ep})}=C_1\int_0^{\infty}u_0^G dx\quad\andd \quad \lim_{\ep\to 0}\|\tilde{\phi}_{\ep,1}\|^2_{L^2(u^G_{\ep})}=C^2_1\int_0^{\infty}u_0^G dx.
\end{equation}

For any $\ka>1$, we split 
\begin{equation}\label{eqn-2022-07-13-1}
    \begin{split}
    &\int_0^{\infty}\tilde{\phi}_{\ep,1}u^G_{\ep}dx- C_1\int_0^{\infty} u_0^G dx\\
    &\quad=\int_{\frac{1}{\ka}}^{\ka}\tilde{\phi}_{\ep,1}u^G_{\ep}dx-C_1\int_{\frac{1}{\ka}}^{\ka}u^G_{0}dx+\left(\int_{0}^{\frac{1}{\ka}}+\int_{\ka}^{\infty}\right)\tilde{\phi}_{\ep,1}u^G_{\ep}dx+C_1\left(\int_{0}^{\frac{1}{\ka}}+\int_{\ka}^{\infty}\right)u^G_{0}dx
    \end{split}
\end{equation}
and 
\begin{equation}\label{eqn-2022-07-13-2}
    \begin{split}
        &\int_{0}^{\infty}\tilde{\phi}_{\ep,1}^2u^G_{\ep}dx-C_1^2\int_0^{\infty}u^G_{0}dx\\
        &\quad =\int_{\frac{1}{\ka}}^{\ka}\tilde{\phi}_{\ep,1}^2u^G_{\ep}dx-C_1^2\int_{\frac{1}{\ka}}^{\ka}u^G_{0}dx+\left(\int_0^{\frac{1}{\ka}}+\int_{\ka}^{\infty}\right)\tilde{\phi}_{\ep,1}^2 u^G_{\ep}dx+C_1^2\left(\int_0^{\frac{1}{\ka}}+\int_{\ka}^{\infty}\right) u^G_{0}dx.
    \end{split}
\end{equation}

By \eqref{gibbs-to-gibbs} and \eqref{a-convergent-result-2022-04-09}, we see that 
\begin{equation*}\label{eqn-2022-07-13-3}
    \lim_{\ep\to 0}\left|\int_{\frac{1}{\ka}}^{\ka}\tilde{\phi}_{\ep,1}u^G_{\ep}dx-C_1\int_{\frac{1}{\ka}}^{\ka} u^G_{0}dx\right|=0,\,\,\lim_{\ep\to 0}\left|\int_{\frac{1}{\ka}}^{\ka}\tilde{\phi}_{\ep,1}^2u^G_{\ep}dx-C_1^2\int_{\frac{1}{\ka}}^{\ka} u^G_{0}dx\right|=0,\,\,\forall \ka>1.
\end{equation*}

Lemmas \ref{lem-compactness-2-infty} and \ref{lem-compactness-2-0} yield
\begin{equation}\label{eqn-2021-09-14-2}
    \lim_{\ka\to \infty}\sup_{\ep}\int_0^{\frac{1}{\ka}}\tilde{\phi}_{\ep,1}u^G_{\ep}dx=0,\quad\lim_{\ka\to \infty}\sup_{\ep}\int_{\ka}^{\infty}\tilde{\phi}_{\ep,1}^2u^G_{\ep}dx=0.
\end{equation}
This together with Lemma \ref{lem-integral-gibb} and H\"older's inequality yields
$\lim_{\ka\to \infty}\sup_{\ep}\int_{\ka}^{\infty}\tilde{\phi}_{\ep,1} u^G_{\ep}dx=0$. Furthermore, we see from \eqref{uniform-bound-2022-04-09} and \eqref{eqn-2021-09-14-2} that $\lim_{\ka\to 0}\sup_{\ep}\int_0^{\frac{1}{\ka}}\tilde{\phi}_{\ep,1}^2 u^G_{\ep}dx=0$.

Given these limits, \eqref{eqn-2021-09-14-1} follows immediately from taking the limit $\ep\to 0$ and then $\ka\to \infty$ in \eqref{eqn-2022-07-13-1} and \eqref{eqn-2022-07-13-2}. This completes the proof. 
\end{proof}

\section{\bf Asymptotic bounds of the mean extinction time} \label{s:ext}

In this section, we adopt probabilistic methods to study the asymptotic of the mean extinction time $\E^{\ep}_x[T^{\ep}_0]$. In particular, we prove Theorem \ref{thm-asymptotic-mean-extinction}.

   

We begin with the introduction of some notations that are used frequently in the rest of this section. For $0<\de\ll 1$, {\bf (H)} (1)-(3) ensures the existence of $\be=\be(\de)\in (0,1)$ such that 
\begin{equation}\label{eqn-coefficients-approx}
1-\de\leq \frac{b(x)}{b'(0)x}, \frac{\al_{\ep}(x)}{x[\ep^2a'(0) +|\si'(0)|^2x]}\leq 1+\de,\quad \forall x\in (0,\be )\andd 0<\ep\ll 1. 
\end{equation}
Set 
\begin{equation}\label{eqn-def-ratios}
    \ka_{-}=\ka_{-}(\de):=\frac{2(1-\de)b'(0)}{(1+\de)|\si'(0)|^2}\quad\andd\quad  \ka_{+}=\ka_+(\de):=\frac{2(1+\de)b'(0)}{(1-\de)|\si'(0)|^2}.
\end{equation}
Note that $\ka_{-}<\ka_{+}<1$ when $\Lambda_{0}<0$, and $\ka_{+}>\ka_{-}>1$ when $\Lambda_{0}>0$.

Fix $x_*=x_*(\de)\in (0,\be)$. Denote by $\tau^{\ep}=\tau^{\ep}(\de)$ the first time  $X^{\ep}_t$ exits from $(0,\be)$, namely, $\tau^{\ep}:=\inf\{t\geq 0: X^{\ep}_t=0\text{ or }\be\}$, and by $\tau^{\ep}_{x_*}=\tau^{\ep}_{x_*}(\de)$ the first time $X^{\ep}_t$ hits $x_*$, namely, $\tau^{\ep}_{x_*}(\de):=\inf\{t\geq 0: X^{\ep}_t=x_*\}$.

For each $0<\ep\ll1$ and $x\in(0,\infty)$, we define 
\begin{equation*}
\begin{split}
     s_{\ep}(x)=s_{\ep}(x,\de):&=\int_{x_*}^x e^{-2\int_{x_*}^y \frac{b}{\al_{\ep}}ds}dy,\\ 
     r_{\ep}(x)=r_{\ep}(x,\de):&=\int_{x_*}^x e^{-2\int_{x_*}^y \frac{b}{\al_{\ep}}ds} \int_{x_*}^y \frac{1}{\al_{\ep}(z)}e^{2\int_{x_*}^z \frac{b}{\al_{\ep}}ds}dz dy.
\end{split}
\end{equation*}
In literature (see e.g. \cite{Kallenberg02}), $s_{\ep}$ is referred to as the scale function. The function $r_{\ep}$ arises naturally in the study of the mean exit time $\E_{\bullet}^{\ep}[\tau^{\ep}]$ (see \cite{IW81,Kallenberg02} or the proof of Lemma \ref{lem-mean-exit-time-bdd-interval}). It is easy to check that $s_{\ep}(0+)\in(-\infty,0)$ and $r_{\ep}(0+)\in(0,\infty)$. 

Replacing $\al_{\ep}$ by $\si^{2}$ in the definition of $s_{\ep}$ and $r_{\ep}$, we define $s_{0}$ and $r_{0}$. It is straightforward to check that $s_{0}(0+)\in(-\infty,0)$ when $\La_0<0$, and $s_0(0+)=-\infty$ when $\La_0>0$. Moreover, $r_0(0+)=\infty$.

We establish three lemmas before proving Theorem \ref{thm-asymptotic-mean-extinction}. The first one concerns the asymptotic of $s_{\ep}(0+)$ and $\P^{\ep}_{x}[X^{\ep}_{\tau^{\ep}}=\be]$ for $x\in(0,\beta)$.  

\begin{lem}\label{lem-asymptotic-scale-function}
Assume {\bf(H)}. Then, $\lim_{\ep\to 0}s_{\ep}=s_0$. Moreover, 
\begin{enumerate}
    \item[\rm(1)] if $\La_0<0$, then $\lim_{\ep\to 0}s_{\ep}(0+)=s_0(0+)>-\infty$ and
    $$
\lim_{\ep\to 0}\P^{\ep}_x[X^{\ep}_{\tau^{\ep}}=\be]=\frac{s_0(x)-s_0(0+)}{s_0(\be)-s_0(0+)}\in (0,1),\quad \forall x\in (0,\be);
    $$
    
    \item[\rm(2)] if $\La_0>0$, then there are $C_1, C_2>0$ (depending on $\de$) such that 
    $$
    C_1 \ep^{-2(\ka_{-}-1)}\lesssim_{\ep} -s_{\ep}(0+)\lesssim_{\ep} C_2 \ep^{-2(\ka_+-1)},
    $$
    and  for each $x\in (0,\be)$, there are $C_3,C_4>0$ such that
    $$
    1-C_3 \ep^{2(\ka_{-}-1)}\lesssim_{\ep} \P^{\ep}_x[X^{\ep}_{\tau^{\ep}}=\be]\lesssim_{\ep} 1-C_4\ep^{2(\ka_{+}-1)}.
    $$
\end{enumerate}
\end{lem}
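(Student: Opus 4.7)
The plan is to reduce everything to the classical scale function identity for one-dimensional diffusions and then perform the asymptotic analysis of $s_\ep(0+)$ via the local bounds \eqref{eqn-coefficients-approx}. The first assertion, $\lim_{\ep\to 0} s_\ep = s_0$ locally uniformly on $(0,\infty)$, is immediate from dominated convergence: on any compact subset of $(0,\infty)$ the integrand $y\mapsto e^{-2\int_{x_*}^y b/\al_\ep ds}$ converges uniformly to that of $s_0$, since $\al_\ep\to\si^2$ in $C^2$ on compact subsets of $[0,\infty)$ by \eqref{limit-alpha-V}. For each fixed $\ep>0$ the quantity $\int_0^{x_*}|b|/\al_\ep\,ds$ is finite (because $b/\al_\ep$ is bounded near $0$ at fixed $\ep$), so $s_\ep$ extends continuously to $[0,\be]$ and $s_\ep(X^\ep_{t\wedge\tau^\ep})$ is a bounded continuous martingale; optional stopping (using $\tau^\ep<\infty$ a.s.) yields the classical exit identity
\begin{equation*}
\P^\ep_x[X^\ep_{\tau^\ep}=\be]=\frac{s_\ep(x)-s_\ep(0+)}{s_\ep(\be)-s_\ep(0+)}.
\end{equation*}

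For part (1), I would fix $\de$ small enough that $\ka_\pm<1$. The bounds \eqref{eqn-coefficients-approx} then give an $\ep$-uniform domination
\begin{equation*}
e^{2\int_y^{x_*} b/\al_\ep ds}\leq\left(\frac{\ep^2 a'(0)+|\si'(0)|^2 x_*}{\ep^2 a'(0)+|\si'(0)|^2 y}\right)^{\ka_+}\leq C y^{-\ka_+}\quad\text{on }(0,x_*),
\end{equation*}
whose right-hand side is integrable since $\ka_+<1$. Dominated convergence yields $s_\ep(0+)\to s_0(0+)\in(-\infty,0)$, and substitution into the exit identity gives the stated limit; strict two-sided positivity follows since $s_0$ is strictly increasing on $(0,\be)$.

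For part (2), with $\ka_\pm>1$, I would use \eqref{eqn-coefficients-approx} to bracket
\begin{equation*}
\left(\frac{\ep^2 a'(0)+|\si'(0)|^2 x_*}{\ep^2 a'(0)+|\si'(0)|^2 y}\right)^{\ka_-}\leq e^{2\int_y^{x_*} b/\al_\ep ds}\leq\left(\frac{\ep^2 a'(0)+|\si'(0)|^2 x_*}{\ep^2 a'(0)+|\si'(0)|^2 y}\right)^{\ka_+}
\end{equation*}
and combine this with the elementary asymptotic
\begin{equation*}
\int_0^{x_*}(\ep^2 a'(0)+|\si'(0)|^2 y)^{-\ka}dy\sim \frac{[\ep^2 a'(0)]^{1-\ka}}{|\si'(0)|^2(\ka-1)}\quad(\ep\to 0,\ \ka>1)
\end{equation*}
to extract $C_1\ep^{-2(\ka_--1)}\lesssim_\ep -s_\ep(0+)\lesssim_\ep C_2\ep^{-2(\ka_+-1)}$. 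For the exit probability, I would write $1-\P^\ep_x[X^\ep_{\tau^\ep}=\be]=(s_\ep(\be)-s_\ep(x))/(s_\ep(\be)-s_\ep(0+))$; the numerator tends to the positive constant $s_0(\be)-s_0(x)$ by the first part of the lemma, while the denominator is asymptotic to $-s_\ep(0+)$ since $s_\ep(\be)$ stays bounded. Inserting the bounds above gives the claimed estimates. The only mildly delicate step is verifying that $\limsup$ and $\liminf$ of ratios behave correctly when rewriting these estimates in the form $\P^\ep_x\lesssim_\ep 1-C\ep^{2(\ka_\pm-1)}$, but this is a mechanical algebra check since both sides tend to $1$ and the corrections are of the same order as $\ep^{2(\ka_\pm-1)}$.
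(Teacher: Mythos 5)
Your proposal is correct and follows essentially the same route as the paper: derive the classical exit identity $\P^\ep_x[X^\ep_{\tau^\ep}=\be]=\frac{s_\ep(x)-s_\ep(0+)}{s_\ep(\be)-s_\ep(0+)}$, then bracket the integrand of $-s_\ep(0+)$ using \eqref{eqn-coefficients-approx} and evaluate the resulting elementary integrals. The only minor divergence is in establishing $\lim_{\ep\to0}s_\ep(0+)=s_0(0+)$: the paper invokes the monotone convergence theorem directly (since $\al_\ep\downarrow\si^2$ makes $e^{2\int_y^{x_*}b/\al_\ep}$ increase as $\ep\downarrow0$), which handles both signs of $\La_0$ at once, whereas you use dominated convergence with the explicit $y^{-\ka_+}$ domination, valid when $\ka_+<1$; both are fine and the estimates that actually drive parts (1) and (2) are the same.
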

\begin{proof}
Since $\al_{\ep}\downarrow\si^2$ on $(0,\be)$ as $\ep\to0$, we apply the monotone convergence theorem to find $\lim_{\ep\to 0}s_{\ep}=s_0$ and $\lim_{\ep\to 0}s_{\ep}(0+)=s_0(0+)$.

It is well known (see e.g. \cite[Theorem 6.3.1]{IW81}) that 
\begin{equation}\label{eqn-2022-05-24-1}
    \P^{\ep}_{x}[X^{\ep}_{\tau^{\ep}}=\be]=\frac{s_{\ep}(x)-s_{\ep}(0+)}{s_{\ep}(\be)-s_{\ep}(0+)},\quad\forall x\in (0,\be).
\end{equation}

\medskip

(1)  It is easy to see that $-s_0(0+)<\infty$. The limiting equality follows by letting  $\ep\to0$ in \eqref{eqn-2022-05-24-1}.


\medskip

(2) Using \eqref{eqn-coefficients-approx}, we find  
\begin{equation*}\label{eqn-2022-05-15-1}
    \begin{split}
        -s_{\ep}(0+)\leq \int_0^{x_*} e^{\frac{1+\de}{1-\de}\int_y^{x_*}\frac{2b'(0)}{\ep^2 a'(0)+|\si'(0)|^2 s}ds}dy
        =\int_0^{x_*} \left[\frac{\ep^2 a'(0)+|\si'(0)|^2 x_*}{\ep^2 a'(0)+|\si'(0)|^2 y}\right]^{\ka_+} dy.
    \end{split}
\end{equation*}
Note that $\ka_+>\ka_{-}>1$ in this case. Calculating the last integral leads to 
\begin{equation*}
    \begin{split}
      -s_{\ep}(0+)&\leq  2[|\si'(0)|^2x_*]^{\ka_{+}}\frac{1}{(-\ka_{+}+1)|\si'(0)|^2}\left[\ep^2a'(0)+|\si'(0)|^2 y\right]^{-\ka_{+}+1}\Big|_{y=0}^{x_*}\\
      &\leq\frac{2x_*^{\ka_{+}} |\si'(0)|^{2(\ka_{+}-1)}}{(\ka_{+}-1)[\ep^2 a'(0)]^{\ka_{+}-1}}\left\{1-\frac{[\ep^2 a'(0)]^{\ka_{+}-1}}{[\ep^2 a'(0)+|\si'(0)|^2x_*]^{\ka_{+}-1}}\right\}\\
      &\approx_{\ep} \frac{2x_*^{\ka_{+}} |\si'(0)|^{2(\ka_{+}-1)}}{(\ka_{+}-1)[a'(0)]^{\ka_{+}-1}}\ep^{-2(\ka_{+}-1)}=:C_1\ep^{-2(\ka_+-1)},
    \end{split}
\end{equation*}
which together with  \eqref{eqn-2022-05-24-1} leads to
\begin{equation*}
    \begin{split}
        \P^{\ep}_x[X^{\ep}_{\tau}=\be]&=1-\frac{s_{\ep}(\be)-s_{\ep}(x)}{s_{\ep}(\be)-s_{\ep}(0+)}\\
        &\approx_{\ep} 1-\frac{s_0(\be)-s_0(x)}{s_0(\be)-s_{\ep}(0+)}\\
        &\lesssim_{\ep}1-[s_0(\be)-s_0(x)]C_1^{-1}\ep^{2(\ka_{+}-1)}=:1-C_2\ep^{2(\ka_{+}-1)}.
    \end{split}
\end{equation*}

Similarly, there exist $C_3,C_4>0$ such that $-s_{\ep}(0)\gtrsim_{\ep} C_3 \ep^{-2(\ka_{-}-1)}$ and $\P^{\ep}_x[X^{\ep}_{\tau}=\be]\gtrsim_{\ep} 1-C_4 \ep^{2(\ka_{-}-1)}$. This proves (2). 
\end{proof}


In the second lemma, we study the asymptotic bounds of the mean exit time $\E^{\ep}_{x_*}[\tau^{\ep}]$.

\begin{lem}\label{lem-mean-exit-time-bdd-interval}
Assume {\bf(H)}. 
\begin{enumerate}
   \item If $\La_0<0$, then there are $C_1,C_2>0$ (depending on $\de$) such that
   $$
  C_1 |\ln \ep|\lesssim_{\ep} \E^{\ep}_{x_*}[\tau^{\ep}]\lesssim_{\ep} C_2|\ln \ep|. 
   $$
   
   \item If $\La_0>0$, then $\inf_{\ep} \E^{\ep}_{x_*}[\tau^{\ep}]>0$ and $\E^{\ep}_{x_*}[\tau^{\ep}]\lesssim_{\ep}\ep^{-2(\ka_{+}-\ka_{-}+\de)}$.
\end{enumerate}
\end{lem}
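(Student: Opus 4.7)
My plan is to reduce the problem to the classical Dirichlet representation of one-dimensional mean exit times and then estimate its endpoint ingredients using the approximations \eqref{eqn-coefficients-approx}.

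First, I would set $u^{\ep}(x):=\E^{\ep}_{x}[\tau^{\ep}]$. Standard theory gives $u^{\ep}\in C^{2}((0,\beta))\cap C([0,\beta])$ with $\LL_{\ep}u^{\ep}=-1$ on $(0,\beta)$ and $u^{\ep}(0+)=u^{\ep}(\beta)=0$. A direct differentiation verifies $\LL_{\ep}(-2r_{\ep})=-1$ and $\LL_{\ep}s_{\ep}=0$; since $r_{\ep}(x_{*})=s_{\ep}(x_{*})=0$ by definition, the general $C^{2}$ solution is $u^{\ep}=-2r_{\ep}+A_{\ep}+B_{\ep}s_{\ep}$. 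Imposing the two Dirichlet conditions and then evaluating at $x_{*}$ yields the clean identity
\begin{equation*}
\E^{\ep}_{x_{*}}[\tau^{\ep}]=\frac{2r_{\ep}(0+)s_{\ep}(\beta)-2r_{\ep}(\beta)s_{\ep}(0+)}{s_{\ep}(\beta)-s_{\ep}(0+)}=2r_{\ep}(0+)\P^{\ep}_{x_{*}}[X^{\ep}_{\tau^{\ep}}=0]+2r_{\ep}(\beta)\P^{\ep}_{x_{*}}[X^{\ep}_{\tau^{\ep}}=\beta],
\end{equation*}
the second equality following from \eqref{eqn-2022-05-24-1}. Since $s_{\ep}(\beta)\to s_{0}(\beta)$ and $r_{\ep}(\beta)\to r_{0}(\beta)$ in $(0,\infty)$ by dominated convergence (the integrands are uniformly bounded on $[x_{*},\beta]$), Lemma \ref{lem-asymptotic-scale-function} already controls the exit probabilities and the entire task reduces to estimating $r_{\ep}(0+)$.

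To do this I would swap the order of integration to write
\[
r_{\ep}(0+)=\int_{0}^{x_{*}}m_{\ep}(z)\bigl[s_{\ep}(z)-s_{\ep}(0+)\bigr]\,dz,\qquad m_{\ep}(z):=\frac{1}{\alpha_{\ep}(z)}e^{2\int_{x_{*}}^{z}b/\alpha_{\ep}\,ds},
\]
and then sandwich each factor between expressions of the form $[\ep^{2}a'(0)+|\sigma'(0)|^{2}\cdot]^{\kappa_{\pm}}$ via \eqref{eqn-coefficients-approx}, in the same spirit as the proof of Theorem \ref{Lower-bound-principal-eigenvalue}. The key mechanism is a crossover at the demographic scale $z\sim\ep^{2}$: on $(0,\ep^{2}]$ the linear piece $\ep^{2}a'(0)z$ of $\alpha_{\ep}$ dominates and the contribution to $r_{\ep}(0+)$ is $O(1)$; on $(\ep^{2},x_{*}]$ the quadratic piece $|\sigma'(0)|^{2}z^{2}$ dominates and a direct power count produces the leading term.

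For part (1), where $\kappa_{\pm}<1$, Lemma \ref{lem-asymptotic-scale-function}(1) gives $s_{\ep}(0+)\to s_{0}(0+)\in(-\infty,0)$ and both exit probabilities stay bounded away from $0$ and $1$. On the outer window one then finds $s_{\ep}(z)-s_{\ep}(0+)\asymp z^{1-\kappa}$ and $m_{\ep}(z)\asymp z^{\kappa-2}$, whose product collapses to $\asymp z^{-1}$; hence $r_{\ep}(0+)\asymp\int_{\ep^{2}}^{x_{*}}z^{-1}\,dz\asymp|\ln\ep|$, with two-sided constants depending on $\delta$ through $\kappa_{\pm}$. For part (2), where $\kappa_{\pm}>1$, Lemma \ref{lem-asymptotic-scale-function}(2) gives $-s_{\ep}(0+)\asymp\ep^{-2(\kappa-1)}$, which makes $s_{\ep}(z)-s_{\ep}(0+)$ essentially the constant $-s_{\ep}(0+)$ on the outer window; combined with $\int_{\ep^{2}}^{x_{*}}m_{\ep}(z)\,dz\lesssim 1$ this produces $r_{\ep}(0+)\lesssim\ep^{-2(\kappa_{+}-1)}$. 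Pairing with $\P^{\ep}_{x_{*}}[X^{\ep}_{\tau^{\ep}}=0]\lesssim\ep^{2(\kappa_{-}-1)}$ from Lemma \ref{lem-asymptotic-scale-function}(2) then yields the stated $\ep^{-2(\kappa_{+}-\kappa_{-}+\delta)}$ upper bound (the spare $\delta$ absorbs the multiplicative $\delta$-dependent constants so that $\lesssim_{\ep}$ holds in the strict sense), while $2r_{\ep}(\beta)\P^{\ep}_{x_{*}}[X^{\ep}_{\tau^{\ep}}=\beta]\to 2r_{0}(\beta)>0$ immediately supplies $\inf_{\ep}\E^{\ep}_{x_{*}}[\tau^{\ep}]>0$. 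The main technical obstacle is the crossover window $z\sim\ep^{2}$: a naive pointwise upper bound there would contribute a spurious $\ep^{-2\kappa}$ factor, and one has to exploit the precise width of the window together with the detailed scaling of $s_{\ep}-s_{\ep}(0+)$ across it to avoid contaminating the leading exponent.
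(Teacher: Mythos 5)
Your setup is sound and matches the paper's: the Dirichlet representation of $\E^{\ep}_{x_*}[\tau^{\ep}]$ in terms of $r_{\ep}$ and $s_{\ep}$, the reduction to estimating $r_{\ep}(0+)$, and the Fubini identity $r_{\ep}(0+)=\int_0^{x_*}m_{\ep}(z)[s_{\ep}(z)-s_{\ep}(0+)]\,dz$ are all correct. Your probabilistic rewriting $\E^{\ep}_{x_*}[\tau^{\ep}]=2r_{\ep}(0+)\P^{\ep}_{x_*}[X^{\ep}_{\tau^{\ep}}=0]+2r_{\ep}(\be)\P^{\ep}_{x_*}[X^{\ep}_{\tau^{\ep}}=\be]$ is a nice touch: it makes $\inf_{\ep}\E^{\ep}_{x_*}[\tau^{\ep}]\geq 2r_{\ep}(\be)\P^{\ep}_{x_*}[X^{\ep}_{\tau^{\ep}}=\be]\to 2r_0(\be)>0$ immediate, which is simpler than the paper's comparison with the exit time from $(\hat x,\be)$. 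Case (2) goes through as you describe, since all the claimed bounds there carry explicit $\de$-losses in the exponents that absorb any mismatch.

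The gap is in case (1). Your plan is to ``sandwich each factor'' $m_{\ep}(z)$ and $s_{\ep}(z)-s_{\ep}(0+)$ separately, and you then assert $m_{\ep}(z)\asymp z^{\ka-2}$ and $s_{\ep}(z)-s_{\ep}(0+)\asymp z^{1-\ka}$ with a \emph{single} exponent $\ka$, so that the product collapses to $z^{-1}$. But the one-sided bounds available from \eqref{eqn-coefficients-approx} carry \emph{different} exponents: an upper bound on $e^{2\int_{x_*}^{z}b/\al_{\ep}\,ds}$ (hence on $m_{\ep}$) uses $\ka_{-}$, while an upper bound on $s_{\ep}'(y)=e^{2\int_y^{x_*}b/\al_{\ep}\,ds}$ uses $\ka_{+}$, and vice versa for lower bounds. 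The factorized product is therefore $\lesssim z^{-1-(\ka_{+}-\ka_{-})}$ from above and $\gtrsim z^{-1+(\ka_{+}-\ka_{-})}$ from below on the outer window, with $\ka_{+}-\ka_{-}\asymp\de>0$ \emph{fixed} (recall $C_1,C_2$ are allowed to depend on $\de$, but $\de$ cannot be sent to $0$). Integrating over $(\ep^2,x_*)$ then yields $\ep^{-c\de}$ for the upper bound and $O(1)$ for the lower bound --- neither is $\asymp|\ln\ep|$. A logarithmic asymptotic is exactly the borderline exponent $z^{-1}$ and tolerates no exponent mismatch, so the factorized sandwich cannot prove either inequality in case (1).

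The fix is to never split the kernel: in the double integral the exponentials recombine exactly as $e^{2\int_y^{z}b/\al_{\ep}\,ds}$ with $0<y<z$, and this single object is bounded above (resp.\ below) by the single power $\bigl[\tfrac{\ep^2a'(0)+|\si'(0)|^2z}{\ep^2a'(0)+|\si'(0)|^2y}\bigr]^{\ka_{+}}$ (resp.\ $\ka_{-}$) of the \emph{ratio}, so no spurious $\ep^{-2(\ka_{+}-\ka_{-})}$ factor appears. This is what the paper does: the upper bound is then an exact computation (the substitution $t=1/(1+x)$ and $\tfrac{1-t^{1-\ka}}{1-t}<1$ in \eqref{eqn-2022-03-16-3}), and the lower bound restricts the double integral to $\{z\in(y,2y)\}$, where the ratio is at most $2$ and the exponent costs only a constant $2^{\ka_{-}-1}$, leaving $\int_0^{x_*/2}\tfrac{dy}{\ep^2a'(0)+|\si'(0)|^2y}\,\ln 2\asymp|\ln\ep|$. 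You should replace the factor-by-factor estimate in case (1) with one of these two devices.
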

\begin{proof}
We first show that
\begin{equation}\label{eqn-formula-mean-extinction-time-atx*}
    \E^{\ep}_{x_*}[\tau^{\ep}]=\frac{2\left[r_{\ep}(0+)s_{\ep}(\be)-r_{\ep}(\be)s_{\ep}(0+)\right]}{s_{\ep}(\be)-s_{\ep}(0+)}.
\end{equation}

It is well known that $u_{\ep}:=\E^{\ep}_{\bullet}[\tau^{\ep}]$ solves
\begin{equation*}\label{eqn-mean-exit-time-bdd-interval}
\begin{cases}
\frac{1}{2}\al_{\ep} u''_{\ep}+bu'_{\ep}=-1\quad \text{in}\quad (0,\be),\\
u_{\ep}(0)=0=u_{\ep}(\be).
\end{cases}
\end{equation*}
Direct calculations yield
\begin{equation*}
\begin{split}
u_{\ep}(x)
&=-2r_{\ep}(x)+\frac{2(s_{\ep}(x)-s_{\ep}(0+))}{s_{\ep}(\be)-s_{\ep}(0+)} r_{\ep}(\be)+\frac{2(s_{\ep}(\be)-s_{\ep}(x))}{s_{\ep}(\be)-s_{\ep}(0+)} r_{\ep}(0+).
\end{split}
\end{equation*}
Setting $x=x_{*}$, we derive \eqref{eqn-formula-mean-extinction-time-atx*} from $s_{\ep}(x_{*})=0$ and $r_{\ep}(x_{*})=0$.

\medskip

(1) Note $\lim_{\ep\to 0}r_{\ep}(\be)=r_0(\be)$. Since $\lim_{\ep\to0}s_{\ep}(\beta)=s_{0}(\beta)$ and $\lim_{\ep\to0}s_{\ep}(0+)=s_{0}(0+)$ by Lemma \ref{lem-asymptotic-scale-function}, we find from \eqref{eqn-formula-mean-extinction-time-atx*} that
\begin{equation}\label{eqn-2022-05-13-1}
\begin{split}
\E^{\ep}_{x_*}[\tau^{\ep}]
&\approx_{\ep} \frac{2s_0(\be)r_{\ep}(0+)}{s_0(\be)-s_0(0+)}-\frac{2r_0(\be)s_0(0+)}{s_0(\be)-s_0(0+)}.
\end{split}
\end{equation}

If there are $C_1,C_2>0$ (depending on $\de$) such that  \begin{equation}\label{eqn-kappa-bounds}
C_1|\ln \ep|\lesssim_{\ep} r_{\ep}(0+)\lesssim_{\ep} C_2|\ln \ep|,
\end{equation}
we deduce from \eqref{eqn-2022-05-13-1} that
$\frac{2C_1 s_0(\be)}{s_0(\be)-s_0(0+)}|\ln \ep|\lesssim_{\ep} \E^{\ep}_{x_*}[\tau^{\ep}]\lesssim_{\ep} \frac{2C_2 s_0(\be)}{s_0(\be)-s_0(0+)}|\ln \ep|$, leading to the conclusion.

It remains to show \eqref{eqn-kappa-bounds}. Thanks to \eqref{eqn-coefficients-approx}, we compute
\begin{equation*}
\begin{split}
r_{\ep}(0+)&=\int_0^{x_*} \int_y^{x_*} \frac{1}{\al_{\ep}(z)} e^{2\int_y^z \frac{b}{\al_{\ep}}ds}dzdy\\
&\geq \int_0^{x_*}\int_y^{x_*}\frac{1}{(1+\de)z[\ep^2a'(0)+|\si'(0)|^2 z]} e^{\frac{1-\de}{1+\de}\int_y^z \frac{2b'(0)}{\ep^2a'(0)+|\si'(0)|^2 s}ds}dzdy\\
&=\frac{1}{1+\de}\int_0^{x_*}\int_y^{x_*}\frac{1}{z} [\ep^2a'(0)+|\si'(0)|z]^{\ka_{-}-1} [\ep^2a'(0)+|\si'(0)|y]^{-\ka_{-}}dzdy\\
&\geq \frac{1}{1+\de} \int_0^{\frac{x_*}{2}} \int_{y}^{2y}[\ep^2a'(0)+|\si'(0)|y]^{-1} \frac{1}{z}\left[\frac{\ep^2a'(0)+|\si'(0)|z}{\ep^2a'(0)+|\si'(0)|y}\right]^{\ka_{-}-1}dz dy.
\end{split}
\end{equation*}
Noting that 
$$
\frac{\ep^2a'(0)+|\si'(0)|z}{\ep^2a'(0)+|\si'(0)|y}\leq \frac{2\ep^2a'(0)+|\si'(0)|2y}{\ep^2a'(0)+|\si'(0)|y}=2,\quad\forall y\in \left(0,\frac{x_*}{2}\right)\andd z\in (y,2y),
$$
we deduce from the fact $\ka_{-}<1$ that
\begin{equation*}\label{eqn-kappa-lower}
\begin{split}
r_{\ep}(0+)&\geq \frac{1}{1+\de}\int_0^{\frac{x_*}{2}} [\ep^2a'(0)+|\si'(0)|y]^{-1}\int_y^{2y} \frac{1}{z}2^{\ka_{-}-1}dz dy\\
&
=\frac{2^{\ka_{-}-1}\ln 2}{(1+\de)|\si'(0)|^2}\ln \frac{\ep^2 a'(0)+|\si'(0)|^2 \frac{x_*}{2}}{\ep^2 a'(0)}
\\
&=\frac{2^{\ka_{-}-1}\ln 2}{(1+\de)|\si'(0)|^2}\ln \left(1+\frac{|\si'(0)|^2 x_*}{2\ep^2 a'(0)}\right)\approx_{\ep} C_1|\ln \ep|,
\end{split}
\end{equation*} 
where $C_1:= \frac{2^{\ka_{-}}\ln 2}{(1+\de)|\si'(0)|^2}$, and the equality follows from direct calculations of the double integral.

To derive an upper bound, we change the order of integration to rewrite $r_{\ep}(0+)$ as
\begin{equation}\label{eqn-2022-05-16-1-1}
    r_{\ep}(0+)=\int_0^{x_*}\int_0^{z}\frac{1}{\al_{\ep}(z)} e^{2(V_{\ep}(y)-V_{\ep}(z))}dydz,
\end{equation}
which is just $\RN{1}_{\ep}$ in \eqref{eqn-2022-02-09-5-1}. By \eqref{eqn-2022-03-16-4}, $r_{\ep}(0+)\lesssim_{\ep} C_2 |\ln \ep|$ for some $C_2>0$. Hence, \eqref{eqn-kappa-bounds} follows.

\medskip

(2) Let $\hat{x}\in (0,x_*)$. Obviously,  $\E^{\ep}_{x_*}[\tau^{\ep}]\geq \E^{\ep}_{x_*}[\hat{\tau}^{\ep}]$, where $\hat{\tau}^{\ep}:=\inf\{t\geq 0: X^{\ep}_t=\hat{x}\text{ or }\be\}$. Note that $\E^{\ep}_{\bullet}[\hat{\tau}^{\ep}]$ solves 
\begin{equation*}
\begin{cases}
\frac{1}{2}\al_{\ep} u''+bu'=-1\quad \text{in}\quad (\hat{x},\be),\\
u(\hat{x})=0=u(\be).
\end{cases}
\end{equation*}
As $\lim_{\ep\to 0}\al_{\ep}=\si^2$ uniformly in $[\hat{x},\beta]$, the classical PDE theory ensures that $\lim_{\ep\to0}\E^{\ep}_{\bullet}[\hat{\tau}^{\ep}]=u_{0}$ uniformly in $[\hat{x},\beta]$, where $u_{0}$ is the unique solution of 
\begin{equation*}
\begin{cases}
\frac{1}{2}\si^2 u_{0}''+bu_{0}'=-1\quad \text{in}\quad (\hat{x},\be),\\
u_{0}(\hat{x})=0=u_{0}(\be).
\end{cases}
\end{equation*}
Since $u_{0}(x_{*})>0$ by the maximum principle, we conclude $\inf_{\ep} \E^{\ep}_{x_*}[\tau^{\ep}]>0$.

It remains to derive the upper bound for $\E^{\ep}_{x_*}[\tau^{\ep}]$.  Note that $\ka_{+}>\ka_{-}>1$. Using \eqref{eqn-2022-05-16-1-1},
we apply \eqref{eqn-2022-02-09-6} to find $r_{\ep}(0+)\lesssim_{\ep} \ep^{-2(\ka_{+}-1+\de)}$. Since $-s_{\ep}(0+)\gtrsim_{\ep} C_1 \ep^{-2(\ka_{-}-1)}$ due to Lemma \ref{lem-asymptotic-scale-function}, we deduce from \eqref{eqn-formula-mean-extinction-time-atx*} that 
$$
\E^{\ep}_{x_*}[\tau^{\ep}]\approx_{\ep} \frac{2s_0(\be)r_{\ep}(0+)}{s_0(\be)-s_{\ep}(0+)}+2r_0(\be)\lesssim_{\ep} \frac{2s_0(\be)\ep^{-2(\ka_{+}-1+\de)}}{C_1\ep^{-2(\ka_{-}-1)}}=\frac{2s_0(\be)}{C_1}\ep^{-2(\ka_{+}-\ka_{-}+\de)}.
$$
This completes the proof. 
\end{proof}

The third lemma addresses the uniform-in-$\ep$ finiteness of the mean hitting time $\E^{\ep}_{\bullet}[\tau^{\ep}_{x_*}]$.

\begin{lem}\label{lem-mean-hitting-time-x*}
Assume {\bf(H)}. Then, $\sup_{\ep}\E^{\ep}_{x}[\tau^{\ep}_{x_*}]<\infty$ for each $x>x_*$.
\end{lem}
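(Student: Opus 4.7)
The plan is to represent the mean hitting time $\E_x^{\ep}[\tau_{x_*}^{\ep}]$ via the scale function $s_{\ep}$ and the speed measure $m_{\ep}(dy)=\frac{2}{\al_{\ep}(y)}e^{2\int_{x_*}^{y}b/\al_{\ep}ds}dy$ of the diffusion \eqref{main-diffusion-eqn-equivalent}, and then to bound the resulting expression uniformly in $\ep$ by combining the convergence \eqref{limit-alpha-V} of the coefficients with the uniform tail estimate on the Gibbs density $u_{\ep}^{G}$ provided by Lemma \ref{lem-integral-gibb}. As a preliminary step, I note that {\bf(H)}(4) yields $\frac{|b(y)|}{\al_{\ep}(y)}\gtrsim\frac{1}{y}$ for $y\gg1$ uniformly in small $\ep$, hence $s_{\ep}(\infty)=\infty$, and by standard one-dimensional theory (e.g.\ \cite{Kallenberg02}) one has $\tau_{x_*}^{\ep}<\infty$ almost surely, which justifies the limiting procedure below.

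For $n>x>x_{*}$, let $\tau^{\ep,n}$ denote the exit time of $X_{t}^{\ep}$ from $(x_{*},n)$. By the one-dimensional Green's function formula (see e.g.\ \cite[Chapter VI]{IW81}),
\begin{equation*}
\E_{x}^{\ep}[\tau^{\ep,n}]=\int_{x_{*}}^{n}G^{\ep,n}(x,y)\,m_{\ep}(dy),\qquad G^{\ep,n}(x,y):=\frac{(s_{\ep}(x\wedge y)-s_{\ep}(x_{*}))(s_{\ep}(n)-s_{\ep}(x\vee y))}{s_{\ep}(n)-s_{\ep}(x_{*})}.
\end{equation*}
Using the elementary bounds $G^{\ep,n}(x,y)\leq s_{\ep}(x)-s_{\ep}(x_{*})$ for $y\geq x$ and $G^{\ep,n}(x,y)\leq s_{\ep}(y)-s_{\ep}(x_{*})$ for $y\leq x$, both valid uniformly in $n$, and passing to $n\to\infty$ via monotone convergence together with $\tau^{\ep,n}\uparrow\tau_{x_{*}}^{\ep}$, I obtain
\begin{equation*}
\E_{x}^{\ep}[\tau_{x_{*}}^{\ep}]\leq(s_{\ep}(x)-s_{\ep}(x_{*}))\int_{x}^{\infty}m_{\ep}(dy)+\int_{x_{*}}^{x}(s_{\ep}(y)-s_{\ep}(x_{*}))\,m_{\ep}(dy).
\end{equation*}

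It remains to bound each of these two pieces uniformly in $\ep$. The key algebraic identity $m_{\ep}(dy)=2e^{-2\int_{1}^{x_{*}}b/\al_{\ep}ds}\,u_{\ep}^{G}(y)\,dy$, together with \eqref{limit-alpha-V}, shows that $m_{\ep}(dy)\leq C_{1}u_{\ep}^{G}(y)\,dy$ with $C_{1}>0$ independent of $\ep$; Lemma \ref{lem-integral-gibb} and \eqref{gibbs-to-gibbs} (the latter covering $\int_{x}^{\max\{x,1\}}u_{\ep}^{G}\,dy$ if $x<1$) then bound $\int_{x}^{\infty}u_{\ep}^{G}\,dy$ uniformly. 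The scale-function factor $s_{\ep}(x)-s_{\ep}(x_{*})$ is itself uniformly bounded by dominated convergence applied to $s_{\ep}'=e^{-2\int_{x_{*}}^{\bullet}b/\al_{\ep}ds}$. The second integral is over the compact interval $[x_{*},x]$, on which both $s_{\ep}-s_{\ep}(x_{*})$ and $m_{\ep}(dy)/dy$ are continuous and converge locally uniformly by \eqref{limit-alpha-V} and \eqref{gibbs-to-gibbs}, so it is uniformly bounded as well. The only delicate point, and the one I would verify most carefully, is the reduction of $m_{\ep}$ to a uniform constant multiple of $u_{\ep}^{G}$ near infinity; once this is in hand, Lemma \ref{lem-integral-gibb} applies directly and the proof concludes.
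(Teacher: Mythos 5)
Your proof is correct, but it follows a genuinely different route from the paper's. You exploit the one-dimensional structure fully: the Green's function representation of $\E_x^{\ep}[\tau^{\ep,n}]$ with respect to the scale function and speed measure, the elementary pointwise bounds on $G^{\ep,n}$, monotone convergence in $n$, and then the observation that $m_{\ep}(dy)$ is a uniformly bounded multiple of $u_{\ep}^{G}(y)\,dy$ so that Lemma \ref{lem-integral-gibb} controls the tail at infinity. All the ingredients check out: the normalization of $m_{\ep}$ is consistent with $\LL_{\ep}=\frac{d}{dm_{\ep}}\frac{d}{ds_{\ep}}$ (and with the formula for $\E^{\ep}_{\bullet}[\tau^{\ep}]$ used in Lemma \ref{lem-mean-exit-time-bdd-interval}); the prefactor $e^{-2\int_1^{x_*}b/\al_{\ep}ds}$ relating $m_{\ep}$ to $u_{\ep}^{G}$ is uniformly bounded by \eqref{limit-alpha-V}; and $s_{\ep}(\infty)=\infty$ follows from {\bf(H)}(4), justifying $\tau^{\ep,n}\uparrow\tau^{\ep}_{x_*}$. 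The paper instead argues probabilistically: it constructs a Lyapunov function $V$ with $\LL_{\ep}V\leq-1$ on $(N_0,\infty)$ to bound the return time from $N_0+1$ to $N_0$ via the It\^o--Dynkin formula, bounds the exit time from the bounded interval $(x_*,N_0+1)$ by a PDE/convergence argument, and then decomposes the passage from $N_0$ to $x_*$ into a geometric number of excursions to $N_0+1$ using the strong Markov property, with the excursion probability $p_{\ep}$ bounded away from $1$ via the scale function. Your argument is shorter and avoids the renewal decomposition entirely, at the price of relying on the explicit scale/speed representation, which is intrinsically one-dimensional; the paper's Lyapunov-plus-renewal scheme is heavier here but is the template that survives in settings where no such explicit representation exists. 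Both approaches ultimately trace the control at infinity back to the dissipativity conditions in {\bf(H)}(4).
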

\begin{proof}
Fix $x>x_{*}$. As in the proof of Proposition \ref{prop-concentration-infty}, we can find a function $V\in C^2(0,\infty)$ and a number $N_0\in (x,\infty)$ such that $V(N_0)>0$ and  $\LL_{\ep} V\leq -\frac{b^2}{2\si^2}$ in $(N_0,\infty)$. Since $\lim_{x\to\infty}\frac{b}{|\si|}=-\infty$ by {\bf (H4)}, we may assume $\LL_{\ep}V\leq -1$ in $(N_0,\infty)$. Set $\tau^{\ep}_{N_0}:=\inf\{t\geq 0: X^{\ep}_t=N_0\}$. An application of It\^o-Dynkin's formula yields $0\leq\E^{\ep}_{N_0+1}[V(\tau^{\ep}_{N_0})]\leq V(N_0+1)-\E^{\ep}_{N_0+1}[\tau^{\ep}_{N_0}]$,
leading to
\begin{equation}\label{eqn-2022-05-16-2}
    \sup_{\ep}\E^{\ep}_{N_0+1}[\tau^{\ep}_{N_0}]\leq V(N_0+1).
\end{equation}


Set $\tau^{\ep}_{(x_*,N_0+1)}:=\{t\geq 0: X^{\ep}_t=x_*\text{ or }N_0+1\}$. Then, $\E^{\ep}_{\bullet}[\tau^{\ep}_{(x_*,N_0+1)}]$ on $[x_*,N_0+1]$ solves
\begin{equation*}
    \begin{cases}
    \LL_{\ep}u=-1\quad \text{in}\quad (x_*,N_0+1),\\
    u(x_*)=0=u(N_0+1).
    \end{cases}
\end{equation*}
Arguing as in the proof of Lemma \ref{lem-mean-exit-time-bdd-interval} (2), we find
\begin{equation}\label{eqn-2022-05-16-3}
    \sup_{\ep}\E^{\ep}_{N_0}[\tau^{\ep}_{(x_*,N_0+1)}]<\infty.
\end{equation}

Let $X^{\ep}_0=N_0$, $\hat{\tau}^{\ep}_0=0$ and define recursively the following sequences of stopping times: before the first time $X_{t}^{\ep}$ reaches $x_*$ (i.e., $\tau_{x_*}^{\ep}$), for $n\in\N$, we let $\tau^{\ep}_{n}$ be the first time after $\hat{\tau}^{\ep}_{n-1}$ at which $X^{\ep}_t$ reaches $N_0+1$, and $\hat{\tau}^{\ep}_n$ be the first time after $\tau^{\ep}_{n}$ at which $X^{\ep}_t$ reaches $N_0$; since $\P^{\ep}_{N_0}[\tau_{x_*}^{\ep}<\infty]=1$, $\tau_{n}^{\ep}$ and $\hat{\tau}^{\ep}_{n}$ are defined up to some random index $n_{0}\in\N\cup\{0\}$; let $\tau^{\ep}_n=\hat{\tau}^{\ep}_n=\tau^{\ep}_{x^*}$ for all $n\geq n_0+1$. To be more specific, we recursively define for each $n\in\N$,
\begin{equation*}
\begin{split}
    \tau^{\ep}_{n}:=\inf\left\{t\geq \hat\tau^{\ep}_{n-1}: X^{\ep}_t=N_0+1\right\}\wedge \tau^{\ep}_{x_*},\quad
    \hat{\tau}^{\ep}_n:=\inf\left\{t\geq \tau^{\ep}_n: X^{\ep}_t=N_0\right\}\wedge \tau^{\ep}_{x_*}. 
\end{split}
\end{equation*}

Clearly, $\tau^{\ep}_{n}=\inf\{t\geq \hat{\tau}^{\ep}_{n-1}: X^{\ep}_t=x_*\text{ or }N_0+1\}$ and $\tau^{\ep}_{n}\uparrow \tau^{\ep}_{x_*}$ as $n\to\infty$ for $\P^{\ep}_{N_0}$-a.e. Hence, 
\begin{equation}\label{eqn-2021-02-24-1-1}
    \begin{split}
    \E^{\ep}_{N_0}[\tau^{\ep}_{x_*}]&=\lim_{n\to\infty}\E^{\ep}_{N_0}[\tau^{\ep}_n]=\sum_{n=1}^{\infty}\E^{\ep}_{N_0}[\tau^{\ep}_n-\hat\tau^{\ep}_{n-1}]+\sum_{n=1}^{\infty}\E^{\ep}_{N_0}[\hat{\tau}^{\ep}_n-\tau^{\ep}_{n}].
    \end{split}
\end{equation}

Thanks to \cite[Theorem 6.3.1]{IW81} and Lemma \ref{lem-asymptotic-scale-function}, $p_{\ep}:=\P_{N_0}[X^{\ep}_{\tau^{\ep}_1}=N_0+1]$ satisfies
\begin{equation}\label{eqn-2022-05-16-4}
    \lim_{\ep\to 0}p_{\ep}=\lim_{\ep\to 0}\frac{s_{\ep}(N_0)-s_{\ep}(x_*)}{s_{\ep}(N_0+1)-s_{\ep}(x_*)}=\frac{s_0(N_0)-s_0(x_*)}{s_0(N_0+1)-s_0(x_*)}\in (0,1).
\end{equation}

For $n\geq 1$, we show
\begin{equation}\label{eqn-2021-02-23-1-1}
\begin{split}
    \P^{\ep}_{N_0}[X^{\ep}_{\tau^{\ep}_n}=\be]&=p^{n}_{\ep},\\
    \E^{\ep}_{N_0}[\tau^{\ep}_{n}-\hat\tau^{\ep}_{n-1}]&=p^{n-1}_{\ep} \E^{\ep}_{N_0}[\tau^{\ep}_1],\\
    \E^{\ep}_{N_0}[\hat{\tau}^{\ep}_n-\tau^{\ep}_n]&=p^{n}_{\ep}\E^{\ep}_{N_0+1}[\tau^{\ep}_{N_0}].
\end{split}
\end{equation} 
The first two equalities for $n=1$ are obvious. Thanks to the strong Markov property and time-homogeneity of $X^{\ep}_t$, we find 
\begin{equation*}
    \begin{split}
        \E^{\ep}_{N_0}[\hat{\tau}^{\ep}_1-\tau^{\ep}_1]&=\E^{\ep}_{N_0}[\hat{\tau}^{\ep}_1-\tau^{\ep}_1|X^{\ep}_{\tau^{\ep}_1}=x_*]\times\P^{\ep}_{N_0}[X^{\ep}_{\tau^{\ep}_1}=x_*]\\
        &\quad+\E^{\ep}_{N_0}[\hat{\tau}^{\ep}_1-\tau^{\ep}_1|X^{\ep}_{\tau^{\ep}_1}=N_0+1]\times\P^{\ep}_{N_0}[X^{\ep}_{\tau^{\ep}_1}=N_0+1]\\
        &=\E^{\ep}_{N_0}[\hat{\tau}^{\ep}_1-\tau^{\ep}_1|X^{\ep}_{\tau^{\ep}_1}=N_0+1]\times\P^{\ep}_{N_0}[X^{\ep}_{\tau^{\ep}_1}=N_0+1]=\E^{\ep}_{N_0+1}[\tau^{\ep}_{N_0}]p_{\ep}.  
    \end{split}
\end{equation*}
Hence, \eqref{eqn-2021-02-23-1-1} holds for $n=1$. Suppose it is true for $n=k-1$ with $k\geq 2$. By the strong Markov property and time-homogeneity of $X^{\ep}_t$,
\begin{equation*}
    \begin{split}
\P^{\ep}_{N_0}[X^{\ep}_{\tau^{\ep}_k}=N_0+1]        &=\P^{\ep}_{N_0}[X^{\ep}_{\tau^{\ep}_k}=N_0+1|X^{\ep}_{\hat{\tau}^{\ep}_{k-1}}=N_0]\\
&\quad\times\P^{\ep}_{N_0}[X^{\ep}_{\hat{\tau}^{\ep}_{k-1}}=N_0|X^{\ep}_{\tau^{\ep}_{k-1}}=N_0+1]\times \P^{\ep}_{N_0}[X^{\ep}_{\tau^{\ep}_{k-1}}=N_0+1]=p_{\ep}^k,\\
\E^{\ep}_{N_0}[\tau^{\ep}_k-\hat\tau^{\ep}_{k-1}]        &=\E^{\ep}_{N_0}\left[\tau^{\ep}_k-\hat\tau^{\ep}_{k-1}|X^{\ep}_{\hat{\tau}^{\ep}_{k-1}}=N_0\right]\\
&\quad\times \P^{\ep}_{N_0}[X^{\ep}_{\hat{\tau}^{\ep}_{k-1}}=N_0|X^{\ep}_{\tau^{\ep}_{k-1}}=N_0+1]\times \P^{\ep}_{N_0}[X^{\ep}_{\tau^{\ep}_{k-1}}=N_0+1]\\
&=\E^{\ep}_{N_0}\left[\tau^{\ep}_k-\hat\tau^{\ep}_{k-1}|X^{\ep}_{\hat{\tau}^{\ep}_{k-1}}=N_0\right]\times \P^{\ep}_{N_0}[X^{\ep}_{\tau^{\ep}_{k-1}}=N_0+1]=\E^{\ep}_{N_0}[\tau^{\ep}_{1}] p_{\ep}^{k-1},\\
\E^{\ep}_{N_0}[\hat{\tau}^{\ep}_k-\tau^{\ep}_k]&=\E^{\ep}_{N_0}\left[\hat{\tau}^{\ep}_k-\tau^{\ep}_k|X^{\ep}_{\tau^{\ep}_k}=N_0+1\right]\times\P^{\ep}_{N_0}[X^{\ep}_{\tau^{\ep}_k}=N_0+1]=\E^{\ep}_{N_0+1}[\tau^{\ep}_{N_0}]p_{\ep}^k. 
\end{split}
\end{equation*}
Consequently, \eqref{eqn-2021-02-23-1-1} holds for $n=k$ and thus, holds for all $n\in\N$ by induction.

Given \eqref{eqn-2021-02-23-1-1}, we see from \eqref{eqn-2021-02-24-1-1} that 
\begin{equation*}
\begin{split}
    \E^{\ep}_{N_0}[\tau^{\ep}_{x_*}]&= \sum_{n=1}^{\infty}\left(p^{n-1}_{\ep} \E^{\ep}_{N_0}[\tau^{\ep}_{1}]+p^{n}_{\ep}\E^{\ep}_{N_0+1}[\tau^{\ep}_{N_0}]\right)= \frac{1}{1-p_{\ep}} \E^{\ep}_{N_0}[\tau^{\ep}_{1}]+\frac{p_{\ep}}{1-p_{\ep}}\E^{\ep}_{N_0+1}[\tau^{\ep}_{N_0}],
\end{split}
\end{equation*}
which together with \eqref{eqn-2022-05-16-2},  \eqref{eqn-2022-05-16-3} and \eqref{eqn-2022-05-16-4} yields 
$\sup_{\ep}\E^{\ep}_{x}[\tau^{\ep}_{x_*}]\leq \sup_{\ep}\E^{\ep}_{N_0}[\tau^{\ep}_{x_*}]<\infty$. 
\end{proof}

We are ready to prove Theorem \ref{thm-asymptotic-mean-extinction}.

\begin{proof}[Proof of Theorem \ref{thm-asymptotic-mean-extinction}]
Clearly, it suffices to prove the result for $\E_{x}^{\ep}[T_{0}^{\ep}]$ for each $x\in(0,\infty)$.

Fix $x\in(0,\infty)$. Let  $0<\de\ll 1$ (depending on $x$) and then take $\be=\be(\de)$ and $x_*=x_*(\de)\in (0,\be)$ so that $x_*\in(0,x)$. They are introduced at the beginning of this section. The strong Markov property and time-homogeneity of $X^{\ep}_t$ then imply that 
\begin{equation}\label{eqn-2022-05-17-2}
    \E^{\ep}_x[T^{\ep}_0]=\E^{\ep}_x\left[\E^{\ep}_x\left[(T^{\ep}_0-\tau^{\ep}_{x_*}+\tau^{\ep}_{x_*})\big|X^{\ep}_{\tau^{\ep}_{x_*}}\right]\right]=\E^{\ep}_{x_*}[T^{\ep}_0]+\E^{\ep}_x[\tau^{\ep}_{x_*}].
\end{equation}
Since $\sup_{\ep}\E^{\ep}_x[\tau^{\ep}_{x_*}]<\infty$ by Lemma \ref{lem-mean-hitting-time-x*}, it suffices to study the asymptotic bounds of $\E^{\ep}_{x_*}[T^{\ep}_0]$. 

We follow the same idea as in the proof of Lemma \ref{lem-mean-hitting-time-x*}. Let $X^{\ep}_0=x_*$, $\hat{\tau}^{\ep}_0=0$ and define recursively the following sequences of stopping times: before the first time $X_{t}^{\ep}$ reaches $0$ (i.e., $T_{0}^{\ep}$), for $n\in\N$, we let $\tau^{\ep}_{n}$ be the first time after $\hat{\tau}^{\ep}_{n-1}$ at which $X^{\ep}_t$ reaches $\be$, and $\hat{\tau}^{\ep}_n$ be the first time after $\tau^{\ep}_{n}$ at which $X^{\ep}_t$ reaches $x_*$; since $\P_{x_*}[T_{0}^{\ep}<\infty]=1$, $\tau_{n}^{\ep}$ and $\hat{\tau}^{\ep}_{n}$ are defined up to some random index $n_{0}\in\N\cup\{0\}$; let $\tau^{\ep}_n=\hat{\tau}^{\ep}_n=T^{\ep}_{x^*}$ for all $n\geq n_0+1$. To be more specific, we recursively define for each $n\in\N$,
\begin{equation*}
\begin{split}
    \tau^{\ep}_{n}:=\inf\left\{t\geq \hat\tau^{\ep}_{n-1}: X^{\ep}_t=\be\right\}\wedge T^{\ep}_{0},\quad
    \hat{\tau}^{\ep}_n:=\inf\left\{t\geq \tau^{\ep}_n: X^{\ep}_t=x_*\right\}\wedge T^{\ep}_{0}. 
\end{split}
\end{equation*}

Clearly, $\tau^{\ep}_{n}=\inf\{t\geq \hat{\tau}^{\ep}_{n-1}: X^{\ep}_t=0\text{ or }\be\}$ and $\tau^{\ep}_{n}\uparrow T^{\ep}_{0}$ as $n\to\infty$ for $\P^{\ep}_{x_*}$-a.e. Hence, 
\begin{equation}\label{eqn-2021-02-24-1}
    \begin{split}
    \E^{\ep}_{x_*}[T^{\ep}_{0}]&=\lim_{n\to\infty}\E^{\ep}_{x_*}[\tau^{\ep}_n]=\sum_{n=1}^{\infty}\E^{\ep}_{x_*}[\tau^{\ep}_n-\hat\tau^{\ep}_{n-1}]+\sum_{n=1}^{\infty}\E^{\ep}_{x_*}[\hat{\tau}^{\ep}_n-\tau^{\ep}_{n}].
    \end{split}
\end{equation}
Set $p_{\ep}:=\P_{x_*}[X^{\ep}_{\tau^{\ep}_1}=\be]$. Following arguments as in the proof of Lemma \ref{lem-mean-hitting-time-x*}, we have for each $n\geq 1$,
\begin{equation*}\label{eqn-2021-02-23-1}
\begin{split}
    \P^{\ep}_{x_*}[X^{\ep}_{\tau^{\ep}_n}=\be]=p^n_{\ep},\quad\E^{\ep}_{x_*}[\tau^{\ep}_n-\hat\tau^{\ep}_{n-1}]=p^{n-1}_{\ep} \E^{\ep}_{x_*}[\tau^{\ep}_1]\quad\text{and}\quad\E^{\ep}_{x_*}[\hat{\tau}^{\ep}_n-\tau^{\ep}_n]=p^{n}_{\ep}\E^{\ep}_{\be}[\tau^{\ep}_{x_*}],
\end{split}
\end{equation*} 
This together with  \eqref{eqn-2021-02-24-1} yields 
\begin{equation}\label{eqn-2022-05-17-1}
\begin{split}
    \E^{\ep}_{x_*}[T^{\ep}_{0}]&= \sum_{n=1}^{\infty}\left(p^{n-1}_{\ep} \E^{\ep}_{x_*}[\tau^{\ep}_{1}]+p^{n}_{\ep}\E^{\ep}_{\be}[\tau^{\ep}_{x_*}]\right)= \frac{1}{1-p_{\ep}} \E^{\ep}_{x_*}[\tau^{\ep}_{1}]+\frac{p_{\ep}}{1-p_{\ep}}\E^{\ep}_{\be}[\tau^{\ep}_{x_*}].
\end{split}
\end{equation}

\medskip

\noindent{\bf Case: $\La_0<0$.} Thanks to Lemmas \ref{lem-asymptotic-scale-function}, \ref{lem-mean-exit-time-bdd-interval} and \ref{lem-mean-hitting-time-x*}, there are $C_1,C_2>0$ such that $C_1|\ln \ep|\lesssim_{\ep} \E^{\ep}_{x_*}[T^{\ep}_0]\lesssim_{\ep} C_2|\ln \ep|$. From which and \eqref{eqn-2022-05-17-2}, the desired result follows. 

\medskip

\noindent{\bf Case: $\La_0>0$.} We rewrite \eqref{eqn-2022-05-17-1} as
\begin{equation}\label{eqn-2022-05-26-1}
    \E^{\ep}_{x_*}[T^{\ep}_0]=\frac{1}{1-p_{\ep}}\left(\E^{\ep}_{x_*}[\tau^{\ep}_1]+\E^{\ep}_{\be}[\tau^{\ep}_{x_*}]\right)-\E^{\ep}_{\be}[\tau^{\ep}_{x_*}].
\end{equation}

By Lemmas  \ref{lem-asymptotic-scale-function}, \ref{lem-mean-exit-time-bdd-interval} and \ref{lem-mean-hitting-time-x*}, there are positive constants $C_3, C_4, C_5$ and $C_6$ such that
\begin{equation*}
    C_3\ep^{-2(\ka_{-}-1)}\lesssim_{\ep} \frac{1}{1-p_{\ep}}\lesssim_{\ep} C_4\ep^{-2(\ka_{+}-1)},\,\, C_5\lesssim_{\ep} \E^{\ep}_{x_*}[\tau^{\ep}_1]+\E^{\ep}_{\be}[\tau^{\ep}_{x_*}]\lesssim_{\ep} C_6 \ep^{-2(\ka_{+}-\ka_{-}+\de)},
\end{equation*}
which together with \eqref{eqn-2022-05-26-1} yield $C_3 C_5\ep^{-2(\ka_{-}-1)}\lesssim_{\ep} \E^{\ep}_{x_*}[T^{\ep}_0]\lesssim_{\ep}  C_4C_6\ep^{-2(\ka_{+}-1+\ka_{+}-\ka_{-}+\de)}$.

We see from the definition of $\ka_{+}$ and $\ka_{-}$ in \eqref{eqn-def-ratios} that for any $0<\ga\ll 1$, there exists $\de>0$ (and corresponding $x_*=x_*(\de)$) so that 
$$
-(\ka_{-}-1)\leq 1-(1-\ga)\frac{2b'(0)}{|\si'(0)|}\,\, \andd\,\, 1-(1+\ga)\frac{2b'(0)}{|\si'(0)|}\leq -(\ka_{+}-1+\ka_{+}-\ka_{-}+\de),
$$
leading to $C_3C_5 \ep^{2-(1-\ga)\frac{4b'(0)}{|\si'(0)|}}\lesssim_{\ep} \E^{\ep}_{x_*}[T^{\ep}_0]\lesssim_{\ep} C_4 C_6 \ep^{2-(1+\ga)\frac{4b'(0)}{|\si'(0)|}}$. This together with \eqref{eqn-2022-05-17-2} yields the result.

The proof is complete.
\end{proof}


\section*{Acknowledgement}

We are grateful to the referees and the editors for their careful reading of the manuscript and for providing
many constructive critiques and helpful suggestions which led to a significant improvement of the
paper.

\begin{funding}
A.H. was supported by the NSF through the grants DMS 2147903 and CAREER 2339000. W.Q. was partially supported by a postdoctoral fellowship from the University of Alberta. Z.S. was partially supported by a start-up grant from the University of Alberta and NSERC RGPIN-2018-04371.  Y.Y. was partially supported by NSERC RGPIN-2020-04451, PIMS CRG grant, a faculty development grant from the University of Alberta, and a Scholarship from Jilin University.
%
\end{funding}


\begin{thebibliography}{}
\bibitem{B18} M. Benaim, Stochastic persistence. \url{https://arxiv.org/abs/1806.08450}

\bibitem{BB09} A. Biswas  and V. S. Borkar, Small noise asymptotics for invariant densities for a class of diffusions: a control theoretic view. \emph{J. Math. Anal. Appl.} 360 (2009), no. 2, 476-484.

\bibitem{BKR01}V. I. Bogachev, N. V. Krylov, and M. R{\"o}ckner, On regularity of transition probabilities and invariant measures of singular diffusions under minimal conditions. {\em Comm. Partial Differential Equations} 26 (2001), no. 11-12, 2037-2080. 

\bibitem{BS15} A. N. Borodin and P. Salminen, {\em Handbook of Brownian motion-facts and formulae}, Springer Science \& Business Media, 2015.

\bibitem{BGK05} A. Bovier, V. Gayrard and M. Klein, Metastability in reversible diffusion processes. II. Precise asymptotics for small eigenvalues. \emph{J. Eur. Math. Soc. (JEMS)} 7 (2005), no. 1, 69-99.

\bibitem{CCLMMS09} P. Cattiaux, P. Collet, A. Lambert, S. Mart\'{i}nez, S. M\'{e}l\'{e}ard and J. San Mart\'{i}n, Quasi-stationary distributions and diffusion models in population dynamics. \emph{Ann. Probab.} 37 (2009), no. 5, 1926-1969.

\bibitem{Chan98} T. Chan, Large deviations and quasi-stationarity for density-dependent birth-death processes. \emph{J. Austral. Math. Soc. Ser. B} 40 (1998), no. 2, 238-256.

\bibitem{CCM16} J.-R. Chazottes,  P. Collet and S. M\'el\'eard, Sharp asymptotics for the quasi-stationary distribution of birth-and-death processes. \emph{Probab. Theory Related Fields} 164 (2016), no. 1-2, 285-332.

\bibitem{CCM19} J.-R. Chazottes,  P. Collet and S. M\'el\'eard, On time scales and quasi-stationary distributions for multitype birth-and-death processes. \emph{Ann. Inst. Henri Poincar\'e Probab. Stat.} 55 (2019), no. 4, 2249-2294.


\bibitem{CV18} N. Champagnat and D. Villemonais, General criteria for the study of quasi-stationarity. \emph{Electron. J. Probab.} 28 (2023), Paper No. 22, 84 pp.



\bibitem{CW98} Y. Chen, and L. Wu, {\em Second order elliptic equations and elliptic systems}. (English summary) Translated from the 1991 Chinese original by Bei Hu. Translations of Mathematical Monographs, 174. American Mathematical Society, Providence, RI, 1998.

\bibitem{Chung2001} K. L. Chung, {\em A course in probability theory}. Third edition. Academic Press, Inc., San Diego, CA, 2001.


\bibitem{CMSM13} P. Collet, S. Mart\'{i}nez and J. San Mart\'{i}n, \emph{Quasi-stationary distributions. Markov chains, diffusions and dynamical systems.} Probability and its Applications (New York). Springer, Heidelberg, 2013.

\bibitem{Day87} M. V. Day, Recent progress on the small parameter exit problem. \emph{Stochastics} 20 (1987), no. 2, 121-150.

\bibitem{DLLN19} G. Di Ges\`u, T. Leli\`evre, D. Le Peutrec and B. Nectoux, Sharp asymptotics of the first exit point density. \emph{Ann. PDE} 5 (2019), no. 1, Paper No. 5, 174 pp.

\bibitem{DLLN20} G. Di Ges\`u, T. Leli\`evre, D. Le Peutrec and B. Nectoux, The exit from a metastable state: Concentration of the exit point distribution on the low energy saddle points, part 1. \emph{J. Math. Pures Appl. (9)} 138 (2020), 242-306.


\bibitem{DLLN21} G. Di Ges\`u, T. Leli\`evre, D. Le Peutrec and B. Nectoux, The exit from a metastable state: Concentration of the exit point distribution on the low energy saddle points, part 2. \emph{Stoch. Partial Differ. Equ. Anal. Comput.} 10 (2022), no. 1, 317-357.


\bibitem{vDP13} E. A. van Doorn and P. K. Pollett, Quasi-stationary distributions for discrete-state models. \emph{Eur. J. Oper. Res} 230 (2013), no. 1, 1-14.

\bibitem{E20} S. P. Ellner, R. E. Snyder, P. B. Adler, G. Hooker and S. Schreiber, Technical Comment on Pande et al. (2020): Why invasion analysis is important for understanding coexistence, {\em Ecology Letters.} 23 (2020), no. 11, 1721-1724.

\bibitem{EHS15} S. Evans, A. Hening and S. J. Schreiber, Protected polymorphisms and evolutionary stability of patch-selection strategies in stochastic environments. {\em J. Math. Biol.} 71 (2015), no. 2, 325–359.

\bibitem{FS14} A. Faure and S. J. Schreiber, Quasi-stationary distributions for randomly perturbed dynamical systems, {\em Annals of Applied Probability.} 24 (2014), no. 2, 553-598.
  
\bibitem{FW98} M. Freidlin and A. Wentzell, Random perturbations of dynamical systems, {\em Springer.} (1998).

\bibitem{Fukushima80} M. Fukushima, \emph{Dirichlet forms and Markov processes.} North-Holland Mathematical Library, 23. North-Holland Publishing Co., Amsterdam-New York; Kodansha, Ltd., Tokyo, 1980.

\bibitem{GT01}  D. Gilbarg and N. S. Trudinger, {\em Elliptic partial differential equations of second order}. Reprint of the 1998 edition. Classics in Mathematics. Springer-Verlag, Berlin, 2001. 

\bibitem{G01} F. Gosselin, Asymptotic behavior of absorbing Markov chains conditional on nonabsorption for applications in conservation biology, {\em Annals of Applied Probability.} 11 (2001), no. 1, 261-284.



\bibitem{HSL16} S. P. Hart, S. J. Schreiber and J. M. Levine, How variation between individuals affects species coexistence, {\em Ecology Letters.} 19 (2016), no. 8, 825-838.

\bibitem{HK19} A. Hening and M. Kolb, Martin Quasistationary distributions for one-dimensional diffusions with singular boundary points. \emph{Stochastic Process. Appl.} 129 (2019), no. 5, 1659-1696.


\bibitem{HN18} A. Hening and D. Nguyen, Coexistence and extinction for stochastic Kolmogorov systems. {\em Annals of Applied Probability.} 28 (2018), no. 3, 1893-1942.

\bibitem{HNS22} A. Hening, D. Nguyen and S. Schreiber, A classification of the dynamics of three-dimensional stochastic ecological systems. {\em Annals of Applied Probability.} 32 (2022), no. 2, 893--931.

\bibitem{HQSY} A. Hening, W. Qi, Z. Shen and Y. Yi, Quasi-stationary distributions of multi-dimensional diffusion processes. \url{https://sites.ualberta.ca/~zhongwei/manuscript-Hening-Qi-Shen-Yi-QSD.pdf}

\bibitem{H97} G. Hognas, On the quasi-stationary distribution of a stochastic Ricker model, {\em Stochastic processes and their applications.} 70 (1997), no. 2, 243--263.


\bibitem{HK11} D. Holcman and I. Kupka, Semi-classical limits of the first eigenfunction and concentration on the recurrent sets of a dynamical system. \emph{Forum Math.} 23 (2011), no. 1, 1-74.

\bibitem{HJLY18} W. Huang, M. Ji, Z. Liu and Y. Yi, Concentration and limit behaviors of stationary measures. \emph{Phys. D} 369 (2018), 1-17.

\bibitem{IW81} N. Ikeda and S.  Watanabe, {\em Stochastic differential equations and diffusion processes}. North-Holland Mathematical Library, 24. North-Holland Publishing Co., Amsterdam-New York; Kodansha, Ltd., Tokyo, 1981. 

\bibitem{JS06} F. Jacobs and S. J. Schreiber, Random perturbations of dynamical systems with absorbing states. \emph{SIAM J. Appl. Dyn. Syst.} 5 (2006), no. 2, 293-312.

\bibitem{JQSY21} M. Ji, W. Qi, Z. Shen and Y. Yi, Transient dynamics of absorbed singular diffusions. \emph{J. Dynam. Differential Equations}, to appear.

\bibitem{Kallenberg02} O. Kallenberg, {\em  Foundations of modern probability}. Second edition. Probability and its Applications (New York). Springer-Verlag, New York, 2002. 

\bibitem{K11} R. Z. Khasminskii, \emph{Stochastic stability of differential equations.} With contributions by G. N. Milstein and M. B. Nevelson. Completely revised and enlarged second edition. Stochastic Modelling and Applied Probability, 66. Springer, Heidelberg, 2012.

\bibitem{Kifer88} Y. Kifer, \emph{Random perturbations of dynamical systems.} Progress in Probability and Statistics, 16. Birkh\"{a}user Boston, Inc., Boston, MA, 1988.

\bibitem{Kifer89} Y. Kifer, Attractors via random perturbations. \emph{Comm. Math. Phys.} 121 (1989), no. 3, 445-455.

\bibitem{K90} Y. Kifer, A discrete-time version of the Wentzell-Friedlin theory, {\em Annals of  Probability.} 18 (1990), no. 4, 1676-1692.

\bibitem{KLZ98} F. Klebaner, J. Lazar and O. Zeitouni, On the quasi-stationary distribution for some randomly perturbed transformations of an interval,  {\em Annals of Applied Probability.} 8 (1998), no. 1, 300-315.

\bibitem{KS12} M. Kolb and D. Steinsaltz, Quasilimiting behavior for one-dimensional diffusions with killing. \emph{Annals of Probability.} 40 (2012), no. 1, 162--212.


\bibitem{LC12} J. Littin C., Uniqueness of quasistationary distributions and discrete spectra when $\infty$ is an entrance boundary and 0 is singular. \emph{J. Appl. Probab.} 49 (2012), no. 3, 719-730.

\bibitem{Mathieu95} P. Mathieu, Spectra, exit times and long time asymptotics in the zero-white-noise limit. \emph{Stochastics Stochastics Rep.} 55 (1995), no. 1-2, 1-20.

\bibitem{MV12} S. M\'el\'eard and D. Villemonais, Quasi-stationary distributions and population processes. \emph{Probab. Surv.} 9 (2012), 340-410.

\bibitem{Mikami88} T. Mikami, Asymptotic expansions of the invariant density of a Markov process with a small parameter. \emph{Ann. Inst. H. Poincar\'{e} Probab. Statist.} 24 (1988), no. 3, 403-424.

\bibitem{Miura14} Y. Miura, Ultracontractivity for Markov semigroups and quasi-stationary distributions. \emph{Stoch. Anal. Appl.} 32 (2014), no. 4, 591-601.

\bibitem{Pa83} A. Pazy, {\em Semigroups of linear operators and applications to partial differential equations.} Applied Mathematical Sciences, 44. Springer-Verlag, New York, 1983. 

\bibitem{Pokkett-bib} P. K. Pollett, Quasi-stationary distributions: a bibliography. \url{https://people.smp.uq.edu.au/PhilipPollett/papers/qsds/qsds.pdf}



\bibitem{QSY} W. Qi, Z. Shen and Y. Yi, Large deviation principle for quasi-stationary distributions and multiscale dynamics of absorbed singular diffusions. \url{https://sites.ualberta.ca/~zhongwei/manuscript-Qi-Shen-Yi.pdf}



\bibitem{RZ99} K. Ramanan and O. Zeitouni, The quasi-stationary distribution for small random perturbations of certain one-dimensional maps, {\em Stochastic processes and their applications.} 84 (1999), no. 1, 25-51.

\bibitem{SWY} Z. Shen, S. Wang and Y. Yi, Concentration of quasi-stationary distributions for one-dimensional diffusions with applications. \emph{Ann. Inst. Henri Poincar\'{e} Probab. Stat.}, to appear. \url{https://sites.ualberta.ca/~zhongwei/manuscript-Shen-Wang-Yi.pdf}



\bibitem{Sheu86} S. J. Sheu, Asymptotic behavior of the invariant density of a diffusion Markov process with small diffusion. \emph{SIAM J. Math. Anal.} 17 (1986), no. 2, 451-460.

\bibitem{SE07} D. Steinsaltz and S. Evans, Quasistationary distributions for one-dimensional diffusions with killing. Transactions of the AMS, 359 (2007). no. 3, 1285--1324.


\bibitem{PS21} E. Strickler and A. Prodhomme, {\em Large population asymptotics for a multitype stochastic SIS epidemic model in randomly switched environment.} \url{https://arxiv.org/abs/2107.05333} 



\bibitem{WY71} S. Watanabe and T. Yamada, On the uniqueness of solutions of stochastic differential equations. II. \emph{J. Math. Kyoto Univ.} 11 (1971), 553-563.

\bibitem{YW71} T. Yamada and S. Watanabe, On the uniqueness of solutions of stochastic differential equations. \emph{J. Math. Kyoto Univ.} 11 (1971), 155-167.
 \end{thebibliography}
\end{document}